\documentclass[11pt]{amsart}

\usepackage{amsmath, amssymb, amsfonts}
\usepackage{graphicx} 
\usepackage{fullpage}
\usepackage{multicol}
\usepackage[usenames,dvipsnames,svgnames,table]{xcolor}
\usepackage{enumitem}
\usepackage{hyperref}
\usepackage{tikz}
\usepackage{tabularx}
\usepackage[margin=1in]{geometry}
\usepackage{overpic}
\usetikzlibrary{shapes.geometric,calc}
\definecolor{mygrey}{RGB}{180,180,180}
\definecolor{mygreen}{RGB}{0,128,0}
\raggedbottom

\theoremstyle{plain}
\newtheorem{thm}{Theorem}[section]
\newtheorem{lemma}[thm]{Lemma}
\newtheorem{prop}[thm]{Proposition}
\newtheorem{cor}[thm]{Corollary}

\newtheorem{conjecture}[thm]{Conjecture}

\numberwithin{equation}{section}

\theoremstyle{definition}
\newtheorem{definition}[thm]{Definition}
\newtheorem{example}[thm]{Example}
\newtheorem{remark}[thm]{Remark}

\newtheorem{questions}[thm]{Questions}

\newcommand{\G}{\Gamma}
\newcommand{\D}{\Delta}
\newcommand{\Z}{\mathbb{Z}}
\newcommand{\R}{\mathbb{R}}
\newcommand{\N}{\mathbb{N}}

\newcommand{\bT}{\mathbb{T}}
\newcommand{\Cayley}[1]{\mathcal {C}_#1}
\newcommand{\CAT}{\operatorname{CAT}}
\newcommand{\cC}{\mathcal{C}}
\newcommand{\cH}{\mathcal{H}}

\newcommand{\cL}{\mathcal{L}}
\newcommand{\cN}{\mathcal{N}}

\newcommand{\cP}{\mathcal{P}}

\newcommand{\betti}[1]{\mathrm{b_1}(#1)}

\renewcommand{\div}[1]{\mathrm{div}_{#1}}
\renewcommand{\implies}{\Longrightarrow}

\newcommand{\cT}{\mathcal T}
\newcommand{\cK}{\mathcal{K}}

\setcounter{tocdepth}{1}

\begin{document}

\title{Divergence, thickness and hypergraph index for \\ general Coxeter groups}

\author{Pallavi Dani}
\address{Pallavi Dani, Department of Mathematics, Louisiana State University, Baton Rouge, LA 70803--4918, USA}
\email{pdani@math.lsu.edu}

\author{Yusra Naqvi}
\address{Yusra Naqvi, Department of Mathematics, University College London, 25 Gordon Street, London WC1H 0AY, United Kingdom}
\email{y.naqvi@ucl.ac.uk}

\author{Ignat Soroko}
\address{Ignat Soroko, Department of Mathematics, Southern Methodist University, Dallas, TX 75205, USA}
\email{isoroko@smu.edu}

\author{Anne Thomas}
\address{Anne Thomas, School of Mathematics \& Statistics, Carslaw Building F07,  University of Sydney NSW 2006, Australia}
\email{anne.thomas@sydney.edu.au}

\date{ \today }

\begin{abstract}  We study divergence and thickness for general Coxeter groups $W$.  We first characterise linear divergence, and show that if $W$ has superlinear divergence then its divergence is at least quadratic.  We then formulate a computable combinatorial invariant, 
hypergraph index, for arbitrary Coxeter systems $(W,S)$.  This generalises Levcovitz's definition
for the right-angled case.  We prove that if $(W,S)$ has finite hypergraph index $h$,  then $W$ is (strongly algebraically) thick of order at most~$h$, hence has divergence bounded above by a polynomial of degree $h+1$.  We conjecture that these upper bounds on the order of thickness and divergence are in fact equalities, and we prove our conjecture for certain families of Coxeter groups.  These families are obtained by a new construction which, given any right-angled Coxeter group, produces infinitely many examples of non-right-angled Coxeter systems with the same hypergraph index.  Finally, we give an upper bound on the hypergraph index of any Coxeter system $(W,S)$, and hence on the divergence of $W$, in terms of, unexpectedly, the topology of its associated Dynkin diagram.  
\end{abstract}

\maketitle

\section{Introduction}

We study the quasi-isometry invariants divergence and thickness for general Coxeter groups.  We define a combinatorial invariant of any Coxeter system $(W,S)$, its \emph{hypergraph index} $h(W,S) \in \{0,1,2,\dots \} \cup \{ \infty \}$, which is computed from its Coxeter presentation $W = \langle S \mid (st)^{m_{st}} \rangle$,  and which conjecturally determines both the divergence and the order of thickness of $W$.   This generalises the hypergraph index introduced by Levcovitz in the right-angled case~\cite{levcovitz-RACG}.

Roughly speaking, the \emph{divergence} of a pair of geodesic rays with common basepoint measures, as a function of~$r$, the length of a shortest path connecting their time-$r$ points which stays at least distance $r$ from the basepoint.  
Gersten~\cite{gersten-quadratic} used this idea to define a quasi-isometry invariant, also called divergence.  The divergence of a finitely generated group is then the divergence of one of its Cayley graphs.  

In symmetric spaces of noncompact type, divergence is either linear  or exponential, and Gromov suggested in~\cite[{\protect{$6.\mathrm B_2$(h), p.\,133}}]{gromov}
that the same dichotomy should hold for more general nonpositively curved spaces.  However for many important families, it is now known that divergence can be linear, quadratic or exponential; these families include $3$-manifold groups~\cite{gersten-3mfld}, mapping class groups and Teichm\"uller space~\cite{behrstock, duchin-rafi}, and right-angled Artin groups~\cite{abddy,behrstock-charney}. 
The first constructions of families of $\CAT(0)$ groups with divergence polynomial of any integer degree are due to Macura~\cite{macura} and independently  Behrstock--Dru\c{t}u~\cite{behrstock-drutu}.  Finitely generated groups with divergence neither a polynomial of integer degree nor an exponential are constructed in \cite{olshanskii-osin-sapir, gruber-sisto}.  Recently, Brady and Tran~\cite{brady-tran-div, brady-tran-divcat0} constructed finitely presented groups (including subgroups of $\CAT(0)$ groups) with divergence $r^\alpha$ for $\alpha$ running through an irrational dense subset of~$[2,\infty)$, and with divergence $r^n\log(r)$ for all integers $n\ge2$. 

The quasi-isometry invariant \emph{thickness} was introduced by Behrstock, Dru\c{t}u and Mosher \cite{behrstock-drutu-mosher} as a means to understand the geometry of non-relatively hyperbolic groups.  A metric space $X$ is (strongly) thick of order $0$ if none of its asymptotic cones has a cut-point, and for integers $n \geq 1$, the space $X$ is (strongly) thick of order $n$ if, roughly speaking, any two points in $X$ can be connected by a chain of subsets, each of which is thick of order $n-1$.  Behrstock and Dru\c{t}u proved in~\cite[Corollary~4.17]{behrstock-drutu} that a group which is strongly thick of order at most $n$ has divergence at most $r^{n+1}$.   

 In the setting of right-angled Coxeter groups, the study of divergence was begun by Dani and Thomas~\cite{dani-thomas-div}, and continued by Behrstock, Falgas-Ravry, Hagen and Susse~\cite{behrstock-falgas-ravry-hagen-susse},  Behrstock, Hagen and Sisto~\cite{behrstock-hagen-sisto-caprace}, and Levcovitz~\cite{levcovitz-div,levcovitz-RACG,levcovitz-RACG-2020}; Levcovitz~\cite{levcovitz-div} also considered divergence for certain non-right-angled Coxeter groups.  We recall many results from these works later in this introduction.  For arbitrary Coxeter systems~$(W,S)$, Caprace~\cite{caprace, caprace-erratum} characterised the collections of special subgroups of~$W$ such that $W$ is hyperbolic relative to such a collection, while  Behrstock, Hagen, Sisto and Caprace proved in the Appendix to~\cite{behrstock-hagen-sisto-caprace} that~$W$ admits a canonical minimal relatively hyperbolic structure, whose peripheral subgroups are in fact special subgroups.  In the same Appendix, these four authors also proved that any Coxeter group is either strongly algebraically thick, or is relatively hyperbolic, and provided an inductive construction of all Coxeter systems $(W,S)$ such that $W$ is strongly algebraically thick.

We are now ready to describe our contributions.  Throughout, we assume $(W,S)$ is a Coxeter system with~$S$ finite.  We recall  that right-angled Coxeter groups are \emph{rigid}, meaning that up to isomorphism they have unique defining graphs~\cite{radcliffe}, but in general Coxeter groups are not rigid (see, for instance,~\cite{brady-mccammond-muhlherr-neumann}).  Thus we often refer to Coxeter systems rather than Coxeter groups.  

In all our results on divergence, we assume that the Coxeter group $W$ is $1$-ended.  This assumption ensures the equivalence of Gersten's definition of divergence and others in the literature (see the discussion in~\cite[Section~3]{drutu-mozes-sapir}).  Indeed, the definition of divergence for finitely presented  groups with more than one end is not consistent in the literature. The usual definition implies that divergence takes infinite values for such groups, but there exist versions where their divergence is a real-valued function (see, for example,~\cite[Remark 3.9]{drutu-mozes-sapir}).  To avoid having to consider these varying definitions of divergence throughout, we have opted instead to restrict to the $1$-ended case in this paper. Moreover, the quasi-isometry classification of Coxeter groups reduces to that of $1$-ended Coxeter groups. (For right-angled Coxeter groups this was explained in the introduction to~\cite{dani-thomas-qi-hyp}, and the general case follows from the results of Davis~\cite[Proposition 8.8.2]{davis-book} and Papasoglu--Whyte~\cite[Theorems 0.2 and 0.3]{papasoglu-whyte} by similar reasoning.)
Given a Coxeter system $(W,S)$, there is a concrete condition for determining whether $W$ is $1$-ended~\cite[Theorem~8.7.2]{davis-book}, and we provide a corresponding function to check $1$-endedness in our accompanying code~\cite{hindex_gap} written for the computer algebra system \textsf{GAP}~\cite{GAP4}. 
We refer to Section~\ref{sec:CoxeterDavis} for additional background on Coxeter groups. 

Our first results for general Coxeter groups concern linear and quadratic divergence.  We prove:

\begin{thm}\label{thm:linearQuadraticIntro} Let $(W,S)$ be a Coxeter system such that $W$ is $1$-ended and $(W,S)$ is irreducible and nonaffine.  Then the divergence of $W$ is at least quadratic.
\end{thm}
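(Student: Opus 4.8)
The plan is to reduce the theorem to the single assertion that the divergence of $W$ is \emph{superlinear}, and then to invoke the gap result of this paper (a Coxeter group with superlinear divergence has divergence at least quadratic). As $W$ is $1$-ended, Gersten's divergence agrees with the other notions in the literature, and one may freely use the geometric action of $W$ on its $\CAT(0)$ Davis complex $\Sigma$. To produce superlinearity I would split into two cases using the Behrstock--Hagen--Sisto--Caprace dichotomy recalled above: either $W$ is relatively hyperbolic with respect to a collection of \emph{proper} special subgroups, or $W$ is strongly algebraically thick.

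The relatively hyperbolic case is short: a group that is hyperbolic relative to proper peripheral subgroups has a global cut point in some asymptotic cone, so its divergence is superlinear, and the gap result upgrades this to at least quadratic. (This case in particular includes $W$ word-hyperbolic, whose divergence is of course exponential.)

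The substantive case is that $W$ is thick. Here none of the usual soft mechanisms is available -- thick groups have no cut points in any asymptotic cone, hence no Morse or rank-one elements -- so a quadratic lower bound has to be constructed by hand, generalising the argument of Dani and Thomas in the right-angled case. First I would use irreducibility (connectedness of the Coxeter diagram) together with nonaffineness to rule out $W$ being thick of order $0$, and more precisely to see, via the description of thick Coxeter systems, that the thick structure of $W$ exhibits it as assembled from flat (affine or direct-product) special subgroups linked along lower-rank special subgroups in a genuinely non-product pattern that percolates through all of $S$. Then I would choose an infinite-order element whose axis $\gamma$ in $\Sigma$ crosses these flat pieces transversely, and argue that any path joining $\gamma(r)$ to $\gamma(-r)$ while avoiding the ball $B(\gamma(0),r)$ is forced to pass through on the order of $r$ of the flat pieces and to spend length on the order of $r$ inside each, hence has total length at least of order $r^{2}$. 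This exhibits a pair of geodesic rays of quadratic divergence and completes the proof.

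The main obstacle is precisely this thick case, and it has two entangled sources of difficulty. Since thick groups contain no Morse geodesics, the axis $\gamma$ above is not Morse, so the quadratic lower bound is not formal: one must exploit the concrete combinatorial geometry of $\Sigma$ to see that the \emph{particular} detours in question are long, even though general quasigeodesics with endpoints on $\gamma$ may stray arbitrarily far from it. One must also handle the most general linking pattern of the flat pieces, rather than the clean chains that appear in the right-angled examples -- this is where the extra flexibility of non-right-angled Coxeter systems has to be controlled. This thick case is also the essential content of the companion characterisation of linear divergence for $1$-ended Coxeter systems, from which, together with the gap result, Theorem~\ref{thm:linearQuadraticIntro} follows by contraposition.
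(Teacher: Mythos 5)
Your proposal has several fundamental problems, and in particular the paper's actual proof is entirely different from, and much shorter than, the route you describe.

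First, the argument is circular. You propose to reduce to superlinearity and then invoke ``the gap result of this paper'' (Corollary~\ref{cor:superlinear}, that superlinear divergence implies at least quadratic divergence) to finish. But that corollary is explicitly deduced in the paper \emph{from} Theorem~\ref{thm:linearQuadraticIntro} together with Corollary~\ref{cor:linearIntro}; it is not an independent result available for use in proving the theorem. So at least in your relatively hyperbolic branch the plan begs the question. (That branch can be salvaged without the gap result --- relatively hyperbolic groups have exponential divergence by \cite[Theorem~1.3]{Sisto12}, cited in the paper --- but your stated argument is circular.)

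Second, the claim that ``thick groups have no cut points in any asymptotic cone, hence no Morse or rank-one elements'' is false, and it steers you onto the wrong path. Only thick of order $0$ (i.e.\ wide) spaces have no asymptotic cut points; thick spaces of higher order typically do, and in particular may contain rank one elements. In fact the paper's key observation, from Caprace--Fujiwara (Proposition~\ref{prop:CoxeterRank1}), is that \emph{every} irreducible, nonspherical, nonaffine Coxeter group has a rank one element --- whether it is relatively hyperbolic or thick. There is nothing special about the thick case that needs to be handled by hand, and no dichotomy is needed at all.

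Third, your substantive thick case is only a sketch of a program, not a proof. Constructing an explicit axis crossing ``flat pieces transversely'' and establishing a quadratic lower bound by counting separating flats is hard even for specific families --- that is exactly the content of the technical Lemma~\ref{lem:divd-lower} in Section~\ref{sec:duplex}, which is restricted to a concrete family of duplex groups and requires careful wall arguments. Carrying it out for an arbitrary irreducible nonaffine thick Coxeter system is far beyond what you outline, and indeed precisely the kind of lower bound the paper only conjectures in general (Conjecture~\ref{conj:lower}).

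The paper's actual proof takes a soft, uniform approach: choose a Coxeter element $w$; by Caprace--Fujiwara it is rank one, so any axis $\gamma$ of $w$ in the (locally compact, $\CAT(0)$) Davis complex $\Sigma$ does not bound a flat half-plane; then Kapovich--Leeb (Proposition~\ref{prop-kap-leeb}, together with the careful reading in Remark~\ref{rem:subquadratic}) shows that a periodic geodesic in a locally compact $\CAT(0)$ space which does not bound a flat half-plane cannot have subquadratic divergence. Hence $\div\gamma(r)\succeq r^2$, which gives the quadratic lower bound on $\div\Sigma$ and therefore on $\div W$ by quasi-isometry invariance. This is a three-line argument once the two quoted results are in hand, and it bypasses the thick/relatively-hyperbolic dichotomy entirely.
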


\noindent The proof of Theorem~\ref{thm:linearQuadraticIntro} is brief and combines results from the literature (see Section~~\ref{sec:linearQuadratic}).  We observe that $W$ has a rank one element, by work of Caprace and Fujiwara~\cite{caprace-fujiwara}, and then apply a general result of Kapovich and Leeb~\cite[Proposition~3.3]{kapovich-leeb} to conclude that the divergence of $W$ is at least $r^2$.  A similar strategy is used in, for instance, the proofs of~\cite[Theorem~6.10]{hagen} and~\cite[Theorem~3.3]{behrstock-hagen-sisto-caprace}.

We next characterise linear divergence for arbitrary Coxeter groups.  

\begin{cor}\label{cor:linearIntro}  Let $(W,S)$ be a Coxeter system such that $W$ is $1$-ended.  Then $W$ has linear divergence if and only if $(W,S)=(W_1,S_1) \times (W_2,S_2)$ where either:
\begin{enumerate}
\item both $W_1$ and $W_2$ are infinite; or
\item $W_1$ is finite (possibly trivial) and $(W_2,S_2)$ is irreducible affine of rank $\ge 3$.
\end{enumerate}
\end{cor}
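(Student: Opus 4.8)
The plan is to reduce everything to the decomposition of $(W,S)$ into its irreducible components. Write $(W,S) = (W_1',S_1')\times\cdots\times(W_k',S_k')$, where $S_1',\dots,S_k'$ are the vertex sets of the connected components of the graph on $S$ in which $s$ and $t$ are joined precisely when $m_{st}\ne 2$, so that each $(W_i',S_i')$ is irreducible. Let $F$ be the direct product of those components $W_i'$ that are finite, and let $W_{j_1},\dots,W_{j_m}$ be the infinite ones, so that $W\cong F\times W_{j_1}\times\cdots\times W_{j_m}$ and, for any subset $T\subseteq S$ that is a union of some of the $S_i'$, we have a Coxeter direct product decomposition $(W,S) = (W_T,T)\times(W_{S\setminus T},S\setminus T)$. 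I would then invoke the following standard ingredients: divergence is a quasi-isometry invariant of $1$-ended groups (so Gersten's divergence agrees with the versions used below); since $F$ is finite, $W$ is quasi-isometric to $W_{j_1}\times\cdots\times W_{j_m}$; the direct product of two infinite finitely generated groups is thick of order $0$ and hence has linear divergence (cf.~\cite{behrstock-drutu-mosher, behrstock-drutu}); an irreducible affine Coxeter group of rank $n+1$ acts geometrically on $\mathbb{E}^n$, hence is quasi-isometric to $\Z^n$, which has linear divergence once $n\ge 2$; and the only irreducible affine Coxeter system of rank $2$ is the one giving the infinite dihedral group $D_\infty\cong\widetilde A_1$, which is $2$-ended, not $1$-ended.

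For the ``if'' direction: in case~(1), $W=W_1\times W_2$ with both factors infinite has linear divergence by the product fact; in case~(2), $W_1$ is finite and $(W_2,S_2)$ is irreducible affine of rank $n+1\ge 3$, so $W$ is quasi-isometric to $W_2$ and hence to $\Z^n$ with $n\ge 2$, giving linear divergence. For the ``only if'' direction, suppose $W$ is $1$-ended with linear divergence; since $W$ is infinite, $m\ge 1$. If $m\ge 2$, set $T:=S_{j_1}$, so $W_1:=W_T=W_{j_1}$ is infinite and $W_2:=W_{S\setminus T}=F\times W_{j_2}\times\cdots\times W_{j_m}$ is infinite; then $(W,S)=(W_1,S_1)\times(W_2,S_2)$ is conclusion~(1) (and here linear divergence is automatic, consistently with the product fact). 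If $m=1$, then $W$ is quasi-isometric to $W_{j_1}$, so $(W_{j_1},S_{j_1})$ is $1$-ended and has linear divergence; by Theorem~\ref{thm:linearQuadraticIntro} it cannot be irreducible nonaffine (else its divergence would be at least quadratic), so it is irreducible affine, and being $1$-ended it is not $D_\infty$, hence has rank $\ge 3$. Taking $W_1:=F$ (finite, possibly trivial) and $W_2:=W_{j_1}$ then gives conclusion~(2).

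I do not expect a genuine obstacle: the work lies in carefully assembling known facts. The points needing the most care are matching the combinatorial decomposition of $(W,S)$ into irreducibles with the two-factor decomposition in the statement (absorbing all finite components and all but one infinite component into a single factor when $m\ge 2$); using $1$-endedness correctly, both to ensure divergence is well-defined and quasi-isometry invariant and to exclude the degenerate rank-$2$ affine case $D_\infty$; and quoting precisely that a product of two infinite finitely generated groups has linear divergence and that an irreducible affine Coxeter group of rank $\ge 3$ is quasi-isometric to $\Z^{n}$ with $n\ge 2$.
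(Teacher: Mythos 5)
Your proof is correct and takes essentially the same route as the paper's: both arguments reduce, via finite-index invariance of divergence, to either a product of two infinite Coxeter factors or an irreducible system, handle the irreducible affine case by observing it is virtually $\Z^n$ with $n\ge 2$ (discarding $\widetilde A_1$ via $1$-endedness), and invoke Theorem~\ref{thm:linearQuadraticIntro} to rule out the irreducible nonaffine case. The one minor difference is in the product-of-two-infinites case: the paper verifies the hypotheses of Lemma~7.2 of \cite{abddy} by hand using the geodesic extension property of Coxeter groups, whereas you cite the more general fact from \cite{behrstock-drutu-mosher, behrstock-drutu} that a product of two infinite finitely generated groups is wide, which is a valid shortcut.
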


\noindent This generalises the characterisation of linear divergence for right-angled Coxeter groups (see~\cite[Theorem~4.1]{dani-thomas-div} and~\cite[Proposition~2.11]{behrstock-hagen-sisto-caprace}).  Note that possibility (2) does not occur in the right-angled case.  We prove Corollary~\ref{cor:linearIntro} using Theorem~\ref{thm:linearQuadraticIntro} and standard facts about Coxeter systems, together with an argument from~\cite{abddy}.

We record the following useful consequence of Corollary~\ref{cor:linearIntro} and Theorem~\ref{thm:linearQuadraticIntro} (which relies essentially on Proposition~3.3 of~\cite{kapovich-leeb}, as we mentioned above).

\begin{cor}\label{cor:superlinear}
If a $1$-ended Coxeter group has superlinear divergence, then its divergence is at least quadratic.\qed
\end{cor}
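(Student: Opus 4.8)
The plan is to deduce this directly from Theorem~\ref{thm:linearQuadraticIntro} and Corollary~\ref{cor:linearIntro}, by reducing to the irreducible nonaffine case. Let $(W,S)$ be a Coxeter system with $W$ $1$-ended and superlinear divergence, and write $(W,S) = (W_1,S_1) \times \cdots \times (W_k,S_k)$ as a direct product of its irreducible special subgroups. Since $W$ is $1$-ended it is infinite, so at least one factor is infinite; and if two or more were infinite, then Corollary~\ref{cor:linearIntro}(1) would give that $W$ has linear divergence, contrary to hypothesis. Hence exactly one factor, say $(W_1,S_1)$, is infinite and the others are finite. Then $W_1$ has finite index in $W$, so $W$ and $W_1$ are quasi-isometric; as divergence and $1$-endedness are quasi-isometry invariants, $W_1$ is $1$-ended with superlinear divergence.

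It remains to see that $(W_1,S_1)$ is nonaffine, after which Theorem~\ref{thm:linearQuadraticIntro} applied to $(W_1,S_1)$ gives that the divergence of $W_1$, and hence of $W$, is at least quadratic. Suppose instead that $(W_1,S_1)$ is irreducible affine, of rank $n$. If $n \le 2$, then $W_1$ is either finite (for $n=1$) or virtually $\Z$ (for $n=2$, the infinite dihedral group), so $W_1$ is not $1$-ended; and if $n \ge 3$, then Corollary~\ref{cor:linearIntro}(2), with the finite factor taken to be trivial, forces $W_1$ to have linear divergence. Either way we contradict the previous paragraph, so $(W_1,S_1)$ is nonaffine and the proof is complete.

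There is no real obstacle here: every step is either a standard structural fact about Coxeter systems (existence and form of the irreducible decomposition, the classification of irreducible affine systems of rank at most~$2$) or a direct invocation of the two results just established. The only point needing a moment's care is the bookkeeping in the affine case, namely checking that ranks~$1$ and~$2$ are genuinely excluded by $1$-endedness before appealing to the rank~$\ge 3$ clause of Corollary~\ref{cor:linearIntro}.
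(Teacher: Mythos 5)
Your proof is correct and follows essentially the same route as the paper, which labels the corollary an ``immediate consequence'' of Theorem~\ref{thm:linearQuadraticIntro} and Corollary~\ref{cor:linearIntro}: the paper's own proof of Corollary~\ref{cor:linearIntro} already reduces to the irreducible case via a finite-index argument and then shows that, apart from the linear-divergence cases and the $2$-ended type $\widetilde A_1$, the remaining case is irreducible nonspherical nonaffine, where Theorem~\ref{thm:linearQuadraticIntro} gives a quadratic lower bound. You have simply spelled out that reduction explicitly, which is exactly what makes the result ``immediate''.
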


\noindent In the right-angled case this result seems to have been known to the authors of~\cite{behrstock-hagen-sisto-caprace}, as indicated by the proof of~\cite[Theorem~3.3]{behrstock-hagen-sisto-caprace}, but it is not stated explicitly there.

The main theoretical contribution of this paper is the definition of \emph{hypergraph index} for general Coxeter systems $(W,S)$, which generalises the definition introduced by Levcovitz in the right-angled setting~\cite{levcovitz-RACG}. Given any $(W,S)$, we construct a  
sequence $\Lambda_0, \Lambda_1, \Lambda_2 \dots$ of sets of subsets of~$S$, where $\Lambda_0$ contains all $T \subset S$ such that the corresponding special subgroup $W_T$ is (strongly) thick of order $0$, and each $\Lambda_i$ is the union of certain elements of $\Lambda_{i-1}$.
By definition, the hypergraph index $h(W,S)$ is then the smallest $h$ for which $S \in \Lambda_h$ (if it exists), and $\infty$ otherwise.  It is evident from the definition that $h(W,S)$ is either a non-negative integer or equals $\infty$. We provide detailed motivation for our formulation of hypergraph index in Section~\ref{sec:motiv}.

A key feature of the construction is that $h(W,S)$ can be explicitly computed from the Coxeter presentation $W = \langle S \mid (st)^{m_{st}}\rangle$.
In fact, in our general setting we show that  for finite $m_{st} \geq 7$, replacing all relations $(st)^{m_{st}}$ by~$(st)^7$ does not change the hypergraph index (see Proposition~\ref{prop:labels7up}).  Thus the determination of possible hypergraph indexes for Coxeter systems with any fixed number of generators is a finite computational problem. 

Our results relating hypergraph index, thickness and divergence are as follows.

\begin{thm}\label{thm:hyp_thick_div_Intro}  Let $(W,S)$ be a Coxeter system such that~$W$ is $1$-ended. Let $h = h(W,S)$. Then:
\begin{enumerate}
    \item $h = 0$ if and only if $W$ has linear divergence.
    \item If $h = 1$, then $W$ has quadratic divergence.
    \item If $h$ is finite, then $W$ is strongly thick of order at most $h$, hence the divergence of $W$ is bounded above by a polynomial of degree $h + 1$.
    \item $h = \infty$ if and only if $W$ is relatively hyperbolic.
\end{enumerate}
\end{thm}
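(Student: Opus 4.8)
The plan is to prove the four parts of Theorem~\ref{thm:hyp_thick_div_Intro} by combining the structural characterisation of hypergraph index with results already available in the literature on thickness and divergence of Coxeter groups. I would organise the argument around the following reductions.

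\textbf{Part (3) is the technical heart and I would prove it first.} The claim that finite hypergraph index $h$ implies strong (algebraic) thickness of order at most $h$ should be established by induction on $h$ following the inductive structure of the sequence $\Lambda_0, \Lambda_1, \dots$. The base case $h = 0$: if $S \in \Lambda_0$ then by definition $W = W_S$ is thick of order $0$. For the inductive step, suppose $S \in \Lambda_h$; by the definition of hypergraph index, $S$ is a union $S = T_1 \cup \dots \cup T_k$ where each $T_j \in \Lambda_{h-1}$ (so by induction $W_{T_j}$ is strongly thick of order at most $h-1$) and these pieces are glued along an overlap pattern — presumably consecutive pieces share enough generators that the corresponding special subgroups have infinite intersection, or more precisely that the intersection $W_{T_j \cap T_{j+1}}$ is infinite (or that one obtains a thick-of-order-$0$ chain connecting them). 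One then invokes the standard amalgamation/network lemma for thickness from \cite{behrstock-drutu-mosher} (namely, a group that is the union of a ``thick network'' of subgroups each thick of order $\le m$, glued along infinite coarsely connected intersections, is thick of order $\le m+1$), in the strongly algebraic form used by Behrstock--Hagen--Sisto--Caprace in the Appendix to \cite{behrstock-hagen-sisto-caprace}. The divergence bound $r^{h+1}$ is then immediate from \cite[Corollary~4.17]{behrstock-drutu}. \textbf{I expect this amalgamation step to be the main obstacle}, because one must verify that the combinatorial gluing condition built into the definition of $\Lambda_i$ (the condition governing which elements of $\Lambda_{i-1}$ may be unioned) precisely matches the hypothesis of the thick-network lemma — in particular that special subgroups of a Coxeter group, intersected and joined along common generators, assemble into an honest thick network with the required uniform coarse connectedness; this is where the careful bookkeeping of the hypergraph construction in Section~\ref{sec:motiv} has to be used.

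\textbf{Parts (1) and (2) follow by combining Part (3) with the linear/quadratic results already proved.} For (1): if $h = 0$ then $W$ is thick of order $0$, so its asymptotic cones have no cut-points, and by the argument of \cite[Theorem~4.1]{dani-thomas-div} / \cite{behrstock-hagen-sisto-caprace} together with Corollary~\ref{cor:linearIntro} this forces linear divergence — one must check that $h = 0$ is equivalent to $(W,S)$ having the product structure in Corollary~\ref{cor:linearIntro}, which should be a direct unwinding of the definition of $\Lambda_0$ (a special subgroup is thick of order $0$ exactly when it is a nontrivial direct product of infinite groups or irreducible affine of rank $\ge 3$, and the whole group lies in $\Lambda_0$ exactly in the cases listed). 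Conversely, if $W$ has linear divergence then by Corollary~\ref{cor:superlinear} it is not superlinear, and one reads off from Corollary~\ref{cor:linearIntro} that $(W,S)$ has one of the two product forms, each of which is seen to lie in $\Lambda_0$, giving $h = 0$. For (2): if $h = 1$ then by Part (3) $W$ is thick of order $\le 1$, so divergence is at most quadratic; and since $h \ge 1$ means $h \ne 0$, Part (1) gives that divergence is superlinear, hence at least quadratic by Corollary~\ref{cor:superlinear}. Combining, divergence is exactly quadratic.

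\textbf{Part (4) is the dichotomy, and I would deduce it from the Appendix to \cite{behrstock-hagen-sisto-caprace}.} Those authors proved that every Coxeter group is either strongly algebraically thick or relatively hyperbolic (with proper peripherals), and gave an inductive construction of all the thick ones. The strategy is to show that this inductive construction is exactly captured by finiteness of the hypergraph index: if $h(W,S) < \infty$ then $W$ is thick by Part (3), hence not relatively hyperbolic; conversely, if $W$ is strongly algebraically thick, one argues by induction on the level of the Behrstock--Hagen--Sisto--Caprace construction that $S \in \Lambda_h$ for some finite $h$ — each stage of their construction (taking a product, or amalgamating a thick network of already-thick special subgroups along infinite special intersections) is mirrored by passing from $\Lambda_{i-1}$ to $\Lambda_i$. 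The remaining implication, that relative hyperbolicity forces $h = \infty$, follows since $h$ finite would contradict the thick/relatively-hyperbolic dichotomy (a group cannot be both, being one-ended and non-elementary). The subtle point here, which I would flag but expect to be routine given Section~\ref{sec:motiv}, is matching the precise combinatorial ``wide''/thick-network conditions in \cite{behrstock-hagen-sisto-caprace} with the union rule defining $\Lambda_i$; this is essentially the same verification as in Part (3) run in the reverse direction.
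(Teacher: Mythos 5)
Your overall architecture matches the paper's: prove the thickness bound in (3) by induction on $h$ via a tight-network amalgamation lemma, then deduce (1), (2) and the ``only if'' half of (4) from the resulting divergence bounds, and obtain the ``if'' half of (4) from the inductive construction of thick Coxeter systems in the Appendix to \cite{behrstock-hagen-sisto-caprace}. That said, two points merit comment.

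First, in the base case of your Part~(3) you assert that $S \in \Lambda_0$ gives thickness of order~$0$ ``by definition.'' That is not quite right: $\Lambda_0 = \Omega(S) \cup \Psi(S)$, and elements of $\Psi(S)$ are slab subsets, which generate groups that are certainly not wide. What saves the argument is that $h$ finite forces $\Omega(S) \neq \varnothing$, so $S$ contains a wide subset; Lemma~\ref{lem:widenotinslab} then excludes $S \in \Psi(S)$, forcing $S \in \Omega(S)$ and hence $\Omega(S) = \{S\}$. The paper runs the base case through Theorem~\ref{thm:h0Linear} and Proposition~\ref{prop:wide} for exactly this reason. For the inductive step, the amalgamation bookkeeping you flag as the main obstacle is handled exactly as you anticipate: Lemma~\ref{lem:tightAlgebraic} observes that special subgroups are convex and that the $\equiv_{h-1}$-condition supplies the infinite path-connected intersections required by Definition~\ref{defn:tightAlgebraic}, and Proposition~\ref{prop:tight} upgrades this to a tight network of cosets, feeding directly into Definition~\ref{defn:strongThick}.

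Second, for the direction of (4) asserting that finite $h$ forces $W$ not relatively hyperbolic, you write ``$W$ is thick by Part~(3), hence not relatively hyperbolic.'' Be careful here: Part~(3) gives \emph{strong} thickness in the metric sense of Definition~\ref{defn:strongThick}, not strong \emph{algebraic} thickness, and the dichotomy of Theorem~\ref{thm:thickNotRelHyp} is phrased in the algebraic form. As noted in Remark~\ref{rem:strongAlg1}, the paper explicitly does not know whether finite hypergraph index yields strong algebraic thickness of the corresponding order in general, so you cannot invoke that dichotomy directly from Part~(3). The paper instead derives the implication from divergence: finite $h$ gives polynomial divergence by Corollary~\ref{cor:hDiv}, while finitely presented relatively hyperbolic groups have exponential divergence (\cite{Sisto12}), a contradiction. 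The converse (infinite $h$ implies relatively hyperbolic) does go through the algebraic dichotomy, via the contrapositive of Proposition~\ref{prop:thickFinite}, exactly as you propose. Apart from these two refinements your proposal tracks the paper's proof.
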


\noindent These statements generalise results of Levcovitz~\cite{levcovitz-RACG} in the right-angled setting, as follows. Theorem~\ref{thm:hyp_thick_div_Intro}(1) generalises the equivalence of (4) and (5) in~\cite[Theorem 6.1]{levcovitz-RACG}, Theorem~\ref{thm:hyp_thick_div_Intro}(2) generalises the implication (5) $\implies$ (4) in~\cite[Theorem 6.2]{levcovitz-RACG}, Theorem~\ref{thm:hyp_thick_div_Intro}(3) generalises~\cite[Theorem~6.3 and Corollary~6.4]{levcovitz-RACG}, and Theorem~\ref{thm:hyp_thick_div_Intro}(4)  generalises~\cite[Remark 3.10]{levcovitz-RACG}.  Our proof of the first claim in Part (3) above is considerably shorter and more direct than that of~\cite[Theorem~6.3]{levcovitz-RACG}, and makes natural use of strong algebraic thickness.

We expect that the converse to Theorem~\ref{thm:hyp_thick_div_Intro}(2) holds.  (For right-angled Coxeter groups, this converse is the implication (4) $\implies$ (5) of \cite[Theorem~6.2]{levcovitz-RACG}, and relies upon the characterisation of quadratic divergence for right-angled Coxeter groups from~\cite{dani-thomas-div, levcovitz-div}.)  More generally, we make the following conjecture.

\begin{conjecture}\label{conj:lower}  Let $(W,S)$ be a Coxeter system such that $W$ is $1$-ended.  Let $h = h(W,S)$.  Then the following are equivalent:
\begin{enumerate}
    \item $(W,S)$ has finite hypergraph index $h$;
    \item $W$ is strongly thick of order $h$; and
    \item the divergence of $W$ is polynomial of degree $h + 1$.
\end{enumerate}
\end{conjecture}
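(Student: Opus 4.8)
The plan is to reduce the whole statement to a single assertion --- a \emph{lower} bound on divergence --- and then read off all three equivalences formally. By Theorem~\ref{thm:hyp_thick_div_Intro}(3) and the Behrstock--Dru\c{t}u bound, finite hypergraph index $h$ already yields that $W$ is strongly thick of order \emph{at most} $h$ and has divergence \emph{at most} a polynomial of degree $h+1$; and, by the Behrstock--Hagen--Sisto--Caprace dichotomy, a Coxeter group is strongly thick (of finite order) precisely when it is not relatively hyperbolic. Granting these, it suffices to prove that if $h(W,S)=h$ is finite then the divergence of $W$ is at least a polynomial of degree $h+1$, i.e.\ is not $O(r^{h})$. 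Indeed, since a group that is strongly thick of order $k$ has divergence at most $r^{k+1}$, this lower bound forces the order of thickness of $W$ to be exactly $h$ and its divergence to be exactly a polynomial of degree $h+1$, which is (1)$\Rightarrow$(2) and (1)$\Rightarrow$(3). The converses follow from the same two-sided comparison: if $W$ is strongly thick of some finite order, or has polynomial divergence, then $W$ is not relatively hyperbolic, so $h(W,S)$ is finite by Theorem~\ref{thm:hyp_thick_div_Intro}(4), and comparing the lower bound with the upper bounds above pins the index to the claimed value.

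For the lower bound I would induct on $h$ along the filtration $\Lambda_0\subseteq\Lambda_1\subseteq\cdots$ of subsets of $S$ appearing in the definition of the hypergraph index. The base cases are known, or nearly so: $h=0$ is Theorem~\ref{thm:hyp_thick_div_Intro}(1), while $h=1$ is the (still open) converse to Theorem~\ref{thm:hyp_thick_div_Intro}(2), which should itself require a combinatorial characterisation of quadratic divergence for general Coxeter systems, extending the right-angled results of Dani--Thomas~\cite{dani-thomas-div} and Levcovitz~\cite{levcovitz-div}. For the inductive step, an element $T\in\Lambda_i\setminus\Lambda_{i-1}$ is, by construction, assembled from a chain of elements of $\Lambda_{i-1}$, successive ones overlapping along infinite special subgroups. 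Modelling the argument on the right-angled proofs of~\cite{levcovitz-RACG}, one would, for each radius $r$, exhibit a pair of points in $W_T$ together with an avoidant path between them whose length is forced to grow like $r\cdot r^{i}=r^{i+1}$, because crossing each link of the chain requires a detour of order $r^{i}$ through a subgroup that is thick of order $i-1$; dually, one shows that the asymptotic cone of $W_T$ has no separating cut-set of dimension $\le i-1$ between the relevant points. The essential input is that each level of the construction is genuinely needed --- the chain cannot be short-circuited at a lower level --- and making this precise, uniformly over all systems of labels (using Proposition~\ref{prop:labels7up} to reduce to labels at most $7$), and combining it with the $\CAT(0)$ geometry of the Davis complex and the rank one elements supplied by Caprace--Fujiwara~\cite{caprace-fujiwara}, is where the real work lies.

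For the families treated in this paper the lower bound can be imported rather than proved from scratch. Starting from a right-angled Coxeter system of known hypergraph index $n$ --- for which Levcovitz's theorem~\cite{levcovitz-RACG} already gives divergence exactly of degree $n+1$ --- the new construction outputs non-right-angled systems $(W,S)$ with $h(W,S)=n$, and one verifies directly that the construction preserves divergence: for instance by exhibiting the original right-angled group as a quasi-retract (or as a convex reflection subgroup) of $W$, so that the divergence of $W$ is at least that of the right-angled group, or by checking that the combinatorial data controlling the lower-bound geodesics is unchanged. Together with the upper bound from Theorem~\ref{thm:hyp_thick_div_Intro}(3), this gives the desired equality.

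The main obstacle is exactly the general lower bound of the second paragraph. In the right-angled setting the arguments of~\cite{levcovitz-RACG} use tight control of geodesics in the Cayley graph together with the rigidity of right-angled systems, and neither tool is available in general: Coxeter groups need not be rigid, and geodesics behave quite differently once large edge labels are present. Isolating the correct notion of an ``essential level'' of the hypergraph construction, and proving that it raises the divergence degree by exactly one at each step, uniformly over all label patterns, is the technical heart of the conjecture; even the base case $h=1$, amounting to a combinatorial characterisation of quadratic divergence for arbitrary $(W,S)$, is presently open. I expect the families handled here are precisely those for which this obstacle is sidestepped by the geometric comparison with the right-angled case described above.
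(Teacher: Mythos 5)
This statement is Conjecture~\ref{conj:lower}, which the paper does \emph{not} prove; it only establishes the upper bounds (Theorem~\ref{thm:hyp_thick_div_Intro}(3)) in general, and verifies the conjecture for one explicit family (the $(m,\infty)$-duplexes with $m$ even) via a direct lower-bound computation. You correctly identify that, modulo the upper bounds already in the paper, the whole conjecture reduces to a lower bound of degree $h+1$ on divergence, and you are right that the base case $h=1$ (a combinatorial characterisation of quadratic divergence for general Coxeter systems) and the general inductive step are open; that much agrees with the paper's own framing. So for the part of your proposal that is genuinely speculative, there is nothing to compare against.

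Where you do claim to recover the result the paper actually proves --- the lower bound for the duplex families --- your route does not work, and the paper explicitly flags this. You propose to import the lower bound from the right-angled group $W$ by ``exhibiting the original right-angled group as a quasi-retract (or as a convex reflection subgroup) of $W_2(m,n)$, so that the divergence of $W_2(m,n)$ is at least that of $W$.'' But the paper records (Questions after Proposition~\ref{prop:distinct}) that it is not known whether $W$ and $W_2(m,n)$ are even quasi-isometric, let alone whether one quasi-retracts onto the other, and it points out that the two obvious candidate maps --- collapsing $u',u''\mapsto u$, or sending $u\mapsto$ some element of $\langle u',u''\rangle$ --- are not quasi-isometries (they send $2$-ended convex subgroups to non-$2$-ended ones or vice versa). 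Your fallback, ``checking that the combinatorial data controlling the lower-bound geodesics is unchanged,'' also understates the obstruction. The paper's actual lower bound (Lemma~\ref{lem:divd-lower}) is a fresh wall-tracking induction in the Davis complex of the duplex, modelled in outline on Dani--Thomas but substantially harder precisely because, once $m\geq 4$, distinct walls of the same type can cross; the right-angled argument relies on that never happening, and Claims 1--3 in that proof exist to control the new crossings (e.g.\ the $a_n^*$-separation step in Claim~3). You would also need $m$ even, so that walls have well-defined types at all --- a restriction the paper imposes and which your sketch does not mention. In short: the meta-reduction is fine, the acknowledgment that the general case is open is accurate, but the one piece you assert can be done (``importing'' the lower bound for the families in the paper) is exactly the part the paper says one cannot do by transport of structure and instead proves from scratch.
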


In the right-angled case, this is Conjecture~1.1 of~\cite{levcovitz-RACG}.  Levcovitz recently proved this conjecture as Theorem~A of~\cite{levcovitz-RACG-2020}, by showing  that if $h$ is finite, the Cayley graph of $W$ contains a periodic geodesic with divergence polynomial of degree $h+1$.  
If Conjecture~\ref{conj:lower} holds, there are many interesting corollaries, as in~\cite{levcovitz-RACG-2020}.  
For instance, obtaining exact bounds on divergence and thickness is notoriously hard, but the equivalences in Conjecture~\ref{conj:lower}   provide an algorithmic way to compute these. Furthermore, it follows that the divergence of any $1$-ended Coxeter group is either polynomial of integer degree or exponential. In addition, Conjecture~\ref{conj:lower} implies that hypergraph index is a (computable) quasi-isometry invariant of $1$-ended Coxeter groups, which is not obvious from the definition.

As evidence for Conjecture~\ref{conj:lower}, we prove that for all finite $h$, there are ``many" examples of non-right-angled Coxeter systems for which this conjecture holds.  For this, we introduce a procedure, called the \emph{duplex construction}, which takes as input a right-angled Coxeter system $(W,S)$, and produces a Coxeter system on twice as many generators whose defining graph has edge labels~$2$, $m \in \{2,3,\dots\}$ and $n \in \{5,6,\dots\} \cup \{\infty\}$, and which has the same hypergraph index as $(W,S)$.  Then by applying the duplex construction to a family of examples from~\cite{dani-thomas-div}, 
we obtain the following statement.

\begin{cor}\label{cor:duplexIntro}  For every integer $h \geq 0$, there are infinitely many pairwise nonisomorphic, non-right-angled Coxeter groups, obtained via the duplex construction, whose associated Coxeter systems have hypergraph index $h$.   Moreover, if $m$ is even and $n = \infty$ in the duplex construction, these Coxeter groups are strongly thick of order $h$ and have divergence polynomial of degree $h+1$.
\end{cor}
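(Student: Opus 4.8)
The plan is to run the duplex construction on a family of right-angled Coxeter systems for which Conjecture~\ref{conj:lower} is already known, and then to read off the conclusions from Theorem~\ref{thm:hyp_thick_div_Intro}. Levcovitz~\cite{levcovitz-RACG-2020} proved Conjecture~\ref{conj:lower} for all right-angled Coxeter systems; in particular, the right-angled Coxeter groups constructed by Dani and Thomas~\cite{dani-thomas-div} with divergence polynomial of degree exactly $h+1$ are strongly thick of order exactly $h$ and have hypergraph index $h$. Together with the case $h=0$ — for which one may take $(W,S)$ to be a direct product of $k\ge 2$ infinite dihedral groups, which has linear divergence by Corollary~\ref{cor:linearIntro} and hypergraph index $0$ by Theorem~\ref{thm:hyp_thick_div_Intro}(1) — this gives, for every integer $h\ge 0$, infinitely many right-angled Coxeter systems $(W,S)$ with $h(W,S)=h$ and divergence $\asymp r^{h+1}$, and one may moreover arrange that $|S|$ takes infinitely many distinct values across all members and all $h$.

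To prove the first assertion, apply the duplex construction to each such $(W,S)$, with the parameter $m$ chosen to be at least $3$, obtaining a Coxeter system $(\widehat W,\widehat S)$ with $|\widehat S|=2|S|$. By the defining property of the duplex construction, $h(\widehat W,\widehat S)=h(W,S)=h$. Since the defining graph of $(\widehat W,\widehat S)$ has an edge labelled $m\ge 3$, the group $\widehat W$ contains a dihedral subgroup of order $2m$ and hence a torsion element of order $>2$; as every torsion element of a right-angled Coxeter group has order $2$, the group $\widehat W$ is not right-angled. Finally, the ranks $2|S|$ of the mod-$2$ abelianisations of the $\widehat W$ are pairwise distinct by our choice of inputs, so these groups are pairwise nonisomorphic. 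This proves the first assertion.

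For the ``moreover'' clause, run the construction with $m$ even and $n=\infty$, so that every edge label of the defining graph of $(\widehat W,\widehat S)$ lies in $\{2,m,\infty\}$. We first record that $\widehat W$ is $1$-ended: this may be verified from the duplex diagram using the criterion of~\cite[Theorem~8.7.2]{davis-book} (or with the code~\cite{hindex_gap}), or deduced from $1$-endedness of $W$ together with connectivity of the nerve of $(\widehat W,\widehat S)$. Since $h(\widehat W,\widehat S)=h<\infty$, Theorem~\ref{thm:hyp_thick_div_Intro}(3) shows that $\widehat W$ is strongly thick of order at most $h$ and that $\mathrm{div}(\widehat W)\preceq r^{h+1}$. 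For the matching lower bound, the key point is that with $n=\infty$ the full subgraph of the duplex diagram on the ``original'' half $S$ of $\widehat S$ is exactly the defining graph of $(W,S)$, so that $W$ is a special subgroup of $\widehat W$. Special subgroups of Coxeter groups are convex in the Davis complex~\cite{davis-book}, and the divergence of a Coxeter group is bounded below by that of any of its special subgroups (a standard fact used throughout~\cite{dani-thomas-div,levcovitz-RACG,levcovitz-RACG-2020}); hence $\mathrm{div}(\widehat W)\succeq\mathrm{div}(W)\asymp r^{h+1}$. Combining the two bounds, $\mathrm{div}(\widehat W)\asymp r^{h+1}$ is polynomial of degree exactly $h+1$. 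If $\widehat W$ were strongly thick of order $k<h$, then~\cite[Corollary~4.17]{behrstock-drutu} would force $\mathrm{div}(\widehat W)\preceq r^{k+1}\prec r^{h+1}$, a contradiction; so $\widehat W$ is strongly thick of order exactly $h$, completing the proof.

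The only steps above that are not mere assembly of cited results are the two that depend on the internal workings of the duplex construction: the $1$-endedness of $\widehat W$ when $m$ is even and $n=\infty$, and — the main obstacle — the identification of $(W,S)$ with a special subgroup of $(\widehat W,\widehat S)$, which is precisely what lets the divergence lower bound transfer. Should this last identification not hold verbatim for every member of the family, the fallback is to exhibit instead a convex reflection subgroup of $\widehat W$ with divergence $\succeq r^{h+1}$, or to adapt Levcovitz's periodic-geodesic construction from~\cite{levcovitz-RACG-2020} to the duplex diagram directly; in all approaches, this is where the work concentrates.
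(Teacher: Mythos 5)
The first assertion is handled along a workable but underspecified route: you fix the duplex parameter (say $m\geq 3$) and vary the input right-angled system, whereas the paper fixes the input graph $\Gamma_d$ for each $h=d-1$ and varies $(m,n)$, invoking Proposition~\ref{prop:distinct} for pairwise nonisomorphism. Your route needs, for every $h\geq 1$, infinitely many right-angled Coxeter systems with hypergraph index $h$; the Dani--Thomas family supplies one such graph per $h$, so you would need to say how to produce the rest (e.g.\ by repeatedly taking a direct product with a finite special subgroup, which preserves hypergraph index by Lemma~\ref{lem:reducible}(2)). Also, your mod-$2$ abelianisation count of $2|S|$ is only valid when all edge labels are even or $\infty$; for $m$ odd the lifts $u',u''$ are identified in the abelianisation, so the rank is $|S|$, not $2|S|$. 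These are fixable, but as written the first assertion has gaps.

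The serious error is in the ``moreover'' clause. Your lower bound rests on the assertion that the divergence of a Coxeter group is bounded below by that of any of its special subgroups. This is false. Convexity of a special subgroup $W_T$ of $W$ gives you that geodesics of $W_T$ are geodesics of $W$, but the divergence of $W_T$ is computed with avoidant paths confined to $W_T$, while the divergence of $W$ allows avoidant paths to leave $W_T$ and take shortcuts through the ambient Cayley graph. A concrete counterexample: let $\Gamma$ be a $5$-cycle, so $W_\Gamma$ is a hyperbolic right-angled Coxeter group with exponential divergence, and let $\Gamma'$ be the join of $\Gamma$ with two nonadjacent vertices, so $W_{\Gamma'}\cong W_\Gamma\times D_\infty$ has linear divergence by Corollary~\ref{cor:linearIntro}; yet $W_\Gamma$ is a (convex) special subgroup of $W_{\Gamma'}$. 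Thus the inequality you need, $\mathrm{div}(\widehat W)\succeq\mathrm{div}(W)$, does not follow from $W$ being a special subgroup of $\widehat W$, and it is not a standard fact from~\cite{dani-thomas-div,levcovitz-RACG,levcovitz-RACG-2020}: those papers lower-bound divergence by exhibiting explicit geodesics and bounding the length of avoidant paths in the \emph{ambient} Cayley graph. Your first fallback (replacing $W$ by a convex reflection subgroup) fails for the same reason. Your second fallback --- adapting the periodic-geodesic/wall-crossing argument to the duplex diagram directly --- is in fact the substance of the paper's proof, which does this via the quite technical Lemma~\ref{lem:divd-lower}; you correctly identify this as ``where the work concentrates'' but do not carry it out. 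In the paper's argument, incidentally, the isometric inclusion of Cayley graphs is used in the opposite direction from yours: a lower bound on $r$-avoidant distance in the \emph{larger} Davis complex $\Cayley{d+3}$ automatically bounds below the $r$-avoidant distance between the same pair of vertices inside the smaller $\Cayley{d}$, precisely because there are fewer avoidant paths in the subgraph, not more. So the lower bound in the ``moreover'' clause is not established by your argument.
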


\noindent In particular, Corollary~\ref{cor:duplexIntro} implies that there are families of non-right-angled Coxeter groups with divergence polynomial of any integer degree. An explicit construction of such groups is given in Proposition~\ref{prop:divd}.

The divergence statement in Corollary~\ref{cor:duplexIntro} is obtained by combining the upper bound provided by Theorem~\ref{thm:hyp_thick_div_Intro}(3) with a lower bound which we establish for these specific examples.  Our proof of the lower bound is similar in outline to that of the analogous result in~\cite{dani-thomas-div}, but the details are considerably more technical.  We restrict to $m$ even to ensure that each wall in the associated Davis complex has a well-defined type in $S$; the types of walls were used extensively in~\cite{dani-thomas-div,levcovitz-div,levcovitz-RACG,levcovitz-RACG-2020}.  In the right-angled case, two walls of the same type never intersect, but if $m \ge 4$ is even then distinct walls of the same type can intersect, which introduces greater complexity to the arguments.  We note that Levcovitz in~\cite{levcovitz-div} obtained lower bounds on divergence for not necessarily right-angled Coxeter groups such that, among other conditions, some vertex in the defining graph has a triangle-free neighbourhood. However in the defining graphs of duplex groups, every vertex is contained in a triangle, so the lower bounds from~\cite{levcovitz-div} cannot be applied.

We conclude by establishing a surprising upper bound on hypergraph index and hence divergence.   Recall that if a (topological) graph $\Delta$ has $v$ vertices, $e$ edges and~$k$ components, then its \emph{first Betti number} is given by $\betti\D = e-v+k$, and this number equals the rank of its first homology group.

\begin{thm}\label{thm:bettiIntro}
Let $(W,S)$ be a Coxeter system with Dynkin diagram $\Delta = \Delta(W,S)$ and hypergraph index~$h = h(W,S)$. If $h$ is finite then $h \leq \betti\D + 1$. 
\end{thm}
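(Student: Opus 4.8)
\noindent\textit{Proof plan.}
For $T\subseteq S$ write $\Delta[T]$ for the full subgraph of $\Delta$ spanned by $T$, and put $\beta(T):=\min\{\,j : T\in\Lambda_j\,\}$, so that $h(W,S)=\beta(S)$ whenever $h$ is finite. The plan is to prove, by induction on $\beta(T)$ with an auxiliary induction on $|T|$, that
\[
\beta(T)\ \le\ \betti{\Delta[T]}+1\qquad\text{for every }T\text{ occurring in the construction,}
\]
and then to apply this with $T=S$. Two standard reductions let us assume $\Delta[T]$ is connected with no isolated vertex: a generator of $T$ that commutes with all of $T$ generates a finite direct factor of $W_T$ and may be deleted without changing either side; and if $\Delta[T]$ is disconnected then $W_T$ is the direct product of the special subgroups on the components of $\Delta[T]$, in which case one checks from the construction that $\beta(T)=0$ when two of these factors are infinite --- leaving nothing to prove --- and otherwise $\beta(T)=\beta(C^\ast)$ for the unique infinite component $C^\ast$, reducing the claim for $T$ to that for $C^\ast$ since $\betti{\Delta[T]}\ge\betti{\Delta[C^\ast]}$. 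The base case $\beta(T)\le 1$ is vacuous because $\betti{\Delta[T]}\ge 0$.

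For the inductive step let $\beta(T)=j\ge 2$ with $\Delta[T]$ connected. By the construction $T=\bigcup_l T_l$ with each $T_l\in\Lambda_{j-1}$ and with the \emph{overlap graph} --- vertices the $T_l$, an edge between $T_l$ and $T_{l'}$ whenever $W_{T_l\cap T_{l'}}$ is infinite --- connected. By minimality of $j$ some piece $U:=T_{l_0}$ has $\beta(U)=j-1$ (otherwise the same decomposition would already place $T$ in $\Lambda_{j-1}$), so $\betti{\Delta[U]}\ge j-2$ by the inductive hypothesis; since deleting a vertex of a graph changes its first Betti number by a non-positive amount, $\betti{\Delta[T]}\ge\betti{\Delta[U]}\ge j-2$ already, and it remains to show that the merge contributes one further independent cycle, i.e.\ $\betti{\Delta[T]}\ge j-1$. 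The mechanism is the classical fact that a special subgroup $W_C$ is infinite exactly when some connected component of $\Delta[C]$ is not of finite type, and that such a component either contains a cycle of $\Delta$ or is a tree carrying an affine or indefinite Coxeter labelling (the rank-$\ge 3$ affine trees being precisely types $\widetilde B,\widetilde C,\widetilde D,\widetilde E,\widetilde F,\widetilde G$, while $\widetilde A_n$, $n\ge 2$, is a cycle). Following a path in the overlap graph from $U$ out to a piece containing a vertex $x\notin U$ and combining the subdiagrams forced by the infinite overlaps with the edges attaching $x$, one produces an independent cycle not already present in $\Delta[U]$ --- and so closes the induction --- in every situation except when all of these overlaps are trees.

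The main obstacle is precisely that exceptional situation, in which every infinite overlap along the chain is \emph{acyclic} in $\Delta$ (an $\infty$-edge, or an affine or indefinite tree), so that a priori the pieces could be glued together forest-like with no gain in $\betti$. Here one must show that such a merge cannot genuinely raise the hypergraph level. The key rigidity is that an affine Coxeter system is \emph{minimal infinite}: every proper subsystem is finite. Hence if $W_{T_l\cap T_{l'}}$ is infinite through an affine subdiagram, that entire subdiagram lies inside $T_l\cap T_{l'}$ and the two pieces coincide along it; combined with the description of thick-of-order-$0$ special subgroups --- after stripping a finite direct factor: trivial, a direct product of two infinite special subgroups, or an irreducible affine one of rank $\ge 3$ (Corollary~\ref{cor:linearIntro} and Theorem~\ref{thm:hyp_thick_div_Intro}(1)) --- this forces a chain of purely acyclic overlaps either to collapse, so that $T\in\Lambda_{j-1}$ against $\beta(T)=j$, or to ``close up'' and produce a cycle after all. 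The extreme case of the analysis is the statement $\betti{\Delta}=0\Rightarrow h\le 1$, i.e.\ that a $1$-ended Coxeter group with acyclic Dynkin diagram and finite hypergraph index has $h\le 1$; I would prove this directly, combining the classification of affine Coxeter diagrams with Caprace's characterisation of relatively hyperbolic Coxeter groups~\cite{caprace} and the inductive description of strongly algebraically thick Coxeter systems from the Appendix to~\cite{behrstock-hagen-sisto-caprace}. With the acyclic case excluded, each level of the hypergraph index beyond the first consumes an independent cycle of $\Delta$, so $h(W,S)=\beta(S)\le\betti{\Delta}+1$.
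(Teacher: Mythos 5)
Your proposal takes a genuinely different route from the paper's. The paper inducts on $\betti{\D}$: with $h>2$ and $\betti\D = b\ge 1$, it first invokes Lemma~\ref{lem:hi=h-1} to find $T\subsetneq S$ with $h(W_T,T)=h-1$, and then argues that either $\betti{\D_T}<b$ (so the inductive hypothesis applies to $T$) or $\betti{\D_T}=b$, in which case $\D$ is $\D_T$ with trees attached, and Lemma~\ref{lem:add_trees} forces $h(W,S)\le h(W_T,T)=h-1$, a contradiction. You instead induct on the level $\beta(T)=\min\{j: T\in\Lambda_j\}$, trying to show that each merge in the $\Lambda$-hierarchy consumes an independent cycle. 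That would be a sharper, level-by-level statement if it worked, but as written the argument has a real gap.

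The gap is exactly the ``exceptional situation'' you yourself flag, in which every infinite overlap $T_l\cap T_{l'}$ in the chain is acyclic in $\D$. You assert that rigidity of irreducible affine diagrams, together with the classification of wide subsets, forces such a chain ``either to collapse, so that $T\in\Lambda_{j-1}$, or to close up and produce a cycle after all'' --- but this dichotomy is never established; it is precisely the content that needs a proof. Even in the non-exceptional case, the claim that following a path in the overlap graph out to a vertex $x\notin U$ ``produces an independent cycle not already present in $\Delta[U]$'' is not justified: a piece $T_l$ may contribute a cycle that already lies inside $\Delta[U]$, in which case $\betti{\Delta[T]}$ need not strictly exceed $\betti{\Delta[U]}$. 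In the paper the substantive step that handles exactly this situation is Lemma~\ref{lem:add_trees}, whose proof constructs a wide set $A\times(B\cup T_j)$ that absorbs the attached tree at level $0$ and propagates the absorption up the hierarchy via Lemma~\ref{lem:monotone}. Your extreme case $\betti\D=0\Rightarrow h\le 1$ is Proposition~\ref{prop:DynkinTree}, but that alone does not resolve the acyclic-overlap situation at higher levels.

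There is also a bookkeeping subtlety worth noting: your $\beta(T)$ is defined via membership in $\Lambda_j(S)$, whereas the hypergraph index of the subsystem $(W_T,T)$ is computed from $\Lambda_j(T)$, and these are not the same (the paper's Lemma~\ref{lem:monotone} gives only a one-sided comparison). The paper sidesteps this by working with a fixed principal subtree and proving Lemma~\ref{lem:hi=h-1} directly; if you pursue your induction, you will need an analogous mechanism for comparing levels in $\Lambda_\bullet(S)$ with intrinsic hypergraph indexes of subsets.
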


\noindent Combining this with  Theorem~\ref{thm:hyp_thick_div_Intro}(3) and (4), we obtain:
\begin{cor}\label{cor:bettiIntro}
Let $(W,S)$ be a Coxeter system with Dynkin diagram $\Delta$.  If $W$ is not relatively hyperbolic, then the divergence of $W$ is bounded above by a polynomial of degree $\betti{\Delta}+2$.  
\end{cor}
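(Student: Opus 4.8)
The plan is to trace how independent cycles of the Dynkin diagram $\Delta$ are ``consumed'' by successive levels of the hypergraph construction. I would first reduce to the case that $\Delta$ is connected: if $\Delta=\Delta_1\sqcup\dots\sqcup\Delta_k$ then $W=W_{S_1}\times\dots\times W_{S_k}$, and if two factors are infinite then $h=0$ and the inequality is immediate; otherwise all but one factor is finite, such finite factors do not affect the hypergraph index and may be discarded, and $\betti\Delta$ only decreases under this, so it suffices to treat connected $\Delta$. Then fix a spanning tree $\mathcal T$ of $\Delta$, so that $\Delta$ has exactly $\betti\Delta$ non-tree edges whose fundamental cycles form a basis of $H_1(\Delta)$; recall also that for any induced subdiagram $\Delta[T]\subseteq\Delta$ the inclusion induces an \emph{injection} $H_1(\Delta[T])\hookrightarrow H_1(\Delta)$, so $\betti{\Delta[T]}$ is monotone under passing to induced subdiagrams.

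Since $h$ is finite we have $S\in\Lambda_h\setminus\Lambda_{h-1}$, and unwinding the definition of the $\Lambda_i$ I would extract an \emph{efficient} descending chain $S=P_0\supseteq P_1\supseteq\dots\supseteq P_{h-1}\supseteq P_h$ with $P_d\in\Lambda_{h-d}$ and $P_d\notin\Lambda_{h-d-1}$ for $d\le h-1$, where for each such $d$ the set $P_d$ is the union of a ``linked'' family of elements of $\Lambda_{h-d-1}$, one of which is $P_{d+1}$. Such a chain exists because if every member of the family producing $P_d$ lay in $\Lambda_{h-d-2}$ then $P_d$ itself would lie in $\Lambda_{h-d-1}$, contradicting the choice of $P_d$. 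The heart of the argument is then the claim that for every $d\le h-2$ (i.e.\ at every merge in which the pieces being combined already have hypergraph level $\ge1$) one has $\betti{\Delta[P_d]}\ge\betti{\Delta[P_{d+1}]}+1$. Granting this, together with $\betti{\Delta[P_{h-1}]}\ge 0$, the telescoping estimate yields $\betti\Delta=\betti{\Delta[P_0]}\ge(h-1)+\betti{\Delta[P_{h-1}]}\ge h-1$, i.e.\ $h\le\betti\Delta+1$.

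To prove the claim I would show that a merge which genuinely raises the hypergraph level past the first must create a new independent cycle in the associated Dynkin subdiagram. If $\betti{\Delta[P_d]}=\betti{\Delta[P_{d+1}]}$, then by the injectivity above every circuit of $\Delta[P_d]$ already lies in $\Delta[P_{d+1}]$, so the vertices of $P_d\setminus P_{d+1}$ attach to $\Delta[P_{d+1}]$ and to one another in a forest-like way, with no $\Delta$-edge creating a new circuit across the members of the linked family that produces $P_d$. Tracking this with a Mayer--Vietoris / Euler-characteristic computation for induced subgraphs, and using the structure of thick-of-order-$0$ Coxeter systems from Corollary~\ref{cor:linearIntro} --- in particular that an irreducible affine Coxeter diagram has first Betti number at most $1$, being a tree except for the single cycle $\tilde A_n$ --- I would argue that such a forest-like union of elements of $\Lambda_{h-d-1}$ over infinite overlaps is again realized by a linked family of elements of $\Lambda_{h-d-2}$, so that $P_d\in\Lambda_{h-d-1}$, contradicting efficiency. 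The hypothesis $d\le h-2$ is essential: at the bottom the merged pieces are only thick of order $0$, and the ``free'' first step $\Lambda_0\to\Lambda_1$ is precisely what accounts for the $+1$ in the bound.

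The main obstacle is exactly this last step --- showing that a level-raising merge which does not increase the first Betti number cannot be genuine. It demands a careful reading of the definition of the $\Lambda_i$ and a correct treatment of the phenomena that make general Coxeter groups harder than right-angled ones: disconnected Dynkin subdiagrams (the direct-product thick-of-order-$0$ pieces), overlaps whose special subgroup is infinite although its diagram is a tree (an $\infty$-labelled edge, or a non-$\tilde A$ affine diagram), and the possibility of several new edges of $\Delta$ appearing at once. It may well be necessary to strengthen the inductive statement beyond the lossy inequality ``$\betti{\Delta[P_d]}\ge\betti{\Delta[P_{d+1}]}+1$'', for example by recording how the relevant cycles sit relative to the infinite overlaps used at each merge, so that the cycles created at distinct levels are manifestly linearly independent.
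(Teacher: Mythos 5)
Your proposal aims at the substantive result, Theorem~\ref{thm:bettiIntro} (that finite $h$ implies $h\le\betti\Delta+1$); the stated corollary itself is just a one-line combination of Theorem~\ref{thm:bettiIntro} with Theorem~\ref{thm:hyp_thick_div_Intro}(3),(4), which you implicitly use but never say.

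For Theorem~\ref{thm:bettiIntro}, there is a genuine gap, and you identify it yourself: the ``heart of the argument'' --- that for $d\le h-2$ each level-raising merge satisfies $\betti{\Delta[P_d]}\ge\betti{\Delta[P_{d+1}]}+1$ --- is never proved. Everything after ``Granting this\dots'' is fine, but ``granting this'' grants precisely the hard content. The final paragraph gestures at Mayer--Vietoris-style bookkeeping and acknowledges one ``may well'' need to strengthen the inductive statement, which is an admission that the argument is incomplete rather than a sketch of how to complete it. There are also two structural concerns your formulation glosses over. First, your $P_d$ lie in $\Lambda_{h-d}(S)$ but need not satisfy $h(W_{P_d},P_d)=h-d$ as Coxeter subsystems in their own right; the monotonicity Lemma~\ref{lem:monotone} only runs one way, so your ``efficient chain'' does not automatically track intrinsic hypergraph index the way the argument seems to assume (e.g.\ when you say at the bottom the merged pieces ``are only thick of order $0$''). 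Second, you would need the chain to be not merely efficient but compatible with the forest structure on $\cup_i\Lambda_i(S)$; an arbitrary element of $\Lambda_{h-d-1}$ feeding into $P_d$ need not restrict well to $(W_{P_{d+1}},P_{d+1})$.

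The paper's proof of Theorem~\ref{thm:bettiIntro} sidesteps this. It inducts on $\betti\Delta$ rather than along the $\Lambda$-chain, and the two lemmas that do the work are different from what you would need: Lemma~\ref{lem:hi=h-1} produces a \emph{subsystem} $T\subset S$ with $h(W_T,T)=h-1$ exactly (not merely an element of $\Lambda_{h-1}(S)$), and Lemma~\ref{lem:add_trees} shows that attaching pendant trees along single edges cannot raise the hypergraph index (given $h(W_T,T)\neq 0,\infty$). The inductive step then splits cleanly: either $\betti{\Delta_T}<\betti\Delta$ and induction applies, or $\betti{\Delta_T}=\betti\Delta$, in which case the complement of $\Delta_T$ must consist of trees attached by single edges (otherwise the Betti number would have dropped), and Lemma~\ref{lem:add_trees} forces $h\le h-1$, a contradiction. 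This avoids the need to show that each merge manufactures a new independent cycle, which is exactly the step you could not close. If you want to salvage your approach, the likeliest route is to replace your $P_d$ by the nested subsystems $T_i$ of Corollary~\ref{cor:spectrumIntro} and then prove something equivalent to Lemma~\ref{lem:add_trees} --- at which point you would be reproducing the paper's argument.
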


\noindent We do not know of any previous connection between large-scale geometry and the topology of Dynkin diagrams.  
 It follows from Corollary~\ref{cor:bettiIntro}
that for all integers $n \geq 2$, if $G$ is any non-relatively hyperbolic group with divergence
strictly greater than~$r^n$, then~$G$ is not quasi-isometric to any Coxeter group whose Dynkin diagram satisfies $\betti\D \leq n - 2$.

We prove Theorem~\ref{thm:bettiIntro} by induction on~$\betti\D$, and en route obtain the following nesting result:

\begin{cor}\label{cor:spectrumIntro}  Suppose $(W, S)$ has finite hypergraph index $h \geq 1$. Then there exist subsets $S \supset T_1 \supset T_2 \supset \dots \supset T_h$ such that the subsystem $(W_{T_i}, T_i)$ has hypergraph index $h-i$.\qed 
\end{cor}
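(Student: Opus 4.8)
The plan is to prove the statement by induction on $h$, reducing it to the following one-step claim: \emph{if $(W,S)$ has finite hypergraph index $h\ge 1$, then there is a proper subset $T\subsetneq S$ with $h(W_T,T)=h-1$}. Granting this, the corollary follows at once. Apply the claim to obtain $T_1\subsetneq S$ with $h(W_{T_1},T_1)=h-1$; if $h=1$ we are done, and if $h\ge 2$ we apply the inductive hypothesis to $(W_{T_1},T_1)$, which has finite hypergraph index $h-1\ge 1$, to extend the chain to $T_1\supset T_2\supset\dots\supset T_h$ with $h(W_{T_i},T_i)=(h-1)-(i-1)=h-i$ for all $i$. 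Each containment is automatically strict, since consecutive subsystems in the chain have different hypergraph indices.

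To prove the one-step claim I would first record the (purely formal) fact that the hypergraph index construction is \emph{local}: for every $T\subset S$ and every $i\ge 0$ one has $\Lambda_i(W_T,T)=\Lambda_i(W,S)\cap\cP(T)$. This is verified by induction on $i$: the case $i=0$ holds because membership in $\Lambda_0$ only tests whether a special subgroup is thick of order $0$, a property intrinsic to that subgroup; and for the inductive step one uses that the linking relation governing the passage from $\Lambda_{i-1}$ to $\Lambda_i$ depends only on the subsets being linked together with the Coxeter data they inherit, which is the same whether read off in $W$ or in $W_T$. In particular $h(W_T,T)=\min\{i: T\in\Lambda_i(W,S)\}$, so $T\in\Lambda_j(W,S)$ is equivalent to $h(W_T,T)\le j$.

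Now let $h=h(W,S)$, so $S\in\Lambda_h(W,S)\setminus\Lambda_{h-1}(W,S)$. By construction $S=\bigcup_{V\in C}V$ for some collection $C$ of elements of $\Lambda_{h-1}(W,S)$ that is ``linked'' in the sense used to build $\Lambda_h$; since $S\notin\Lambda_{h-1}(W,S)$, no single $V\in C$ equals $S$, so each $V\in C$ satisfies $V\subsetneq S$ and, by locality, $h(W_V,V)\le h-1$. I claim some $V\in C$ has $h(W_V,V)=h-1$, and taking $T:=V$ then finishes the one-step claim. Suppose not: then every $V\in C$ has $h(W_V,V)\le h-2$, hence $V\in\Lambda_{h-2}(W,S)$ by locality, so $C\subseteq\Lambda_{h-2}(W,S)$. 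Since $\Lambda_{h-2}(W,S)\subseteq\Lambda_{h-1}(W,S)$ and the linking relation on a fixed family of subsets does not depend on the stage of the construction, the family $C$ is already linked at stage $h-2$, and therefore $S=\bigcup_{V\in C}V\in\Lambda_{h-1}(W,S)$---contradicting $h(W,S)=h$.

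The main obstacle is precisely this last step: producing a piece of index \emph{exactly} $h-1$ rather than merely $\le h-1$. It forces one to know both that the $\Lambda$-construction restricts correctly to subsystems (the locality statement) and that the linking relation is monotone in the stage, so that the minimality defining $h$ can be played off against the structure of $\Lambda_h$. I would also note that this one-step claim is the engine behind the induction on $\betti\D$ that proves Theorem~\ref{thm:bettiIntro}: there one must in addition choose the subset $T$ so that $\betti{\Delta(W_T,T)}\le\betti\D-1$, after which $h-1\le\betti{\Delta(W_T,T)}+1\le\betti\D$ closes that induction; the nested subsystems appearing along the way are exactly the $T_i$ of the present corollary.
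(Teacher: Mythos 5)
Your overall strategy matches the paper's: reduce to the one-step claim that some proper subset $T\subsetneq S$ has $h(W_T,T)=h-1$ (this is the paper's Lemma~\ref{lem:hi=h-1}), then iterate. The gap is in the proof of the one-step claim, specifically the ``locality'' statement $\Lambda_i(W_T,T)=\Lambda_i(W,S)\cap\cP(T)$, which is \emph{false}, not ``purely formal.''

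It fails already at $i=0$. Recall $\Lambda_0(S)=\Omega(S)\cup\Psi(S)$, and both $\Omega$ and $\Psi$ are defined by \emph{maximality} conditions inside the ambient generating set. So a set can be maximal among wide-type subsets of $T$ without being maximal in $S$: if $A\times B$ is a wide subset of $T$, and some $s\in S\setminus T$ commutes with $A$ and makes $B\cup\{s\}$ nonspherical, then $A\times(B\cup\{s\})$ properly contains $A\times B$, so $A\times B\notin\Omega(S)$ even though $A\times B\in\Omega(T)$. Your justification for the base case (``membership in $\Lambda_0$ only tests whether a special subgroup is thick of order $0$, a property intrinsic to that subgroup'') is based on the informal description in the introduction; the actual Definitions~\ref{defn:wide} and~\ref{defn:slab} are not intrinsic in this sense, and the counterexample just described refutes the identity.

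The downstream consequence you draw, namely ``$T\in\Lambda_j(W,S)$ is equivalent to $h(W_T,T)\le j$,'' is also false, and this is where the argument actually breaks. Take any slab subset $T=A\times K\in\Psi(S)\subseteq\Lambda_0(S)$. Every nonspherical subset of $T$ must contain $A$ (since $K$ is spherical and $A$ is minimal nonspherical), so $T$ has no disjoint pair of nonspherical commuting subsets and no irreducible affine subsystem of rank $\ge 3$; hence $\Omega(T)=\varnothing$ and $h(W_T,T)=\infty$, while $T\in\Lambda_0(S)$. In particular, an equivalence class $C$ in $\Lambda_{h-1}(S)$ with union $S$ may well contain such slab elements $V$ with $h(W_V,V)=\infty$, so the step ``each $V\in C$ satisfies $h(W_V,V)\le h-1$'' is wrong, and your contradiction argument (``if not all $V$ have index exactly $h-1$ then all have index $\le h-2$'') silently drops the case $h(W_V,V)>h-1$. (Your parenthetical ``$\Lambda_{h-2}(W,S)\subseteq\Lambda_{h-1}(W,S)$'' is also false in general, since passing from $\Lambda_i$ to $\Lambda_{i+1}$ replaces elements by unions of equivalence classes, but that point is inessential to your argument.)

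What the paper actually has is only one-sided monotonicity: Lemma~\ref{lem:monotone}(2a) says that $Y\in\Lambda_i(T)$ implies $Y$ is \emph{contained in} some $Y'\in\Lambda_i(S)$, not that $Y\in\Lambda_i(S)$; and there is no converse in general. The paper therefore cannot pick an arbitrary $V\in C$: it first proves (by a compactness-type argument over the whole class $C$ using Lemma~\ref{lem:monotone}(1) and (2)) that some $T\in C$ has the property that $T\subseteq U\in\Lambda_i(S)$ forces $i\ge h-1$, and then, for that specific $T$, shows $T\in\Lambda_{h-1}(T)$ by an induction along the ``principal subtree'' (using that elements of $\widetilde\Lambda_i(S)$ feeding into $T$ are \emph{contained in} elements of $\Lambda_i(T)$ --- again one-sided), and finally rules out $h(W_T,T)<h-1$ by appealing to the maximality property of $T$ via Lemma~\ref{lem:monotone}(2a). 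The locality you invoke would make all of this a one-liner, but it does not hold.
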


\noindent If Conjecture~\ref{conj:lower} holds, this implies that any Coxeter group $W$ with divergence polynomial of integer degree $n \geq 2$ has nested special subgroups with  divergence $r^{k}$, for all integers $1 \leq k \leq n$.

We do not know whether the upper bound on hypergraph index provided by Theorem~\ref{thm:bettiIntro} is sharp, although we provide examples with $\betti\D  = 0,1,2,3,4,5$ and hypergraph index equal to $\betti\D + 1$ (see Figure~\ref{fig:GenThetaGraphs}). We found our examples for $\betti\D = 2,3,4,5$ using our \textsf{GAP} code to compute hypergraph index~
\cite{hindex_gap}.  
For $\D$ a tree (respectively, a cycle) we also give examples which realise all hypergraph indexes in the spectrum $\{0,1,\infty\}$ (respectively, $\{0,1,2,\infty\}$).  Hence we obtain the interesting fact that Coxeter systems whose Dynkin diagram is a tree form a family whose divergence takes on the same limited values as in the families of groups for which divergence was first studied:

\begin{cor}\label{cor:DynkinTreeDivIntro}  If the Dynkin diagram of $(W,S)$ is a tree and $W$ is $1$-ended, then $W$ has divergence linear, quadratic or exponential only.  Moreover, each of these possibilities is realised.
\end{cor}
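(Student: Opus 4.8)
The plan is to combine Theorem~\ref{thm:bettiIntro} with the trichotomy already available from the other results in the introduction, and then to argue separately that each of the three listed values actually occurs. First suppose the Dynkin diagram $\D = \D(W,S)$ is a tree. Then $\betti\D = 0$, since a tree has $e = v - 1$ and $k = 1$, so $e - v + k = 0$. If $W$ is relatively hyperbolic, then by work of Caprace its divergence is exponential (a $1$-ended group that is hyperbolic relative to proper special subgroups has exponential divergence). If $W$ is not relatively hyperbolic, then $h = h(W,S)$ is finite by Theorem~\ref{thm:hyp_thick_div_Intro}(4), and Theorem~\ref{thm:bettiIntro} gives $h \le \betti\D + 1 = 1$. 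So $h \in \{0,1\}$, and Theorem~\ref{thm:hyp_thick_div_Intro}(1) and (2) show that the divergence of $W$ is linear when $h=0$ and quadratic when $h=1$. This establishes that divergence can only be linear, quadratic or exponential.

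For the "moreover" clause, I would exhibit one example of each type with tree-shaped Dynkin diagram. For linear divergence: take $(W,S) = (W_1, S_1) \times (W_2, S_2)$ with $W_1, W_2$ both infinite dihedral (or one of them finite and the other an irreducible affine Coxeter system of rank $\ge 3$, e.g.\ $\widetilde{A}_n$); by Corollary~\ref{cor:linearIntro} this has linear divergence, and one checks the Dynkin diagram is a tree (for a product of two edges, it is a disjoint union of two edges—if one insists on a connected tree one can instead use an affine system of type $\widetilde{A}_n$ with $n \geq 3$, whose Dynkin diagram is a path). For quadratic divergence: exhibit a Coxeter system whose Dynkin diagram is a path or a star, which is not relatively hyperbolic and is not in the linear case of Corollary~\ref{cor:linearIntro}; the point is then that its hypergraph index must equal $1$ by the argument above, so divergence is quadratic by Theorem~\ref{thm:hyp_thick_div_Intro}(2). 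A concrete candidate is a path on three vertices with suitable finite labels $\ge 4$, arranged so that $W$ is $1$-ended, not affine, and strongly thick of order exactly~$1$. For exponential divergence: take any Coxeter system whose Dynkin diagram is a tree and which is relatively hyperbolic (equivalently, by Theorem~\ref{thm:hyp_thick_div_Intro}(4), has $h = \infty$)—for instance a free product of dihedral groups, such as $W = \la a, b, c \mid a^2, b^2, c^2, (ab)^3, (bc)^3 \ra$ with the middle vertex omitted, i.e.\ a path with one infinite-order "edge" realized as absence of a relation; more cleanly, take $\D$ to be a path on three vertices with the two edges labelled so that the two dihedral special subgroups are the peripherals of a relatively hyperbolic structure. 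This relies on Caprace's characterisation of relative hyperbolicity for Coxeter groups.

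The main obstacle, and the only part requiring genuine care, is producing a clean example with quadratic divergence whose Dynkin diagram is a tree and verifying that it is neither in the linear case nor relatively hyperbolic. The upper bound $h \le 1$ is automatic from $\betti\D = 0$, so quadratic divergence follows the moment one rules out $h = 0$ (via Corollary~\ref{cor:linearIntro}) and $h = \infty$ (via Caprace's criterion, checkable directly from the labels); the accompanying \textsf{GAP} code~\cite{hindex_gap} can be used to confirm $h(W,S) = 1$ on the chosen example. Assembling the three examples and checking $1$-endedness via~\cite[Theorem~8.7.2]{davis-book} in each case is then routine.
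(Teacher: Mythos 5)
Your trichotomy argument is correct and essentially matches the paper's: the paper derives it from Proposition~\ref{prop:DynkinTree}, which is in fact the $\betti\D = 0$ base case of Theorem~\ref{thm:bettiIntro}, so the two routes are the same in substance. The gaps are in the "moreover" clause. First, a minor one: $\widetilde A_n$ for $n \geq 2$ has Dynkin diagram a cycle, not a path (see Figure~\ref{fig:affine}), so it cannot serve as your tree example for linear divergence. Second, and more seriously, your proposed quadratic example, a path on three vertices with finite labels $\geq 4$, cannot exist. If the Dynkin diagram of a rank-$3$ Coxeter system is a path $s_1 - s_2 - s_3$ with labels $p = m_{12}$ and $q = m_{23}$ finite (and $m_{13}=2$), then the system is spherical if $1/p + 1/q > 1/2$, affine $\widetilde C_2$ (linear divergence) if $1/p + 1/q = 1/2$, and Lann\'er hyperbolic (hence word-hyperbolic, hence exponential divergence) if $1/p + 1/q < 1/2$. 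If instead one of $p,q$ is $\infty$, Moussong's criterion (Theorem~\ref{thm:Moussong}) again gives hyperbolicity. So no rank-$3$ Coxeter system with tree Dynkin diagram has quadratic divergence, and your candidate necessarily fails.

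The paper's resolution is Caprace's family $\Delta_n$: the path on $n$ vertices with every edge labelled $4$. One gets $\widetilde C_2$ (linear) at $n=3$, a relatively hyperbolic group (exponential) for $4 \leq n \leq 7$, and hypergraph index $1$ (quadratic) only once $n \geq 8$. The key fact you are missing is that the quadratic tree examples require much higher rank; once you have $\Delta_8$ in hand, verifying that it is neither wide (Corollary~\ref{cor:linearIntro}) nor relatively hyperbolic (Theorem~\ref{thm:relHypCoxeter}) pins down $h=1$ by your Betti-number bound, exactly as you outlined.
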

 
\subsection*{Organisation of the paper}  We give background on divergence, thickness and Coxeter systems in Section~\ref{sec:background}.  In Section~\ref{sec:linearQuadratic} we prove Theorem~\ref{thm:linearQuadraticIntro} and Corollaries~\ref{cor:linearIntro} and~\ref{cor:superlinear}, which concern linear and quadratic divergence.  Section~\ref{sec:hypergraph} gives our definition of hypergraph index and establishes Theorem~\ref{thm:hyp_thick_div_Intro}.  We introduce the duplex construction and prove Corollary~\ref{cor:duplexIntro} in Section~\ref{sec:duplex}.  Finally, we prove Theorem~\ref{thm:bettiIntro} along with Corollaries~\ref{cor:spectrumIntro} and~\ref{cor:DynkinTreeDivIntro} in Section~\ref{sec:Betti}.

\subsection*{Acknowledgements} We thank Mark Hagen and Ivan Levcovitz for helpful discussions, and an anonymous referee for extremely careful reading and thoughtful suggestions.  We are grateful to the University of Sydney, the School of Mathematics at the University of Sydney and the University of Sydney Mathematical Research Institute for supporting a visit by Ignat Soroko, who also acknowledges support from the AMS--Simons travel grant. Yusra Naqvi and this research were supported in part by ARC Grant DP180102437. Pallavi Dani was supported by NSF Grant No.~DMS--1812061.

\section{Background}\label{sec:background}

This section gives brief background.  We discuss divergence in Section~\ref{sec:div}, then in Section~\ref{sec:tightThick} recall relevant material concerning thickness and divergence from~\cite{behrstock-drutu}.   Section~\ref{sec:CoxeterDavis} gives some background on Coxeter groups, and Section~\ref{sec:hypCoxeter} recalls characterisations of (relative) hyperbolicity and thickness for Coxeter groups.

\subsection{Divergence}\label{sec:div}

We now recall Gersten's definition of divergence from~\cite{gersten-quadratic}. 

Let $(X,d)$ be a $1$-ended geodesic metric space.   For $p \in X$, write $S(p,r)$ and $B(p,r)$ for the sphere and open ball of radius $r$ about $p$, respectively.  A path in $X$ is  \emph{$(p,r)$-avoidant} if it lies in 
$X \setminus B(p,r)$.   Given  $x,y \in X \setminus B(p,r)$, the \emph{$(p,r)$-avoidant distance} $d^{\mathrm{av}}_{p,r}(x,y)$
between them is the infimum of the lengths of all $(p,r)$-avoidant paths from $x$ to $y$.  

Now fix a basepoint $e\in X$.  For each $0 <\rho \le 1$, let 
\[
\delta_{\rho}(r) = \sup_{x, y \in S(e,r)} d^{\mathrm{av}}_{e,\rho r}(x,y).
\]
Then the \emph{divergence} of $X$ is defined to be the resulting collection of functions 
\[
\div{X} = \{\delta_{\rho}
\mid 0 < \rho
\le 1 \}.
\]  

Let $(W,S)$ be a Coxeter system (with $S$ finite) such that $W$ is $1$-ended.  Then the Cayley graph of $W$ with respect to $S$, denoted $\cC(W,S)$, has the geodesic extension property, meaning that any finite geodesic segment can be extended to an infinite geodesic ray (see, for example, Lemmas 4.7.2 and 4.7.3 of~\cite{davis-book}).  It is not hard to show that if the metric space $X$ has the geodesic extension property, then 
$\delta_{\rho} \simeq \delta_{1}$ for all $0 < \rho \le 1$, 
 where $\simeq$ is the equivalence relation on functions which is defined as follows. For arbitrary functions $f,g\colon[0,\infty)\to \R$ we have $f\simeq g$ if and only if $f\preceq g$ and $g\preceq f$, where the relation $\preceq$ is defined as:
 \[
 f \preceq g \iff \; \exists \; A,B>0, C,D,E\ge0 \text{ such that } f(r) \le Ag(Br+C) + Dr+E \text{ for all } r\ge0.  
 \]
Thus we will think of $\div{X}$ as a function of $r$, defining it to be equal to $\delta_1(r)$. Up to the equivalence relation $\simeq$, the divergence $\div{X}$ is a quasi-isometry invariant of the space $X$, which is independent of the chosen basepoint (see~\cite[Prop.\,2.1]{gersten-quadratic}).  Thus for a Coxeter system $(W,S)$ as above, we define the divergence of the group $W$ to be the divergence of the Cayley graph $\cC(W,S)$. 

We will also need the notion of the divergence of a geodesic. Let $\gamma\colon\R\to X$ be a complete geodesic in a $1$-ended geodesic metric space $X$. We define $\div{\gamma}(r)$ to be the distance between points $\gamma(r)$ and $\gamma(-r)$ in $X\setminus B(\gamma(0),r)$ with respect to the path metric, and we will say that a path between $\gamma(r)$ and $\gamma(-r)$ is  \emph{$r$-avoidant} if it lies in $X \setminus B(\gamma(0),r)$.  
The $\simeq$ equivalence class of $\div{\gamma}(r)$ is called the \textit{divergence of the geodesic $\gamma$}, and it serves as a lower bound for the divergence $\div{X}(r)$ of the space $X$.

We will say that the divergence $\div{X}(r)$ or $\div{\gamma}(r)$ is \emph{linear} if it is $\simeq$ equivalent to $r$,  \emph{quadratic} if it is $\simeq$  equivalent to $r^2$, and so on. We will also say that the divergence is \emph{superlinear} if it is $\succeq r$ and $\not\simeq r$, and that it is \emph{subquadratic} if it is $\preceq r^2$ and $\not\simeq r^2$.  Finally, we say that the divergence is \emph{exponential} if it is $\simeq$ equivalent to $e^r$.

\subsection{Tight networks, thickness and divergence}\label{sec:tightThick}

In our proofs we will make careful use of several definitions and results from the work of Behrstock and Dru\c{t}u~\cite{behrstock-drutu}.  We recall these ingredients below.  Note that, unlike~\cite{levcovitz-RACG, levcovitz-RACG-2020}, we have chosen to retain the adjective ``strong" when referring to (algebraic) thickness.

We first recall some definitions related to networks of subspaces and of subgroups.  Given $C,D \geq 0$ and a subset $L$ of a metric space $X$, write $\cN_C(L)$ for the open  $C$-neighbourhood of $L$.  We say $L$ is \emph{$C$-path connected} if any two points in $L$ can be connected by a path in $\cN_C(L)$, and \emph{$(C,D)$-quasiconvex} if any two points in $L$ can be connected in $\cN_C(L)$ by a $(D,D)$-quasigeodesic; if $C = D$ then $L$ is \emph{$C$-quasiconvex}.  

\begin{definition}[tight network of subspaces, Definition~4.1 of~\cite{behrstock-drutu}]\label{defn:tight}
Let $X$ be a metric space, $C, D \geq 0$, and $\cL$ a collection of subsets of $X$.  We say that $X$ is a \emph{$(C,D)$-tight network with respect to $\cL$} if every $L \in \cL$, with the induced metric, is $(C,D)$-quasiconvex, $X = \cup_{L \in \cL} N_C(L)$, and  for any two $L, L' \in \cL$ and any $x \in X$ such that $B(x,3C)$ intersects both $L$ and $L'$, there is a finite sequence
$L = L_1,L_2,\dots,L_{n-1},L_n = L'$
of elements of $\cL$ such that $n \leq D$ and for all $1 \leq i < n$, the intersection $\cN_C(L_i) \cap \cN_C(L_{i+1})$ has infinite diameter, is $D$-path connected, and intersects $B(x,D)$.  We say that $X$ is a \emph{tight network with respect to $\cL$} if it is a $(C,D)$-tight network with respect to $\cL$ for some $C, D \geq 0$.
\end{definition}

\begin{definition}[tight algebraic network of subgroups, Definition~4.2 of~\cite{behrstock-drutu}]\label{defn:tightAlgebraic}
Let $G$ be a finitely generated group, $M \geq 0$ and $\cH$ a collection of subgroups of $G$.  We say that $G$ is an \emph{$M$-tight algebraic network with respect to $\cH$} if the elements of $\cH$ are $M$-quasiconvex subgroups whose union generates a finite-index subgroup of $G$, and for any two $H, H' \in \cH$ there exists a finite sequence $H = H_1,\dots,H_n = H'$ of subgroups in $\cH$ such that for all $1 \leq i < n$, the intersection $H_i \cap H_{i+1}$ is infinite and $M$-path connected. We say that $G$ is a \emph{tight algebraic network with respect to $\cH$} if it is an $M$-tight algebraic network with respect to $\cH$ for some $M \geq 0$.
\end{definition}

The following result connects the two previous definitions.

\begin{prop}[Proposition~4.3 of~\cite{behrstock-drutu}]\label{prop:tight}
Let $G$ be a finitely generated group with $\cH$ a collection of subgroups forming a tight algebraic network inside $G$, and $G_1$ the finite index subgroup of $G$ generated by the elements of $\cH$.  Then $G$ is a tight network with respect to the collection of left cosets 
\[
\pushQED{\qed}
\cL = \{ g H \mid g \in G_1, H \in \cH \}.\qedhere
\popQED
\]
\end{prop}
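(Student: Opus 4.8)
The plan is to verify the three conditions of Definition~\ref{defn:tight} directly, for $X$ a Cayley graph of $G$ with respect to a finite generating set, with word metric $d$ and length $|\cdot|$; the constants $C,D\ge 0$ will be pinned down along the way. Throughout I use that left translation by any $g\in G$ is an isometry of $X$, together with two preliminary facts. (i)~Since $\bigcup_{H\in\cH}H$ generates the finitely generated, finite-index subgroup $G_1$, we may fix a \emph{finite} set $\Sigma\subseteq\bigcup_{H\in\cH}H$ generating $G_1$; set $R=\max_{\sigma\in\Sigma}|\sigma|$, and, as finite-index subgroups are undistorted, fix $\lambda,c$ with $|w|_\Sigma\le\lambda|w|+c$ for all $w\in G_1$. (ii)~(Coarse intersection.) For subgroups $H,K\le G$ and any $C$, $\cN_C(H)\cap\cN_C(K)\subseteq\cN_{C'}(H\cap K)$ for some $C'=C'(C)$: for $z$ in the left side, choose $h\in H$, $k\in K$ within $C$ of $z$, so $u:=h^{-1}k$ lies in the finite set $B(1,2C)\cap H^{-1}K$; fixing, for each such $u$, a decomposition $u=h_u^{-1}k_u$ with $h_u\in H$, $k_u\in K$, one gets $hh_u^{-1}=kk_u^{-1}\in H\cap K$, whence $d(z,H\cap K)\le C+\max_u|h_u|$.

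The quasiconvexity and covering conditions are quick. Each $L=gH\in\cL$ is a left translate of the $M$-quasiconvex subgroup $H$, hence is $M$-quasiconvex, so $(C,D)$-quasiconvex once $C,D\ge M$. And $\bigcup_{L\in\cL}L=G_1$ (each $g\in G_1$ lies in $gH$, and $gH\subseteq G_1$ when $g\in G_1$), while left coset representatives $t_1,\dots,t_k$ of $G_1$ in $G$ give $G=\cN_{C_0}(G_1)$ with $C_0=\max_i|t_i|$; so $X=\bigcup_{L\in\cL}\cN_C(L)$ once $C\ge C_0$. Fix $C=\max(M,C_0)$.

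The network condition is the heart of the matter. Given $L,L'\in\cL$ and $x$ with $B(x,3C)$ meeting both, pick $p\in L\cap B(x,3C)$, $q\in L'\cap B(x,3C)$; then $L=pH$, $L'=qH'$ with $H,H'\in\cH$, $p,q\in G_1$, and $w:=p^{-1}q\in G_1$ has $|w|\le 6C$. Write $w=\sigma_1\cdots\sigma_r$ with $\sigma_i\in\Sigma$, so $r=|w|_\Sigma\le 6\lambda C+c$; let $K_i\in\cH$ contain $\sigma_i$ and put $w_m=\sigma_1\cdots\sigma_m$, so $|w_m|\le rR$. Form a chain from $L$ to $L'$ by concatenating: the $\cH$-chain from $H$ to $K_1$ (Definition~\ref{defn:tightAlgebraic}), translated by $p$; for $m=1,\dots,r-1$, the $\cH$-chain from $K_m$ to $K_{m+1}$, translated by $pw_m$; and the $\cH$-chain from $K_r$ to $H'$, translated by $q=pw_r$. (For $r=0$, take the $\cH$-chain from $H$ to $H'$ translated by $p=q$.) Consecutive segments match because $\sigma_m\in K_m$ gives $pw_{m-1}K_m=pw_mK_m$, the last segment ends at $qH'=L'$, and all cosets appearing lie in $\cL$ since each $pw_m\in G_1$. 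As $\cH$ is finite, each $\cH$-chain may be taken of length $\le|\cH|$, so the whole chain has length $\le(r+1)|\cH|$, bounded in terms of $C$. Each successive intersection is a translate, by some $pw_m$, of an intersection $H_i\cap H_{i+1}$ from an $\cH$-chain: it is infinite (hence of infinite diameter), it contains $pw_m$ (subgroups contain $1$), and by~(ii) it is sandwiched between the $M$-path-connected set $pw_m(H_i\cap H_{i+1})$ and a uniform neighbourhood thereof, hence is $D$-path connected for $D$ large; moreover $d(pw_m,x)\le|w_m|+d(p,x)\le rR+3C$, so this intersection meets $B(x,rR+3C+1)$. Choosing $D$ larger than $M$, $(6\lambda C+c+1)|\cH|$, $rR+3C+1$, and the path-connectedness constant, all of Definition~\ref{defn:tight} holds, so $G$ is a $(C,D)$-tight network with respect to $\cL$.

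The only genuine obstacle is the network condition, and inside it the demand that \emph{everything be uniform}: the length of the connecting chain, controlled via finiteness of $\cH$ together with undistortion of $G_1$ (which lets one write $p^{-1}q$ as a boundedly long product of elements of $\bigcup_{H\in\cH}H$); and the simultaneous requirements that each successive neighbourhood-intersection along the chain be $D$-path connected and pass within $D$ of $x$, arranged by routing the translated $\cH$-chains through the points $pw_m$ near $x$ (possible since subgroups contain the identity) and by the coarse-intersection estimate~(ii). The quasiconvexity and covering conditions are, by comparison, formal.
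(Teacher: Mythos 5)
The paper gives no proof of this proposition: it is quoted from \cite{behrstock-drutu} and the end-of-proof symbol is placed inside the statement, so there is no internal argument to compare against. Your direct verification of Definition~\ref{defn:tight} is correct. In particular: quasiconvexity of cosets and the covering condition transport formally via translation invariance and the finite-index property; the coarse-intersection lemma~(ii), combined with $M$-path connectedness of the subgroup intersections from Definition~\ref{defn:tightAlgebraic}, correctly shows that each $\cN_C(L_i)\cap\cN_C(L_{i+1})$ is sandwiched between the $M$-path connected set $L_i\cap L_{i+1}$ and a bounded neighbourhood of it, hence is $D$-path connected and of infinite diameter; and the re-based concatenation of $\cH$-chains, routed through the points $pw_m\in G_1$ lying within $rR+3C$ of $x$, yields a uniformly bounded sequence whose consecutive neighbourhood-intersections all meet a fixed ball around $x$. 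The one thing you should state explicitly as a hypothesis is $|\cH|<\infty$: you use it both to cap the chain length by $(r+1)|\cH|$ and (implicitly) to make the constant $C'$ from~(ii) uniform over the finitely many pairs $(H_i,H_{i+1})$ that can arise in an $\cH$-chain. Finiteness is not written into Definition~\ref{defn:tightAlgebraic} as reproduced in this paper, but it is part of the definition in \cite{behrstock-drutu} and holds in every application here (cf.\ Definition~\ref{defn:strongAlgThick}), so this is a hypothesis to flag, not a gap.
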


We now recall some definitions and results from~\cite{behrstock-drutu} relating to thickness. A metric space $X$ is said to be \emph{wide} if none of its asymptotic cones has a cut-point. 

\begin{definition}[strong thickness, Definition~4.13  of~\cite{behrstock-drutu}]\label{defn:strongThick}
Let $X$ be a metric space.  We say $X$ is \emph{strongly thick of order $0$} if it is wide.  Given $C,D \geq 0$ and $n \in \N$ we say $X$ is \emph{$(C,D)$-strongly thick of order at most $n$} if $X$ is a $(C,D)$-tight network with respect to a collection of subsets $\cL$, and the subsets in $\cL$, with the induced metric, are $(C,D)$-strongly thick of order at most $n-1$.  We say $X$ is \emph{$(C,D)$-strongly thick of order $n$} if it is strongly thick of order at most $n$, and is not strongly thick of order $n-1$.   We say $X$ is \emph{strongly thick of order $n$} if it is $(C,D)$-strongly thick of order $n$ for some $C, D \geq 0$.
\end{definition}

\begin{definition}[strong algebraic thickness, Definition~4.14 of~\cite{behrstock-drutu}]\label{defn:strongAlgThick}
Let $G$ be a finitely generated group.  We say $G$ is \emph{strongly algebraically thick of order $0$} if it is wide, and \emph{strongly algebraically thick of order at most $n$} if $G$ is a tight algebraic network with respect to a finite collection of subgroups $\cH$ such that all subgroups in $\cH$ are strongly algebraically thick of order at most $n-1$.  We say $G$ is \emph{strongly algebraically thick of order $n$} if it is strongly algebraically thick of order at most $n$, and is not strongly algebraically thick of order $n-1$.
\end{definition}

The implications we will use between these notions of thickness and divergence are given by the next two results. 

\begin{prop}\label{prop:wide}
Let $G$ be a finitely generated group.  The following are equivalent:
\begin{enumerate}
    \item $G$ is wide.
    \item $G$ has linear divergence.
    \item $G$ is strongly algebraically thick of order $0$.
    \item $G$ is strongly thick of order $0$.
\end{enumerate}
\end{prop}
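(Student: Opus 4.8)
The plan is to separate the purely definitional equivalences from the single equivalence that carries genuine content. Conditions (1), (3) and (4) are in fact the same statement: by Definition~\ref{defn:strongThick} a metric space is ``strongly thick of order $0$'' precisely when it is wide, so (1) $\Leftrightarrow$ (4) is immediate, and by Definition~\ref{defn:strongAlgThick} a finitely generated group is ``strongly algebraically thick of order $0$'' precisely when it is wide, giving (1) $\Leftrightarrow$ (3). Thus the whole proposition reduces to showing that $G$ is wide if and only if $G$ has linear divergence.

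For (1) $\Rightarrow$ (2) I would invoke the $n = 0$ instance of the Behrstock--Dru\c{t}u bound \cite[Corollary~4.17]{behrstock-drutu}: a group that is strongly thick of order at most $0$, i.e.\ wide, has divergence bounded above by a polynomial of degree $1$; since the divergence of a $1$-ended finitely generated group is always at least linear, it is exactly linear. Alternatively this direction admits a direct proof by contradiction: if the divergence were superlinear, one chooses, along a suitable sequence $r_i \to \infty$, points $x_i, y_i \in S(e, r_i)$ whose $(e, \rho r_i)$-avoidant distance grows superlinearly in $r_i$, and then rescaling by $r_i$ and passing to an ultralimit produces an asymptotic cone of $G$ in which the images of $(x_i)$ and $(y_i)$ are separated by the image of the basepoint, contradicting wideness.

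For (2) $\Rightarrow$ (1) I would cite \cite{drutu-mozes-sapir}: linear divergence implies that no asymptotic cone of $G$ has a cut point. In outline, assuming a cut point in some $\mathrm{Cone}_\omega(G; e, (r_i))$, one uses the tree-graded structure of asymptotic cones to recover, back in $G$, sequences of pairs of points that are close to the sphere $S(e,r_i)$ but whose $(e,\rho r_i)$-avoidant distances blow up faster than linearly, contradicting (2). I expect this implication to be the main obstacle: extracting a quantitative superlinear lower bound on divergence from the merely topological assertion ``some asymptotic cone has a cut point'' is the technical heart of the argument, and is where the tree-graded machinery of \cite{drutu-mozes-sapir} (cf.\ also \cite{behrstock-drutu}) does its work.

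Finally, since no one-endedness hypothesis is imposed, I would dispose of the degenerate cases separately. If $G$ is finite then all four conditions hold trivially (its asymptotic cone is a point and its divergence is bounded). If $G$ is infinite with more than one end, then any nontrivial free product or HNN decomposition of $G$ over a finite subgroup yields cut points in every asymptotic cone, so $G$ is not wide, while Gersten's avoidant distance between points lying in distinct ends is infinite, so the divergence is not linear; thus (1) and (2) fail together, consistently with (3) and (4).
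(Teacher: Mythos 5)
Your proposal is correct and takes essentially the same route as the paper: the equivalences (1) $\Leftrightarrow$ (3) $\Leftrightarrow$ (4) are read off directly from Definitions~\ref{defn:strongThick} and~\ref{defn:strongAlgThick}, and (1) $\Leftrightarrow$ (2) is outsourced to Dru\c{t}u--Mozes--Sapir. The paper is slightly terser (it cites \cite[Lemma~3.17]{drutu-mozes-sapir} for both directions of (1) $\Leftrightarrow$ (2) at once, rather than splitting off the $n=0$ case of Behrstock--Dru\c{t}u for one direction), and it omits the degenerate-case discussion you append, but there is no substantive difference.
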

\begin{proof}
The equivalence of (1), (3) and (4) is by definition. That (2) is equivalent to (1) is proven in~\cite[Lemma~3.17]{drutu-mozes-sapir}.
\end{proof}

\begin{thm}[Corollary~4.17 of~\cite{behrstock-drutu}]\label{cor:thickDiv}
If a finitely generated group $G$ is strongly thick of order at most $n$, then the divergence of $G$ is bounded above by a polynomial of degree~$n + 1$.\qed
\end{thm}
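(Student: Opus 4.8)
The plan is to prove the following statement for geodesic metric spaces $X$, of which the theorem is the case where $X$ is a Cayley graph of $G$, by induction on $n$: if $X$ is strongly thick of order at most $n$ in the sense of Definition~\ref{defn:strongThick}, then $\div{X}$ is bounded above by a polynomial of degree $n+1$. The base case $n=0$ asserts that a wide space has linear divergence; this is \cite[Lemma~3.17]{drutu-mozes-sapir} (see also Proposition~\ref{prop:wide}). Since strong thickness of order at most $n$ is defined recursively through tight networks, the inductive step reduces to a single ``Network Lemma'': if $X$ is a tight network with respect to a collection $\cL$ of subsets in the sense of Definition~\ref{defn:tight}, and every $L\in\cL$ with the induced metric satisfies $\div{L}\preceq f(r)$ for some fixed nondecreasing $f$ with $f(r)\ge r$, then $\div{X}\preceq rf(r)$. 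Granting this, if $X$ is $(C,D)$-strongly thick of order at most $n$ then its defining pieces are strongly thick of order at most $n-1$, hence have divergence $\preceq r^{n}$ by the inductive hypothesis, and the Network Lemma yields $\div{X}\preceq r\cdot r^{n}=r^{n+1}$.

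To prove the Network Lemma, fix a basepoint $e\in X$ and $\rho\in(0,1]$, take $r$ large, and let $x,y\in S(e,r)$; we must join $x$ to $y$ by an $(e,\rho r)$-avoidant path of length $O(rf(r))$. If a geodesic $[x,y]$ is already $(e,\rho r)$-avoidant we are done, as its length is at most $2r$; otherwise we build a detour from the tight-network data. Subdivide a reference path from $x$ to $y$ into $O(r)$ points spaced $O(1)$ apart, use $X=\bigcup_{L\in\cL}\cN_C(L)$ to attach a piece to each subdivision point, and use the chain property of Definition~\ref{defn:tight} to link consecutive pieces by chains of at most $D$ pieces whose successive overlaps $\cN_C(L_i)\cap\cN_C(L_{i+1})$ are $D$-path connected and of infinite diameter. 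Concatenating, one obtains a chain $M_0,M_1,\dots,M_K$ of pieces with $K=O(r)$ and with nonempty, unbounded, $D$-path-connected consecutive overlaps. Since each overlap is unbounded, the pieces extend arbitrarily far from $e$; using the $(C,D)$-quasiconvexity of the pieces and (after comparing the ambient and induced metrics) the bound $\div{M_\ell}\preceq f(r)$ at a radius comparable to $r$, one crosses each $\cN_C(M_\ell)$ from a suitable point of its overlap with $M_{\ell-1}$ to a suitable point of its overlap with $M_{\ell+1}$ by an $(e,\rho r)$-avoidant path of length $O(f(r))$. Summing over the $O(r)$ pieces, together with two short end segments in $M_0$ and $M_K$ joining $x$ and $y$ to the first and last overlaps, gives an $(e,\rho r)$-avoidant path of total length $O(r)\cdot O(f(r))=O(rf(r))$; since $\rho$ was arbitrary, this bounds every $\delta_\rho$ and hence $\div{X}\preceq rf(r)$.

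The main obstacle is the bookkeeping internal to the Network Lemma. In the hard case the chain of pieces shadows a reference path from $x$ to $y$ which must cross $B(e,\rho r)$, so the chain, together with its overlaps, passes near $e$; the delicate point is therefore to route the actual detour through these near-$e$ pieces while keeping it outside $B(e,\rho r)$. This is exactly what the infinite-diameter and $D$-path-connectedness conditions in Definition~\ref{defn:tight} are for: they guarantee that each overlap, hence each piece, reaches far from $e$, and that one can move within an overlap to reach such a far point; one must then select the routing points carefully and translate between the ambient metric and the (only quasi-isometrically controlled) induced metric on each piece. None of these adjustments affects the degree of the resulting polynomial, which is governed solely by the two robust facts that the chain of pieces has length linear in $r$ and that crossing a single piece costs $O(f(r))$.
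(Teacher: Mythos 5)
This theorem is cited in the paper from Behrstock--Dru\c{t}u (Corollary~4.17 of \cite{behrstock-drutu}) with a \textqed{} and no proof, so there is no in-paper argument to compare against. Your reconstruction does set up what I believe is the right inductive scheme: base case via ``wide $\Leftrightarrow$ linear divergence'' (Proposition~\ref{prop:wide}), and inductive step via a ``Network Lemma'' of the form: a tight network over pieces of divergence $\preceq f$ has divergence $\preceq r f(r)$.

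The gap is inside the Network Lemma, at the sentence asserting that ``one crosses each $\cN_C(M_\ell)$ \dots by an $(e,\rho r)$-avoidant path of length $O(f(r))$.'' Several things are asserted that need proof. First, unboundedness of an overlap only provides points of the overlap arbitrarily far from $e$; to get entry/exit points $q_\ell$ at distance \emph{comparable to} $r$ (neither inside $B(e,\rho r)$ nor so far out that the piece's divergence at the relevant radius blows up), you must combine unboundedness with $D$-path-connectedness of the overlap to hit a prescribed annulus, and then argue that doing so is compatible with avoidance. Second, the divergence bound $\div{M_\ell}\preceq f$ is stated with respect to a basepoint inside $M_\ell$ and a ball in the \emph{induced} metric on $M_\ell$; to convert ``avoidance of a ball in $M_\ell$'' into ``avoidance of the ambient $B(e,\rho r)$'' you need to choose a basepoint of $M_\ell$ close to $e$ (only possible when $M_\ell$ passes near $e$) and use $(C,D)$-quasiconvexity to compare induced and ambient balls; pieces that stay away from $B(e,\rho r)$ need a separate, elementary quasigeodesic estimate. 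Third, the inductive hypothesis has to be stated so that the polynomial bound on $\div{L}$ is \emph{uniform} over $L\in\cL$ and over the basepoint; this is where the fact that Definition~\ref{defn:strongThick} propagates the same constants $(C,D)$ to every piece at every level must be invoked explicitly, since divergence is basepoint-independent only up to $\simeq$, with constants that a priori drift. None of these is a fatal obstruction --- they are precisely the content of the cited Behrstock--Dru\c{t}u argument --- but as written the proposal names the right target without supplying the estimates, so it should be regarded as an outline of the cited proof rather than a proof.

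\newcommand{\textqed}{\ensuremath{\square}}
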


\subsection{Coxeter groups and the Davis complex}\label{sec:CoxeterDavis}

We now give some background on Coxeter groups, as well as recalling the construction of the Davis complex.  We mostly  follow~\cite{davis-book}.  

Recall that a \emph{Coxeter group} is a group $W$ with presentation of the form
\[ W = \langle S \mid (st)^{m_{st}} = 1 \mbox{ for all } s,t \in S \rangle \]
where $m_{ss} = 1$ for all $s \in S$, and $m_{st} = m_{ts} \in \{2,3,4,\dots\} \cup \{ \infty \}$ for all distinct $s, t \in S$.  Here $m_{st} = \infty$ means that the element $st$ has infinite order.  We write $C_2$ for the cyclic group of order~$2$, so that $\langle s \rangle \cong C_2$ for every $s \in S$, and $D_\infty$ for the infinite dihedral group, so that if $m_{st} = \infty$ then $\langle s, t \rangle \cong D_\infty$.  If $|S| = n$, we will sometimes let $S = \{ s_1,\dots,s_n \}$ and write $m_{ij}$ for $m_{s_i s_j}$.  

For $W$ and $S$ as in the previous paragraph, the pair $(W,S)$ is called a \emph{Coxeter system}, and the cardinality of the generating set $S$ is its \emph{rank}.  A Coxeter system is \emph{right-angled} if $m_{st} \in \{2,\infty\}$ for all distinct~$s,t \in S$.  Since right-angled Coxeter groups are rigid (as discussed in the introduction), we may then define a Coxeter group $W$ to be \emph{right-angled} if some (hence any) Coxeter system~$(W,S)$ is right-angled.

A Coxeter system $(W,S)$ can be encoded by two different edge-labelled simplicial graphs, the \emph{defining graph} $\G = \G(W,S)$, typically used in right-angled cases, and the \emph{Dynkin diagram} $\Delta = \Delta(W,S)$.  Both of these have vertex set $S$, and an edge between any $s, t \in S$ such that $3 \leq m_{st} < \infty$ labelled by the integer $m_{st}$.  The defining graph (respectively, Dynkin diagram) additionally has an edge labelled $2$ (respectively, $\infty$) between any $s,t \in S$ such that $m_{st} = 2$ (respectively, $m_{st} = \infty$). 
We will follow the convention that in a Dynkin diagram, edge labels~$3$ are omitted.   
We will also omit the edge label $2$ from the defining graph of a right-angled Coxeter system.  

A Coxeter system $(W,S)$ is \emph{irreducible} if there is no nontrivial partition of $S$ into two disjoint commuting subsets,  \emph{spherical} if $W$ is finite, and \emph{affine} if $S$ is the set of reflections in the faces of a compact polytope in Euclidean space.  The classifications of irreducible spherical and affine Coxeter systems are classical.  For the reader's convenience, we reproduce the Dynkin diagrams for the irreducible spherical and affine Coxeter systems in Figures~\ref{fig:spherical} and~\ref{fig:affine}, respectively. 

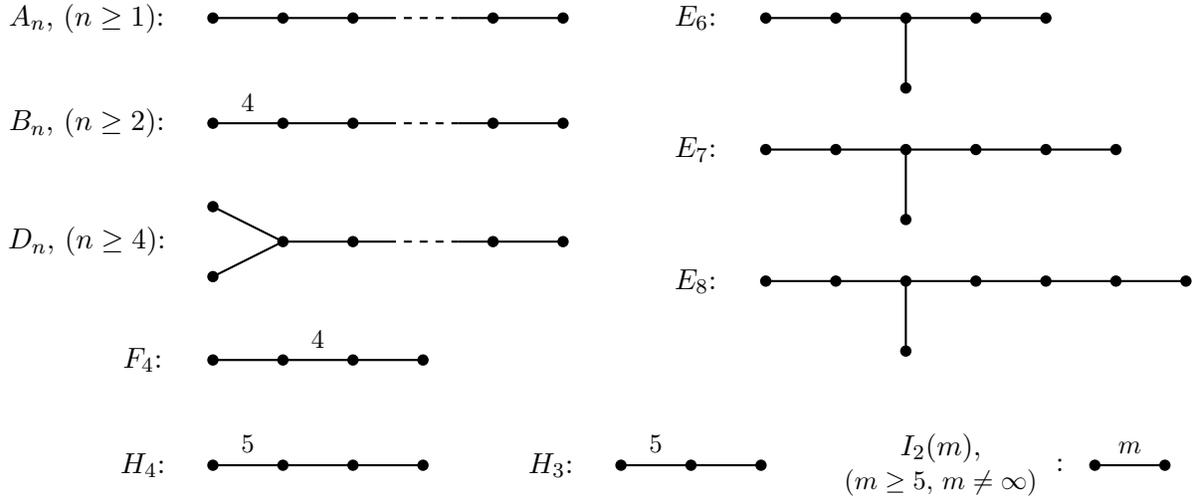
\begin{figure}
\begin{center}
\begin{tikzpicture}[scale=0.7,double distance=2.3pt,thick]
\begin{scope}[xshift=-5cm,yshift=-3.5cm,scale=1.33] 
\fill (0,0) circle (2.3pt); 
\fill (1,0) circle (2.3pt); 
\fill (2,0) circle (2.3pt); 
\fill (4,0) circle (2.3pt); 
\fill (5,0) circle (2.3pt); 
\draw (0,0)--(1,0)--(2,0)--(2.5,0);
\draw [dashed] (2.5,0)--(3.5,0);
\draw (3.5,0)--(4,0)--(5,0);
\draw (-1.8,0) node {$A_n$, $(n\ge1)$:};	
\end{scope}

\begin{scope}[xshift=-5cm,yshift=-5.5cm,scale=1.33] 
\draw (0,0)--(1,0); \draw (0.5,0.3) node {\small$4$};
\fill (0,0) circle (2.3pt); 
\fill (1,0) circle (2.3pt); 
\fill (2,0) circle (2.3pt); 
\fill (4,0) circle (2.3pt); 
\fill (5,0) circle (2.3pt); 
\draw (1,0)--(2,0)--(2.5,0);
\draw [dashed] (2.5,0)--(3.5,0);
\draw (3.5,0)--(4,0)--(5,0);
\draw (-1.8,0) node {$B_n$, $(n\ge2)$:};	
\end{scope}

\begin{scope}[xshift=-5cm,yshift=-7.75cm,scale=1.33] 
\fill (0,0.5) circle (2.3pt); 
\fill (0,-0.5) circle (2.3pt); 
\fill (1,0) circle (2.3pt); 
\fill (2,0) circle (2.3pt); 
\fill (4,0) circle (2.3pt); 
\fill (5,0) circle (2.3pt); 
\draw (0,0.5)--(1,0)--(0,-0.5);
\draw (1,0)--(2,0)--(2.5,0);
\draw [dashed] (2.5,0)--(3.5,0);
\draw (3.5,0)--(4,0)--(5,0);
\draw (-1.8,0) node {$D_n$, $(n\ge4)$:};	
\end{scope}

\begin{scope}[xshift=5.5cm,yshift=-3.5cm,scale=1.33] 
\fill (0,0) circle (2.3pt); 
\fill (1,0) circle (2.3pt); 
\fill (2,0) circle (2.3pt); 
\fill (3,0) circle (2.3pt); 
\fill (4,0) circle (2.3pt); 
\fill (2,-1) circle (2.3pt); 
\draw (0,0)--(1,0)--(2,0)--(3,0)--(4,0);
\draw (2,0)--(2,-1);
\draw (-1,0) node {$E_6$:};	
\end{scope}

\begin{scope}[xshift=5.5cm,yshift=-6cm,scale=1.33] 
\fill (0,0) circle (2.3pt); 
\fill (1,0) circle (2.3pt); 
\fill (2,0) circle (2.3pt); 
\fill (3,0) circle (2.3pt); 
\fill (4,0) circle (2.3pt); 
\fill (5,0) circle (2.3pt); 
\fill (2,-1) circle (2.3pt); 
\draw (0,0)--(1,0)--(2,0)--(3,0)--(4,0)--(5,0);
\draw (2,0)--(2,-1);
\draw (-1,0) node {$E_7$:};	
\end{scope}

\begin{scope}[xshift=5.5cm,yshift=-8.5cm,scale=1.33] 
\fill (0,0) circle (2.3pt); 
\fill (1,0) circle (2.3pt); 
\fill (2,0) circle (2.3pt); 
\fill (3,0) circle (2.3pt); 
\fill (4,0) circle (2.3pt); 
\fill (5,0) circle (2.3pt); 
\fill (6,0) circle (2.3pt); 
\fill (2,-1) circle (2.3pt); 
\draw (0,0)--(1,0)--(2,0)--(3,0)--(4,0)--(5,0)--(6,0);
\draw (2,0)--(2,-1);
\draw (-1,0) node {$E_8$:};	
\end{scope}

\begin{scope}[xshift=-5cm,yshift=-10cm,scale=1.33] 
\draw (1,0)--(2,0); \draw (1.5,0.3) node {\small$4$};
\fill (0,0) circle (2.3pt); 
\fill (1,0) circle (2.3pt); 
\fill (2,0) circle (2.3pt); 
\fill (3,0) circle (2.3pt); 
\draw (0,0)--(1,0);
\draw (2,0)--(3,0);
\draw (-1,0) node {$F_4$:};	
\end{scope}

\begin{scope}[xshift=-5cm,yshift=-12cm,scale=1.33] 
\fill (0,0) circle (2.3pt); 
\fill (1,0) circle (2.3pt); 
\fill (2,0) circle (2.3pt); 
\fill (3,0) circle (2.3pt); 
\draw (0,0)--(1,0)--(2,0)--(3,0);
\draw (0.5,0.3) node {\small$5$};
\draw (-1,0) node {$H_4$:};	
\end{scope}

\begin{scope}[xshift=2.75cm,yshift=-12cm,scale=1.33] 
\fill (0,0) circle (2.3pt); 
\fill (1,0) circle (2.3pt); 
\fill (2,0) circle (2.3pt); 
\draw (0,0)--(1,0)--(2,0);
\draw (0.5,0.3) node {\small5};
\draw (-1,0) node {$H_3$:};	
\end{scope}

\begin{scope}[xshift=11.75cm,yshift=-12cm,scale=1.33] 
\fill (0,0) circle (2.3pt); 
\fill (1,0) circle (2.3pt); 
\draw (0,0)--(1,0);
\draw (0.5,0.25) node {\small $m$};
\draw(-2.2,0.25) node {$I_2(m),$};
\draw(-2.2,-0.25) node {\small$(m\ge5,\, m\ne\infty)$};
\draw(-0.5,0) node {:};
\end{scope}

\end{tikzpicture}
\end{center}
\caption{\small{Dynkin diagrams for the irreducible spherical Coxeter systems.}\label{fig:spherical}}
\end{figure}

\begin{figure}
\begin{center}
\begin{tikzpicture}[thick, scale=0.7, double distance=2.3pt]
\begin{scope}[xshift=-5cm,yshift=-3.5cm,scale=1.33] 
\fill (0,0) circle (2.3pt); 
\fill (1,0) circle (2.3pt); 
\fill (3,0) circle (2.3pt); 
\fill (4,0) circle (2.3pt); 
\fill (2,1) circle (2.3pt); 
\fill [white] (2,2) circle (2.4pt);
\draw (0,0)--(1,0)--(1.5,0);
\draw [dashed] (1.5,0)--(2.5,0);
\draw (2.5,0)--(3,0)--(4,0);
\draw (0,0)--(2,1)--(4,0);
\draw (-1.8,0) node {$\widetilde A_n$, $(n\ge2)$:};	
\end{scope}

\begin{scope}[xshift=7cm,yshift=-3.5cm,scale=1.33] 
\fill (0,0) circle (2.3pt); 
\fill (1,0) circle (2.3pt); 
\draw (0,0)--(1,0);
\draw (0.5,0.25) node {\small $\infty$};
\draw (-1,0) node {$\widetilde A_1$:};	
\end{scope}

\begin{scope}[xshift=-5cm,yshift=-5.5cm,scale=1.33] 
\draw (3,0)--(4,0); \draw (3.5,0.3) node {\small $4$};
\fill (0,0.5) circle (2.3pt); 
\fill (0,-0.5) circle (2.3pt); 
\fill (1,0) circle (2.3pt); 
\fill (3,0) circle (2.3pt); 
\fill (4,0) circle (2.3pt); 
\draw (0,0.5)--(1,0)--(0,-0.5);
\draw (1,0)--(1.5,0);
\draw [dashed] (1.5,0)--(2.5,0);
\draw (2.5,0)--(3,0);
\draw (-1.8,0) node {$\widetilde B_n$, $(n\ge4)$:};	
\end{scope}

\begin{scope}[xshift=7cm,yshift=-5.5cm,scale=1.33] 
\draw (1,0)--(2,0); \draw (1.5,0.3) node {\small $4$};
\fill (0,0.5) circle (2.3pt); 
\fill (0,-0.5) circle (2.3pt); 
\fill (1,0) circle (2.3pt); 
\fill (2,0) circle (2.3pt); 
\draw (0,0.5)--(1,0)--(0,-0.5);
\draw (-1,0) node {$\widetilde B_3$:};	
\end{scope}

\begin{scope}[xshift=-5cm,yshift=-7.5cm,scale=1.33] 
\draw (0,0)--(1,0); \draw (0.5,0.3) node {\small $4$};
\draw (3,0)--(4,0); \draw (3.5,0.3) node {\small $4$};
\fill (0,0) circle (2.3pt); 
\fill (1,0) circle (2.3pt); 
\fill (3,0) circle (2.3pt); 
\fill (4,0) circle (2.3pt); 
\draw (1,0)--(1.5,0);
\draw [dashed] (1.5,0)--(2.5,0);
\draw (2.5,0)--(3,0);
\draw (-1.8,0) node {$\widetilde C_n$, $(n\ge3)$:};	
\end{scope}

\begin{scope}[xshift=7cm,yshift=-7.5cm,scale=1.33] 
\draw (0,0)--(1,0)--(2,0);
\draw (0.5,0.3) node {\small $4$}; \draw (1.5,0.3) node {\small $4$};
\fill (0,0) circle (2.3pt); 
\fill (1,0) circle (2.3pt); 
\fill (2,0) circle (2.3pt); 
\draw (-1,0) node {$\widetilde C_2$:};
\end{scope}

\begin{scope}[xshift=-5cm,yshift=-9.5cm,scale=1.33] 
\fill (0,0.5) circle (2.3pt); 
\fill (0,-0.5) circle (2.3pt); 
\fill (1,0) circle (2.3pt); 
\fill (3,0) circle (2.3pt); 
\fill (4,0.5) circle (2.3pt); 
\fill (4,-0.5) circle (2.3pt); 
\draw (0,0.5)--(1,0)--(0,-0.5);
\draw (4,0.5)--(3,0)--(4,-0.5);
\draw (1,0)--(1.5,0);
\draw [dashed] (1.5,0)--(2.5,0);
\draw (2.5,0)--(3,0);
\draw (-1.8,0) node {$\widetilde D_n$, $(n\ge5)$:};	
\end{scope}

\begin{scope}[xshift=7cm,yshift=-9.5cm,scale=1.33] 
\fill (0,0.5) circle (2.3pt); 
\fill (0,-0.5) circle (2.3pt); 
\fill (1,0) circle (2.3pt); 
\fill (2,0.5) circle (2.3pt); 
\fill (2,-0.5) circle (2.3pt); 
\draw (0,0.5)--(1,0)--(0,-0.5);
\draw (2,0.5)--(1,0)--(2,-0.5);
\draw (-1,0) node {$\widetilde D_4$:};	
\end{scope}

\begin{scope}[xshift=-7.25cm,yshift=-12cm,scale=1.33] 
\fill (0,0) circle (2.3pt); 
\fill (1,0) circle (2.3pt); 
\fill (2,0) circle (2.3pt); 
\fill (3,0) circle (2.3pt); 
\fill (4,0) circle (2.3pt); 
\fill (2,-1) circle (2.3pt); 
\fill (2,-2) circle (2.3pt); 
\draw (0,0)--(1,0)--(2,0)--(3,0)--(4,0);
\draw (2,0)--(2,-1)--(2,-2);
\draw (-1,0) node {$\widetilde E_6$:};	
\end{scope}

\begin{scope}[xshift=3.35cm,yshift=-12cm,scale=1.33] 
\fill (0,0) circle (2.3pt); 
\fill (1,0) circle (2.3pt); 
\fill (2,0) circle (2.3pt); 
\fill (3,0) circle (2.3pt); 
\fill (4,0) circle (2.3pt); 
\fill (5,0) circle (2.3pt); 
\fill (6,0) circle (2.3pt);
\fill (3,-1) circle (2.3pt); 
\draw (0,0)--(1,0)--(2,0)--(3,0)--(4,0)--(5,0)--(6,0);
\draw (3,0)--(3,-1);
\draw (-1,0) node {$\widetilde E_7$:};	
\end{scope}

\begin{scope}[xshift=6cm,yshift=-15cm,scale=1.33] 
\draw (1,0)--(2,0);
\draw (1.5,0.3) node {\small $4$};
\fill (0,0) circle (2.3pt); 
\fill (1,0) circle (2.3pt); 
\fill (2,0) circle (2.3pt); 
\fill (3,0) circle (2.3pt); 
\fill (4,0) circle (2.3pt); 
\draw (0,0)--(1,0);
\draw (2,0)--(3,0)--(4,0);
\draw (-1,0) node {$\widetilde F_4$:};	
\end{scope}

\begin{scope}[xshift=-7.25cm,yshift=-16cm,scale=1.33] 
\fill (0,0) circle (2.3pt); 
\fill (1,0) circle (2.3pt); 
\fill (2,0) circle (2.3pt); 
\fill (3,0) circle (2.3pt); 
\fill (4,0) circle (2.3pt); 
\fill (5,0) circle (2.3pt); 
\fill (6,0) circle (2.3pt); 
\fill (7,0) circle (2.3pt); 
\fill (2,-1) circle (2.3pt);
\draw (0,0)--(1,0)--(2,0)--(3,0)--(4,0)--(5,0)--(6,0)--(7,0);
\draw (2,0)--(2,-1);
\draw (-1,0) node {$\widetilde E_8$:};	
\end{scope}

\begin{scope}[xshift=8.65cm,yshift=-17cm,scale=1.33] 
\fill (0,0) circle (2.3pt); 
\fill (1,0) circle (2.3pt); 
\fill (2,0) circle (2.3pt); 
\draw (0,0)--(1,0)--(2,0);
\draw (0.5,0.3) node {\small $6$};
\draw (-1,0) node {$\widetilde G_2$:};	
\end{scope}

\end{tikzpicture}
\end{center}
\caption{\small{Dynkin diagrams for the irreducible affine Coxeter systems. Note that the rank of an affine system is one more than the subscript of the system's name.}\label{fig:affine}}
\end{figure}
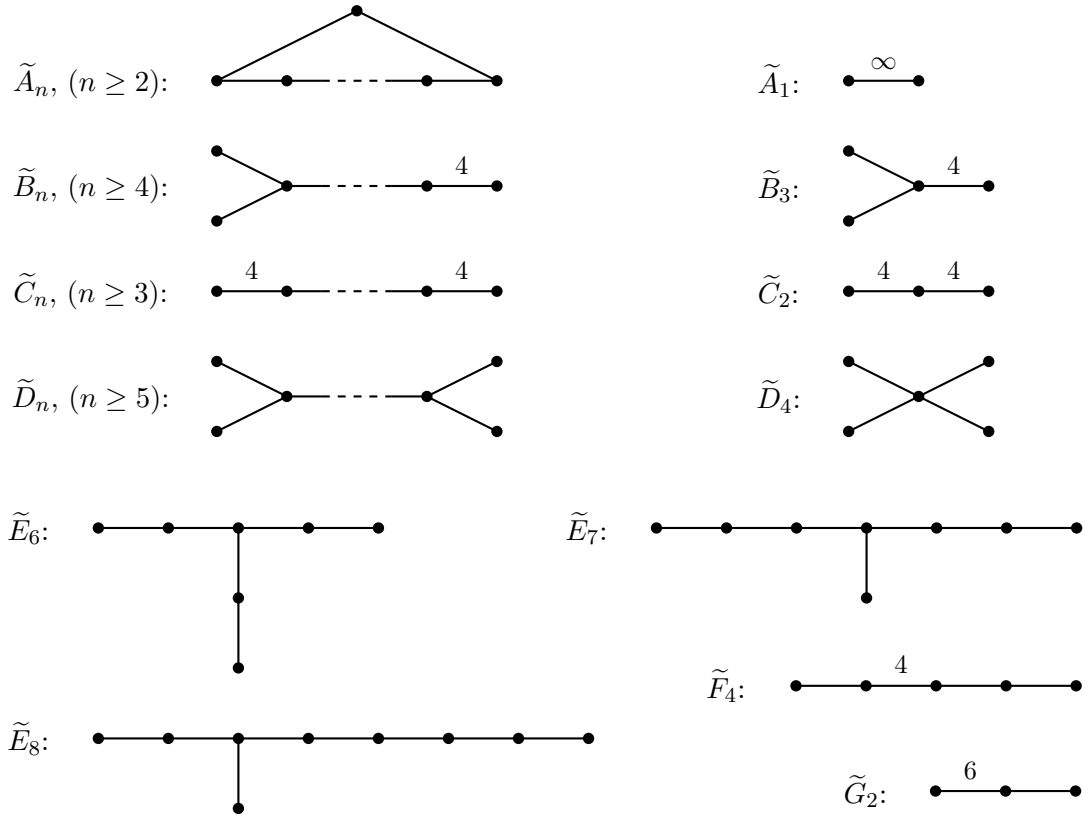

Given $T \subseteq S$, we denote by $W_T$ the subgroup of $W$ generated by $T$, and call $W_T$ a \emph{special subgroup}.  The pair $(W_T,T)$ is a Coxeter system. We say $T$ is \emph{spherical} if $W_T$ is finite.  We will sometimes abuse terminology by saying that a subset $T \subseteq S$ is \emph{irreducible} if $(W_T,T)$ is irreducible, and \emph{affine} if $(W_T,T)$ is affine.  For $A, B \subseteq S$, we denote by $A \times B$ the set $A\cup B$ where $A$ and $B$ are disjoint commuting subsets of $S$. 
Given $T \subseteq S$ we write $T^\perp$ for the set of elements of $S \setminus T$ which commute with all elements of $T$.  To simplify notation, if $s \in S$ we write $s^\perp$ instead of $\{s\}^\perp$.

A subset $A \subseteq S$ is \emph{minimal nonspherical} if $A$ is nonspherical but every proper subset of $A$ is spherical. Notice that if $A\subseteq S$ is minimal nonspherical then any subset $B\subseteq S$ commuting with~$A$ is disjoint from $A$.
(Indeed, if the decomposition $A=(A\setminus B)\sqcup(A\cap B)$ is nontrivial, then due to minimality of $A$, both subsets $A\setminus B$ and $A\cap B$ are spherical and commuting, and  hence their union $A$ must be spherical, which yields a contradiction.)

\begin{remark}\label{rem:minimalRA} In a right-angled system the minimal nonspherical subsets are exactly the pairs of nonadjacent vertices in the defining graph, and a nonempty subset is spherical exactly when it is the vertex set of a clique in the defining graph. 
\end{remark}

\begin{remark}\label{rem:minimal}
For general $(W,S)$, if $A \subseteq S$ is minimal nonspherical then $W_A$ is an (infinite)  simplicial Coxeter group in the sense discussed in~\cite[Section~6.9 and Example~14.2.3]{davis-book}, meaning that~$W_A$ is generated by the set $A$ of reflections in the faces of a compact simplex in either Euclidean or hyperbolic space.  Hence $(W_A,A)$ is either irreducible affine (see Figure~\ref{fig:affine}) or irreducible hyperbolic as classified by Lann\'er (see~\cite[Table 6.2]{davis-book}, which we reproduce here in Figure~\ref{fig:lanner}). We also note that all these Coxeter systems are $1$-ended, except that of type $\widetilde A_1$, which is virtually $\Z$ and hence is $2$-ended.
\end{remark}

\begin{figure}
\begin{center}
\begin{tikzpicture}[thick, scale=0.7, double distance=2.3pt]
\begin{scope}[xshift=-6.25cm]
\node[draw=none,minimum size=1cm,regular polygon,regular polygon sides=3] (a) {};
\draw (a.corner 1)--(a.corner 2);
\draw (a.corner 2)--(a.corner 3);
\draw (a.corner 3)--(a.corner 1);
\foreach \x in {1,2,3}
  \fill (a.corner \x) circle[radius=3pt];
\draw (-0.7,0.3) node {$p$};
\draw (0.7,0.3) node {$q$};
\draw (0,-0.7) node {$r$};
\draw (6.5,0) node {with\,\, $\dfrac1{\,p\,}+\dfrac1{\,q\,}+\dfrac1{\,r\,}<1$,\,\, including};
\begin{scope}[xshift=10.5cm, scale=1.33]
\fill (1,0) circle (2.3pt);
\fill (2,0) circle (2.3pt);
\fill (3,0) circle (2.3pt);
\draw (1,0)--(2,0)--(3,0);
\draw (1.5,-0.3) node {$p$};
\draw (2.5,-0.3) node {$q$};
\draw (5.5,0) node {with\,\, $\dfrac1{\,p\,}+\dfrac1{\,q\,}<\dfrac1{\,2\,}$};
\end{scope}
\end{scope}

\begin{scope}[xshift=-2cm,yshift=-1cm]
\begin{scope}[xshift=-5cm,yshift=-2cm,scale=1.33] 
\fill (0,0) circle (2.3pt);
\fill (1,0) circle (2.3pt);
\fill (2,0) circle (2.3pt);
\fill (3,0) circle (2.3pt);
\draw (1.5,0.3) node {5};
\draw (0,0)--(1,0)--(2,0)--(3,0);
\end{scope}

\begin{scope}[xshift=-5cm,yshift=-3.5cm,scale=1.33] 
\draw (2,0)--(3,0); \draw (2.5,0.3) node {4};
\fill (0,0) circle (2.3pt);
\fill (1,0) circle (2.3pt);
\fill (2,0) circle (2.3pt);
\fill (3,0) circle (2.3pt);
\draw (0.5,0.3) node {5};
						 
\draw (0,0)--(1,0)--(2,0);
\end{scope}

\begin{scope}[xshift=-5cm,yshift=-5cm,scale=1.33] 
\fill (0,0) circle (2.3pt);
\fill (1,0) circle (2.3pt);
\fill (2,0) circle (2.3pt);
\fill (3,0) circle (2.3pt);
\draw (0.5,0.3) node {5};
\draw (2.5,0.3) node {5};
\draw [thick] (0,0)--(1,0)--(2,0)--(3,0);
\end{scope}

\begin{scope}[xshift=-5cm,yshift=-7cm,scale=1.33] 
\fill (0,0) circle (2.3pt);
\fill (1,0) circle (2.3pt);
\fill (1.866,0.5) circle (2.3pt);
\fill (1.866,-0.5) circle (2.3pt);
\draw (0.5,0.3) node {5};
\draw [thick] (0,0)--(1,0)--(1.866,0.5);
\draw [thick] (1,0)--(1.866,-0.5);
\end{scope}
\end{scope}

\begin{scope}[xshift=2cm, yshift=-4.25cm]
\begin{scope}[xshift=1cm, scale=1.33]
\draw (0,0)--(1,0); \draw (0.5,0.3) node {4};
\fill (0,0) circle (2.3pt);
\fill (1,0) circle (2.3pt);
\fill (0,1) circle (2.3pt);
\fill (1,1) circle (2.3pt);
\draw (1,0)--(1,1)--(0,1)--(0,0);
\end{scope}

\begin{scope}[xshift=3.5cm,scale=1.33]
\fill (0,0) circle (2.3pt);
\fill (1,0) circle (2.3pt);
\fill (0,1) circle (2.3pt);
\fill (1,1) circle (2.3pt);
\draw [thick] (0,0)--(1,0)--(1,1)--(0,1)--(0,0);
\draw (0.5,0.3) node {5};
\end{scope}

\begin{scope}[xshift=6cm,scale=1.33]
\draw (0,0)--(1,0); \draw (0.5,0.3) node {4};
\draw (0,1)--(1,1); \draw (0.5,1.3) node {4};
\fill (0,0) circle (2.3pt);
\fill (1,0) circle (2.3pt);
\fill (0,1) circle (2.3pt);
\fill (1,1) circle (2.3pt);
\draw (1,0)--(1,1); \draw (0,1)--(0,0);
\end{scope}

\begin{scope}[xshift=2.25cm,yshift=-3cm,scale=1.33]
\fill (0,0) circle (2.3pt);
\fill (1,0) circle (2.3pt);
\fill (0,1) circle (2.3pt);
\fill (1,1) circle (2.3pt);
\draw [thick] (0,0)--(1,0)--(1,1)--(0,1)--(0,0);
\draw (0.5,0.3) node {5};
\draw (0.5,1.3) node {5};
\end{scope}

\begin{scope}[xshift=4.75cm,yshift=-3cm,scale=1.33]
\draw (0,1)--(1,1); \draw (0.5,1.3) node {4};
\fill (0,0) circle (2.3pt);
\fill (1,0) circle (2.3pt);
\fill (0,1) circle (2.3pt);
\fill (1,1) circle (2.3pt);
\draw (0,1)--(0,0)--(1,0)--(1,1);
\draw (0.5,0.3) node {5};
\end{scope}
\end{scope}

\begin{scope}[xshift=-7cm,yshift=-11cm]
\begin{scope}[scale=1.33] 
\fill (0,0) circle (2.3pt);
\fill (1,0) circle (2.3pt);
\fill (2,0) circle (2.3pt);
\fill (3,0) circle (2.3pt);
\fill (4,0) circle (2.3pt);
\draw (0.5,0.3) node {5};
\draw [thick] (0,0)--(1,0)--(2,0)--(3,0)--(4,0);
\end{scope}

\begin{scope}[yshift=-1.5cm,scale=1.33] 
\draw (3,0)--(4,0); \draw (3.5,0.3) node {4};
\fill (0,0) circle (2.3pt);
\fill (1,0) circle (2.3pt);
\fill (2,0) circle (2.3pt);
\fill (3,0) circle (2.3pt);
\fill (4,0) circle (2.3pt);
\draw (0.5,0.3) node {5};
\draw (0,0)--(1,0)--(2,0)--(3,0);
\end{scope}

\begin{scope}[yshift=-3cm,scale=1.33] 
\fill (0,0) circle (2.3pt);
\fill (1,0) circle (2.3pt);
\fill (2,0) circle (2.3pt);
\fill (3,0) circle (2.3pt);
\fill (4,0) circle (2.3pt);
\draw (0.5,0.3) node {5};
\draw (3.5,0.3) node {5};
\draw [thick] (0,0)--(1,0)--(2,0)--(3,0)--(4,0);
\end{scope}

\begin{scope}[xshift=10cm,yshift=-1.5cm]
\node[draw=none,minimum size=1.752cm,regular polygon,regular polygon sides=5] (a) {};
\draw (a.corner 1)--(a.corner 2);
\draw (a.corner 2)--(a.corner 3);
\draw (a.corner 3)--(a.corner 4); \draw (a.corner 3) node [shift={(3*360/5+175:0.6)}] {$4$};
\draw (a.corner 4)--(a.corner 5);
\draw (a.corner 5)--(a.corner 1);
\foreach \x in {1,2,...,5}
  \fill (a.corner \x) circle[radius=3pt];
\end{scope}

\begin{scope}[xshift=13.5cm,yshift=-1.5cm,scale=1.33] 
\fill (0,0) circle (2.3pt);
\fill (1,0) circle (2.3pt);
\fill (2,0) circle (2.3pt);
\fill (2.866,0.5) circle (2.3pt);
\fill (2.866,-0.5) circle (2.3pt);
\draw (0.5,0.3) node {5};
\draw [thick] (0,0)--(1,0)--(2,0)--(2.866,0.5);
\draw [thick] (2,0)--(2.866,-0.5);
\end{scope}
\end{scope}
\end{tikzpicture}
\end{center}
\caption{\small{Dynkin diagrams of Lann\'er hyperbolic Coxeter systems. Together with the irreducible affine Coxeter systems (Figure~\ref{fig:affine}), they form the class of minimal nonspherical systems; see Remark~\ref{rem:minimal}.}
\label{fig:lanner}}
\end{figure}

Any Coxeter system $(W,S)$ has an  associated Davis complex $\Sigma = \Sigma(W,S)$.  The $1$-skeleton of $\Sigma$ is the Cayley graph $\cC = \cC(W,S)$ of $W$ with respect to the generating set $S$, with a single unoriented edge connecting vertices $w$ and $ws$ for each $w \in W$ and $s \in S$.  The cells of~$\Sigma$ then have vertex sets $w W_T$ where $w \in W$ and $W_T$ is a spherical special subgroup of $W$.  We now metrise $\Sigma$ so that each edge has length $1$, and each cell of $\Sigma$ is a compact Euclidean polytope (with all edges of length $1$).  In particular, each $2$-cell of $\Sigma$ is a regular Euclidean $2m$-gon with $2 \leq m < \infty$ and edges labelled alternately by a pair $s,t \in S$ with $m_{st} = m$.
 Equipped with this metric, $\Sigma$ is a $\CAT(0)$ space (see~\cite[Theorem~12.3.3]{davis-book}), and the group $W$ is quasi-isometric to $\Sigma$ (see~\cite[Proposition~B.5.2]{Kra}).

 A  \emph{reflection} in $W$ is a conjugate of an element of $S$, and a \emph{wall} in $\Sigma$ is the fixed set of a reflection. We say that an edge in $\cC$ is \emph{dual} to a wall $H$ if its midpoint is contained in $H$, and that a wall in $\Sigma$ is \emph{dual} to an edge $e$ if it contains the midpoint of $e$.
 We then say that a path in $\cC$ \emph{crosses}~$H$ if it contains at least one edge which is dual to $H$. Each wall~$H$ separates $\Sigma$ into exactly two components, and the two \emph{half-spaces} defined by $H$ are the closures of the components of $\Sigma \setminus H$ (see Section~3.2 of~\cite{davis-book}). For any two subsets $A,B\subset \Sigma$, we say that a wall $H$ \emph{separates $A$ from $B$} if $A$ and $B$ lie in different components of $\Sigma\setminus H$.

\subsection{Hyperbolicity and thickness for Coxeter groups}\label{sec:hypCoxeter}

We now recall the characterisations of hyperbolicity, relative hyperbolicity and strong algebraic thickness for Coxeter groups.  

For hyperbolicity we have the following theorem of Moussong.

\begin{thm}[Moussong, see Corollary~12.6.3 of \cite{davis-book}]\label{thm:Moussong}
Let $(W,S)$ be a Coxeter system.  Then~$W$ is hyperbolic if and only if there is no subset $T \subseteq S$ satisfying either of the following two conditions:
\begin{enumerate}
    \item $(W_T,T)$ is an irreducible affine Coxeter system of rank $\geq 3$.
    \item $(W_T,T) = (W_{T_1},T_1) \times (W_{T_2},T_2)$ where both $W_{T_1}$ and $W_{T_2}$ are infinite.\qed
\end{enumerate}
\end{thm}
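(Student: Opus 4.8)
The plan is to work with the Davis complex $\Sigma = \Sigma(W,S)$ carrying the piecewise-Euclidean Moussong metric recalled above: it is $\CAT(0)$, and $W$ acts on it properly and cocompactly, so $W$ is hyperbolic if and only if $\Sigma$ is. By the Flat Plane Theorem for $\CAT(0)$ spaces admitting a geometric group action (see, e.g., \cite{davis-book}), $\Sigma$ is hyperbolic if and only if it contains no isometrically embedded Euclidean plane. Hence the theorem reduces to the assertion that $\Sigma$ contains an isometric flat $\R^2$ if and only if some $T \subseteq S$ is of type (1) or (2).

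For the ``if'' direction one uses that $\Sigma(W_T,T)$ embeds as a convex subcomplex of $\Sigma$, so it suffices to find a flat plane inside $\Sigma(W_T,T)$. If $(W_T,T)$ is irreducible affine of rank $\geq 3$, then $W_T$ is virtually $\Z^k$ for some $k \geq 2$ acting geometrically on $\Sigma(W_T,T)$, and the Flat Torus Theorem supplies an isometric $\R^k \supseteq \R^2$. If $(W_T,T) = (W_{T_1},T_1) \times (W_{T_2},T_2)$ with both factors infinite, then $\Sigma(W_T,T)$ is the metric product $\Sigma(W_{T_1},T_1) \times \Sigma(W_{T_2},T_2)$; each factor is unbounded and $\CAT(0)$ with a cocompact group action, hence contains a geodesic line (e.g. an axis of an infinite-order element), and the product of the two lines is an isometric $\R^2$. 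In either case $W$ therefore contains an undistorted $\Z^2$ and is not hyperbolic.

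For the ``only if'' direction, suppose $\Sigma$ contains an isometrically embedded flat plane $P$. The real work is to locate $P$: one analyses the trace on $P$ of the cell structure, and in particular of the walls of $\Sigma$, which partition $P$ into convex polygons and impose rigid constraints coming from the shapes of the Euclidean cells (regular $2m$-gons and products of Coxeter cells of spherical special subgroups). The combinatorial heart is Moussong's link computation: each vertex link in $\Sigma$ is a spherical complex assembled from Coxeter cells of the spherical subsets $T' \subseteq S$, and one shows this link is $\CAT(1)$, with a closed geodesic of length exactly $2\pi$ occurring only when forced by a reducible or an irreducible affine sub-configuration. Feeding a flat $P$ through this analysis forces $P$ to lie in a single $\Sigma(W_T,T)$ with $(W_T,T)$ either reducible with two infinite factors --- case (2) --- or irreducible affine, where flatness excludes rank $2$ (the rank-$2$ affine system $\widetilde A_1 = D_\infty$ has a line, not a plane, for its Davis complex), giving case (1).

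The main obstacle is exactly this last step. Controlling isometrically embedded flats in a piecewise-Euclidean $\CAT(0)$ complex is delicate, and it carries essentially all the combinatorial content of the theorem: one must first establish the $\CAT(1)$ link condition (the ``Moussong condition'') in order even to know $\Sigma$ is $\CAT(0)$, and then sharpen it to classify which spherical and affine sub-configurations of $(W,S)$ can support a flat. I would isolate as the key lemma the statement that a closed geodesic of length $\le 2\pi$ in a vertex link of $\Sigma$ corresponds to a spherical $T' \subseteq S$ carrying a short configuration, with equality only for the affine and reducible patterns in (1) and (2); granting this, the remainder is bookkeeping with convex subcomplexes and the Flat Plane Theorem.
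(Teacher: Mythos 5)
The paper does not actually prove this statement: it is stated as a known theorem of Moussong, cited to Corollary~12.6.3 of Davis's book, with a \verb|\qed| marking the absence of a proof. So there is no internal argument to compare against; what you have written is a roadmap of the \emph{external} proof from Moussong's thesis as presented in Davis, Chapter~12.

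As a roadmap, it is accurate and honest about where the weight lies. The reduction to the Flat Plane Theorem for the Moussong--metrized Davis complex, the easy ``if'' direction via convex subcomplexes $\Sigma(W_T,T)$, the Flat Torus Theorem in the affine case, and the product-of-axes construction in the reducible case are all correct as stated. (The product argument does tacitly use that the $W$-action on $\Sigma$ is semisimple, so that an infinite $W_{T_i}$ has a hyperbolic element with an axis; this is standard but worth naming.) You are also right that the rank-$2$ affine case must be excluded from~(1): $\widetilde A_1$ yields only a line.

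The genuine gap is the one you flag yourself: the ``only if'' direction is not an argument but a description of one. Two things in particular are glossed over. First, the relevant link analysis is not merely about vertex links of $\Sigma$; one needs the full Gromov--Moussong machinery for piecewise spherical complexes (Moussong's lemma on metric flag complexes with edge lengths $\geq \pi/2$), and the threshold is closed geodesics of length strictly less than $2\pi$, with the length-exactly-$2\pi$ case requiring a separate rigidity analysis. Second, the passage from ``$\Sigma$ is not hyperbolic'' to ``there exists a flat plane contained in a single $\Sigma(W_T,T)$'' requires a localisation argument (Moussong's ``no flats away from suspensions/products'' lemma); it is not immediate from the Flat Plane Theorem, which only produces a flat somewhere. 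If you intended this as a self-contained proof rather than a sketch, the key lemma you isolate in your final paragraph is precisely what would have to be proved, and it carries essentially all of the combinatorial content of Moussong's thesis. For the purposes of the present paper, citing the result as in the source is the correct move, and your outline shows you know where the bodies are buried.
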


Behrstock, Hagen, Sisto and Caprace prove in the Appendix to~\cite{behrstock-hagen-sisto-caprace}
that every Coxeter group admits a canonical minimal  relatively hyperbolic structure, whose peripheral subgroups are special subgroups.  Moreover, the families of special subgroups which can serve as peripheral subgroups for a relatively hyperbolic structure are characterised by Caprace in~\cite{caprace,caprace-erratum}~as follows. 

\begin{thm}[Theorem~A$'$ of~\cite{caprace-erratum}]\label{thm:relHypCoxeter}
Let $(W,S)$ be a Coxeter system and $\cT$ be a collection of proper subsets of $S$. Then $W$ is hyperbolic relative to $\{W_T\mid T\in\cT\}$ if and only if the following conditions hold:
\begin{itemize}
\item[(RH1)] For each subset $T\subseteq S$ such that $T$ is irreducible affine of cardinality $\geq 3$, there exists $T'\in\cT$ such that $T \subseteq T'$. Given any pair of irreducible nonspherical subsets $T_1$, $T_2\subseteq S$ with $[T_1,T_2]=1$, there exists $T\in\cT$ such that $T_1\cup T_2\subseteq T$.
\item[(RH2)] For all $T_1, T_2\in\cT$ with $T_1\ne T_2$, the intersection $T_1\cap T_2$ is spherical.
\item[(RH3)] For each $T\in\cT$ and each irreducible nonspherical $U\subseteq T$, we have $U^\perp\subseteq T$.\qed
\end{itemize}
\end{thm}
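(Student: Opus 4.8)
This is a result of Caprace (see~\cite{caprace, caprace-erratum}); the plan is to translate relative hyperbolicity of $W$ into the geometry of the Davis complex $\Sigma = \Sigma(W,S)$. For $T \in \cT$ I would write $\Sigma_T \subseteq \Sigma$ for the convex sub-Davis-complex on which the special subgroup $W_T$ acts cocompactly, and set $\cS = \{\, g\Sigma_T \mid g \in W,\ T \in \cT\,\}$. Since $W$ acts geometrically on $\Sigma$ and $\mathrm{Stab}_W(g\Sigma_T) = gW_Tg^{-1}$, it suffices to show that $W$ acting on $\Sigma$ is hyperbolic relative to $\cS$ in Bowditch's sense; equivalently, that every asymptotic cone of $\Sigma$ is tree-graded with pieces the ultralimits of the elements of $\cS$, which in turn is equivalent to $\cS$ being a family of \emph{isolated subspaces} in the sense of~\cite{caprace}. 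The proof then splits into verifying this isolated-subspaces condition from (RH1)--(RH3) and, conversely, extracting (RH1)--(RH3) from the condition.

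For \emph{necessity} of (RH1)--(RH3) I would invoke three standard facts about a group $G$ hyperbolic relative to a family $\{P_i\}$: the family is almost malnormal; the commensurator in $G$ of an infinite subgroup of some $P_i$ is contained in $P_i$; and every finitely generated undistorted subgroup all of whose asymptotic cones lack a cut point is conjugate into some $P_i$. Then (RH2) is immediate, since for distinct $T_1,T_2\in\cT$ with $T_1\cap T_2$ nonspherical the subgroup $W_{T_1}\cap W_{T_2}=W_{T_1\cap T_2}$ is an infinite subgroup of two distinct peripherals. Likewise (RH3) follows: if $U\subseteq T\in\cT$ is irreducible nonspherical then $W_U$ is infinite and $W_{U^\perp}$ centralises it, so $W_{U^\perp}\subseteq W_T$ and hence $U^\perp = S\cap W_{U^\perp}\subseteq T$. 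For (RH1), an irreducible affine $W_T$ of rank $\ge3$ is virtually $\Z^{\,|T|-1}$ with $|T|-1\ge2$, and the join $W_{T_1}\times W_{T_2}$ of two infinite special subgroups contains $\Z^2$; neither is relatively hyperbolic with respect to proper subgroups, so each must be conjugate into some peripheral $W_{T'}$. Upgrading this conjugacy to the literal inclusion $T\subseteq T'$ (respectively $T_1\cup T_2\subseteq T'$) asserted in (RH1) is the delicate point of this direction, and rests on the theory of parabolic closures in Coxeter groups together with the fact that a parabolic subgroup of $W$ lying inside $W_{T'}$ is already parabolic in $(W_{T'},T')$.

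For \emph{sufficiency} I would assume (RH1)--(RH3) and check the three requirements for $\cS$ to be a family of isolated subspaces, then quote the main theorem of~\cite{caprace}. First, any two distinct members of $\cS$ have uniformly bounded coarse intersection: via $W_{T_1}\cap W_{T_2}=W_{T_1\cap T_2}$ and gate projections between convex subcomplexes, the coarse intersection of $g\Sigma_{T_1}$ and $h\Sigma_{T_2}$ is governed by the finite group $W_{T_1\cap T_2}$, finite by (RH2). Second, every flat in $\Sigma$, and more generally every quasi-isometrically embedded product of two unbounded spaces, lies in a uniformly bounded neighbourhood of some member of $\cS$: by~\cite{caprace-fujiwara} the only special subgroups without a rank-one element are the affine ones and the reducible ones with two infinite factors, so all flats and product regions are coarsely supported on some such $W_U$, and (RH1) places $U$ inside some $T\in\cT$, so $\Sigma_U\subseteq\Sigma_T\in\cS$. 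Third, the members of $\cS$ are uniformly coarsely convex and none is coarsely contained in another; this is where (RH3) enters, guaranteeing that each $\Sigma_T$ already contains all the $U^\perp$-directions forced to run parallel to $W_U$ for $U\subseteq T$ irreducible nonspherical, so the resulting tree-graded structure is consistent and the pieces are irredundant.

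The step I expect to be hardest is the second requirement above: showing that \emph{every} flat and \emph{every} product region in the $\CAT(0)$ complex $\Sigma$ is, up to bounded error, captured by one of the residues $g\Sigma_T$. This combines a classification of the ``wide'' parabolic subgroups of $W$ with a Morse-type contraction estimate ensuring that a bi-infinite geodesic not virtually supported on such a subgroup cannot bound a flat, and then a combinatorial argument passing from an arbitrary conjugate of a wide parabolic to a literal inclusion of subsets of $S$ — precisely the passage where the argument of~\cite{caprace} required the correction supplied in~\cite{caprace-erratum}.
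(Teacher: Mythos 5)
The paper does not prove Theorem~\ref{thm:relHypCoxeter}: it is quoted verbatim, with a \verb|\qed| closing the statement, as Theorem~A$'$ of Caprace's erratum~\cite{caprace-erratum} (correcting Theorem~A of~\cite{caprace}). There is therefore no internal proof to compare your write-up against; what you have produced is a reconstruction sketch of Caprace's own argument, and it should be assessed as such.

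As a sketch, the overall architecture is plausible and you correctly identify the two genuinely delicate points: (a) in the necessity direction, upgrading ``conjugate into a peripheral $W_{T'}$'' to the literal set-theoretic inclusion $T\subseteq T'$ demanded by (RH1), which is exactly where the original~\cite{caprace} needed the correction in~\cite{caprace-erratum}; and (b) in the sufficiency direction, showing that every flat and product region in $\Sigma$ is coarsely captured by a residue $g\Sigma_T$. However, the sketch has gaps a referee would press on. First, your invocation of~\cite{caprace-fujiwara} to classify ``special subgroups without rank-one elements'' is slightly off: the relevant statement (Proposition~4.5 there, reproduced in the paper's proof of Proposition~\ref{prop:CoxeterRank1}) classifies when an \emph{element} fails to be rank one, and the parabolics that arise are (i) finite, (ii) products of two infinite parabolics, and (iii) $K\times P_{\mathrm{aff}}$ with $K$ finite and $P_{\mathrm{aff}}$ affine of rank $\geq 3$. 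Your phrasing omits the finite-times-affine case and conflates a statement about elements with one about subgroups, which matters precisely when passing from ``a flat exists'' to ``a specific special subgroup supports it.'' Second, the final sentence of your sufficiency paragraph --- that (RH3) ``guarantees the resulting tree-graded structure is consistent and the pieces are irredundant'' --- is asserted rather than argued; (RH3) is needed to rule out \emph{conjugates} of peripherals (not just distinct members of $\cT$) having infinite intersection, via the slab-type subsets $U\times U^\perp$, and this is nontrivial. In short: the outline is faithful to the shape of Caprace's proof, but both directions still rest on unproven claims at exactly the places you flag as hard, and the rank-one classification needs the precise statement rather than the paraphrase.
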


We will also need the classes $\mathbb{T}_0$ and $\mathbb{T}$ of Coxeter systems defined by Behrstock, Hagen, Sisto and Caprace in the Appendix to~\cite{behrstock-hagen-sisto-caprace}.

\begin{definition}[Section~A.1 of~\cite{behrstock-hagen-sisto-caprace}]\label{defn:T}
Define $\bT_0$ to be the class of all Coxeter systems $(W,S)$ such that either:
\begin{enumerate}
    \item[(a)] $(W,S)$ is irreducible affine of rank $\geq 3$; or
    \item[(b)] $(W,S) = (W_{S_1},S_1) \times (W_{S_2},S_2)$ with $(W_{S_i},S_i)$ irreducible and nonspherical for $i = 1,2$.
\end{enumerate}
The class $\bT$ of Coxeter systems $(W,S)$ is then defined inductively as follows.
\begin{enumerate}
    \item $\bT$ contains $\bT_0$.
    \item If $S = S_0 \sqcup \{ s \}$ such that $(W_{S_0},S_0) \in \bT$ and $s^\perp \subset S$ is nonspherical, then $(W_S,S) \in \bT$.
    \item If $S = S_1 \cup S_2$ such that $(W_{S_i},S_i) \in \bT$ for $i = 1,2$ and $S_1 \cap S_2$ is nonspherical, then $(W_S,S) \in \bT$.
\end{enumerate}
\end{definition}

Both strong algebraic thickness and not being relatively hyperbolic can then be characterised via the class $\mathbb{T}$:

\begin{thm}[See Corollary~A.10 of~\cite{behrstock-hagen-sisto-caprace}]\label{thm:thickNotRelHyp}
Let $(W,S)$ be a Coxeter system.  The following are equivalent:
\begin{enumerate}
    \item $(W,S)$ is in $\bT$;
    \item $W$ is strongly algebraically thick;
    \item $W$ is not relatively hyperbolic with respect to any family of proper subgroups.\qed
\end{enumerate}
\end{thm}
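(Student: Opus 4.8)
The plan is to prove the cycle $(1)\implies(2)\implies(3)\implies(1)$; the statement is \cite[Corollary~A.10]{behrstock-hagen-sisto-caprace}, and what follows only indicates one route, using Caprace's criterion (Theorem~\ref{thm:relHypCoxeter}) together with the tight-network formalism of Section~\ref{sec:tightThick}.

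\emph{$(1)\implies(2)$.} I would induct on the recursive construction of $\bT$ in Definition~\ref{defn:T}. In the base class $\bT_0$: an irreducible affine system of rank $\geq 3$ is virtually $\Z^k$ with $k\geq2$, so every asymptotic cone is bi-Lipschitz to $\R^k$ ($k\geq2$) and has no cut point; and a direct product of two infinite groups is wide, since a nontrivial product of geodesic spaces has no cut point. In either case $W$ is strongly algebraically thick of order $0$ by Proposition~\ref{prop:wide}. For the step corresponding to rule~(2) — $S=S_0\sqcup\{s\}$ with $(W_{S_0},S_0)\in\bT$ strongly algebraically thick by induction and $s^\perp\subseteq S_0$ nonspherical (so $W_{s^\perp}$ is infinite) — I would take $\cH=\{W_{S_0},\,sW_{S_0}s\}$. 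Its two members are conjugate, hence both strongly algebraically thick of the same order; they are $M$-quasiconvex because special subgroups, and intersections of parabolic subgroups, are convex in $\cC(W,S)$; their intersection contains the infinite special subgroup $W_{s^\perp}$, hence is infinite, and is $M$-path-connected, being itself a parabolic subgroup; and they generate a finite-index subgroup $K=\langle W_{S_0},sW_{S_0}s\rangle$ of $W_S$. For this last point: if some $m_{st}$ with $t\in S_0$ is odd, then $(ts)^2=t\cdot sts$ lies in $K$ and generates $\langle ts\rangle$ (as $m_{st}$ is odd), so $s=t\cdot ts\in K$ and $K=W_S$; otherwise every finite $m_{st}$ is even, so $W_S$ surjects onto $C_2$ by $s\mapsto1$ and every other generator $\mapsto0$, and writing an element of even $s$-length as a product alternating between members of $W_{S_0}$ and of $sW_{S_0}s$ shows $K$ is exactly the index-two kernel of this surjection. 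Thus $W_S$ is a tight algebraic network over strongly algebraically thick subgroups, and Definition~\ref{defn:strongAlgThick} applies. The step for rule~(3) — $S=S_1\cup S_2$ with $(W_{S_i},S_i)\in\bT$ and $S_1\cap S_2$ nonspherical — is handled directly by $\cH=\{W_{S_1},W_{S_2}\}$, whose members intersect in the infinite special subgroup $W_{S_1\cap S_2}$. Since $S$ is finite the recursion terminates, so the order of thickness obtained is finite.

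\emph{$(2)\implies(3)$.} This is the general principle of Behrstock--Dru\c{t}u--Mosher~\cite{behrstock-drutu-mosher} that a strongly thick group of any finite order is not asymptotically tree-graded with respect to proper subsets, hence not hyperbolic relative to any family of proper subgroups (using that strong algebraic thickness implies strong thickness, via Proposition~\ref{prop:tight}).

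\emph{$(3)\implies(1)$.} I would prove the contrapositive with Theorem~\ref{thm:relHypCoxeter}. Assume $(W,S)\notin\bT$ and let $\cT$ be the set of maximal elements of $\{T\subsetneq S : (W_T,T)\in\bT\}$, a collection of proper subsets of $S$. Then (RH1) holds: every irreducible affine $T\subseteq S$ of rank $\geq3$ lies in $\bT_0\subseteq\bT$ and is a proper subset of $S$ (otherwise $(W,S)\in\bT$), hence is contained in some maximal such $T$; similarly, a commuting pair $T_1,T_2$ of irreducible nonspherical subsets is automatically disjoint (an element commuting with all of an irreducible nonspherical set cannot lie in it), so $T_1\cup T_2\in\bT_0$ is proper and lies in some $T\in\cT$. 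Condition (RH2) holds: if distinct $T_1,T_2\in\cT$ had $T_1\cap T_2$ nonspherical, then rule~(3) would put $(W_{T_1\cup T_2},T_1\cup T_2)\in\bT$ and maximality would force $T_1\cup T_2=S$, contradicting $(W,S)\notin\bT$. Condition (RH3) holds: if $T\in\cT$, $U\subseteq T$ is irreducible nonspherical, and $x\in U^\perp\setminus T$, then in $T\cup\{x\}$ the set of generators commuting with $x$ contains $U$ and so is nonspherical, whence rule~(2) puts $(W_{T\cup\{x\}},T\cup\{x\})\in\bT$ and maximality forces $T\cup\{x\}=S$, again a contradiction. By Theorem~\ref{thm:relHypCoxeter}, $W$ is hyperbolic relative to the proper subgroups $\{W_T : T\in\cT\}$, so (3) fails.

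The \textbf{main obstacle} is the rule~(2) step of $(1)\implies(2)$: constructing an honest \emph{finite} tight algebraic network and checking all clauses of Definition~\ref{defn:tightAlgebraic}, where the only genuinely non-routine point is the finite-index claim for $\langle W_{S_0},sW_{S_0}s\rangle$, handled by the parity dichotomy above. A secondary subtlety is the bookkeeping in $(3)\implies(1)$: seeing that the \emph{maximal} $\bT$-subsystems satisfy (RH2) and (RH3) exactly is precisely the observation that the two recursion rules of Definition~\ref{defn:T} are the combinatorial mirror image of Caprace's conditions, so that the $\bT$-construction and the relative hyperbolicity criterion are two formulations of the same phenomenon.
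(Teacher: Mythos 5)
The paper does not prove this theorem: the qed-box ending the statement flags it as a quoted result, Corollary~A.10 of~\cite{behrstock-hagen-sisto-caprace}, and the body of the present paper supplies no argument for it. So there is no proof of the authors' to compare your reconstruction against. That said, your proof is essentially correct and, as far as I can judge, follows the same skeleton as the argument in the Appendix of~\cite{behrstock-hagen-sisto-caprace}: the cycle $(1)\Rightarrow(2)\Rightarrow(3)\Rightarrow(1)$, with $(1)\Rightarrow(2)$ by induction on the recursive construction of $\bT$, $(2)\Rightarrow(3)$ by Behrstock--Dru\c{t}u--Mosher, and $(3)\Rightarrow(1)$ by checking Caprace's (RH1)--(RH3) against the collection of maximal proper $\bT$-subsets of $S$. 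In particular, the parity dichotomy you use to show $\langle W_{S_0},sW_{S_0}s\rangle$ has finite index, and the disjointness observation needed for the commuting-pair clause of (RH1), are both correct.

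One small slip worth fixing. In the rule-(2) step you justify quasiconvexity of the members of $\cH=\{W_{S_0},\,sW_{S_0}s\}$ by asserting that ``special subgroups, and intersections of parabolic subgroups, are \emph{convex} in $\cC(W,S)$.'' Conjugates of special subgroups are not convex in general: already in $D_\infty=\langle s,t\rangle$ with $m_{st}=\infty$, the parabolic $s\langle t\rangle s=\{1,sts\}$ fails to be convex, since the geodesic from $1$ to $sts$ passes through $s$ and $st$. What is true, and all that Definition~\ref{defn:tightAlgebraic} requires, is that a parabolic $gW_Tg^{-1}$ is \emph{quasiconvex} in $\cC(W,S)$, with constant controlled by $|g|$ (here $g=s$ has length $1$, so a single $M$ works for both members of $\cH$); and $W_{S_0}\cap sW_{S_0}s$ is again a parabolic subgroup, hence quasiconvex and therefore $M$-path-connected. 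Replacing ``convex'' by ``quasiconvex'' in that clause repairs the justification, and the rest of the argument goes through as you wrote it.
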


\section{Linear and quadratic divergence}\label{sec:linearQuadratic}

In this short section we prove Theorem~\ref{thm:linearQuadraticIntro} of the introduction, which provides a quadratic lower bound on divergence for irreducible nonaffine Coxeter groups.  We also prove Corollary~\ref{cor:linearIntro} 
of the introduction, which characterises linear divergence for Coxeter groups.   
 
Let $(W,S)$ be a Coxeter system.  Recall that a \emph{rank one geodesic} in a $\CAT(0)$ space is a geodesic line which does not bound a flat half-plane.  An element of $W$ is then said to be \emph{rank one} if it is hyperbolic in its action on the Davis complex $\Sigma = \Sigma(W,S)$ and some (hence any) of its axes in $\Sigma$ are rank one geodesics.  We emphasise that we are considering geodesic lines in $\Sigma$ in this definition, not combinatorial geodesics in the Cayley graph $\cC = \cC(W,S)$ (since these are not in general geodesic in the $\CAT(0)$ metric on $\Sigma$).  Recall also that a \emph{Coxeter element} of $W$ is a product of the elements of~$S$, in any order.  A  \emph{parabolic subgroup} of $W$ is a conjugate of a special subgroup, and the \emph{parabolic closure} of an element $w \in W$ is the smallest parabolic subgroup of $W$ which contains $w$.  The next result follows from work of Caprace--Fujiwara~\cite{caprace-fujiwara}.

\begin{prop}\label{prop:CoxeterRank1}  If $(W,S)$ is irreducible, nonspherical and nonaffine, then 
any Coxeter element of $W$ is a rank one element.
\end{prop}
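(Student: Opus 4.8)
The plan is to deduce the proposition from the rank-one criterion of Caprace and Fujiwara~\cite{caprace-fujiwara}, together with two standard facts about Coxeter elements. Fix a Coxeter element $w = s_1 s_2 \cdots s_n$ of $W$, where $S = \{s_1, \dots, s_n\}$.

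First I would check that $w$ acts on the Davis complex $\Sigma = \Sigma(W,S)$ as a hyperbolic isometry. Since $(W,S)$ is nonspherical, $W$ is infinite, and a Coxeter element of an irreducible infinite Coxeter system has infinite order (this is classical, and can be read off from the geometric representation of $W$). Because $W$ acts properly and cocompactly by isometries on the $\CAT(0)$ complex $\Sigma$, every element of $W$ acts as a semisimple isometry; an infinite-order semisimple isometry in such an action cannot be elliptic, since it would otherwise fix a point and hence have finite order. Thus $w$ is hyperbolic and has an axis in $\Sigma$.

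Next I would compute the parabolic closure $\mathrm{Pc}(w)$, the smallest parabolic subgroup of $W$ containing $w$. The expression $s_1 \cdots s_n$ (a product of the distinct elements of $S$) is cyclically reduced with support all of $S$, and the parabolic closure of such an element is, up to conjugacy, the special subgroup generated by its support; since $(W,S)$ is irreducible this forces $\mathrm{Pc}(w) = W$. Equivalently, one checks directly that $w$, being of infinite order and hence not conjugate to a generator, is not conjugate into any proper special subgroup. Now the criterion of~\cite{caprace-fujiwara} applies: an element of infinite order whose parabolic closure is irreducible and nonaffine acts on the Davis complex as a rank one isometry. By hypothesis $\mathrm{Pc}(w) = W$ is irreducible and nonaffine, so $w$ is a rank one element, as claimed.

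The step requiring the most care is the identification $\mathrm{Pc}(w) = W$: this rests on the description of parabolic closures of cyclically reduced elements, and on verifying that a Coxeter element is indeed cyclically reduced and admits no conjugate of strictly smaller support. Having $\mathrm{Pc}(w) = W$ is precisely what ensures that the rank-one behaviour of the axis of $w$ inside the convex subcomplex associated to $\mathrm{Pc}(w)$ is not destroyed by flat directions coming from outside, i.e.\ that $w$ is rank one in all of $\Sigma$. Finally, the nonaffine hypothesis is essential here: a Coxeter element of an irreducible affine system still has parabolic closure equal to $W$, but its axis bounds a flat half-plane, so such an element is not rank one.
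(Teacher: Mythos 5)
Your proof is correct and takes essentially the same route as the paper: establish that the parabolic closure of a Coxeter element is all of $W$ (the paper simply cites Corollary~4.3 of~\cite{caprace-fujiwara}), then invoke the Caprace--Fujiwara rank-one characterization. One small caution: the criterion as you paraphrase it (``infinite-order element with irreducible nonaffine parabolic closure is rank one'') is not literally correct in general---if the parabolic closure were a proper irreducible nonaffine parabolic $W_T$ sitting inside $W_T \times W_{T^\perp}$ with $W_{T^\perp}$ infinite, the axis would still bound a flat half-plane---the precise statement is Proposition~4.5 of~\cite{caprace-fujiwara}, namely that $w$ is \emph{not} rank one if and only if $w$ lies in some parabolic that is finite, a product of two infinite parabolics, or (finite)$\,\times\,$(affine of rank $\ge 3$). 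Your argument is nonetheless sound because here the parabolic closure is all of $W$, so the only parabolic containing $w$ is $W$ itself, which is of none of those forms; you observe exactly this in your closing paragraph.
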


\begin{proof}
By, for example, Corollary~4.3 of~\cite{caprace-fujiwara}, the parabolic closure of a Coxeter element of $W$ is equal to $W$.  Now by Proposition~4.5 of~\cite{caprace-fujiwara}, for general Coxeter groups $W$, an element $w \in W$ is \emph{not} of  rank one if and only if $w \in P$, where $P$ is parabolic with either (i) $P$ finite, or (ii) $P = P_1 \times P_2$ with $P_i$ both infinite parabolic, or (iii) $P = K \times P_{\mbox{\tiny{aff}}}$ where $K$ is a finite parabolic (possibly trivial) and $P_{\mbox{\tiny{aff}}}$ is  affine parabolic of rank at least $3$.  So if we take~$w$ to be a Coxeter element of $W$ with $(W,S)$ infinite, irreducible and nonaffine, then $w$ is a rank one element.  
\end{proof}

We now use the following general result due to Kapovich and Leeb~\cite{kapovich-leeb} (see also~\cite{kapovich-kleiner-leeb}).  We note that since $W$ is finitely generated, the Davis complex $\Sigma$ is locally compact.

\begin{prop}[Proposition~3.3 of~\cite{kapovich-leeb} or Proposition~4.5 of~\cite{kapovich-kleiner-leeb}]\label{prop-kap-leeb}  Let $X$ be a locally compact $\CAT(0)$ space and let $\gamma$ be a complete geodesic which is invariant under a cyclic group of hyperbolic isometries.  If $\gamma$ has subquadratic divergence (in the sense of Remark~\ref{rem:subquadratic}), then it bounds a flat half-plane and hence has linear divergence.\qed
\end{prop}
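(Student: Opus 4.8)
The plan is to reduce the statement to two facts about the nearest-point projection $\pi$ onto $\gamma$. The first is elementary: if $\gamma$ bounds a flat half-plane then it has linear divergence. The second is the standard dichotomy for an axis $\gamma$ of a hyperbolic isometry acting on a proper $\CAT(0)$ space $X$ (which $X$ is, being locally compact): $\gamma$ either bounds a flat half-plane or is \emph{strongly contracting}, meaning there is a constant $B_0$ such that the $\pi$-image of every open metric ball disjoint from $\gamma$ has diameter at most $B_0$. (This goes back to Ballmann's analysis of rank one isometries; it also appears in work of Charney and Sultan on contracting boundaries.) Granting these, it is enough to show that a strongly contracting geodesic has divergence $\succeq r^2$: this contradicts the subquadraticity hypothesis, so $\gamma$ must bound a flat half-plane, which by the first fact also gives the ``hence'' clause of the proposition.

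For the quadratic lower bound I would argue as follows. Fix $r > 0$ and let $\alpha$ be an arbitrary $r$-avoidant path from $\gamma(r)$ to $\gamma(-r)$. Since $\pi$ is $1$-Lipschitz, $\pi \circ \alpha$ is a continuous path in $\gamma$ joining $\gamma(r)$ to $\gamma(-r)$; between the last parameter at which it equals $\gamma(r/2)$ and the first subsequent parameter at which it equals $\gamma(-r/2)$ it joins these two points while avoiding both, so it stays in $\gamma([-r/2, r/2])$. This carves out a subpath $\beta$ of $\alpha$ whose projection covers $\gamma([-r/2, r/2])$, and for any $x = \beta(s)$ we have $\pi(x) = \gamma(t)$ with $|t| \le r/2$ and $d(x, \gamma(0)) \ge r$, so $d(x, \gamma) = d(x, \gamma(t)) \ge r - |t| \ge r/2$. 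Now subdivide $\beta$ into $N = \lceil 4\ell(\beta)/r \rceil$ pieces of arc length at most $r/4$. Each piece $\sigma$ lies in the closed ball of radius $r/4$ about $\sigma(0)$, which sits inside the open ball $B(\sigma(0), d(\sigma(0),\gamma))$ since $d(\sigma(0),\gamma) \ge r/2 > r/4$; as that open ball is disjoint from $\gamma$, strong contraction forces $\operatorname{diam}\pi(\sigma) \le B_0$, so $\pi(\sigma)$ lies in a subsegment of $\gamma$ of length at most $B_0$. Since the $N$ sets $\pi(\sigma)$ cover the length-$r$ segment $\gamma([-r/2, r/2])$, we get $N B_0 \ge r$, whence $\ell(\alpha) \ge \ell(\beta) > (N-1)\tfrac r4 \ge \big(\tfrac{r}{B_0} - 1\big)\tfrac r4$. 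As $\alpha$ was arbitrary, $\div\gamma(r) \ge \big(\tfrac{r}{B_0} - 1\big)\tfrac r4$ for all $r$, i.e.\ $\div\gamma \succeq r^2$, as required.

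Finally, the first, elementary fact: if $\gamma = \partial P$ for an isometrically embedded flat half-plane $P \cong \R \times [0,\infty)$ (with $\gamma$ the boundary line and $\gamma(0)$ the origin), then the path $\gamma(r) \to (r,r) \to (-r,r) \to \gamma(-r)$ traced inside $P$ has length $4r$ and all of its points lie at distance $\sqrt{t^2 + r^2} \ge r$ from $\gamma(0)$ (these distances being computed in $P$, hence in $X$), so it is $r$-avoidant; together with $\div\gamma(r) \ge d(\gamma(r), \gamma(-r)) = 2r$ this gives $\div\gamma \simeq r$. The one genuinely substantial ingredient is the dichotomy quoted in the first paragraph, and this is exactly where I expect the difficulty to lie: it is the only point at which local compactness of $X$ is used (when strong contraction fails, a flat half-plane is built as a limit of geodesic segments that stay far from $\gamma$, which requires properness), and it is also the only point at which the periodicity of $\gamma$ is needed, since a non-periodic geodesic in a proper $\CAT(0)$ space need not satisfy such a dichotomy. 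An alternative, more self-contained route would avoid the black box and construct the flat half-plane directly from the short avoidant paths by a rescaling/ultralimit argument, following Kapovich and Leeb~\cite{kapovich-leeb}.
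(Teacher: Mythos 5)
The paper does not prove this proposition --- the statement carries a terminal qed symbol and is attributed to Kapovich--Leeb and Kapovich--Kleiner--Leeb --- so the comparison is with those sources, and your route is genuinely different from theirs. The cited argument works directly: if short $r$-avoidant paths exist along a sequence of radii, a flat half-plane is extracted as a rescaled limit, with local compactness supplying the needed convergence and periodicity anchoring the limit to $\gamma$. You instead factor the proof through the now-standard dichotomy that in a proper $\CAT(0)$ space a periodic geodesic either bounds a flat half-plane or is strongly contracting (Bestvina--Fujiwara, building on Ballmann; see also Charney--Sultan), and supply a correct, elementary projection/subdivision argument that strong contraction forces $\div{\gamma}(r) \succeq r^2$. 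Your pointwise bound $\alpha(r) \geq \bigl(\tfrac{r}{B_0}-1\bigr)\tfrac{r}{4}$ disposes of the weakened subquadraticity in Remark~\ref{rem:subquadratic} for free, so you avoid the careful re-reading of the original proof that the remark invokes; and the flat-half-plane step is sound since a flat half-plane in a $\CAT(0)$ space is convex, so your in-plane distances agree with ambient ones, and combining the $4r$ avoidant path with $\div{\gamma}(r) \ge 2r$ gives linear divergence. The trade-off is that the contraction dichotomy you invoke is itself a result of roughly the same depth as the Kapovich--Leeb construction and uses properness and periodicity in the same essential places --- you have modularized rather than simplified --- but your packaging does make explicit where each hypothesis enters and what quantitative quadratic lower bound is actually obtained.
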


\begin{remark}\label{rem:subquadratic}
The authors of \cite{kapovich-leeb,kapovich-kleiner-leeb} use a slightly different notion of the subquadratic divergence of a geodesic than we do. Namely, they say that a geodesic $\gamma$ has subquadratic divergence if $\lim_{r\to\infty}\alpha(r)/r^2=0$, where $\alpha(r)$ is the infimum of lengths of $r$-avoidant paths connecting $\gamma(r)$ and $\gamma(-r)$. However, a careful analysis of the proofs of Proposition~3.3 of~\cite{kapovich-leeb} and Proposition~4.5 of~\cite{kapovich-kleiner-leeb} shows that an even weaker condition on $\gamma$ would yield the same conclusion, namely, that
\begin{quote}{\it 
there exists a sequence $r_k\to\infty$ such that $\lim_{k\to \infty}\alpha(r_k)/r_k^2=0$.}
\end{quote}
The negation of this condition will then imply that there exist real numbers $C,N>0$ such that for all $r>N$ we have $\alpha(r)/r^2>C$. For the purposes of Proposition~\ref{prop-kap-leeb} and the proof of Theorem~\ref{thm:linearQuadraticIntro} we will understand the term `subquadratic divergence' in the above italicised sense.
\end{remark}

Now we can prove Theorem~\ref{thm:linearQuadraticIntro}.

\begin{proof}[Proof of Theorem~\ref{thm:linearQuadraticIntro}]
Suppose the Coxeter system $(W,S)$ is irreducible and nonaffine and the group $W$ is $1$-ended. Choose a Coxeter element $w\in W$ and let $\gamma$ be an axis for $w$ (recall that by Proposition~\ref{prop:CoxeterRank1}, $w$ is a rank one element). Then $\gamma$ is invariant under the infinite cyclic group $\langle w \rangle$ of hyperbolic isometries. If the geodesic $\gamma$ has subquadratic divergence (in the sense of Remark~\ref{rem:subquadratic}) then by Proposition~\ref{prop-kap-leeb}, $\gamma$ bounds a flat half-plane, which contradicts $w$ being rank one. Hence~$\gamma$ does not have subquadratic divergence, and by Remark~\ref{rem:subquadratic} we conclude that $\div{\gamma}(r)\succeq r^2$.  
This proves that the divergence of the Davis complex $\Sigma$ is at least quadratic as well. Since $W$ is quasi-isometric to $\Sigma$ (see~\cite[Proposition~B.5.2]{Kra}), and since the divergence function is invariant under quasi-isometries \cite[Proposition~2.1]{gersten-quadratic}, we conclude that $W$ also has at least quadratic divergence, as required.
\end{proof}

\medskip
The following result establishes Corollary~\ref{cor:linearIntro} 
of the introduction.

\begin{cor}\label{cor:linear} Let $(W,S)$ be a Coxeter system such that $W$ is $1$-ended.  Then $W$ has linear divergence if and only if 
$(W,S)=(W_1,S_1) \times (W_2,S_2)$ where either:
\begin{enumerate}
\item both $W_1$ and $W_2$ are infinite; or
\item $W_1$ is finite (possibly trivial) and $(W_2,S_2)$ is irreducible affine of rank $\ge 3$.
\end{enumerate}
\end{cor}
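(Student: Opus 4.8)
The plan is to prove both directions, using the established fact (Proposition~\ref{prop:wide}) that linear divergence is equivalent to $W$ being wide, together with Theorem~\ref{thm:linearQuadraticIntro}. For the reverse direction, suppose $(W,S) = (W_1,S_1)\times(W_2,S_2)$ as in case (1) or (2). In case (1), $W$ is a direct product of two infinite groups, hence is wide: its asymptotic cone splits as a product of two unbounded cones, and a product of unbounded geodesic spaces has no cut-point. So $W$ has linear divergence by Proposition~\ref{prop:wide}. In case (2), $W_1$ is finite, so $W$ is virtually $W_2$, and $(W_2,S_2)$ is irreducible affine of rank $\ge 3$; such a group is virtually $\Z^{n}$ with $n = \operatorname{rank}(W_2) - 1 \ge 2$, which is wide (its asymptotic cones are Euclidean spaces of dimension $\ge 2$), so again the divergence is linear. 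It is worth isolating these two "wideness" facts (product of infinite groups; irreducible affine of rank $\ge 3$) as the content of the reverse implication.

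For the forward direction, assume $W$ is $1$-ended with linear divergence, i.e.\ $W$ is wide. First reduce to the irreducible case: write $(W,S)$ as a direct product of irreducible factors $(W_{T_1},T_1)\times\dots\times(W_{T_k},T_k)$. If $k = 1$, then $(W,S)$ is irreducible; if it is nonaffine then Theorem~\ref{thm:linearQuadraticIntro} gives divergence at least quadratic, contradicting wideness; if it is irreducible affine it must have rank $\ge 3$ (rank $1$ is the trivial-times-$C_2$ situation, which is not $1$-ended, and rank $2$ irreducible affine is $\widetilde A_1 \cong D_\infty$, not $1$-ended either), which is exactly case (2) with $W_1$ trivial; and if it is irreducible spherical then $W$ is finite, contradicting $1$-endedness (or: $W$ is then $0$-ended). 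So for $k=1$ we land in case (2). Now suppose $k \ge 2$. Group the factors: let $F$ be the product of the spherical (finite) factors and let $W_1', W_2', \dots$ be the infinite irreducible factors. If there are at least two infinite irreducible factors, we can split $(W,S)$ into a product of two infinite special subgroups, which is case (1). If there is exactly one infinite irreducible factor $(W_{T},T)$, then $(W,S) = F \times (W_T,T)$ with $F$ finite; since $W$ is $1$-ended, $W_T$ is $1$-ended and still has linear divergence, so by the $k=1$ analysis $(W_T,T)$ is irreducible affine of rank $\ge 3$, which is case (2). If there are no infinite irreducible factors then $W$ is finite, again contradicting $1$-endedness. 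This exhausts all cases.

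The main obstacle is the careful bookkeeping in the forward direction — in particular verifying that an irreducible, $1$-ended Coxeter system with linear divergence is forced to be affine of rank $\ge 3$. The key input is Theorem~\ref{thm:linearQuadraticIntro}, which immediately kills the irreducible nonaffine case; the remaining work is the elementary observation that irreducible spherical systems give finite (hence not $1$-ended) groups, and that irreducible affine systems of rank $\le 2$ are not $1$-ended (rank $1$: $W_T \cong C_2$; rank $2$: $W_T \cong D_\infty$, which is $2$-ended). One should also double-check the claim, used implicitly in case (2) of the reverse direction, that an irreducible affine Coxeter group of rank $n+1$ is virtually $\Z^n$ and hence wide when $n \ge 2$; this is classical (such a $W$ contains a finite-index translation lattice acting cocompactly on $\mathbb{E}^n$). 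As the authors indicate, the remaining passage from "$W$ wide" back through Proposition~\ref{prop:wide} to "linear divergence" and the product splitting can be supplemented by the argument from~\cite{abddy} to handle the wideness of direct products cleanly.
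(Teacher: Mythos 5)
Your proof is correct and follows essentially the same path as the paper's: both use Theorem~\ref{thm:linearQuadraticIntro} to exclude the irreducible nonaffine case, both handle the irreducible affine systems by noting that rank $\ge 3$ gives a group virtually $\Z^n$ with $n\ge 2$ while $\widetilde A_1$ is excluded by $1$-endedness, and both pass between $W$ and a finite-index special subgroup via quasi-isometry invariance of divergence. The one place you genuinely diverge is the ``product of two infinite factors'' case: you go through Proposition~\ref{prop:wide}, observing that the asymptotic cone of a direct product is a product of unbounded cones and that such a product has no cut-point, whereas the paper instead applies the explicit avoidant-path construction from the proof of Lemma~7.2 of~\cite{abddy}, first using the Coxeter-specific fact (geodesic extension) that every $w_i\in W_i$ lies on a geodesic ray based at the identity. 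Your softer route is briefer to state but relies on the standard facts that asymptotic cones commute with products and that products of unbounded geodesic spaces have no cut-points; the paper's route avoids invoking asymptotic cones and produces linear divergence directly. Both are correct, and you already noted the~\cite{abddy} alternative at the end. One small slip: there is no irreducible affine system of rank $1$, so your parenthetical ``rank $1$ is the trivial-times-$C_2$ situation'' is slightly garbled --- rank-$1$ systems are just the finite group $C_2$ and are spherical, not affine --- but the conclusion you draw (an irreducible affine $1$-ended system has rank $\ge 3$) is correct.
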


\begin{proof}  We first note that $W$ itself must be infinite since finite groups are $0$-ended.  
Since the divergence of $W$ is invariant under passing to a subgroup of finite index, for any decomposition $W=W_1\times W_2$ with $W_1$ finite, the divergence of $W$ and of $W_2$ will be the same. Thus without loss of generality we may assume that either:
\begin{enumerate}
    \item $(W,S)=(W_1,S_1)\times (W_2,S_2)$ with both $W_1$ and $W_2$ infinite; or
    \item $W$ is irreducible.
\end{enumerate}

In the first case, write $\ell_i$ for the word length on $W_i$ with respect to $S_i$, for $i = 1,2$.  Then since~$W_i$ is infinite, for all $w_i \in W_i$ there is at least one $s \in S_i$ such that $\ell_i(w_is) > \ell_i(w_i)$ (this follows from Lemmas 4.7.2 and 4.7.3 of~\cite{davis-book}).  Hence any $w_i \in W_i$ lies on an infinite geodesic ray based at the identity vertex in the Cayley graph of $W_i$ with respect to $S_i$.  We can then apply the proof of Lemma~7.2 of~\cite{abddy} to conclude that $W = W_1 \times W_2$ has linear divergence.

In the second case, i.e.\ if $W$ is irreducible, $W$ may be either affine of type $\widetilde A_1$, or affine of rank~$\ge 3$, or nonaffine. If $W$ is of type $\widetilde A_1$ then $W$ is $2$-ended and must be discarded. If $W$ is affine of rank $n+1\ge 3$ then $W$ is virtually $\Z^{n}$ with $n \geq 2$, and hence $W$ has linear divergence.

In the remaining case when $W$ is irreducible, nonspherical and nonaffine, Theorem~\ref{thm:linearQuadraticIntro} implies that the divergence of $W$ is at least quadratic.
\end{proof}

\section{Hypergraph index}\label{sec:hypergraph}

In Section~\ref{sec:motiv} we provide motivation and an outline of our construction with a few technicalities omitted.
 In Section~\ref{sec:defn} we define hypergraph index for general Coxeter systems and prove some easy properties.  Our main results for hypergraph index are contained in Section~\ref{sec:hyp_thick_div}, where we prove Theorem~\ref{thm:hyp_thick_div_Intro}.  
Throughout this section, $(W,S)$ is a Coxeter system with $S$ finite. 

\subsection{Motivation and an outline of the construction}\label{sec:motiv}

To motivate our definition of hypergraph index, we first recall that, as was proven in~\cite[Appendix]{behrstock-hagen-sisto-caprace}, every Coxeter group is either hyperbolic relative to a system of proper special peripheral subgroups (and hence has exponential  divergence), or it is thick (and hence has divergence bounded above by a polynomial of integer degree). Moreover,  the inductive procedure of Definition~\ref{defn:T} allows one to either build a minimal system of such peripheral subgroups, or to establish that the entire Coxeter group is thick. An upper bound on the order of thickness (respectively, degree of divergence) can then be calculated, using results of~\cite{behrstock-drutu-mosher}, as the total number of operations (2) and (3) from Definition~\ref{defn:T} which have been applied (respectively, this number plus one).

However not all operations (2) and (3) actually increase the order of thickness (respectively, degree of divergence), and it is  desirable to group several of them together to achieve a better upper bound. In Section~\ref{sec:defn}
we describe a procedure which does exactly that: compacts multiple applications of operations~(2) and~(3) into fewer steps if they do not increase the order of thickness (respectively, degree of divergence). The total number of steps until stabilisation is our invariant, \emph{the hypergraph index}, which we show is a nonnegative integer if the Coxeter group is thick, and equals infinity if the Coxeter group is relatively hyperbolic. 
The construction  
was inspired by the work of Levcovitz~\cite{levcovitz-RACG} who introduced it for the case of right-angled Coxeter groups.

In more detail, the motivation for hypergraph index (as well as for the rules (RH1)--(RH3) of Caprace's Theorem~\ref{thm:relHypCoxeter} and the operations (1)--(3) of Definition~\ref{defn:T}) comes from the following facts about relatively hyperbolic groups:
\begin{itemize}
\item[(i)] If a relatively hyperbolic group has a subgroup isomorphic to $\Z\times\Z$, then it should be contained in a peripheral subgroup (otherwise, \emph{the coned-off Cayley graph} will not be \emph{fine}, in the terminology of~\cite[Definition~2.43]{groves-manning-dehn-rel-hyp}).
\item[(ii)] In a relatively hyperbolic group, the intersection of any two peripheral subgroups or their conjugates should be finite (see e.g.~\cite[Lemma~4.20]{drutu-sapir-tree-graded}).
\end{itemize}

The goal is, for any Coxeter system $(W,S)$, to build a minimal system of peripheral special subgroups of $W$ which satisfies conditions (i) and (ii) above. We start with the smallest collection of subsets of $S$ such that the corresponding special subgroups contain $\Z\times\Z$ in an obvious way. These are the maximal subsets of the form $A\times B\subseteq S$ with $A$ and $B$ both nonspherical (which makes $W_A$ and $W_B$ both infinite), or $A$ irreducible affine of cardinality $\ge3$ and $B$ spherical. We denote this family $\Omega(S)$ and call it the family of \emph{wide subsets}.

After that we need to ensure that condition (ii) is satisfied. We achieve this in an inductive process, by modifying the current family of subsets of $S$ corresponding to prospective peripheral subgroups.  Since the intersection of special subgroups is special, it is easy to see when two members of the current family of subsets of $S$ have nonspherical intersection; in this case, the corresponding special subgroups must be absorbed into a bigger peripheral subgroup.
However it is not always straightforward to decide when, for two subsets of $S$, their corresponding special subgroups have \emph{conjugates} with infinite intersection. Caprace's rule (RH3) from Theorem~\ref{thm:relHypCoxeter} is the key to detecting such situations. Namely, if a subset  $T\subset S$ contains a nonspherical subset~$U$ whose elements commute with some $s\in S \setminus T$, then the subgroups $W_T$ and $sW_Ts^{-1}$ contain the infinite special subgroup $W_U=sW_Us^{-1}$ in their intersection. 
So there are subsets of the form $U\times K\subset S$ with $U$ minimal nonspherical and $K$ maximal spherical commuting with $U$ such that $W_{U}\times W_K$ is not allowed to have nonspherical intersection with any of the peripheral subgroups.
We denote the family of all such $U \times K$ by $\Psi(S)$ and call its elements the \emph{slab subsets}. 

These considerations allow us to keep track of a prospective family of peripheral special subgroups, as follows. 
We start with $\Lambda_0=\Omega(S)\cup\Psi(S)$ and construct a sequence $\Lambda_0$, $\Lambda_1$, $\Lambda_2$, \dots\ of sets of subsets of $S$, where each $\Lambda_i$ consists of unions of those members of $\Lambda_{i-1}$ which are forced to be united due to violation of condition (ii), as described above. We achieve this by introducing an equivalence relation on each of the $\Lambda_i$ generated by the condition that two subsets have nonspherical intersection.

If, for the resulting sequence of subsets $\Lambda_i$, there is an index $h$ such that $\Lambda_h$ contains $S$, then the Coxeter system $(W,S)$ is thick and the minimal such number $h$ is the hypergraph index.  We prove in Theorem~\ref{thm:hyp_thick_div_Intro} that $h$ is an upper bound on the order of (strong) thickness, and hence obtain an upper bound on the degree of divergence. 
However, if the sets $\Lambda_i$ stabilise without reaching~$S$, then it follows from Caprace's Theorem~\ref{thm:relHypCoxeter} that the non-slab elements of $\Lambda_i$ 
form a family of minimal peripheral special subgroups for a relatively hyperbolic structure on $(W,S)$, and~$W$ then has exponential divergence. 

We remark that each set $\Lambda_i$ in the construction can be thought of as a hypergraph with vertex set $S$ and hyperedges corresponding to the subsets in $\Lambda_i$, although we do not use this terminology. This motivates the name   hypergraph index, which was introduced by Levcovitz~\cite{levcovitz-RACG}.

\subsection{Definitions and first properties}\label{sec:defn}

In this section we recall Levcovitz's definition of hypergraph index for right-angled Coxeter groups~\cite[Section~3]{levcovitz-RACG}, in parallel with formulating our generalisation of hypergraph index to arbitrary Coxeter systems (see Definition~\ref{defn:hypergraphIndex}).  We also establish some basic properties of hypergraph index.

First, we define a set of building blocks, called \emph{wide subsystems} and \emph{slab subsystems}, out of which all thick subsystems of $(W,S)$ will be constructed.   
These will respectively generalise the following notions, which were introduced in the right-angled case by Levcovitz~\cite[Definition~3.1]{levcovitz-RACG}.  Let~$\G$ be the defining graph of a right-angled Coxeter group.  The  \emph{wide subgraphs} $\Omega = \Omega(\G)$ are the  maximal induced subgraphs of $\G$ such that every element of $\Omega$ is the join of two induced subgraphs, each of which contains a pair of nonadjacent vertices (equivalently, the vertex sets of these two subgraphs commute and are both nonspherical).  The \emph{strip subgraphs} $\Psi = \Psi(\G)$ are the maximal induced subgraphs of $\G$ such that every element of $\Psi$ is the join of a pair of nonadjacent vertices with a nonempty clique (equivalently, the vertex sets of strip subgraphs generate $2$-ended special subgroups of the form $W_A \times C_2^{|K|}$, where $W_A \cong D_\infty$ and $K$ is the vertex set of a maximal nonempty clique).

We frame our generalisation of these notions in terms of subsets of $S$ rather than induced subgraphs of the defining graph $\G$.
Let $\cP(S)$ be the power set of $S$.

\begin{definition}\label{defn:wide}
Consider the collection of subsets of $S$ of the form $A \times B$ such that either:
\begin{enumerate}
    \item $A$ and $B$ are both nonspherical; or
    \item $A$ is irreducible affine of cardinality $\ge3$ and $B$ is spherical (possibly empty).
\end{enumerate}
The \emph{wide subsets} $\Omega(S) \subseteq \cP(S)$ are the maximal elements of this collection, with respect to the partial order induced by inclusion.  
A \emph{wide subsystem} is a pair $(W_T,T)$ where $T \in \Omega(S)$.
\end{definition}

\begin{remark} Since there are no right-angled irreducible affine Coxeter systems of rank $\geq 3$, when $(W,S)$ is right-angled the elements of $\Omega(S)$ from Definition~\ref{defn:wide} are exactly the vertex sets of the wide subgraphs from~\cite[Definition~3.1]{levcovitz-RACG}.
\end{remark}

The following observation is immediate from Theorem~\ref{thm:Moussong}.

\begin{lemma}\label{lem:hypOmega}
Let $(W,S)$ be a Coxeter system.  Then $W$ is hyperbolic if and only if $\Omega(S) = \varnothing$.
\end{lemma}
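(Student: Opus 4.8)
The statement to prove is Lemma~\ref{lem:hypOmega}: $W$ is hyperbolic if and only if $\Omega(S) = \varnothing$.

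\medskip

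The plan is to unpack both conditions using Moussong's criterion (Theorem~\ref{thm:Moussong}) and Definition~\ref{defn:wide}, and observe that the two obstructions to hyperbolicity in Moussong's theorem correspond exactly to the two cases defining wide subsets. Specifically, Moussong's theorem says $W$ is \emph{not} hyperbolic if and only if there is a subset $T \subseteq S$ such that either (i) $(W_T,T)$ is irreducible affine of rank $\geq 3$, or (ii) $(W_T,T) = (W_{T_1},T_1)\times(W_{T_2},T_2)$ with both factors infinite. On the other hand, $\Omega(S)\neq\varnothing$ means there exists a subset of $S$ of the form $A\times B$ with either (1) $A$ and $B$ both nonspherical, or (2) $A$ irreducible affine of cardinality $\geq 3$ and $B$ spherical (possibly empty).

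\medskip

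For the forward implication (equivalently, the contrapositive: $\Omega(S)\neq\varnothing \implies W$ not hyperbolic): suppose $T \in \Omega(S)$, so $T = A\times B$ falls into case (1) or case (2) of Definition~\ref{defn:wide}. In case (1), $A$ and $B$ are disjoint commuting nonspherical subsets, so $W_A$ and $W_B$ are infinite and $(W_T,T) = (W_A,A)\times(W_B,B)$ satisfies condition (2) of Moussong's theorem; hence $W$ is not hyperbolic. In case (2), $A$ is irreducible affine of cardinality $\geq 3$, so taking the subset $A\subseteq S$ directly gives condition (1) of Moussong's theorem; hence $W$ is not hyperbolic. For the reverse implication (contrapositive: $W$ not hyperbolic $\implies \Omega(S)\neq\varnothing$): by Moussong's theorem there is $T\subseteq S$ of type (1) or (2). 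If $(W_T,T)$ is irreducible affine of rank $\geq 3$, then $T$ lies in the collection of Definition~\ref{defn:wide} (take $A = T$, $B = \varnothing$, noting $\varnothing$ is spherical since $W_\varnothing$ is trivial), so some maximal element of that collection containing $T$ is in $\Omega(S)$, which is therefore nonempty. If $(W_T,T) = (W_{T_1},T_1)\times(W_{T_2},T_2)$ with both factors infinite, then $T_1$ and $T_2$ are nonspherical disjoint commuting subsets, so $T = T_1\times T_2$ lies in the collection of Definition~\ref{defn:wide} (case (1)), and again $\Omega(S)$ contains a maximal element dominating $T$, hence is nonempty.

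\medskip

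There is essentially no obstacle here: the lemma is a direct translation dictionary between Moussong's two conditions and the two clauses of Definition~\ref{defn:wide}, with the only minor point being to note that the empty set is spherical (so that an irreducible affine $A$ of rank $\geq 3$ by itself, with $B = \varnothing$, qualifies as a member of the defining collection) and that every element of the nonempty defining collection is dominated by a maximal element, so nonemptiness of the collection is equivalent to nonemptiness of $\Omega(S)$. The proof is a few lines citing Theorem~\ref{thm:Moussong} and Definition~\ref{defn:wide}.
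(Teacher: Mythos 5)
Your proof is correct and matches the paper's approach exactly: the paper simply states that the lemma is immediate from Moussong's theorem (Theorem~\ref{thm:Moussong}), and your write-up spells out precisely the translation between Moussong's two obstructions to hyperbolicity and the two clauses of Definition~\ref{defn:wide}, including the minor observations that $\varnothing$ is spherical and that the (finite) defining collection is nonempty if and only if it has a maximal element.
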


Part (1) of the next result generalises~\cite[Remark~3.2]{levcovitz-RACG}, and justifies the use of the term ``wide".  We note that if $T \in \Omega(S)$ then $W_T$ is $1$-ended.

\begin{lemma}\label{lem:linearOmega}  Let $(W,S)$ be a Coxeter system.
\begin{enumerate}
\item The elements $T \in \Omega(S)$ correspond bijectively to the maximal special subgroups of $W$ which are wide. 
\item The group $W$ has linear divergence if and only if $\Omega(S) = \{ S \}$.
\end{enumerate}
\end{lemma}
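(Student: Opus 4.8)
The plan is to deduce both parts from a single characterisation of which special subgroups are infinite and wide. Let $\mathcal{F}$ denote the collection of subsets $A\times B$ of $S$ appearing in Definition~\ref{defn:wide}, so that $\Omega(S)$ is the set of maximal elements of $\mathcal{F}$. The claim to establish is
\[
W_V \text{ is infinite and wide} \quad\Longleftrightarrow\quad V\in\mathcal{F},
\]
which I will call $(\star)$. Granting $(\star)$, part~(1) is formal: the map $V\mapsto W_V$ is injective on subsets of $S$ (since $V=W_V\cap S$) and is an order-isomorphism onto its image (as $V\subseteq V'\iff W_V\subseteq W_{V'}$), so it carries the maximal elements of $\mathcal{F}$ bijectively onto the maximal members of $\{W_V : V\subseteq S,\ W_V\text{ infinite and wide}\}$, i.e.\ onto the maximal wide special subgroups that are infinite; each of these is one-ended, as already noted. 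For part~(2): if $\Omega(S)=\{S\}$ then $S\in\mathcal{F}$, so $W=W_S$ is wide by $(\star)$ and hence has linear divergence by Proposition~\ref{prop:wide} (equivalently, membership of $S$ in $\mathcal{F}$ directly exhibits $W$ either as a product of two infinite Coxeter groups or as virtually $\Z^n$ with $n\ge 2$). Conversely, if $W$ has linear divergence then $W$ is wide by Proposition~\ref{prop:wide}, so $W_S$ is infinite and wide, so $S\in\mathcal{F}$ by $(\star)$; being the top element of $\cP(S)$, it is then the unique maximal element of $\mathcal{F}$, so $\Omega(S)=\{S\}$.

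It remains to prove $(\star)$. For the implication $\Leftarrow$: if $V=A\times B$ with $A,B$ both nonspherical, then $W_V=W_A\times W_B$ is a direct product of two infinite groups, so every asymptotic cone of $W_V$ is a product of two unbounded spaces and thus has no cut point; if instead $A$ is irreducible affine of rank $n+1\ge 3$ and $B$ is spherical, then $W_V$ is virtually $\Z^n$ with $n\ge 2$, which is likewise wide. In both cases $W_V$ is infinite. For the implication $\Rightarrow$: suppose $W_V$ is infinite and wide. A finitely generated wide group is finite or one-ended, since a group with more than one end splits nontrivially over a finite subgroup and hence has a cut point in some asymptotic cone (see e.g.~\cite{drutu-sapir-tree-graded}); thus $W_V$ is one-ended and $(W_V,V)$ satisfies the hypothesis of Corollary~\ref{cor:linear}. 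Since $W_V$ is wide it has linear divergence by Proposition~\ref{prop:wide}, so Corollary~\ref{cor:linear} gives $(W_V,V)=(W_1,S_1)\times(W_2,S_2)$ with either both $W_i$ infinite, or $W_1$ finite and $(W_2,S_2)$ irreducible affine of rank $\ge 3$. In the first case $V=S_1\sqcup S_2$ with the $S_i$ commuting and both nonspherical, and in the second $V=S_1\sqcup S_2$ with $S_1$ spherical and $S_2$ irreducible affine of rank $\ge 3$; either way $V\in\mathcal{F}$, which proves $(\star)$.

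The step needing the most care is the implication $\Rightarrow$ in $(\star)$: the right move is to first reduce to the one-ended case (so Corollary~\ref{cor:linear} applies), rather than attempt a direct case analysis on the irreducible factors of $(W_V,V)$, which forces one to re-derive ``not one-ended implies not wide'' by hand. One should also keep in mind a mild degeneracy: a special subgroup that is wide and \emph{finite} need not be contained in any infinite wide special subgroup, so the bijection in part~(1) is precisely between $\Omega(S)$ and the maximal wide special subgroups that happen to be infinite (equivalently, those $W_V$ with $V$ nonspherical) — this matches the intended statement and is all that is needed in the sequel, where $\Omega(S)$ is used to track the \emph{infinite} thick building blocks. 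Everything else is routine, using only the classification of irreducible affine systems and standard facts about special subgroups of Coxeter groups.
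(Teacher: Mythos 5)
Your proof is correct and takes essentially the same route as the paper, whose one-line proof simply cites Proposition~\ref{prop:wide}, Corollary~\ref{cor:linearIntro}, and the maximality of elements of $\Omega(S)$; you have unwound those references, in particular making explicit the reduction to the $1$-ended case (wide implies finite or $1$-ended) so that Corollary~\ref{cor:linearIntro} can be invoked. Your caveat about finite wide special subgroups is a genuine edge case that the paper's terse proof glosses over, and your resolution---that the intended bijection is with the \emph{infinite} maximal wide special subgroups, consistent with the paper's preceding remark that $W_T$ is $1$-ended for $T\in\Omega(S)$---is exactly what the authors mean and use downstream.
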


\begin{proof}  This follows from Proposition~\ref{prop:wide}, Corollary~\ref{cor:linearIntro} and the maximality of elements of $\Omega(S)$.
\end{proof}

In our generalisation of the set $\Psi$ from~\cite[Definition~3.1]{levcovitz-RACG}, we replace the term ``strip" by ``slab", since for us the special subgroups generated by elements of $\Psi(S)$ will not always be $2$-ended.  

\begin{definition}\label{defn:slab}
Consider the collection of subsets of $S$ of the form $A\times K$ such that all of the following hold:
\begin{enumerate}
\item $A$ is minimal nonspherical; 
\item $K$ is a nonempty spherical subset commuting with $A$; and 
\item there does not exist $T \in \Omega(S)$ such that $A\times K \subseteq T$.
\end{enumerate}
The \emph{slab subsets} $\Psi(S) \subseteq \cP(S)$ are the maximal elements of this collection, with respect to the partial order induced by inclusion.  A \emph{slab subsystem} is a pair $(W_T,T)$ where $T \in \Psi(S)$.
\end{definition}

\begin{remark}
It follows from Remark~\ref{rem:minimalRA} that Definition~\ref{defn:slab} also specialises to Levcovitz's in the right-angled case. 
\end{remark}

\begin{remark}\label{rem:slab}
By Remark~\ref{rem:minimal} and the definition of wide subsets, if $A\times K$ is slab, then $(W_A,A)$ is either  irreducible affine of type $\widetilde A_1$, or is irreducible hyperbolic as classified by Lann\'er (and depicted in Figure~\ref{fig:lanner}).
\end{remark}

\begin{remark}\label{rem:irredAffine} 
If $(W,S)$ is irreducible affine or a Lann\'er hyperbolic system, then $\Psi(S) = \varnothing$, since  $S$ is minimal nonspherical, so $S$ cannot have disjoint commuting subsets $A$ and $K$ with $A$ nonspherical and $K \neq \varnothing$.
\end{remark}

Next, we generalise the Lambda hypergraphs introduced by Levcovitz in the right-angled case \cite[Definition~3.5]{levcovitz-RACG}.  The result will be a sequence of subsets $\Lambda_i(S)$ of $\cP(S)$ defined inductively as follows.

\begin{definition}\label{defn:Lambda}
Define $\Lambda_{-1}(S):=\varnothing$ and $\Lambda_0(S):=\Omega(S)\cup\Psi(S)$.
     For $i \geq 0$, if $T,T'\in \Lambda_i(S)$, define $T\equiv_i T'$ if there are subsets
\[
T=T_0,\, T_1,\, \dots,\, T_n=T' \in \Lambda_i(S),
\]
such that $T_j\cap T_{j+1}$ is nonspherical for each $0\le j<n$. Then $\Lambda_{i+1}(S)$ is defined as the set of all unions of elements of the $\equiv_i$-equivalence classes.  That is, if $\cC_i$ denotes the set of all $\equiv_i$-equivalence classes in $\Lambda_i(S)$, then $\Lambda_{i+1}(S):=\{\bigcup_{T\in C} T\mid C\in\cC_i\}\subseteq\cP(S)$.
\end{definition}

\noindent We will sometimes simplify notation by writing $\Lambda_i$ rather than $\Lambda_i(S)$, when the set $S$ is clear.

We now define hypergraph index for general Coxeter systems.  This generalises~\cite[Definition~3.9]{levcovitz-RACG}.

\begin{definition}\label{defn:hypergraphIndex} 
A Coxeter system $(W,S)$ has \emph{hypergraph index} $h = h(W,S)\in \N\cup\{0\}$ if $S\in\Lambda_h(S)\setminus\Lambda_{h-1}(S)$ and $\Omega(S)\ne\varnothing$.  If such an $h$ does not exist, or if $\Omega(S)=\varnothing$, then $(W,S)$ is said to have \emph{infinite hypergraph index} ($h=h(W,S) = \infty$). 
\end{definition}

{\small
\begin{figure}
\begin{tikzpicture}[double distance=3pt, thick, scale=0.75]

\begin{scope}[xshift=-3cm]
\node[draw=none,minimum size=4cm,regular polygon,regular polygon sides=9] (a) {};
\draw (a.corner 1)--(a.corner 2);
\draw (a.corner 2)--(a.corner 3);
\draw (a.corner 3)--(a.corner 4);
\draw (a.corner 4)--(a.corner 5);
\draw (a.corner 5)--(a.corner 6);
\draw (a.corner 6)--(a.corner 7);
\draw (a.corner 7)--(a.corner 8);
\draw (a.corner 8)--(a.corner 9);
\draw (a.corner 9)--(a.corner 1);

\begin{scope}[semithick]
\draw (a.corner 1)--(a.corner 3);
\draw (a.corner 2)--(a.corner 4);
\draw (a.corner 1)--(a.corner 8);
\draw (a.corner 9)--(a.corner 7);
\end{scope}

\foreach \x in {1,2,...,9}
  \fill (a.corner \x) circle[radius=3pt] node[shift={(\x*360/9+35:0.4)}] {{\small$s_\x$}};
\draw (a.corner 5) node[shift={(7*360/9+60+40:0.77)}] {\footnotesize$5$};
\end{scope} 

\begin{scope}[xshift=5.5cm, yshift=4cm]
\draw (-0.2,0) node {\underline{Wide subsets $\Omega(S)$}\,:};

\begin{scope}[yshift=-1.5cm] 
\draw (-0.358,0) node {$T_1=S\setminus\{s_1,s_5\}$\,:};
\begin{scope}[xshift=1.25cm,yshift=0,scale=1.0] 
\fill (1.866,0.5) circle (2.3pt);
\fill (1.866,-0.5) circle (2.3pt);
\fill (2.732,0) circle (2.3pt);
\draw [thick] (2.732,0)--(1.866,-0.5)--(1.866,0.5)--(2.732,0);
\draw (1.866,-0.8) node {\tiny$s_3$}; 
\draw (1.866,0.8) node {\tiny$s_2$}; 
\draw (2.9,-0.35) node {\tiny$s_4$};
\end{scope}
\draw (5,0) node {$\times$};
\begin{scope}[xshift=6cm,yshift=0cm,scale=1.0] 
\fill (0,0) circle (2.3pt);
\fill (1,0) circle (2.3pt);
\fill (1.866,0.5) circle (2.3pt);
\fill (1.866,-0.5) circle (2.3pt);
\draw [thick] (0,0)--(1,0)--(1.866,0.5);
\draw [thick] (1,0)--(1.866,-0.5)--(1.866,0.5);
\draw (0,-0.35) node {\tiny$s_6$};
\draw (0.85,-0.35) node {\tiny$s_7$};
\draw (1.866,-0.8) node {\tiny$s_8$};
\draw (1.866,0.8) node {\tiny$s_9$};
\end{scope}
\end{scope}

\begin{scope}[yshift=-3.75cm] 
\draw (-0.358,0) node {$T_2=S\setminus\{s_1,s_6\}$\,:};
\begin{scope}[xshift=6cm,yshift=0,scale=1.0] 
\fill (1,0) circle (2.3pt);
\fill (1.866,0.5) circle (2.3pt);
\fill (1.866,-0.5) circle (2.3pt);
\draw [thick] (1,0)--(1.866,-0.5)--(1.866,0.5)--(1,0);
\draw (1.866,-0.8) node {\tiny$s_8$}; 
\draw (1.866,0.8) node {\tiny$s_9$}; 
\draw (1,-0.35) node {\tiny$s_7$};
\end{scope}
\draw (6,0) node {$\times$};
\begin{scope}[xshift=1.25cm,yshift=0,scale=1.0] 
\fill (1.866,0.5) circle (2.3pt);
\fill (1.866,-0.5) circle (2.3pt);
\fill (2.732,0) circle (2.3pt);
\fill (3.732,0) circle (2.3pt);
\draw [thick] (2.732,0)--(1.866,-0.5)--(1.866,0.5)--(2.732,0);
\draw [thick] (2.732,0)--(3.732,0);
\draw (1.866,-0.8) node {\tiny$s_3$}; 
\draw (1.866,0.8) node {\tiny$s_2$}; 
\draw (2.9,-0.35) node {\tiny$s_4$};
\draw (3.8,-0.35) node {\tiny$s_5$};
\end{scope}
\end{scope}

\begin{scope}[yshift=-6cm] 
\draw (0,0) node {$T_3=S\setminus\{s_4,s_8,s_9\}$\,:};
\begin{scope}[xshift=1.25cm,yshift=0,scale=1.0] 
\fill (1.866,0.5) circle (2.3pt);
\fill (1.866,-0.5) circle (2.3pt);
\fill (2.732,0) circle (2.3pt);
\draw [thick] (2.732,0)--(1.866,-0.5)--(1.866,0.5)--(2.732,0);
\draw (1.866,-0.8) node {\tiny$s_2$}; 
\draw (1.866,0.8) node {\tiny$s_1$}; 
\draw (2.9,-0.35) node {\tiny$s_3$};
\end{scope}
\draw (5,0) node {$\times$};
\begin{scope}[xshift=6cm,yshift=0cm,scale=1.0] 
\fill (0,0) circle (2.3pt);
\fill (1,0) circle (2.3pt);
\fill (2,0) circle (2.3pt);
\draw [thick] (0,0)--(1,0);
\draw [thick] (1,0)--(2,0);
\draw (0,-0.35) node {\tiny$s_5$};
\draw (1,-0.35) node {\tiny$s_6$};
\draw (2,-0.35) node {\tiny$s_7$};
\draw (0.5,0.25) node {\tiny$5$};
\end{scope}
\end{scope}

\begin{scope}[yshift=-8.25cm] 
\draw (0,0) node {$T_4=S\setminus\{s_2,s_3,s_7\}$\,:};
\begin{scope}[xshift=6cm,yshift=0,scale=1.0] 
\fill (1,0) circle (2.3pt);
\fill (1.866,0.5) circle (2.3pt);
\fill (1.866,-0.5) circle (2.3pt);
\draw [thick] (1,0)--(1.866,-0.5)--(1.866,0.5)--(1,0);
\draw (1.866,-0.8) node {\tiny$s_9$}; 
\draw (1.866,0.8) node {\tiny$s_1$}; 
\draw (1,-0.35) node {\tiny$s_8$};
\end{scope}
\draw (6.1,0) node {$\times$};
\begin{scope}[xshift=3.25cm,yshift=0cm,scale=1.0] 
\fill (0,0) circle (2.3pt);
\fill (1,0) circle (2.3pt);
\fill (2,0) circle (2.3pt);
\draw [thick] (0,0)--(1,0);
\draw [thick] (1,0)--(2,0);
\draw (0,-0.35) node {\tiny$s_4$};
\draw (1,-0.35) node {\tiny$s_5$};
\draw (2,-0.35) node {\tiny$s_6$};
\draw (1.5,0.25) node {\tiny$5$};
\end{scope}
\end{scope}

\end{scope} 

\begin{scope}[xshift=-3cm, yshift=-6cm]
\draw (-1,0) node {\underline{Slab subsets $\Psi(S)$}\,:};
\draw (5,0) node {$T_5=S\setminus\{s_2,s_3,s_8,s_9\}$\,:};
\begin{scope}[xshift=9cm]
\fill (0,0) circle (2.3pt);
\fill (1,0) circle (2.3pt);
\fill (2,0) circle (2.3pt);
\fill (3,0) circle (2.3pt);
\draw [thick] (0,0)--(1,0);
\draw [thick] (1,0)--(2,0);
\draw [thick] (2,0)--(3,0);
\draw (0,-0.35) node {\tiny$s_4$};
\draw (1,-0.35) node {\tiny$s_5$};
\draw (2,-0.35) node {\tiny$s_6$};
\draw (3,-0.35) node {\tiny$s_7$};
\draw (1.5,0.25) node {\tiny$5$};
\draw (4,0) node {$\times$};
\fill (5,0) circle (2.3pt);
\draw (5,-0.35) node {\tiny$s_1$};
\end{scope}
\end{scope} 

\begin{scope}[xshift=5cm, yshift=-8cm]
\draw (-3.75,0) node {$\Lambda_0=\Omega(S)\cup\Psi(S) = \{\,T_1,\,\,T_2,\,\,T_3,\,\,T_4,\,\,T_5\,\},\quad T_1\equiv_0 T_2$\quad as\quad$T_1\cap T_2\,=\,\,$};

\begin{scope}[xshift=2.75cm,yshift=0,scale=1.0] 
\fill (1.866,0.5) circle (2.3pt);
\fill (1.866,-0.5) circle (2.3pt);
\fill (2.732,0) circle (2.3pt);
\draw [thick] (2.732,0)--(1.866,-0.5)--(1.866,0.5)--(2.732,0);
\draw (1.866,-0.8) node {\tiny$s_3$}; 
\draw (1.866,0.8) node {\tiny$s_2$}; 
\draw (2.9,-0.35) node {\tiny$s_4$};
\end{scope}
\draw (6.25,0) node {$\times$};
\begin{scope}[xshift=6cm,yshift=0,scale=1.0] 
\fill (1,0) circle (2.3pt);
\fill (1.866,0.5) circle (2.3pt);
\fill (1.866,-0.5) circle (2.3pt);
\draw [thick] (1,0)--(1.866,-0.5)--(1.866,0.5)--(1,0);
\draw (1.866,-0.8) node {\tiny$s_8$}; 
\draw (1.866,0.8) node {\tiny$s_9$}; 
\draw (1,-0.35) node {\tiny$s_7$};
\end{scope}

\draw (-3.75,-1.75) node {$\Lambda_1 = \{\,T_6 := T_1\cup T_2,\,\,T_3,\,\,T_4,\,\,T_5\,\},\quad T_6\equiv_1 T_5$\quad as\quad $T_6\cap T_5\,=\,\,$};
\begin{scope}[xshift=4cm,yshift=-1.75cm]
\fill (0,0) circle (2.3pt);
\fill (1,0) circle (2.3pt);
\fill (2,0) circle (2.3pt);
\fill (3,0) circle (2.3pt);
\draw [thick] (0,0)--(1,0);
\draw [thick] (1,0)--(2,0);
\draw [thick] (2,0)--(3,0);
\draw (0,-0.35) node {\tiny$s_4$};
\draw (1,-0.35) node {\tiny$s_5$};
\draw (2,-0.35) node {\tiny$s_6$};
\draw (3,-0.35) node {\tiny$s_7$};
\draw (1.5,0.25) node {\tiny$5$};
\end{scope}

\draw (-1,-3) node {$\Lambda_2=\{\,S = T_6\cup T_5,\,\,T_3,\,\,T_4\,\}.$};
\end{scope}

\end{tikzpicture}
\caption{\small{The Dynkin diagram for a Coxeter system $(W,S)$ with hypergraph index $2$ is shown in the top left, and the remainder of this figure shows the wide subsets, slab subsets, and the sets $\Lambda_0$, $\Lambda_1$ and $\Lambda_2$, with $S \in \Lambda_2 \setminus \Lambda_1$. As per our convention, a non-edge corresponds to $m_{s_is_j}=2$ and an edge without a label corresponds to $m_{s_is_j}=3$.  
The label $m_{s_5s_6}=5$ is written explicitly.}
\label{fig:hi2_9}}
\end{figure}
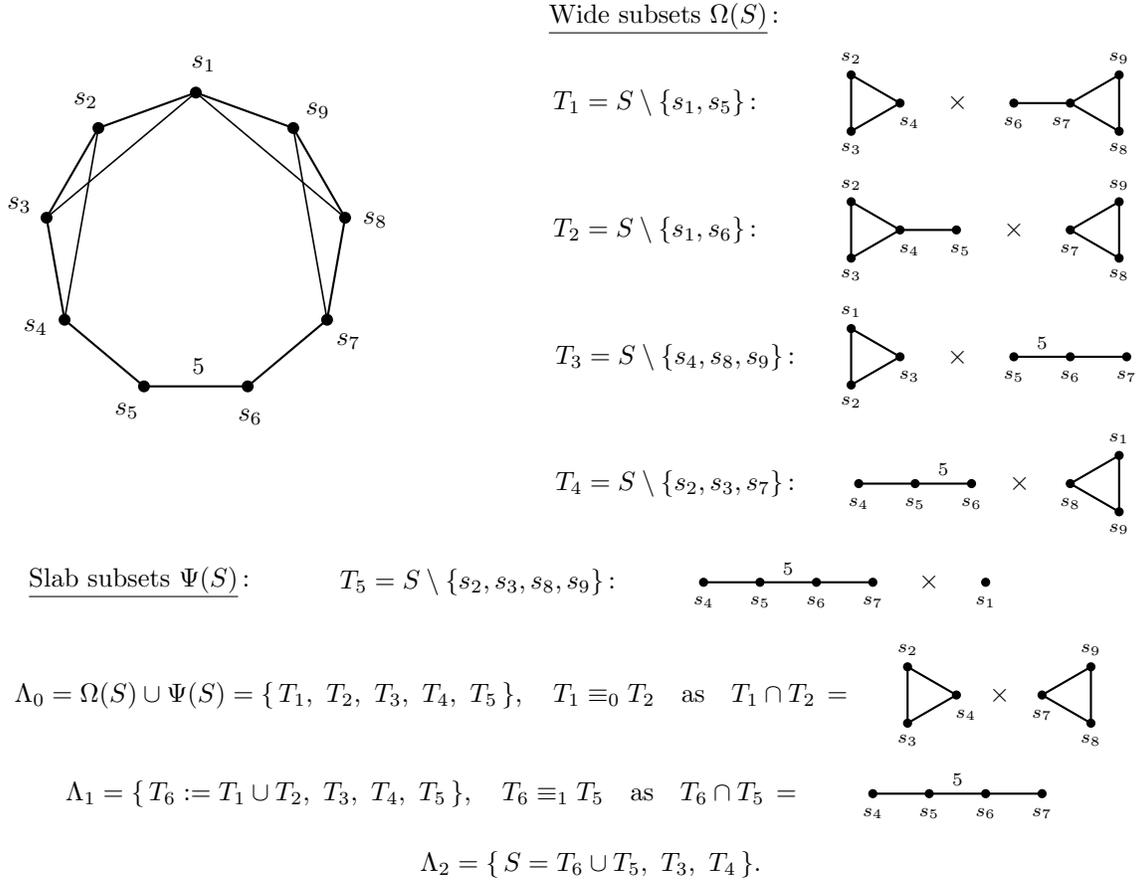
} 

\begin{remark}\label{rem:irredHypergraph}
If $(W,S)$ is irreducible affine of rank $\geq 3$, then $\Omega(S) = \{ S \}$ (and $\Psi(S) = \varnothing$), so $\Lambda_i(S) = \{S\}$ for all $i \geq 0$, hence $h(W,S) = 0$.
 However if $(W,S)$ is of type $\widetilde{A}_1$ or is a Lann\'er hyperbolic system, then $h(W,S)=\infty$, since $\Omega(S) = \varnothing$ in these  cases.
\end{remark}

\begin{example}\label{ex:Hypergraph-example}
Figure~\ref{fig:hi2_9} gives an example of a Coxeter system $(W,S)$ with hypergraph index $2$. Here, the set of wide subsets $\Omega(S)$ has four elements: $T_1$ and $T_2$ have the form $A\times B$ with $A$ and $B$ both nonspherical (and $A$ of type $\widetilde A_2$), while $T_3$ and $T_4$ both have the form (irreducible affine)$\times$(spherical), of type $\widetilde A_2\times H_3$. The set of slab subsets $\Psi(S)$ has a single element $A\times K$, where $(W_A,A)$ is a Lann\'er hyperbolic system of type\,\ 
\begin{tikzpicture}[thick,scale=0.75]
\fill (0,0) circle (2.3pt);
\fill (1,0) circle (2.3pt);
\fill (2,0) circle (2.3pt);
\fill (3,0) circle (2.3pt);
\draw (0,0)--(1,0)--(2,0)--(3,0);
\draw (1.5,0.25) node {\tiny$5$};
\end{tikzpicture}
\ and $K =\{s_1\}$.
This example shows that an element of $\Psi(S)$ can participate in forming the hypergraph index in an essential way.
\end{example}

We next establish some technical results concerning the sets  $\Lambda_i(S)$.  
\begin{lemma}\label{lem:widenotinslab}
A slab subset cannot contain a wide subset.
\end{lemma}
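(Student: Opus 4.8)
The plan is to argue by contradiction: suppose $P = A' \times K$ is a slab subset which contains a wide subset $T = A \times B \in \Omega(S)$, so that $T \subseteq P$. First I would record the structure of $P$. Since $A'$ is minimal nonspherical, any subset of $S$ commuting with $A'$ is disjoint from $A'$ (as noted in the discussion before Remark~\ref{rem:minimalRA}), so $P$ is the disjoint union $A' \sqcup K$ with $K$ spherical and $[A',K]=1$. The key preliminary observation is: \emph{every nonspherical subset $X \subseteq P$ contains $A'$.} Indeed, writing $X = (X \cap A') \sqcup (X \cap K)$, if $X \cap A'$ were a proper subset of $A'$ it would be spherical by minimality of $A'$; and $X \cap K$ is spherical and commutes with $X \cap A'$ (since all of $K$ commutes with $A'$), so $X = (X\cap A') \times (X\cap K)$ would be spherical, a contradiction. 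Hence $X \cap A' = A'$.

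Next I would apply this to $T$, which is nonspherical since $W_T$ is infinite (in case (1) because $W_A$ is infinite, in case (2) because $W_A$ is infinite affine), and which satisfies $T\subseteq P$. Split according to the two forms of $T = A\times B$ in Definition~\ref{defn:wide}. In case (1), $A$ and $B$ are both nonspherical and disjoint, and both lie in $P$; the observation applied to each gives $A' \subseteq A$ and $A' \subseteq B$, so $A' \subseteq A\cap B = \varnothing$, contradicting that $A'$ is nonempty. In case (2), $A$ is irreducible affine of cardinality $\ge 3$, hence nonspherical, and $A\subseteq P$, so $A' \subseteq A$; writing $A = A' \sqcup (A\cap K)$, the subsets $A'$ and $A\cap K$ commute (as $K$ commutes with $A'$) and $A' \ne \varnothing$, so irreducibility of $A$ forces $A\cap K = \varnothing$, i.e.\ $A = A'$. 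Thus $A'$ is irreducible affine of rank $\ge 3$, and then $P = A' \times K$, with $K$ nonempty spherical, is exactly a subset of the form appearing in Definition~\ref{defn:wide}(2); hence $P$ is contained in some maximal such subset $T' \in \Omega(S)$, contradicting condition (3) in the definition of a slab subset (Definition~\ref{defn:slab}).

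I do not expect a serious obstacle: the argument is combinatorial bookkeeping driven by minimality of $A'$ together with the disjointness/commuting constraints built into the notation $A\times B$. The one place that needs a little care is case (2), where one must rule out $A'$ itself being irreducible affine of rank $\ge 3$; this is precisely where the ``not contained in a wide subset'' clause of Definition~\ref{defn:slab} is used (equivalently, one could cite the remark following that definition, that the minimal nonspherical factor of a slab subset is of type $\widetilde A_1$ or Lannér hyperbolic, never irreducible affine of rank $\ge 3$). Everything else is forced.
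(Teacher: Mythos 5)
Your proof is correct and follows essentially the same approach as the paper's: both arguments hinge on the minimality of the slab's nonspherical factor to force it into any nonspherical subset of the slab, and then derive a contradiction from disjointness (your case (1), the paper's case where both wide factors are nonspherical) or from condition (3) of the slab definition (your case (2), the paper's observation that $(W_A,A)$ cannot be irreducible affine of rank $\ge 3$). The only stylistic difference is that you package the key step as a reusable observation ("every nonspherical subset of $P$ contains $A'$") and organize the cases by the two clauses of Definition~\ref{defn:wide}, whereas the paper picks a nonspherical factor $B$ WLOG and branches on whether $B$ is irreducible affine.
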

\begin{proof}
Let $ A\times K \in \Psi(S)$, where $A$ is minimal nonspherical and $K$ is spherical and 
nonempty. Note that $A \times K$ is not  itself a wide subset, and so in particular $(W_A,A)$ is not irreducible affine of rank $\ge 3$.
Now suppose $B\times C \subseteq A \times K$ for some  $B\times C \in \Omega(S)$, 
where (without loss of generality) $B$ is nonspherical.  Since $A$ 
 is minimal nonspherical,   $A \cap B$ is either spherical or equal to $A$.  If  $A \cap B$ is  spherical then, since $K \cap B$ is spherical, we have that $B  = ( A\cap B) \cup (K \cap B)$ is spherical, a contradiction.  
So $A \cap B = A$, that is, $A \subseteq B$.  If $(W_B,B)$ is irreducible affine of rank $\geq 3$, then $B$ is minimal nonspherical, so $A=B$, 
which contradicts $(W_A,A)$ being not irreducible affine of rank $\geq 3$.  Thus $B$ and $C$ are both nonspherical, and since $A \subseteq B$, it follows that $C \subseteq K$, which contradicts $K$ being spherical. 
\end{proof}

The following lemma restricts the form of the sequences $T_0,T_1,\dots,T_n$ which can appear in Definition~\ref{defn:Lambda}, and is related to~\cite[Remark 3.4]{levcovitz-RACG}.

\begin{lemma}\label{lem:intersection}  Let $(W,S)$ be a Coxeter system.  Let $T$ and $T'$ be elements 
of the form $T = A \times K$ and $T' = A' \times K'$, where $A$ and $A'$ are minimal nonspherical (possibly irreducible affine of rank $\geq 3$, as in Definition~\ref{defn:wide}), and $K$ and $K'$ are spherical.  If $T \cap T'$ is nonspherical then $T = T'$.
\end{lemma}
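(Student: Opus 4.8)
The plan is to deduce $T=T'$ in two stages, first $A=A'$ and then $K=K'$, reading the hypothesis as saying that $T$ and $T'$ lie in $\Lambda_0(S)$, i.e.\ each is a slab subset or a wide subset of type~(2) in the sense of Definitions~\ref{defn:wide} and~\ref{defn:slab}. For the first stage, set $X=T\cap T'$. Since $A$ and $K$ are disjoint and commute, $X=(X\cap A)\sqcup(X\cap K)$ with $W_X\cong W_{X\cap A}\times W_{X\cap K}$; if $X\cap A\subsetneq A$ then $X\cap A$ is spherical by minimality of $A$ and $X\cap K\subseteq K$ is spherical, so $W_X$ is finite, contradicting that $X$ is nonspherical. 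Hence $A\subseteq X\subseteq T'=A'\sqcup K'$, and the same product argument applied to $A=(A\cap A')\sqcup(A\cap K')$ forces $A\subseteq A'$; by symmetry $A=A'$. This stage is the expected ``minimality collapses a product'' computation and should be routine.

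The substantive stage is $K=K'$, and here the maximality built into Definitions~\ref{defn:wide} and~\ref{defn:slab} --- especially condition~(3) of the slab definition --- is essential. First I would show $K\cup K'$ is spherical: otherwise, since $K,K'\subseteq A^\perp$, the set $A\times(K\cup K')$ meets the requirements for a wide subset of type~(1), so it lies in some $U\in\Omega(S)$, whence $T=A\times K\subseteq U$, contradicting condition~(3) if $T$ is a slab and contradicting that $\Omega(S)$ is an antichain if $T$ is wide (note $T\subsetneq U$ since $K\cup K'\not\subseteq K$). Next, since $A=A'$, the type of $T$ and $T'$ is forced and common: if $A$ is irreducible affine of rank $\ge 3$ then $A\times K$ is itself a wide-type-(2) candidate, hence sits inside an element of $\Omega(S)$ and cannot be a slab, so both $T,T'$ are wide; otherwise $A\times K$ is no wide candidate at all, so both are slabs. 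In either case $A\times(K\cup K')$ is a legitimate candidate of this same kind containing $T$ --- for the slab case, any wide subset containing $A\times(K\cup K')$ would contain $T$, so condition~(3) persists --- so maximality of $T$ yields $K\cup K'=K$, i.e.\ $K'\subseteq K$, and symmetrically $K\subseteq K'$. Thus $K=K'$ and $T=A\cup K=A'\cup K'=T'$.

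The main obstacle is this second stage: one must track which collection ($\Omega(S)$ versus the slab collection) each of $T$ and $T'$ is a maximal element of, check that these coincide once $A=A'$, and verify that enlarging a slab candidate by a spherical subset of $A^\perp$ does not destroy condition~(3). The first stage, despite looking like the ``geometric'' heart of the statement, is the easy part.
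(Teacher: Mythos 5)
Your proof is correct and follows essentially the same route as the paper's: minimality of $A$ and $A'$ collapses the product and forces $A=A'$, then $K\cup K'$ is shown to be spherical by appealing to the wide set of type (1) it would otherwise generate, and maximality of $T$ in the appropriate collection gives $K'\subseteq K$ (symmetrically $K=K'$). Your explicit observation that $A=A'$ forces $T$ and $T'$ to be of the same type (both wide of type (2) or both slab), and the corresponding two-case maximality argument, is slightly more thorough than the paper's phrasing, which states the maximality step only in the slab case and leaves the wide case implicit.
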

\begin{proof}  Since $T = A \sqcup K$ and $T' = A' \sqcup K'$, we have
\[
T \cap T' = (A \cap A') \sqcup (A \cap K') \sqcup (K \cap A') \sqcup (K \cap K').
\]
Now $K$ and $K'$ are spherical, so if $T \cap T'$ is nonspherical then $A \cap A'$ must be nonspherical.  But since $A$ and $A'$ are both minimal nonspherical, this implies $A \cap A' = A = A'$.  
Now $A = A'$ commutes with $K \cup K'$.
If $K \cup K'$ is nonspherical, then $A \times K \subsetneq A \times (K \cup K') \subseteq T''$ for some $T'' \in \Omega(S)$.  This violates the maximality in Definition~\ref{defn:wide} if 
 $A\times K$ is wide, and violates condition (3) in Definition~\ref{defn:slab} if it is slab. 
  Thus $K \cup K'$ is spherical, and $A \times (K \cup K')$ satisfies conditions (1)--(3) in Definition~\ref{defn:slab} (since $A \times K$ does). By maximality of $A \times K$, we have 
$A \times K = A\times (K \cup K')$, hence 	
$K' \subseteq K$.  Similarly, $K \subseteq K'$. It follows that $K = K'$ and $T = T'$ as required. 
\end{proof}

We will need the following corollary in Section~\ref{sec:hgiduplex}.  

\begin{cor}\label{cor:non-slab} Assume that $\Omega(S) \neq \varnothing$.
For all $i\ge 0$, let $X \in \Lambda_{i+1}(S)$ and let $\mathcal C  $ be an $\equiv_{i}$-equivalence class  whose union is $X$.  Then $X\in \Lambda_{i+1}(S)\setminus \Psi(S) $ if and only if $\mathcal C$ contains an element of $\Lambda_{i}(S) \setminus \Psi(S)$. 
\end{cor}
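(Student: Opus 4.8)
The plan is to prove both implications by separating the case where the given $\equiv_i$-equivalence class $\mathcal{C}$ is a singleton from the case $|\mathcal{C}| \geq 2$, feeding in two small auxiliary facts extracted from Lemmas~\ref{lem:widenotinslab} and~\ref{lem:intersection}. The first, which I will call $(\star)$, is: \emph{if $\mathcal{C}$ is an $\equiv_i$-equivalence class in $\Lambda_i(S)$ with $|\mathcal{C}| \geq 2$, then some member of $\mathcal{C}$ is not a slab subset.} To prove $(\star)$ I would pick distinct $T, T' \in \mathcal{C}$ together with a chain $T = T_0, T_1, \dots, T_n = T'$ in $\Lambda_i(S)$ with each $T_j \cap T_{j+1}$ nonspherical; every $T_j$ then lies in $\mathcal{C}$. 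If all the $T_j$ were slab subsets, then for each $j$ the sets $T_j$ and $T_{j+1}$ would be of the form (minimal nonspherical)$\times$(spherical) with nonspherical intersection, so Lemma~\ref{lem:intersection} forces $T_j = T_{j+1}$; hence $T = T'$, a contradiction. This argument is uniform in $i$, since for every $i$ the set $\Psi(S)$ \emph{is} the set of slab subsets, and a slab subset $A \times K$ has $A$ minimal nonspherical (which by Remark~\ref{rem:minimal} is exactly what Lemma~\ref{lem:intersection} allows) and $K$ spherical.

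Next I would prove, by induction on $i$, the auxiliary claim $(\dagger)$: \emph{every element of $\Lambda_i(S)$ is either a slab subset or contains some element of $\Omega(S)$.} The base case $i = 0$ is immediate from $\Lambda_0(S) = \Omega(S) \cup \Psi(S)$. For the inductive step, write $X \in \Lambda_{i+1}(S)$ as the union of an $\equiv_i$-class $\mathcal{C}$ of $\Lambda_i(S)$: if $\mathcal{C} = \{T\}$ then $X = T \in \Lambda_i(S)$ and the inductive hypothesis applies directly; and if $|\mathcal{C}| \geq 2$, then $(\star)$ provides $Y \in \mathcal{C}$ which is not a slab subset, so by the inductive hypothesis $Y$ contains an element of $\Omega(S)$, hence so does $X \supseteq Y$.

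With $(\star)$ and $(\dagger)$ in hand I would finish as follows (recall $X \in \Lambda_{i+1}(S)$ is given, so the conclusion ``$X \in \Lambda_{i+1}(S) \setminus \Psi(S)$'' is just ``$X \notin \Psi(S)$''). For the ``if'' direction: assume $\mathcal{C}$ contains $Y \in \Lambda_i(S) \setminus \Psi(S)$. If $\mathcal{C} = \{Y\}$ then $X = Y \notin \Psi(S)$. If $|\mathcal{C}| \geq 2$ then, by $(\dagger)$, $Y$ contains some $Z \in \Omega(S)$, so $X \supseteq Z$; were $X$ a slab subset it would contain the wide subset $Z$, contradicting Lemma~\ref{lem:widenotinslab}, so $X \notin \Psi(S)$. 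For the ``only if'' direction: assume $X \in \Lambda_{i+1}(S) \setminus \Psi(S)$. If every member of $\mathcal{C}$ were a slab subset, then $(\star)$ would force $\mathcal{C} = \{T\}$ with $T \in \Psi(S)$, whence $X = T \in \Psi(S)$, a contradiction; so $\mathcal{C}$ contains some $Y \notin \Psi(S)$, and $Y \in \Lambda_i(S)$ because $\mathcal{C} \subseteq \Lambda_i(S)$.

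The main obstacle is not conceptual but a matter of care in the bookkeeping around $(\star)$: one must observe that the chain $T_0, \dots, T_n$ witnessing $T \equiv_i T'$ lies entirely inside the class $\mathcal{C}$, so that under the assumption ``$\mathcal{C} \subseteq \Psi(S)$'' one may legitimately apply Lemma~\ref{lem:intersection} to consecutive members, and one must check that slab subsets really do satisfy the hypotheses of that lemma. The hypothesis $\Omega(S) \neq \varnothing$ plays no essential role in the argument above, but it is harmless to retain it.
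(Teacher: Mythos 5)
Your proof is correct and follows essentially the same path as the paper's: your $(\star)$ is exactly the paper's implicit use of Lemma~\ref{lem:intersection} to force a class of slab subsets to be a singleton, and your $(\dagger)$ is the paper's inductive observation that every element of $\Lambda_j(S)\setminus\Psi(S)$ contains a wide subset, which then combines with Lemma~\ref{lem:widenotinslab} exactly as you do. Your parenthetical remark that the hypothesis $\Omega(S)\neq\varnothing$ is not actually load-bearing is also right (and the paper's invocation of it in the inductive step is inessential): if $\Omega(S)=\varnothing$ then $\Lambda_0(S)=\Psi(S)$ and Lemma~\ref{lem:intersection} forces $\Lambda_i(S)=\Psi(S)$ for all $i$, so both sides of the equivalence are vacuously false.
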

\begin{proof}
We first prove that for each $i\ge 0$, if $X \in \Lambda_{i+1}(S)\setminus \Psi(S)$, then $\mathcal C$ contains an element of $\Lambda_{i}(S) \setminus \Psi(S)$.  If not, then $\mathcal C$ consists only of elements from $\Psi(S)$.  Now Lemma~\ref{lem:intersection} implies that $\mathcal C$ has a unique element, which is then necessarily $X$. Thus $X \in \Psi(S)$, a contradiction. 

To establish the other direction, we first observe that an easy inductive proof using the statement proved in the previous paragraph and the fact that $\Omega(S)$ is nonempty shows that for all $j\ge 0$, every element of $\Lambda_{j}(S) \setminus \Psi(S)$ contains a wide subset.  
Thus if $\mathcal C$ contains an element $T$ of $\Lambda_{i}(S) \setminus \Psi(S)$, then $X$ contains a wide subset (since $T$ does) and therefore cannot be in $\Psi(S)$ by Lemma~\ref{lem:widenotinslab}.
\end{proof}

It will sometimes be convenient to think of the elements of $\cup_i\Lambda_i(T)$, where $T \subseteq S$, as the vertices of a directed forest, as follows.  For $i \geq 0$, there is a directed edge from $A \in \Lambda_{i}(T)$ to $B \in \Lambda_{i+1}(T)$ if $A$ is one of the elements in an $\equiv_i$-equivalence class whose union is $B$.
Then for $i < j$ we say that $A \in \Lambda_i(T)$ \emph{feeds into} $B \in \Lambda_j(T)$ if there is a directed path (i.e., a path in which the edges are coherently oriented) from $A$ to $B$.  
We caution that even though $A \subseteq B$ may hold set-theoretically for some 
$A \in \Lambda_i(T)$ and $B \in \Lambda_j(T)$, it may not be true that $A$ feeds into $B$ (see for example~\cite[Remark~3.3.1]{levcovitz-RACG}).

  We now record for later use some monotonicity results.

\begin{lemma}\label{lem:monotone} Let $(W,S)$ be a Coxeter system and let $T\subset T'$ be subsets of $S$.  Let $i \in \N$.
\begin{enumerate}
\item If $T \in \Lambda_i(S)$ then $T$ feeds into some element of $\Lambda_{i+1}(S)$.
\item Given $Y \in \Lambda_i(T)$, there exists an element $Y' \in \Lambda_i (T')$ such that the following conditions hold:
\begin{enumerate}
\item $Y'$ contains $Y$; and 
\item given $X \in \Lambda_j(T)$, with $j<i$, such that $X$ feeds into $Y$, every $X' \in \Lambda_j(T')$ containing $X$ feeds into $Y'$. 
\end{enumerate}
\end{enumerate}
\end{lemma}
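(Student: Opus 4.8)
The two parts are of quite different character. Part (1) is essentially a triviality: if $T \in \Lambda_i(S)$, then $T$ is a member of its own $\equiv_i$-equivalence class $C$ (since $\equiv_i$ is reflexive), and by Definition~\ref{defn:Lambda} the union $\bigcup_{T'' \in C} T''$ is an element of $\Lambda_{i+1}(S)$; by the definition of ``feeds into'' there is a directed edge from $T$ to this union, so $T$ feeds into it. I would state this in one or two sentences.

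Part (2) is the substance, and I would prove it by induction on $i$. For the base case $i=0$, recall $\Lambda_0(T) = \Omega(T)\cup\Psi(T)$. Given $Y\in\Lambda_0(T)$, I need to produce $Y'\in\Lambda_0(T')$ containing $Y$; condition (b) is vacuous when $i=0$ since there is no $j<0$. The key point here is a monotonicity statement for wide and slab subsets under enlarging the ambient generating set: if $Y=A\times B\in\Omega(T)$ is wide, then $A\times B$ still satisfies condition (1) or (2) of Definition~\ref{defn:wide} as a subset of $T'$ (sphericity, irreducibility, affineness of a subset depend only on the subset itself, not on the ambient set), so it is contained in some maximal such subset $Y'\in\Omega(T')$. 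If $Y=A\times K\in\Psi(T)$ is slab, then $A$ is still minimal nonspherical and $K$ still nonempty spherical commuting with $A$ in $T'$; the only subtlety is condition (3) of Definition~\ref{defn:slab} — it could be that in the larger set $T'$ there is now a wide subset containing $A\times K$, in which case $A\times K$ is absorbed into some $Y'\in\Omega(T')$ rather than landing in $\Psi(T')$. Either way we get $Y'\in\Lambda_0(T')$ with $Y\subseteq Y'$.

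For the inductive step, suppose the claim holds for $i-1$ and let $Y\in\Lambda_i(T)$ with $i\ge1$. Write $Y=\bigcup_{Z\in C}Z$ for an $\equiv_{i-1}$-equivalence class $C\subseteq\Lambda_{i-1}(T)$. For each $Z\in C$, the inductive hypothesis gives $Z'\in\Lambda_{i-1}(T')$ with $Z\subseteq Z'$ and satisfying the feeding-in compatibility (b) at level $i-1$. I claim all these $Z'$ lie in a single $\equiv_{i-1}$-equivalence class in $\Lambda_{i-1}(T')$: if $Z_1\cap Z_2$ is nonspherical for $Z_1,Z_2\in C$, then $Z_1'\cap Z_2'\supseteq Z_1\cap Z_2$ is also nonspherical, so $Z_1'\equiv_{i-1}Z_2'$; transitivity of $\equiv_{i-1}$ through the chain realizing $Z_1\equiv_{i-1}Z_2$ in $C$ then shows all the $Z'$ are $\equiv_{i-1}$-equivalent. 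Let $C'$ be the $\equiv_{i-1}$-class in $\Lambda_{i-1}(T')$ containing all the $Z'$, and set $Y' := \bigcup_{W\in C'}W \in \Lambda_i(T')$. Then $Y'\supseteq\bigcup_{Z\in C}Z' \supseteq \bigcup_{Z\in C}Z = Y$, giving (a). For (b): if $X\in\Lambda_j(T)$ with $j<i$ feeds into $Y$, then there is a directed path $X = X_0 \to X_1 \to \dots \to X_{i-j} = Y$; in particular $X$ feeds into some $Z\in C$ (namely $X_{i-j-1}\in\Lambda_{i-1}(T)$ lies in $C$, and $X$ feeds into it). Let $X'\in\Lambda_j(T')$ contain $X$. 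By the inductive hypothesis applied to $Z$ and $Z'$, $X'$ feeds into $Z'$; since $Z'\in C'$ there is a directed edge $Z'\to Y'$, so $X'$ feeds into $Y'$. This closes the induction.

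The main obstacle I anticipate is twofold. First, the base case requires being careful that ``wide'' and ``slab'' behave monotonically under enlarging $T$ to $T'$ — in particular the possibility that a slab subset of $T$ gets absorbed into a wide subset of $T'$ must be handled, but this is fine because we only need $Y'\in\Lambda_0(T')$, not $Y'\in\Psi(T')$. Second, and more delicate, is verifying that the inductively chosen representatives $Z'$ genuinely all fall into one $\equiv_{i-1}$-class in the \emph{larger} set $T'$: the containment $Z_1'\cap Z_2'\supseteq Z_1\cap Z_2$ makes the forward implication easy, but one should make sure nothing forces these into \emph{different} classes — which cannot happen, since equivalence classes partition $\Lambda_{i-1}(T')$ and we have exhibited a chain of nonspherical intersections connecting them. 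The compatibility condition (b) threading through the induction is bookkeeping-heavy but conceptually routine once the directed-forest language is in place.
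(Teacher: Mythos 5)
Your overall strategy is the same as the paper's: part (1) is immediate from the definitions, and part (2) goes by induction on $i$, using the observation that if $Z_1\cap Z_2$ is nonspherical and $Z_1\subseteq Z_1'$, $Z_2\subseteq Z_2'$, then $Z_1'\cap Z_2'$ is nonspherical. Your base case discussion is correct (the possible absorption of a slab of $T$ into a wide subset of $T'$ is the right thing to worry about and is handled correctly), and your argument that the chosen representatives all lie in a single $\equiv_{i-1}$-class of $\Lambda_{i-1}(T')$ is sound.

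However, your proof of (b) in the inductive step has a concrete gap in the boundary case $j=i-1$. You write the directed path $X=X_0\to\dots\to X_{i-j}=Y$ and set $Z=X_{i-j-1}$, then invoke the inductive hypothesis to conclude that $X'$ feeds into $Z'$. But when $j=i-1$ the path has length one and $Z=X_{i-j-1}=X_0=X$, so ``$X$ feeds into $Z$'' is vacuous (there is no directed path of positive length from $X$ to itself), and the inductive hypothesis --- which concerns feeding in from strictly lower levels --- simply does not apply. What you actually need in this case is a separate observation: if $X\in\Lambda_{i-1}(T)$ is one of the $Y_k$'s and $X'\in\Lambda_{i-1}(T')$ contains $X$, then $X'\cap Z'\supseteq X$ is nonspherical (where $Z'$ is your chosen representative for $X$), so $X'\equiv_{i-1}Z'$ in $\Lambda_{i-1}(T')$, hence $X'\in C'$ and $X'$ feeds into $Y'$. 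This is exactly the kind of argument you already used to show the $Z'$'s are pairwise equivalent, so it is a small fix, but as written your proof does not cover this case. The paper sidesteps the issue by defining its auxiliary collection as \emph{all} elements of $\Lambda_{i-1}(T')$ containing some $Y_j$ (rather than one chosen $Z'$ per $Y_j$ as you do), which makes the $j=i-1$ case true by construction; your leaner choice of a single representative per $Y_j$ works just as well but requires this extra line.
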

\begin{proof}
The proof of (1) follows from the definition of the sets $\Lambda_i(S)$.  

For (2), the proof is by induction on $i$.  In the base case $i=0$, (a) follows easily from the definition of $\Lambda_0$, and (b) is vacuously true.   Assume the conclusion is true for $i-1$ and let $Y \in \Lambda_i(T)$.  Let $ Y_1, \dots, Y_n$ be the elements of an $\equiv_{i-1}$-equivalence class in $\Lambda_{i-1}(T)$ whose union is $Y$ (so in particular, each $Y_j$ feeds into $Y$).  Consider the set  $\mathcal Y \subset \Lambda_{i-1}(T')$ defined~by 
\[
\mathcal Y = \{Z \mid Z \in \Lambda_{i-1}(T'), \;Y_j \subseteq  Z\text{ for some }1 \le j \le n\}.
\]
Note that $\mathcal Y$ is nonempty by the induction hypothesis. 

We claim that the elements of $\mathcal Y$ are pairwise equivalent in $\Lambda_{i-1}(T')$.  To see this, let $Z_1, Z_2 \in \mathcal Y$, and suppose $Y_j \subseteq Z_1$ and $Y_k \subseteq Z_2$.  If $j=k$, then since $Y_j$ is nonspherical, $Z_1$ and $Z_2$ are equivalent.  Otherwise, since $Y_j$ and $Y_k$ are equivalent in $\Lambda_{i-1}(T)$, there is a sequence $Y_j = X_1, \dots, X_l = Y_k$ in $\Lambda_{i-1}(T)$ such that 
$\{X_1, \dots, X_l\} \subset \{Y_1, \dots, Y_n\}$  and 
$X_s \cap X_{s+1}$ is nonspherical for $1 \le s < l$.  By the induction hypothesis, there is a sequence $X_1', \dots, X_l'$ in $\Lambda_{i-1}(T')$ such that $X_s\subseteq X_s'$  for $1 \le s \le l$.  
Now for $1 \le s< l$, the intersection $X'_s \cap X'_{s+1}$ contains $X_s \cap X_{s+1}$, and so 
$X_s'$ is equivalent to $X_{s+1}'$ in $\Lambda_{i-1}(T')$.  Furthermore, $Z_1$ is equivalent to $X_1'$ and 
$Z_2$ is equivalent to $X_l'$.  Thus $Z_1$ and $Z_2$ are equivalent in $\Lambda_{i-1}(T')$.

We have shown that $\mathcal Y$ is a subset of an equivalence class of $\Lambda_{i-1}(T')$.  We now define $Y'$
to be the union of all the sets in this equivalence class.  
Then  $Y' \in \Lambda_i(T')$ and $Y \subseteq Y'$ by construction, i.e.~(a) holds. 
To prove (b), let 
$X\in \Lambda_j(T)$, with $j<i$, be an element which feeds into $Y$, and let $X'$ be any element of $\Lambda_j(T')$ containing $X$.  
 If $j=i-1$, the claim is true by construction.  If $j<i-1$, then $X$ feeds into $Y_k$ for some $1 \le k \le n$. By the induction hypothesis, $X'$ feeds into an element $Y_k'$ containing $Y_k$, and by construction, $Y_k'$ feeds into $Y'$.  Thus $X'$ feeds into $Y'$. 
\end{proof}

Next, we make some first observations relating to hypergraph index itself (Definition~\ref{defn:hypergraphIndex}).  The following three results will allow us to focus on infinite irreducible Coxeter systems when considering hypergraph index below. 

\begin{lemma}\label{lem:finite} Let $(W,S)$ be a Coxeter system.  If $W$ is finite then $h(W,S) = \infty$.
\end{lemma}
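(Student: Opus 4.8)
\textbf{Proof proposal for Lemma~\ref{lem:finite}.}  The plan is to reduce immediately to the condition $\Omega(S)=\varnothing$, which by Definition~\ref{defn:hypergraphIndex} forces $h(W,S)=\infty$.  So the only thing to check is that a finite Coxeter group has no wide subsets.

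First I would observe that if $W$ is finite, then for every $T\subseteq S$ the special subgroup $W_T$ is a subgroup of the finite group $W$, hence finite; that is, every subset of $S$ is spherical.  In particular $S$ contains no nonspherical subset, and (since every irreducible affine Coxeter system is infinite) no irreducible affine subset of any rank.  Therefore the collection of subsets of $S$ described in Definition~\ref{defn:wide} is empty: neither case (1) (requiring $A$ and $B$ both nonspherical) nor case (2) (requiring $A$ irreducible affine of cardinality $\geq 3$) can be met.  Consequently $\Omega(S)=\varnothing$.  Alternatively, one may simply invoke Lemma~\ref{lem:hypOmega}: a finite group is trivially hyperbolic, so $\Omega(S)=\varnothing$.

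By Definition~\ref{defn:hypergraphIndex}, whenever $\Omega(S)=\varnothing$ the system $(W,S)$ has infinite hypergraph index, so $h(W,S)=\infty$, as claimed.  There is no genuine obstacle here; the only point requiring care is the (routine) observation that finiteness of $W$ rules out both the nonspherical and the irreducible affine building blocks used to define $\Omega(S)$.
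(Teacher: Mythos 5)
Your proposal is correct and follows exactly the same approach as the paper's one-line proof, which also observes $\Omega(S)=\varnothing$ and concludes $h(W,S)=\infty$ from Definition~\ref{defn:hypergraphIndex}; you simply spell out the routine verification that finiteness of $W$ makes every subset of $S$ spherical, hence rules out both cases in Definition~\ref{defn:wide}.
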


\begin{proof}
In this case $\Omega(S) = \varnothing$ and so $h(W,S) = \infty$.
\end{proof}

\begin{lemma}\label{lem:reducible1} Let $(W,S)$ be a Coxeter system.  If $(W,S) = (W_1,S_1) \times (W_2,S_2)$ with $W_1$ and $W_2$ both infinite, then $h(W,S) = 0$.
\end{lemma}

\begin{proof}
The set $S = S_1 \times S_2$ is in $\Omega(S)$, so $h(W,S) = 0$. 
\end{proof}

\begin{prop}\label{prop:reducible2}  Let $(W,S)$ be a Coxeter system.  If $(W,S) = (W_1,S_1) \times (W_2,S_2)$ with $(W_1,S_1)$ irreducible and infinite and $W_2$ finite, then $h(W,S) = h(W_1,S_1)$.
\end{prop}

\begin{remark} If Conjecture~\ref{conj:lower} of the introduction holds, then hypergraph index is a quasi-isometry invariant, and Proposition~\ref{prop:reducible2} is immediate.
\end{remark}

\begin{remark}\label{rem:slab1}
Our current proof of Proposition~\ref{prop:reducible2} is longer and more technical than might be expected  because, although every element $A \times K$ of $\Psi(S_1)$ yields an element $A \times (K \cup S_2)$ of $\Psi(S)$, the following example shows that $\Psi(S)$ may also contain elements which are not of this form.  Let $S_1 = \{ a_1, b_1, c_1, d_1\}$ and let $(W_1,S_1)$ have Dynkin diagram as shown in Figure~\ref{fig:slab}.  Then it is not hard to check that $\Psi(S_1) = \varnothing$.  However for any nontrivial spherical Coxeter system $(W_2,S_2)$, if $(W,S) = (W_1,S_1) \times (W_2,S_2)$ then $\{b_1, c_1, d_1\} \times S_2$ is in $\Psi(S)$. 

The existence of such elements of $\Psi(S)$ complicates the proof because such a set could potentially act as a bridge between subsets of  $\Lambda_i(S)$ that ``come from'' distinct equivalence classes in $\Lambda_i(S_1)$, and so create an equivalence class in $\Lambda_i(S)$ which is not induced just by the relation $\equiv_i$ in $\Lambda_i(S_1)$.  This might then feed into an element of $\Lambda_{i+1}(S)$ which acts as a bridge between subsets which ``come from" distinct equivalence classes in $\Lambda_{i+1}(S_1)$, and so on.  Ultimately, this could lead to 
$S$ appearing sooner in the directed forest for $(W,S)$, yielding a lower hypergraph index. For example, in the Coxeter system given in Example~\ref{ex:Hypergraph-example} and Figure~\ref{fig:hi2_9}, the element $T_5\in\Psi(S)$ participates essentially in forming an equivalence class that turns out to be the whole of $S$, thus reducing hypergraph index from $\infty$ to $2$. 
Below we give a proof that, in the situation of $(W,S) = (W_1,S_1) \times (W_2,S_2)$, such behaviour does not in fact occur.
\begin{figure}
    \centering
\begin{tikzpicture}[thick]
	\fill (-1.73,0) circle (2.3pt) node [inner sep=5pt, anchor=east] {$a_1$};
	\fill (1.73,0) circle (2.3pt) node [inner sep=5pt, anchor=west] {$c_1$};
	\fill (0,1) circle (2.3pt) node [inner sep=5pt, anchor=south] {$b_1$};
	\fill (0,-1) circle (2.3pt) node [inner sep=5pt, anchor=north] {$d_1$};
	\draw (-1.73,0)--(0,1)--(1.73,0)--(0,-1)--(-1.73,0);
	\draw (0,1)--(0,-1);
	\draw (1.0,0.7) node {$4$};
	\draw (1.0,-0.7) node {$4$};
	\end{tikzpicture}
  \caption{The Coxeter system $S_1$ from Remark~\ref{rem:slab1}. Edges without labels correspond to $m_{st}=3$.\label{fig:slab}}
\end{figure}
\end{remark}

\begin{proof}[Proof of Proposition~\ref{prop:reducible2}]
Consider the function $\theta\colon \cP(S_1) \to \cP(S)$ defined by $\theta(Y) = Y \cup S_2$.  Since images and preimages of nonspherical sets under $\theta$ are nonspherical, 
it is easy to see from the definitions that  $\theta$ induces a map $\Lambda_0(S_1) \to \Lambda_0 (S)$ which takes 
$\Omega(S_1)$ bijectively to $\Omega(S)$ and  takes 
 $\Psi(S_1)$ into $\Psi(S)$.   Let $\Sigma$ denote the (possibly empty) set of 
 elements of $\Psi(S)$ which are not in the image of $\theta$.  Then every element of $\Sigma$ is of the form 
 $A \times S_2$ where $A$ is minimal nonspherical in $S_1$.  

 We now state a lemma, show how it easily implies Proposition~\ref{prop:reducible2}, and finally prove this lemma.

 \begin{lemma}\label{lem:claim_reducible2}
 Let $(W,S)= (W_1,S_1) \times (W_2,S_2)$ be as in the statement of Proposition~\ref{prop:reducible2}, and let $\theta:Y \mapsto Y \cup S_2$ and $\Sigma = \Psi(S) \setminus \theta(\Psi(S_1))$ be as defined above.  Then for all $i \geq 0$:
 \begin{enumerate}
     \item the map $\theta$ induces a map $\Lambda_{i}(S_1) \to \Lambda_i (S)$; and 
     \item every element of $\Lambda_i(S) \setminus \Sigma$ is contained in the image of $\theta$.
 \end{enumerate}
 \end{lemma}

 To complete the proof of Proposition~\ref{prop:reducible2} using Lemma~\ref{lem:claim_reducible2}, we first note that $\Omega(S_1)$ is empty if and only if $\Omega(S)$ is empty, and that in this case $h(W_1, S_1)= h(W, S)=\infty$.  So we may assume $\Omega(S)$ is nonempty. 
 Now if $S_1 \in \Lambda_i(S_1)$ for some $i$, then $S= \theta(S_1)\in  \Lambda_i(S)$ by Lemma~\ref{lem:claim_reducible2}(1).  On the other hand, if $S \in \Lambda_i(S)$, then since $\Omega(S)$ is nonempty, Lemma~\ref{lem:widenotinslab} implies that $S$ is not in $\Psi(S)$, and so not in~$\Sigma$.  Then by Lemma~\ref{lem:claim_reducible2}(2), we have that $S$ is in the image of $\theta\colon \Lambda_i(S_1) \to  \Lambda_i(S)$, and it follows that $\theta^{-1}(S) = S_1 \in \Lambda_i(S_1)$.  From this we conclude that $h(W_1, S_1) = h(W, S)$. 
 \end{proof}
 
\begin{proof}[Proof of Lemma~\ref{lem:claim_reducible2}]
  We prove this by induction on $i \geq 0$.  Observe that the first paragraph of the proof of Proposition~\ref{prop:reducible2} provides the base case for both (1) and (2). 
  For the inductive step, assume that both (1) and (2) hold for $k$, and let $X\in \Lambda_{k+1}(S_1)$.    First, we show that $\theta(X) = X \cup S_2$ is an element of $\Lambda_{k+1}(S)$, thus establishing (1). 
 
 Let $T\in \Lambda_k(S_1)$ be an element feeding into $X$, then, by the induction hypothesis, $\theta(T)$ belongs to $\Lambda_k(S)$.  Let  $\mathcal{C}_1$ and $\mathcal C$ be the $\equiv_k$-equivalence classes containing $T$ and $\theta(T)$ in $\Lambda_k(S_1)$ and $\Lambda_k(S)$, respectively, and let $\theta(\mathcal{C}_1)= \{\theta(Y) \mid Y \in \mathcal{C}_1\}$.

Observe that for any $A, B \in \Lambda_k(S_1)$, we have  that  $A \cap B$ is nonspherical if and only if $\theta(A)\cap \theta(B)$ is.   It easily follows that if $T_1 \in \mathcal{C}_1$, then $\theta(T) \equiv_k \theta(T_1)$ in 
 $\Lambda_k(S)$, and so $\theta(\mathcal{C}_1)\subset \mathcal C$.   
 
 \emph{Claim. Every element of $\mathcal C$ is a subset of an element of $\theta(\mathcal{C}_1)$.}
  
To prove the Claim, let 
$T'' \in \mathcal C$. Then 
$\theta(T) \equiv_k T''$ in $\Lambda_k(S)$, and  there is a 
  sequence $\theta(T) = T_1'$, $T_2'$, \dots, $T_m' =T''$ of sets in $
 \Lambda_k(S)$, with $T'_i \cap T'_{i+1}$ nonspherical for $1 \le i <m$.  Let $j$ be the smallest index (if any) such that $T_j'$ is not in the image of $\theta$, and note that $j>1$. Then by the induction hypothesis for (2),  $T_j' \in \Sigma$ and so $T_j' = A\times S_2$ for some minimal nonspherical $ A$.    Now since $T_{j-1}' \cap (A \times S_2)$ is  nonspherical, it follows that this intersection contains $A$, and since $T_{j-1}'$ is in the image of $\theta$, this intersection also contains $S_2$.  Thus $T_j'$ is a subset of $T_{j-1}'$.  Consequently, if $j<m$, then 
  $T_{j-1}' \cap T_{j+1}'$ is nonspherical,  and so $T_j'$ can be removed from the sequence.  
  Thus we may assume that $T_i'$ is in the image of $\theta$  for all $i<m$.  
  
  Now if $T'' = T_m'$ is also in the image, i.e.\ 
  $T_m' = \theta(T_m)$ for some $T_m \in \Lambda_k(S)$, then the preimage of the sequence $\theta(T)=T_1'$, $T_2'$, \dots, $T_m' =T''$
  establishes that 
$T \equiv_k T_m$, i.e. $T_m \in \mathcal{C}_1$.  On the other hand, if $T'' \in \Sigma$, we know (from the previous paragraph) that it is already contained in an  element of  $\theta(\mathcal{C}_1)$, namely $T_{m-1}'$.   
This completes the proof of the Claim.

From the Claim, we conclude that if $X' \in \Lambda_{k+1}(S)$ denotes the union of the sets in $\mathcal C$, then 
\[  
X' =\bigcup_{Y' \in \mathcal C} Y'  = \bigcup_{Y' \in\theta( \mathcal{C}_1)} Y' =\bigcup_{Y \in \mathcal{C}_1} \theta (Y)= \bigcup_{Y \in \mathcal{C}_1}  Y\cup S_2
= \theta (X) .
\] This completes the inductive proof of (1).
 
To finish the proof of (2), we need to show that every  $X' \in \Lambda_{k+1}(S) \setminus \Sigma$ is in the image of $\theta$.   If $\mathcal C$ denotes an equivalence class of sets in $\Lambda_k(S)$ feeding into $X'$, then $\mathcal C$ contains an element $T'$ of $\Lambda_k(S) \setminus \Sigma$, for otherwise, Lemma~\ref{lem:intersection} would imply that $\mathcal C$ consists of a single element of $\Sigma$, forcing $X'$ to be in $\Sigma$.  By the induction hypothesis, $T' = \theta(T)$ for some $T \in \Lambda_k(S_1)$, and if $T$ feeds into $X \in \Lambda_{k+1}(S)$, it follows from the above discussion that 
 $\theta(X) = X'$.   This completes the inductive proof of (2). 
 \end{proof}

We next prove that for the computation of hypergraph index, finite edge labels in the set $\{7,8,\dots\}$ are indistinguishable.  

\begin{prop}\label{prop:labels7up}
Let $I = \{1,\dots,n\}$ and let $(W,S)$ and $(W',S')$ be Coxeter systems with $S = \{s_1,\dots,s_n\}$ and $S' = \{ s_1', \dots, s_n'\}$, so that 
\[
W = \langle s_i  \mid (s_i s_j)^{m_{ij}} = 1 \rangle \quad \mbox{and} \quad W' = \langle s'_i  \mid (s'_i s'_j)^{m'_{ij}} = 1 \rangle, \quad i,j \in I.
\]  
Suppose that the matrices $(m_{ij})$ and $(m'_{ij})$ differ only in one pair of positions corresponding to indices $(p,q)$ and $(q,p)$, with both $m_{pq}$ and $m'_{pq}$ finite numbers $\ge7$.
Let $\iota\colon S\to S'$, $s_i \mapsto s_i'$ be the natural bijection.
Then $\iota$ induces bijections $\Omega(S) \to \Omega(S')$, $\Psi(S) \to \Psi(S')$, and $\Lambda_k(S) \to \Lambda_k(S')$ for all $k \geq 0$. In particular, the hypergraph indexes of $(W,S)$ and $(W',S')$ are equal.
\end{prop}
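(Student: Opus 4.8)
The plan is to reduce the entire statement to a single ``transfer lemma'': \emph{for every subset $U\subseteq S$, each of the four properties ``$U$ is spherical'', ``$(W_U,U)$ is irreducible'', ``$U$ is irreducible affine of cardinality $\ge 3$'', and ``$U$ is minimal nonspherical'' holds for $U$ if and only if it holds for $\iota(U)$.} Granting this, I would argue as follows. The bijection $\iota$ extends to a bijection $\cP(S)\to\cP(S')$ commuting with unions, intersections and inclusions, and since the only matrix entry that changes satisfies $m_{pq},m'_{pq}\ge 7\ne 2$, it carries the relation ``$m_{st}=2$'' to ``$m'_{\iota(s)\iota(t)}=2$'', hence also preserves the property of being a product $A\times B$ of disjoint commuting subsets. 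Combined with the transfer lemma, $\iota$ then preserves every clause of Definitions~\ref{defn:wide} and~\ref{defn:slab}, so it takes the collection of subsets satisfying the conditions defining $\Omega(S)$ (resp.\ $\Psi(S)$) bijectively onto the corresponding collection for $S'$, and, preserving inclusion, it takes maximal elements to maximal elements; thus $\iota(\Omega(S))=\Omega(S')$ and $\iota(\Psi(S))=\Psi(S')$. This is the base case $k=0$ of an induction proving $\iota(\Lambda_k(S))=\Lambda_k(S')$: given that $\iota$ is a bijection $\Lambda_k(S)\to\Lambda_k(S')$, the transfer lemma applied to the subsets $T\cap T'$ shows $T\cap T'$ is nonspherical if and only if $\iota(T)\cap\iota(T')$ is, so the relations $\equiv_k$ of Definition~\ref{defn:Lambda} correspond under $\iota$, hence so do their equivalence classes and the unions thereof, giving $\iota(\Lambda_{k+1}(S))=\Lambda_{k+1}(S')$. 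Finally $S\in\Lambda_h(S)\setminus\Lambda_{h-1}(S)$ with $\Omega(S)\ne\varnothing$ if and only if $S'=\iota(S)\in\Lambda_h(S')\setminus\Lambda_{h-1}(S')$ with $\Omega(S')\ne\varnothing$, so $h(W,S)=h(W',S')$, both being $\infty$ in exactly the same situations.

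To prove the transfer lemma, I would first dispose of the case $\{p,q\}\not\subseteq U$: then the Coxeter matrix of $(W_U,U)$ does not involve the entry $m_{pq}$, so $\iota|_U$ is an isomorphism of Coxeter systems $(W_U,U)\to(W'_{\iota(U)},\iota(U))$, and all four equivalences are immediate. So assume $p,q\in U$. Since $m_{pq},m'_{pq}\ge 3$, the Dynkin diagrams of $(W_U,U)$ and $(W'_{\iota(U)},\iota(U))$ have the same underlying graph---only the label on the edge $\{p,q\}$ differs, and it is a finite integer $\ge 7$ on both sides---and in particular $p$ and $q$ lie in the same connected component. As irreducibility of a Coxeter system is equivalent to connectedness of that underlying graph, the irreducibility equivalence follows at once.

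For the remaining three properties I would invoke the classifications in Figures~\ref{fig:spherical},~\ref{fig:affine} and~\ref{fig:lanner}, via the following facts: (i) no irreducible spherical or irreducible affine Coxeter system of rank $\ge 3$ has an edge labelled $\ge 6$; (ii) no Lann\'er system of rank $\ge 4$ has an edge labelled $\ge 6$; and (iii) a rank-$3$ Lann\'er system is either a triangle with edge labels $p',q',r'\ge 3$ satisfying $\frac1{p'}+\frac1{q'}+\frac1{r'}<1$, or a path on three vertices with edge labels $p',q'\ge 3$ satisfying $\frac1{p'}+\frac1{q'}<\frac12$, and in either case, since $\frac17+\frac13+\frac13=\frac{17}{21}<1$ and $\frac17+\frac13=\frac{10}{21}<\frac12$, replacing one label by an arbitrary integer $m\ge 7$ keeps the inequality valid. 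With these in hand: if $U$ is spherical, the component $C$ of its Dynkin diagram containing $p$ and $q$ is irreducible spherical with an edge of label $\ge 7$, so by (i) we get $C=\{p,q\}$ and $(W_C,C)\cong I_2(m_{pq})$; as $(W_U,U)=(W_C,C)\times(W_{U\setminus C},U\setminus C)$ with the second (unchanged) factor finite, replacing $m_{pq}$ by $m'_{pq}$ only swaps $I_2(m_{pq})$ for the finite group $I_2(m'_{pq})$, so $\iota(U)$ is spherical; the converse is symmetric. Next, a $U$ with $p,q\in U$ cannot be irreducible affine of cardinality $\ge 3$ (by (i), its label on $\{p,q\}$ being $\ge 7$) nor of type $\widetilde A_1$ (that label being finite), and the same holds for $\iota(U)$, so that equivalence holds vacuously. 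Finally, by Remark~\ref{rem:minimal} a minimal nonspherical subset is irreducible affine or Lann\'er; so if $p,q\in U$ and $U$ is minimal nonspherical, the previous sentence and (ii) force it to be a rank-$3$ Lann\'er system (a triangle or path on three vertices, in each of which every edge label is $\ge 3$), whence by (iii) $\iota(U)$ is again a rank-$3$ Lann\'er system, hence minimal nonspherical (a Lann\'er system being minimal nonspherical because all its proper special subgroups are finite); the converse is symmetric.

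The proposition's substance lies entirely in the transfer lemma, and within it the only delicate point is the ``minimal nonspherical'' case: a priori the label on $\{p,q\}$ could be arbitrarily large inside a Lann\'er simplex reflection group, and one must be sure this rigid structure is insensitive to its precise value $\ge 7$. The resolution is exactly fact (iii) together with fact (ii): rank-$\ge 4$ Lann\'er diagrams have all labels $\le 5$, and in rank $3$ the remaining edge labels are $\ge 3$, leaving enough slack in the Lann\'er inequality that only the dichotomy ``label $\le 6$'' versus ``label $\ge 7$'' matters. All other cases reduce immediately to inspection of the standard classification tables.
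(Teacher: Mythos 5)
Your proof is correct and follows the same overall strategy as the paper's: show that the classes of subsystems used in Definitions~\ref{defn:wide}, \ref{defn:slab} and \ref{defn:Lambda} (spherical, nonspherical, irreducible affine of rank $\ge 3$, minimal nonspherical) are preserved by $\iota$, then conclude that $\Omega$, $\Psi$ and $\Lambda_k$ all transport across the bijection. However, you have been noticeably more careful on precisely the point where the paper's argument is shaky. The paper asserts that ``none of [the irreducible affine or Lann\'er] Dynkin diagrams has finite edge labels $\ge 7$,'' but this is \emph{false} for rank-$3$ Lann\'er systems: the hyperbolic triangle $(3,3,7)$, for instance, is Lann\'er and has an edge labelled $7$. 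Thus, if $p,q \in U$ and $U$ is minimal nonspherical, $U$ really can be a rank-$3$ Lann\'er system, contrary to what the paper's observation would suggest. Your explicit verification --- fact (iii), using the slack in the triangle-group inequalities $\tfrac17 + \tfrac13 + \tfrac13 < 1$ and $\tfrac17 + \tfrac13 < \tfrac12$, together with fact (ii) to rule out Lann\'er systems of rank $\ge 4$ --- is exactly what is needed to close this gap, and your isolation of the ``transfer lemma'' makes the logic cleaner than the paper's compressed version. One small inaccuracy: your fact (i) asserts that no irreducible affine system of rank $\ge 3$ has an edge labelled $\ge 6$, but $\widetilde{G}_2$ is rank $3$ and has an edge labelled $6$. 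The correct threshold is $\ge 7$, which is all your argument actually invokes, so this does not affect the proof; you should simply state (i) with $\ge 7$ rather than $\ge 6$.
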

\begin{proof}
We notice that in the construction of the sets  $\Omega(S)$, $\Psi(S)$ and $\Lambda_k(S)$ from Definitions~\ref{defn:wide}, \ref{defn:slab} and \ref{defn:Lambda}, respectively, we consider subsystems of the following classes: 
\begin{enumerate}
\item spherical; 
\item nonspherical;
\item irreducible affine of rank $\ge3$;
\item minimal nonspherical:
\begin{enumerate}
\item irreducible affine of rank $\ge 2$; and
\item Lann\'er hyperbolic (see Figure~\ref{fig:lanner}).
\end{enumerate}
\end{enumerate}

By analysing the Dynkin diagrams for irreducible affine systems and for Lann\'er hyperbolic ones, we observe that none of them has finite edge labels $\ge 7$.  Also, among Dynkin diagrams for spherical Coxeter systems, finite edge labels $\ge 7$ appear only in the rank $2$ systems corresponding to finite dihedral groups.
It follows that for an arbitrary subset $T \subseteq S$, $T$ is of any type listed in (1)--(4) above if and only if $\iota(T)$ is of that same type. Thus $\iota$ induces a bijection between the sets $\Omega(S)$ and $\Omega(S')$ and also between the sets $\Psi(S)$ and $\Psi(S')$. 

Now observe that, by similar arguments, the sets $T_j$ appearing in Definition~\ref{defn:Lambda} have the property: $T_j\cap T_{j+1}$ is nonspherical if and only if $\iota(T_j)\cap\iota(T_{j+1})$ is nonspherical. Hence by induction, $\iota$ induces a bijection between $\Lambda_k(S)$ and $\Lambda_k(S')$ for all $k \geq 0$.
\end{proof}

\begin{remark}
Proposition~\ref{prop:labels7up} tells us that for all purposes related to hypergraph index, we can work with a set of just seven edge labels, namely  $m_{ij}\in \{2,3,4,5,6,7,\infty\}$. This makes the problem of exploration of possible hypergraph indexes (and possible configurations of the sets $\Lambda_i(S)$) on a fixed generating set $S$ a finite computational problem.
\end{remark}

We conclude this section by considering the relationship with tight (algebraic) networks (see Definitions~\ref{defn:tight} and~\ref{defn:tightAlgebraic}).  The next two results do not directly generalise any results from~\cite{levcovitz-RACG}, although they follow easily from the definitions.  

\begin{lemma}\label{lem:tightAlgebraic}  Let $(W,S)$ be a Coxeter system.  For $i \geq 0$, let $U$ be an element of $\Lambda_{i+1}(S)$, with~$U$ the union of the elements $T \in C_U$ for $C_U$ an $\equiv_{i}$-equivalence class in $\Lambda_{i}(S)$.  Then $W_U$ is a tight algebraic network with respect to the collection of special subgroups \[ \cH_U = \{ W_T \mid T \in C_U \}. \]
\end{lemma}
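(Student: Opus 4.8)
The plan is to verify directly that the collection $\cH_U=\{W_T\mid T\in C_U\}$ satisfies the two requirements in Definition~\ref{defn:tightAlgebraic}: that the subgroups are uniformly quasiconvex and their union generates a finite-index subgroup of $W_U$, and that any two of them can be joined by a chain of elements of $\cH_U$ with consecutive terms meeting in an infinite, path-connected intersection. For the quasiconvexity, I would invoke the standard fact that special subgroups of Coxeter groups are convex in the Cayley graph with respect to the word metric (see~\cite{davis-book}), so each $W_T$ with the induced metric is $0$-quasiconvex in $W_U$; since $C_U$ is finite (as $S$ is finite), there is a uniform constant $M_0$ serving all of them. For the generation condition, observe that $\bigcup_{T\in C_U} T = U$ by the hypothesis that $U$ is the union of the elements of the equivalence class $C_U$, and hence $\langle \bigcup_{T\in C_U} W_T\rangle = W_U$, which is (trivially) of finite index in itself.

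The substantive point is the chain condition. Given $W_T, W_{T'}\in\cH_U$ with $T,T'\in C_U$, the definition of the equivalence relation $\equiv_i$ (Definition~\ref{defn:Lambda}) supplies a sequence $T=T_0,T_1,\dots,T_n=T'$ in $\Lambda_i(S)$, all lying in $C_U$, with each $T_j\cap T_{j+1}$ nonspherical. I would then argue that for each $j$, the intersection $W_{T_j}\cap W_{T_{j+1}}$ equals $W_{T_j\cap T_{j+1}}$ — this is again a standard fact about special subgroups (the intersection of two special subgroups is the special subgroup on the intersection of the generating sets; see~\cite{davis-book}). Since $T_j\cap T_{j+1}$ is nonspherical, $W_{T_j\cap T_{j+1}}$ is infinite. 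It is also path-connected with a uniform constant: being itself a special subgroup, it is convex in the Cayley graph, so any two of its elements are joined by a geodesic lying inside it, giving $0$-path-connectedness; again finiteness of the relevant index set yields a single uniform $M$. Taking $M=\max(M_0, M_1)$ where $M_1$ is this path-connectedness constant, we conclude that $W_U$ is an $M$-tight algebraic network with respect to $\cH_U$.

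I do not expect a genuine obstacle here: the lemma is essentially a direct translation of the combinatorial definition of $\Lambda_{i+1}(S)$ into the language of Definition~\ref{defn:tightAlgebraic}, using only the two elementary and well-known structural facts about special subgroups of Coxeter groups (convexity, and that intersections of special subgroups are special on the intersection of generators). The only mild care needed is to note that all the constants involved ($0$ for quasiconvexity and path-connectedness) can be taken uniform because $C_U$ is finite, and to record that the sequence $T_0,\dots,T_n$ produced by $\equiv_i$ can, without loss of generality, be taken to consist of elements of $C_U$ itself (remove any $T_j\notin C_U$; but in fact every $T_j$ in such a chain is automatically $\equiv_i$-equivalent to $T$, hence in $C_U$), so the chain lies entirely in $\cH_U$ as required.
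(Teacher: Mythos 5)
Your proposal is correct and matches the paper's argument step for step: convexity of special subgroups gives quasiconvexity, the union $\bigcup_{T\in C_U} T = U$ gives generation of $W_U$, and the $\equiv_i$-chain together with $W_{T_j}\cap W_{T_{j+1}} = W_{T_j\cap T_{j+1}}$ (infinite and convex, hence path-connected) gives the chain condition. The extra remarks on uniformity of constants and on the chain lying in $C_U$ are correct and only make explicit what the paper leaves implicit.
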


\begin{proof}  Special subgroups of Coxeter groups are convex, so as each $T \in C_U$ is a subset of $U$, each $W_T \in \cH_U$ is convex in $W_U$.  As $U$ is the union of the elements $T \in C_U$, the union of the subgroups in $\cH_U$ generates $W_U$ itself.  If $T, T' \in C_U$ then by definition of $\equiv_i$-equivalence, there are subsets $T=T_0, T_1, \dots, T_n=T'$ in $C_U$ such that $W_{T_j} \cap W_{T_{j+1}} = W_{T_j \cap T_{j+1}}$ is infinite for $0 \leq j < n$.  Since each $W_{T_j \cap T_{j+1}}$ is convex in $W_U$, it is path-connected.  
\end{proof}

\begin{cor}\label{cor:tight}  With all notation as in the statement of Lemma~\ref{lem:tightAlgebraic}, $W_U$ is a tight network with respect to the collection of left cosets
\[
\cL_U = \{ w W_T \mid w \in W_U, T \in C_U \}.
\]
\end{cor}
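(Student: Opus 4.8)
The plan is to derive this immediately from Proposition~\ref{prop:tight} (Proposition~4.3 of~\cite{behrstock-drutu}), which is precisely the tool for converting a tight algebraic network of subgroups into a tight network of their left cosets. First I would note that $W_U$ is finitely generated: since $U \subseteq S$ and $S$ is finite by our standing assumption, $U$ is finite and $W_U = \langle U \rangle$. By Lemma~\ref{lem:tightAlgebraic}, $W_U$ is a tight algebraic network with respect to the collection $\cH_U = \{ W_T \mid T \in C_U \}$. The key observation, already recorded inside the proof of that lemma, is that $U = \bigcup_{T \in C_U} T$, so the union of the subgroups in $\cH_U$ generates all of $W_U$ rather than just a proper finite-index subgroup; hence in the notation of Proposition~\ref{prop:tight} we take $G = W_U$ and the distinguished finite-index subgroup $G_1 = W_U$ as well.

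Applying Proposition~\ref{prop:tight} then gives that $W_U$ is a tight network with respect to $\{ g H \mid g \in G_1,\ H \in \cH_U \}$, and unravelling this collection with $G_1 = W_U$ and $\cH_U = \{ W_T \mid T \in C_U \}$ produces exactly $\cL_U = \{ w W_T \mid w \in W_U,\ T \in C_U \}$, which is the assertion. There is essentially no obstacle here: all the substantive content has been packaged into Lemma~\ref{lem:tightAlgebraic} (convexity of special subgroups, infiniteness and path-connectedness of the relevant intersections) and into the cited result of Behrstock and Dru\c{t}u, so the corollary is a one-line deduction.
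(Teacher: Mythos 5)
Your proposal is correct and follows exactly the paper's own route: apply Lemma~\ref{lem:tightAlgebraic} to get the tight algebraic network, then invoke Proposition~\ref{prop:tight} with $G = G_1 = W_U$. The paper simply records this as immediate; your added observation that $G_1 = W_U$ because $\bigcup_{T\in C_U} T = U$ is a useful sanity check but not a deviation.
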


\begin{proof}  This is immediate from Lemma~\ref{lem:tightAlgebraic} and Proposition~\ref{prop:tight}.
\end{proof} 

\subsection{Hypergraph index, thickness and divergence}\label{sec:hyp_thick_div}

In this section we establish our main results relating  hypergraph index to thickness and divergence,  which are stated as Theorem~\ref{thm:hyp_thick_div_Intro} of the introduction. 

As mentioned in the introduction, we will use strong algebraic thickness (see Definition~\ref{defn:strongAlgThick}). We will need the following result. 

\begin{prop}\label{prop:thickFinite}  Let $(W,S)$ be a Coxeter system.  If $W$ is strongly algebraically thick then $(W,S)$ has finite hypergraph index.
\end{prop}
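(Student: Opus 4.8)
The plan is to argue by induction along the inductive structure of the class $\bT$ from Definition~\ref{defn:T}, using Theorem~\ref{thm:thickNotRelHyp} to translate ``$W$ strongly algebraically thick'' into ``$(W,S)\in\bT$''. The statement to prove by induction is slightly stronger than what appears in the proposition: I would show that whenever $(W_T,T)\in\bT$ for some $T\subseteq S$, not only is $h(W_T,T)$ finite, but $T$ itself lies in some $\Lambda_k(T)$; moreover — and this is the point that makes the induction go through — for any $T'\supseteq T$ the set $T$ is contained in some element of $\Lambda_k(T')$. The monotonicity statement Lemma~\ref{lem:monotone}(2) is exactly the tool that lets one pass from ``$X\in\Lambda_j(T)$'' to ``$X$ sits inside some $X'\in\Lambda_j(T')$'', so I would lean on it heavily.

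For the base case, $(W,S)\in\bT_0$ means either $(W,S)$ is irreducible affine of rank $\ge 3$ — in which case $S\in\Omega(S)\subseteq\Lambda_0(S)$ directly — or $(W,S)=(W_{S_1},S_1)\times(W_{S_2},S_2)$ with both factors irreducible nonspherical, in which case $S=S_1\times S_2$ is a wide subset, so again $S\in\Lambda_0(S)$. Either way $h(W,S)=0$. For the inductive step there are two cases. If $S=S_0\sqcup\{s\}$ with $(W_{S_0},S_0)\in\bT$ and $s^\perp\subseteq S$ nonspherical, then by induction $S_0\in\Lambda_k(S_0)$ for some $k$, hence by Lemma~\ref{lem:monotone}(2) there is an element $Y\in\Lambda_k(S)$ containing $S_0$; since $s^\perp$ is nonspherical, $\{s\}\cup s^\perp$ is contained in some wide or slab subset of $S$, hence in some element $Z\in\Lambda_0(S)$, and $Z$ feeds (via Lemma~\ref{lem:monotone}(1)) into some element $Z'\in\Lambda_k(S)$. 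Now $Y$ contains $S_0\supseteq s^\perp$ and $Z'$ contains $s^\perp$, and $s^\perp$ is nonspherical, so $Y\equiv_k Z'$, and their union — which contains $S_0\cup\{s\}=S$ — lies in $\Lambda_{k+1}(S)$; thus $S\in\Lambda_{k+1}(S)$ and $h(W,S)\le k+1$. The other case, $S=S_1\cup S_2$ with $(W_{S_i},S_i)\in\bT$ and $S_1\cap S_2$ nonspherical, is similar but easier: by induction and Lemma~\ref{lem:monotone}(2), for each $i$ there is $k_i$ and $Y_i\in\Lambda_{k_i}(S)$ containing $S_i$; taking $k=\max(k_1,k_2)$ and applying Lemma~\ref{lem:monotone}(1) repeatedly we get $Y_1',Y_2'\in\Lambda_k(S)$ with $S_i\subseteq Y_i'$; since $S_1\cap S_2\subseteq Y_1'\cap Y_2'$ is nonspherical, $Y_1'\equiv_k Y_2'$, and the union of their equivalence class, which contains $S_1\cup S_2=S$, lies in $\Lambda_{k+1}(S)$.

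One subtlety I would be careful about: in invoking ``$\{s\}\cup s^\perp$ is contained in some element of $\Lambda_0(S)$'' I need that a nonspherical commuting pair really does sit inside a wide or slab subset. If $s^\perp$ contains a nonspherical subset, then picking a minimal nonspherical $U\subseteq s^\perp$ gives $U\times\{s\}$ with $U$ minimal nonspherical and $\{s\}$ spherical nonempty commuting with $U$; this is contained either in a wide subset (condition (3) of Definition~\ref{defn:slab} failing) or in a slab subset, so in either case in an element of $\Lambda_0(S)=\Omega(S)\cup\Psi(S)$. I expect this bookkeeping — tracking which $\Lambda_k$ a given subset ``reaches'' under the inclusion $T\hookrightarrow S$ and making sure the $\equiv_k$-equivalences line up — to be the main obstacle, and Lemma~\ref{lem:monotone} is precisely designed to handle it. Finally, since $W$ strongly algebraically thick implies $\Omega(S)\ne\varnothing$ (a thick group is not hyperbolic, so apply Lemma~\ref{lem:hypOmega}), the finite $k$ with $S\in\Lambda_k(S)$ genuinely witnesses finite hypergraph index in the sense of Definition~\ref{defn:hypergraphIndex}.
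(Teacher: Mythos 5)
Your proposal is correct and follows essentially the same route as the paper: reduce to $(W,S)\in\bT$ via Theorem~\ref{thm:thickNotRelHyp}, handle the base case $\bT_0$ by noting $S$ is a wide subset, and for the two inductive operations of Definition~\ref{defn:T} use Lemma~\ref{lem:monotone} to lift elements of $\Lambda_k$ from subsystems into $\Lambda_k$ of the larger system, then observe that the relevant sets share a nonspherical intersection (via a minimal nonspherical $U\subseteq s^\perp$ or via $S_1\cap S_2$) so their union lands in $\Lambda_{k+1}$. Your clarification at the end that one should pick a \emph{minimal} nonspherical $U\subseteq s^\perp$ rather than work with $s^\perp$ itself is exactly the care the paper takes (it picks a minimal nonspherical $A\subseteq s^\perp\cap S_0$), and your observation that $\Omega(S)\neq\varnothing$ follows from non-hyperbolicity via Lemma~\ref{lem:hypOmega} is a correct alternative to the paper's bookkeeping of that condition through the induction.
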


\begin{proof}  By Theorem~\ref{thm:thickNotRelHyp}, we have $(W,S) \in \bT$. Thus by the inductive definition of $\bT$ (see Definition~\ref{defn:T}) it suffices to prove the following three claims:
\begin{enumerate}
\item If $T \subseteq S$ and $(W_T,T) \in \bT_0$ then $(W_T,T)$ has finite hypergraph index.
\item If $S_0 \subseteq S$ and $s \in S \setminus S_0$ are such that $(W_{S_0},S_0) \in \bT$, $s^\perp \cap S_0$ is nonspherical and $(W_{S_0},S_0)$ has finite hypergraph index $h$, then $(W_{S_0 \cup \{s\}}, S_0 \cup \{s \})$ has finite hypergraph index.
\item If $S_1, S_2 \subseteq S$ are such that $(W_{S_i},S_i) \in \bT$ for $i = 1,2$ and $S_1 \cap S_2$ is nonspherical, and $(W_{S_i},S_i)$ has finite hypergraph index $h_i$ for $i = 1,2$, then $(W_{S_1 \cup S_2}, S_1 \cup S_2)$ has finite hypergraph index.
\end{enumerate}

For (1), by the definitions we have $T  \in \Omega(T)$, and so $T  \in \Lambda_0(T) = \Lambda_0(T) \setminus \Lambda_{-1}(T)$, and $(W_T,T)$ has hypergraph index $0$.

For (2), observe first that $\Omega(S_0) \neq \varnothing$ by the definition of finite hypergraph index, hence $\Omega(S_0 \cup \{s\}) \neq \varnothing$.  
Furthermore, $S_0 \in \Lambda_h(S_0)$, so by Lemma~\ref{lem:monotone}(2a), there exists $T \in \Lambda_h(S_0 \cup\{s\})$ such that $S_0 \subseteq T$.  
Let $A$ be a minimal nonspherical subset of $s^\perp \cap S_0$.  Then $A \times \{s\}$ is contained in some wide or slab subset of 
$S_0 \cup \{s \}$, which feeds into some set $T' \in \Lambda_h(S_0 \cup\{s\})$, by Lemma~\ref{lem:monotone}(1). 
Now $T \cap T'$ contains $A$, so is nonspherical.  Thus $T \cup T'$ is a subset of an element of $\Lambda_{h+1}(S_0 \cup \{s\})$. 
Since $S_0 \cup \{s\} \subseteq T \cup T'$, we conclude that $S_0 \cup \{s\}$ has hypergraph index $\le h+1$. 

For (3), we have that $\Omega(S_i) \neq \varnothing$ for $i = 1,2$, hence $\Omega(S_1 \cup S_2) \neq \varnothing$.  Now let $h = \max\{ h_1, h_2 \}$.  Then since by assumption $S_i \in \Lambda_{h_i}(S_i)$, by Lemma~\ref{lem:monotone}(2a) $S_i$ is contained in an element of $\Lambda_h(S_1 \cup S_2)$, say $T_i$.   Now $T_1 \cap T_2$ is nonspherical, since it contains $S_1 \cap S_2$.  So $T_1 \cup T_2$ is a subset of an element of $\Lambda_{h+1}(S_1 \cup S_2)$.  Then since $(S_1 \cup S_2) \subseteq (T_1 \cup T_2)$, we get $S_1 \cup S_2 \in \Lambda_{h+1}(S_1 \cup S_2)$.  This completes the proof. 
\end{proof}

We can now prove one implication of Theorem~\ref{thm:hyp_thick_div_Intro}(4), without requiring $W$ to be $1$-ended (since we have not yet invoked the divergence of $W$).

\begin{cor}  Let $(W,S)$ be a Coxeter system.  If $(W,S)$ has infinite hypergraph index, then $W$ is relatively hyperbolic.
\end{cor}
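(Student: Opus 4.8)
The plan is to deduce this statement immediately as the contrapositive of Proposition~\ref{prop:thickFinite}, combined with the equivalences recorded in Theorem~\ref{thm:thickNotRelHyp}. Proposition~\ref{prop:thickFinite} asserts that strong algebraic thickness of $W$ forces $(W,S)$ to have finite hypergraph index; contrapositively, if $(W,S)$ has infinite hypergraph index then $W$ is \emph{not} strongly algebraically thick. It then remains only to translate ``not strongly algebraically thick'' into ``relatively hyperbolic''.

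Concretely, I would argue as follows. Suppose $(W,S)$ has infinite hypergraph index. By the contrapositive of Proposition~\ref{prop:thickFinite}, $W$ is not strongly algebraically thick. By Theorem~\ref{thm:thickNotRelHyp}, the statements ``$W$ is strongly algebraically thick'' and ``$W$ is not relatively hyperbolic with respect to any family of proper subgroups'' are equivalent; negating both, the failure of the first is equivalent to the failure of the second, i.e.\ to $W$ being relatively hyperbolic with respect to some family of proper subgroups. Hence $W$ is relatively hyperbolic, as required. (One could equivalently phrase this through the class $\bT$: infinite hypergraph index implies $(W,S)\notin\bT$ by Proposition~\ref{prop:thickFinite}, and then $W$ is relatively hyperbolic by the equivalence of (1) and (3) in Theorem~\ref{thm:thickNotRelHyp}.)

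There is essentially no obstacle here: the entire content sits in Proposition~\ref{prop:thickFinite} (which itself rests on Definition~\ref{defn:T}, the monotonicity Lemma~\ref{lem:monotone}, and the structural characterisation of thick Coxeter systems), together with the already-established equivalences of Theorem~\ref{thm:thickNotRelHyp}. The proof of the corollary is therefore a one-line formal deduction, and no $1$-endedness hypothesis is needed since divergence is not yet invoked; the converse implication of Theorem~\ref{thm:hyp_thick_div_Intro}(4), which does bring in divergence, will be handled separately.
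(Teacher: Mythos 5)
Your proposal is correct and matches the paper's proof exactly: the paper also deduces the corollary as an immediate consequence of the contrapositive of Proposition~\ref{prop:thickFinite} together with the equivalences of Theorem~\ref{thm:thickNotRelHyp}. Your additional observation that no $1$-endedness hypothesis is needed at this stage is also consistent with the paper's remark preceding the corollary.
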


\begin{proof}
This is immediate from the contrapositive to Proposition~\ref{prop:thickFinite} together with Theorem~\ref{thm:thickNotRelHyp}.
\end{proof}

We next show that hypergraph index $0$ characterises linear divergence, proving Theorem~\ref{thm:hyp_thick_div_Intro}(1).

\begin{thm}\label{thm:h0Linear} Let $(W,S)$ be a Coxeter system such that $W$ is $1$-ended.  Then $(W,S)$ has hypergraph index $0$ if and only if $W$ has linear divergence.
\end{thm}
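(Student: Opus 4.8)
The plan is to deduce the theorem directly from Lemma~\ref{lem:linearOmega}(2), together with the observation that $\Lambda_0(S) = \Omega(S)\cup\Psi(S)$ and $\Lambda_{-1}(S)=\varnothing$. First I would unwind Definition~\ref{defn:hypergraphIndex}: since $\Lambda_{-1}(S)=\varnothing$, the assertion that $(W,S)$ has hypergraph index $0$ is equivalent to the conjunction ``$\Omega(S)\neq\varnothing$ and $S\in\Omega(S)\cup\Psi(S)$''. By Lemma~\ref{lem:linearOmega}(2), $W$ has linear divergence if and only if $\Omega(S)=\{S\}$. So it suffices to prove that, for $W$ $1$-ended, $(W,S)$ has hypergraph index $0$ if and only if $\Omega(S)=\{S\}$.

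The ``if'' direction is immediate: if $\Omega(S)=\{S\}$ then $\Omega(S)\neq\varnothing$ and $S\in\Omega(S)\subseteq\Lambda_0(S)=\Lambda_0(S)\setminus\Lambda_{-1}(S)$, so the hypergraph index is $0$ (and we have not even used $1$-endedness for this direction, though it is harmless to assume it).

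For the ``only if'' direction, suppose the hypergraph index is $0$, so $\Omega(S)\neq\varnothing$ and $S\in\Omega(S)\cup\Psi(S)$. I would first rule out the possibility $S\in\Psi(S)$: if $S$ were a slab subset, then choosing any $T\in\Omega(S)$ (possible since $\Omega(S)\neq\varnothing$) would exhibit a wide subset $T\subseteq S$ contained in the slab subset $S$, contradicting Lemma~\ref{lem:widenotinslab}. Hence $S\in\Omega(S)$. But $S\in\Omega(S)$ means in particular that $S$ lies in the collection of subsets of $S$ appearing in Definition~\ref{defn:wide}, and since every element of that collection is a subset of $S$, the set $S$ is the maximum of the collection under inclusion, hence its unique maximal element; thus $\Omega(S)=\{S\}$. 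Lemma~\ref{lem:linearOmega}(2) then gives that $W$ has linear divergence, completing the proof.

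This argument is essentially a repackaging of earlier results, so I do not anticipate any genuine obstacle; the only step requiring a moment's attention is excluding the case $S\in\Psi(S)$, which Lemma~\ref{lem:widenotinslab} handles, and I would note that the hypothesis that $W$ is $1$-ended is used only through the application of Lemma~\ref{lem:linearOmega}(2) (equivalently, Corollary~\ref{cor:linearIntro}).
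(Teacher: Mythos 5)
Your proof is correct and follows essentially the same route as the paper's: both directions are reduced via Lemma~\ref{lem:linearOmega}(2) to the statement $\Omega(S)=\{S\}$, and Lemma~\ref{lem:widenotinslab} is used to exclude $S\in\Psi(S)$. The only (minor) difference is that you spell out explicitly why $S\in\Omega(S)$ forces $\Omega(S)=\{S\}$, a step the paper leaves implicit.
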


\begin{proof}  Suppose first that $W$ has linear divergence.  Then by Lemma~\ref{lem:linearOmega}(2), $\Omega(S) = \{ S \}$, hence $S \in \Lambda_0 = \Lambda_0 \setminus \Lambda_{-1}$.  Thus $(W,S)$ has hypergraph index $0$.

Now suppose that $(W,S)$ has hypergraph index $0$.  Then $S$ contains a wide subset, since $\Omega(S) \neq \varnothing$, and $S  \in  \Lambda_0 = \Omega(S) \cup \Psi(S)$.  Lemma~\ref{lem:widenotinslab} implies that $S \notin \Psi(S)$, and so we have $S \in \Omega(S)$.  Hence $\Omega(S) = \{ S \}$, and so by Lemma~\ref{lem:linearOmega}(2) again, $W$ has linear divergence.
\end{proof}

The next two results prove Theorem~\ref{thm:hyp_thick_div_Intro}(3).

\begin{thm}\label{thm:finiteThick} Let $(W,S)$ be a Coxeter system such that $W$ is $1$-ended. Suppose $(W,S)$ has finite hypergraph index $h$.  Then $W$ is strongly thick of order at most $h$.
\end{thm}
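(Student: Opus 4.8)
The plan is to prove Theorem~\ref{thm:finiteThick} by induction on the hypergraph index $h$, using the characterisation of strong thickness in terms of tight networks (Definition~\ref{defn:strongThick}) and the tight network structure already identified in Corollary~\ref{cor:tight}. The base case $h=0$ is exactly Theorem~\ref{thm:h0Linear} together with Proposition~\ref{prop:wide}: hypergraph index $0$ implies $W$ has linear divergence, hence $W$ is wide, i.e.\ strongly thick of order $0$. (One should note that it is enough to prove strong thickness of order \emph{at most} $h$, which is what the statement asks.)

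For the inductive step, suppose the theorem holds for all Coxeter systems with hypergraph index at most $h-1$, and let $(W,S)$ have hypergraph index $h \ge 1$. Then $S \in \Lambda_h(S) \setminus \Lambda_{h-1}(S)$, so $S$ is the union of the elements $T$ of a single $\equiv_{h-1}$-equivalence class $C_S$ in $\Lambda_{h-1}(S)$. First I would observe that each such $T \in C_S$, being a proper subset of $S$ (properness holds because $S \notin \Lambda_{h-1}(S)$), gives a special subgroup $W_T$ with $(W_T,T)$ of hypergraph index at most $h-1$: indeed $T \in \Lambda_{h-1}(S)$ means $T$ is a union of elements of $\Lambda_{h-2}$-equivalence classes, and one checks (using the forest/``feeds into'' picture and Lemma~\ref{lem:monotone}, restricting to the subsystem $(W_T,T)$) that $T \in \Lambda_{h-1}(T)$, so $h(W_T,T) \le h-1$. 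Here one must be slightly careful: if some $T \in C_S$ happens to be a slab subset sitting in $\Lambda_0$, or more generally if $h(W_T,T)$ is only realised at a lower level, the bound $h(W_T,T)\le h-1$ still holds, and by the induction hypothesis $W_T$ is strongly thick of order at most $h-1$ — provided $W_T$ is $1$-ended. One-endedness of each $W_T$ should follow because $T$ contains a wide subset (from the proof of Corollary~\ref{cor:non-slab}, every element of $\Lambda_{h-1}(S)\setminus\Psi(S)$ contains a wide subset, and $\Omega(S)\ne\varnothing$ here; slab elements of $C_S$ that persist to level $h-1$ are ruled out or handled via Lemma~\ref{lem:intersection}), and a special subgroup containing a wide subset is $1$-ended.

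Next I would invoke Corollary~\ref{cor:tight}: with $U = S$ and $C_U = C_S$, the group $W_S = W$ is a tight network with respect to the collection of left cosets $\cL_S = \{ w W_T \mid w \in W, T \in C_S\}$. Each coset $w W_T$, with the induced metric, is isometric to a Cayley graph of $W_T$ (special subgroups are convex, so the induced metric agrees with the word metric on $W_T$), hence is strongly thick of order at most $h-1$ by the previous paragraph. Plugging this into Definition~\ref{defn:strongThick}: $W$ is a $(C,D)$-tight network with respect to $\cL_S$ for suitable $C,D$, and every member of $\cL_S$ is (strongly) thick of order at most $h-1$; therefore $W$ is $(C',D')$-strongly thick of order at most $h$ for appropriate constants. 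This completes the induction.

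The main obstacle I anticipate is not the tight-network bookkeeping — that is handed to us cleanly by Corollary~\ref{cor:tight} — but rather the careful verification that each $T$ in the equivalence class $C_S$ whose union is $S$ genuinely satisfies $h(W_T,T) \le h-1$ \emph{and} that $W_T$ is $1$-ended, so that the induction hypothesis (which requires $1$-endedness) actually applies. The subtlety is that the sets $\Lambda_i(S)$ are defined relative to the ambient $S$, while the induction hypothesis speaks about $\Lambda_i(T)$ relative to $T$; reconciling these requires the monotonicity/restriction behaviour of the $\Lambda_i$ construction. A clean way to handle this is to prove, as a preliminary lemma (or to extract from Lemma~\ref{lem:monotone} applied with the roles of $T$ and $T'$ reversed), that if $T \in \Lambda_i(S)$ and $T \ne S$ then $h(W_T,T) \le i$; combined with the observation that any proper $T\in\Lambda_{h-1}(S)$ appearing in $C_S$ contains a wide subset (hence $W_T$ is $1$-ended and infinite, so $\Omega(T)\ne\varnothing$), the induction goes through. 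The remaining details (choice of constants $C,D$, the identification of the induced metric on a coset with the word metric) are routine.
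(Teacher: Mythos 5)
Your proposal takes essentially the same route as the paper: induct on $h$, use Theorem~\ref{thm:h0Linear} together with Proposition~\ref{prop:wide} for the base case, and combine Corollary~\ref{cor:tight} with the inductive hypothesis for the step. The two sub-claims you correctly flag as needing care — that $h(W_T,T)\le h-1$ for the $T\in C_S$, and that the inductive hypothesis actually applies — are exactly the points the paper's proof asserts without elaboration.

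However, your proposed resolutions are not quite right. The preliminary lemma you suggest, ``if $T\in\Lambda_i(S)$ and $T\ne S$ then $h(W_T,T)\le i$,'' is false when $T$ is a slab subset: a slab $T=A\times K$ contains no wide subset (Lemma~\ref{lem:widenotinslab}), and $A$ is not irreducible affine of rank $\ge 3$, so $\Omega(T)=\varnothing$ and $h(W_T,T)=\infty$ no matter what level $T$ appears at in $\Lambda_\bullet(S)$. Likewise, ``$T$ contains a wide subset, hence $W_T$ is $1$-ended'' is not a valid deduction — a group containing a $1$-ended subgroup can still be multi-ended. So your sketch does not actually establish that every coset $wW_T$ with $T\in C_S$ is strongly thick of order $\le h-1$ (the genuine worry being slab $T$'s, whose special subgroups are $2$-ended or Lann\'er hyperbolic and hence not thick of any order). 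To be fair, the paper's proof states flatly that ``for each $T\in\Lambda_{h-1}$, the Coxeter system $(W_T,T)$ has hypergraph index at most $h-1$,'' which as written is also incorrect for slab $T$, and it does not discuss $1$-endedness of the $W_T$ at all; so your instinct to worry here was sound. The restriction argument $\Lambda_\bullet(S)\to\Lambda_\bullet(T)$ is carried out carefully in Lemma~\ref{lem:hi=h-1} (whose minimality condition sidesteps slabs for the particular $T$ it selects), and that is the kind of argument one needs here, together with an argument that slab subsets either do not appear in $C_S$ or can be safely discarded from the tight network.
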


\begin{proof}  The proof is by induction on $h$.  If $h = 0$ then by Theorem~\ref{thm:h0Linear}, $W$ has linear divergence.  Thus $W$ is strongly thick of order $0$, by Proposition~\ref{prop:wide}.

For the inductive step, since $S \in \Lambda_h$, by definition there is an $\equiv_{h-1}$-equivalence class $C_S$ in $\Lambda_{h-1}$ such that $S$ is the union of the elements $T$ of $C_S$.  Hence by Corollary~\ref{cor:tight}, $W = W_S$ is a tight network with respect to the collection of left cosets $\cL_S = \{ w W_T \mid w \in W, T \in C_S \}$.  Now for each $T \in \Lambda_{h-1}$, the Coxeter system $(W_T,T)$ has hypergraph index at most $h-1$.  Hence by inductive assumption, for each $T \in \Lambda_{h-1}$, the group $W_T$  is strongly thick of order at most $h-1$.  It follows that each coset $w W_T \in \cL_S$ is strongly thick of order at most $h-1$.  Therefore $W$ is strongly thick of order at most $h$, as required.
\end{proof}

\begin{cor}\label{cor:hDiv} Let $(W,S)$ be a Coxeter system such that $W$ is $1$-ended.  If $(W,S)$ has finite hypergraph index $h$, then the divergence of $W$ is bounded above by a polynomial of degree $h + 1$.
\end{cor}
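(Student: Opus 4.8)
The plan is simply to chain together the two results immediately preceding this corollary, so the argument is short. First I would recall from Theorem~\ref{thm:finiteThick} that, since $W$ is $1$-ended and $(W,S)$ has finite hypergraph index $h$, the group $W$ is strongly thick of order at most $h$ in the sense of Definition~\ref{defn:strongThick}. Then I would apply Theorem~\ref{cor:thickDiv} (which is Corollary~4.17 of~\cite{behrstock-drutu}): any finitely generated group that is strongly thick of order at most $n$ has divergence bounded above by a polynomial of degree $n+1$. Taking $n = h$ yields exactly the stated bound, and this completes the proof.

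I would add a brief remark on the role of the hypotheses. The assumption that $W$ is $1$-ended is used only to ensure that Gersten's notion of divergence, as set up in Section~\ref{sec:div}, coincides with the other notions appearing in the literature, so that the conclusion of Theorem~\ref{cor:thickDiv} is unambiguous as a statement about ``the divergence of $W$''; it also guarantees finiteness of hypergraph index is meaningful together with Theorem~\ref{thm:h0Linear} in the base case. All of the substantive work has already been done inside Theorem~\ref{thm:finiteThick}, where the tight-network structure is produced via Corollary~\ref{cor:tight} and an induction on $h$. Consequently there is no genuine obstacle left at the level of this corollary; the only thing to be careful about is to cite Theorem~\ref{cor:thickDiv} with the correct order ($n = h$ rather than $n = h+1$), so that the polynomial degree comes out as $h+1$ and matches the statements in Theorem~\ref{thm:hyp_thick_div_Intro}(3) and Conjecture~\ref{conj:lower}(3).
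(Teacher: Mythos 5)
Your proof is correct and matches the paper's argument exactly: the paper's proof of Corollary~\ref{cor:hDiv} is the one-line combination of Theorem~\ref{thm:finiteThick} with Theorem~\ref{cor:thickDiv}. Your additional remarks on the role of $1$-endedness are accurate but not part of the paper's proof.
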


\begin{proof}  This follows from Theorem~\ref{cor:thickDiv} and Theorem~\ref{thm:finiteThick}.
\end{proof}

We now prove the other direction of Theorem~\ref{thm:hyp_thick_div_Intro}(4).

\begin{cor}
\label{cor:hInfiniteHyp} Let $(W,S)$ be a Coxeter system such that $W$ is $1$-ended. If $W$ is relatively hyperbolic, then $(W,S)$ has infinite hypergraph index.
\end{cor}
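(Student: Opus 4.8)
The plan is to prove the contrapositive: if $(W,S)$ has finite hypergraph index, then $W$ is not relatively hyperbolic with respect to any collection of proper subgroups. So suppose $h = h(W,S) < \infty$. Since $W$ is $1$-ended, Theorem~\ref{thm:finiteThick} applies and shows that $W$ is strongly thick of order at most~$h$; in particular $W$ is strongly thick of some finite order. It is a theorem of Behrstock, Dru\c{t}u and Mosher~\cite{behrstock-drutu-mosher} --- indeed, one of the original motivations for introducing thickness --- that a finitely generated group which is (strongly) thick of finite order is not relatively hyperbolic with respect to any collection of proper subgroups (relatively hyperbolic groups are asymptotically tree-graded, while thick groups are not). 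Applying this to $W$ yields the contrapositive, hence the corollary. Together with the Corollary preceding Theorem~\ref{thm:h0Linear} (which gives the reverse implication, via the contrapositive of Proposition~\ref{prop:thickFinite} and Theorem~\ref{thm:thickNotRelHyp}), this completes the proof of Theorem~\ref{thm:hyp_thick_div_Intro}(4).

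The main point requiring care is to invoke the ``thick implies not relatively hyperbolic'' implication in exactly the right form: ``relatively hyperbolic'' must be understood throughout (both in the statement of the corollary and in the cited result) as ``relatively hyperbolic with respect to some collection of proper subgroups'', and one should confirm that the notion of strong thickness used here matches the notion of thickness for which Behrstock--Dru\c{t}u--Mosher prove non-relative-hyperbolicity. Given the preparation in Section~\ref{sec:tightThick}, this match is immediate.

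An alternative, more self-contained route avoids citing~\cite{behrstock-drutu-mosher}: one checks that the inductive argument proving Theorem~\ref{thm:finiteThick} in fact produces strong \emph{algebraic} thickness, by using the tight algebraic networks of Lemma~\ref{lem:tightAlgebraic} in place of the tight networks of Corollary~\ref{cor:tight}, and then concludes directly from Theorem~\ref{thm:thickNotRelHyp}. The delicate bookkeeping in this route --- and the place I would expect to spend the most effort --- concerns the \emph{slab} subsystems occurring in these networks: a slab subgroup $W_{A\times K}$ need not itself be (strongly algebraically) thick of finite order, so whenever a slab contributes to the equivalence class realising $S\in\Lambda_h(S)$ one must argue that it can be absorbed into, or routed through, a genuinely thick piece. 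In the first route this subtlety is entirely contained inside the already-established Theorem~\ref{thm:finiteThick}, which is why I would present that argument.
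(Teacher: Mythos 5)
Your proof is correct but follows a different route from the paper's. The paper argues via divergence: Corollary~\ref{cor:hDiv} shows that finite hypergraph index $h$ forces the divergence of $W$ to be bounded above by a polynomial of degree $h+1$, while Sisto~\cite[Theorem~1.3]{Sisto12} shows that finitely presented relatively hyperbolic groups have exponential divergence, and these two facts are incompatible. You instead stop at Theorem~\ref{thm:finiteThick} and invoke the Behrstock--Dru\c{t}u--Mosher dichotomy (thick of finite order implies not relatively hyperbolic with respect to any collection of proper subgroups), rather than continuing on to divergence. Since Corollary~\ref{cor:hDiv} is itself a consequence of Theorem~\ref{thm:finiteThick}, the two routes ultimately lean on the same underlying thickness result, so the choice between them is largely one of taste: the paper's keeps the argument within the divergence framework that organises Section~\ref{sec:hyp_thick_div}, while yours is slightly more direct and avoids Sisto's theorem. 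Your caveat about matching notions of thickness is handled by the fact that strong thickness in the sense of~\cite{behrstock-drutu} refines the thickness of~\cite{behrstock-drutu-mosher}, so the dichotomy applies. Finally, your remark about slabs is astute --- a slab subgroup $W_{A\times K}$ is not thick of any finite order, since $(W_A,A)$ is $2$-ended when of type $\widetilde A_1$ and $1$-ended hyperbolic when of Lann\'er type --- but, as you say, this subtlety lives inside the proof of Theorem~\ref{thm:finiteThick}, which you are entitled to cite as a black box; it does not affect the validity of either route to the corollary.
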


\begin{proof}  Suppose $(W,S)$ has finite hypergraph index~$h$.  Then by Corollary~\ref{cor:hDiv}, the divergence of $W$ is bounded above by a polynomial of degree at most $h+1$. However, this is a contradiction, as finitely presented relatively hyperbolic groups have exponential divergence~\cite[Theorem~1.3]{Sisto12}.   So $(W,S)$ has infinite hypergraph index.
\end{proof}

Finally, we prove Theorem~\ref{thm:hyp_thick_div_Intro}(2).

\begin{cor}\label{cor:h1Quadratic} Let $(W,S)$ be a Coxeter system such that $W$ is $1$-ended.  Suppose  $(W,S)$ has hypergraph index $h = 1$.  Then $W$ has quadratic divergence.
\end{cor}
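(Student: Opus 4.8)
The plan is to sandwich the divergence of $W$ between $r^2$ from above and $r^2$ from below, using the strong thickness machinery for the upper bound and the rank one results of Section~\ref{sec:linearQuadratic} for the lower bound.

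For the upper bound I would simply specialise the results already established: since $(W,S)$ has finite hypergraph index $h=1$ and $W$ is $1$-ended, Theorem~\ref{thm:finiteThick} shows $W$ is strongly thick of order at most $1$, and then Theorem~\ref{cor:thickDiv} gives that $\div{W}(r)$ is bounded above by a polynomial of degree $2$, i.e.\ $\div{W}\preceq r^2$. (This is exactly Corollary~\ref{cor:hDiv} in the case $h=1$.)

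For the lower bound, the point is that hypergraph index $1$ is \emph{not} hypergraph index $0$, so by Theorem~\ref{thm:h0Linear} the group $W$ does not have linear divergence. On the other hand, the divergence of any $1$-ended geodesic metric space is at least linear: using the geodesic extension property of $\cC(W,S)$, extend a geodesic through the basepoint $e$ to obtain, for each $r$, points $x,y\in S(e,r)$ with $d(x,y)=2r$; any $(e,r)$-avoidant path from $x$ to $y$ then has length at least $2r$, so $\delta_1(r)\ge 2r$ and $\div{W}\succeq r$. Hence $\div{W}\succeq r$ while $\div{W}\not\simeq r$, i.e.\ $W$ has superlinear divergence, and Corollary~\ref{cor:superlinear} yields $\div{W}\succeq r^2$.

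Combining the two bounds gives $\div{W}\simeq r^2$, so $W$ has quadratic divergence, as required. I do not expect a genuine obstacle here: the substance is carried entirely by Theorem~\ref{thm:h0Linear}, Theorem~\ref{thm:finiteThick}, Corollary~\ref{cor:superlinear} (which itself rests on Theorem~\ref{thm:linearQuadraticIntro} and Corollary~\ref{cor:linear}), and Theorem~\ref{cor:thickDiv}. The only mildly delicate point worth spelling out is that divergence is automatically superlinear once it fails to be linear, which is immediate from the trivial linear lower bound above.
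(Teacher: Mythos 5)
Your proof is correct and follows essentially the same route as the paper: the upper bound is Corollary~\ref{cor:hDiv} (i.e.\ Theorem~\ref{thm:finiteThick} plus Theorem~\ref{cor:thickDiv}), and the lower bound starts from Theorem~\ref{thm:h0Linear}. The only cosmetic difference is that you invoke Corollary~\ref{cor:superlinear} directly, whereas the paper unwinds it: it uses Corollary~\ref{cor:linearIntro} to reduce (after dropping a finite factor, which leaves divergence unchanged) to the irreducible nonaffine case and then applies Theorem~\ref{thm:linearQuadraticIntro}; since Corollary~\ref{cor:superlinear} is by construction just that argument packaged, the two are logically identical. Your extra remark making explicit the trivial linear lower bound via the geodesic extension property (so that ``not linear'' really does mean ``superlinear'' in the paper's sense) is a sensible clarification that the paper takes for granted.
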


\begin{proof}  By Theorem~\ref{thm:h0Linear}, $W$ does not have linear divergence.  Hence if $(W,S)$ is reducible, by Corollary~\ref{cor:linearIntro} $(W,S)$ is the direct product of an irreducible nonspherical nonaffine Coxeter system with some finite (possibly reducible) Coxeter system.   As divergence is a quasi-isometry invariant, we may thus assume without loss of generality that $(W,S)$ is irreducible and nonaffine.  Now by Theorem~\ref{thm:linearQuadraticIntro}, the divergence of $W$ is at least quadratic, while by Corollary~\ref{cor:hDiv}, the divergence of $W$ is at most quadratic.  Thus $W$ has quadratic divergence as required.
\end{proof}

\begin{remark}\label{rem:strongAlg1}
In the right-angled case, by~\cite[Theorem~6.2]{levcovitz-RACG} the Coxeter system $(W,S)$ having hypergraph index~$h=1$ is equivalent to $W$ being strongly algebraically thick of order~$1$; the proof relies on the characterisation of quadratic divergence for right-angled Coxeter groups.  Using this equivalence as the base case of an induction, Theorem~6.5 of~\cite{levcovitz-RACG} then shows that in right-angled systems, $(W,S)$ having finite hypergraph index $h > 0$ implies that $W$ is strongly algebraically thick of order at most $2h-1$.  The inductive step in the proof of~\cite[Theorem~6.5]{levcovitz-RACG} goes through in the general setting, using Lemma~\ref{lem:intersection}.  However, for general $(W,S)$, we do not know if $h(W,S)=1$ implies that $W$ is strongly algebraically thick of order $1$, so we are missing the base case for a similar inductive proof of the corresponding statement in the general setting.
\end{remark}

\section{The duplex construction}\label{sec:duplex}

We now introduce the procedure we call the \emph{duplex construction} which, given any right-angled Coxeter system, produces infinitely many non-right-angled Coxeter systems of the same hypergraph index.
In Section~\ref{sec:hgiduplex} we define the duplex construction and prove that it preserves hypergraph index.  We briefly discuss the relationships between the original right-angled Coxeter group and its various duplexes in Section~\ref{sec:relationships}.  Then in Section~\ref{sec:DTexamples}, we apply the duplex construction to a family of examples from~\cite{dani-thomas-div} to obtain, for every integer $d \geq 1$, infinitely many pairwise nonisomorphic Coxeter groups of hypergraph index $d-1$.  
In Section~\ref{sec:examplesDiv}, we
show that certain duplexes with hypergraph index $d-1$ have divergence~$\simeq r^d$ and are strongly thick of order $d-1$; thus Conjecture~\ref{conj:lower} holds for these examples.  Since we are working with right-angled Coxeter systems, we will mostly use defining graphs rather than Dynkin diagrams in this section.

\subsection{Construction of the duplex}\label{sec:hgiduplex}

The duplex construction is the construction given by Definition~\ref{defn:duplex} below, and the main result of this section is stated as Theorem~\ref{thm:duplexhi} immediately afterwards. 

\begin{definition}\label{defn:duplex}
Let $(W,S)$ be a right-angled Coxeter system.  Let $m$ be an integer $\geq 2$ and let $n$ be either an integer $\geq 5$ or equal to $\infty$.  The \emph{$(m,n)$-duplex of $(W,S)$}, denoted $(W_2(m,n),S_2)$, is the Coxeter system constructed as follows.  The generating set $S_2$ is the disjoint union of two copies of $S$, with each $u\in S$ giving rise to the two elements $u',u''\in S_2$.  The edges of the defining graph of $(W_2(m,n),S_2)$ are then labelled as follows:
\begin{enumerate}
\item for all $u \in S$, the label on the edge $[u',u'']$ is $m$; 
\item if $m_{uv} = 2$ in $(W,S)$, then the four edges $[u',v']$, $[u',v'']$, $[u'',v']$ and $[u'',v'']$ all have label $2$; and
\item if $m_{uv}=\infty$ in $(W,S)$, then the four edges $[u',v']$, $[u',v'']$, $[u'',v']$ and $[u'',v'']$ all have label $n$.
\end{enumerate}
\end{definition}

\begin{thm}\label{thm:duplexhi}  Let $(W,S)$ be a right-angled Coxeter system.  For all $m \geq 2$ and $n \geq 5$ (including $n = \infty$), the hypergraph index of $(W_2(m,n),S_2)$ is the same as the hypergraph index of $(W,S)$.
\end{thm}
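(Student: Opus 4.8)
The plan is to show that the bijection $\iota\colon S \to S_2$ given by $u \mapsto u'$ (together with the involution swapping $u' \leftrightarrow u''$) sets up a correspondence between the relevant subsets that preserves the data used to compute hypergraph index. The key observation driving everything is that in a duplex, the combinatorial behaviour is governed entirely by the ``duple'' structure: for each $u \in S$ the pair $\{u',u''\}$ generates a finite dihedral group of order $2m$ (spherical), and pairs $u', v''$ with $u \neq v$ either commute (if $m_{uv}=2$ in $S$) or generate a group of type $I_2(n)$ or $D_\infty$ according to whether $m_{uv}=\infty$. So I would first establish a ``type dictionary'': a subset $T \subseteq S$ is spherical (resp.\ nonspherical, resp.\ irreducible affine of rank $\geq 3$, resp.\ minimal nonspherical) if and only if the \emph{full} preimage $\widehat{T} = \{u', u'' : u \in T\} \subseteq S_2$ is spherical (resp.\ \dots). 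For the spherical direction this uses that a subset of $S_2$ is spherical iff its generators pairwise generate finite groups and the whole thing decomposes appropriately; the crucial point is that $n \geq 5$ and $m$ arbitrary $\geq 2$ are chosen precisely so that no new spherical or affine or Lann\'er configurations appear beyond those ``lifted'' from $S$ (an $I_2(n)$ with $n \geq 5$ is spherical but never combines into larger spherical or affine diagrams in the way needed, and $\widetilde A_1, \widetilde C_n$ etc.\ never appear). I would enumerate the finitely many irreducible spherical, affine, and Lann\'er diagrams (Figures~\ref{fig:spherical}, \ref{fig:affine}, \ref{fig:lanner}) and check that, given the label constraints $\{2, m, n\}$ with $n \geq 5$, the only such subdiagrams realisable inside a duplex defining graph are exactly the lifts $\widehat{T}$ of the corresponding subdiagrams in $S$, together with the ``vertical'' dihedral edges $[u', u'']$ which are always spherical and always split off as direct factors.

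\textbf{Reduction to saturated subsets.} The second main step is to show that every subset of $S_2$ that matters for the construction is \emph{saturated}, i.e.\ is a full preimage $\widehat{T}$ for some $T \subseteq S$. Concretely: if $X \in \Omega(S_2)$ or $X \in \Psi(S_2)$, then $X = \widehat{T}$ for some $T \subseteq S$. This follows because if $X$ contains $u'$ for some $u$ and $X$ is wide or slab, then adding $u''$ (which commutes with $u'$ up to the spherical dihedral relation and relates to every other generator exactly as $u'$ does) keeps $X$ wide resp.\ slab and strictly larger --- contradicting maximality --- \emph{unless} $u''$ is already in $X$. One must be mildly careful here: in a slab subset $A \times K$ with $A$ minimal nonspherical, adding $u''$ when $u' \in A$ could in principle destroy minimality of $A$; but by the type dictionary $A$ itself is $\widehat{A_0}$ for a minimal nonspherical $A_0 \subseteq S$ possibly union the vertical edge, and the vertical edge $[u', u'']$ is a spherical direct factor, so $\{u', u''\} \subseteq K$ not $A$ --- I would spell out that slab subsets of $S_2$ have the form $\widehat{A_0} \times (\text{vertical edges} \cup \widehat{K_0})$ and match them to $A_0 \times K_0 \in \Psi(S)$. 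Granting saturation, the map $T \mapsto \widehat{T}$ gives bijections $\Omega(S) \to \Omega(S_2)$ and $\Psi(S) \to \Psi(S_2)$, hence a bijection $\Lambda_0(S) \to \Lambda_0(S_2)$ after accounting for the (finitely many, harmless, spherical) vertical edges which are never wide nor slab and so do not appear in $\Lambda_0$ at all.

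\textbf{Propagating through the $\Lambda_i$.} The final step is a straightforward induction: assuming $\widehat{\cdot}$ induces a bijection $\Lambda_i(S) \to \Lambda_i(S_2)$, one checks $T_j \cap T_{j+1}$ is nonspherical in $S$ iff $\widehat{T_j} \cap \widehat{T_{j+1}} = \widehat{T_j \cap T_{j+1}}$ is nonspherical in $S_2$ (immediate from the type dictionary, since $\widehat{\cdot}$ commutes with intersection on saturated sets), so the $\equiv_i$-equivalence classes correspond and their unions correspond, giving $\Lambda_{i+1}(S) \to \Lambda_{i+1}(S_2)$; and $S \in \Lambda_i(S)$ iff $S_2 = \widehat S \in \Lambda_i(S_2)$. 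Hence $h(W_2(m,n), S_2) = h(W,S)$, and this also covers the case $\Omega(S) = \varnothing$ (both indexes $\infty$) via Lemma~\ref{lem:hypOmega}.

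\textbf{Main obstacle.} The delicate part is the type dictionary, specifically verifying that no ``exotic'' spherical, affine, or Lann\'er subdiagram can arise in a duplex graph that is not a lift. One has to rule out, e.g., a triangle with labels $2, m, n$ (or $m, m, n$, etc.) accidentally being spherical for some small $m$, or an edge $[u',u'']$ of label $m$ together with lifted edges forming an affine $\widetilde C$ or $\widetilde B$ diagram --- this is exactly why $m$ can be \emph{any} integer $\geq 2$ but $n$ is forced to be $\geq 5$: label $5, 6$ or $\infty$ between two vertices never participates in a spherical rank-$\geq 3$ diagram or in any affine or Lann\'er diagram except as its own $I_2(n)$ factor or a $D_\infty$ (which is only $\widetilde A_1$, rank $2$, hence not triggering wide/slab membership on its own). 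I would organise this as a short lemma with a case analysis over the (finite) list of forbidden shapes, leaning on the structure of duplex graphs --- every edge incident to $u'$ is ``parallel'' to the corresponding edge incident to $u''$ --- to cut the casework down drastically.
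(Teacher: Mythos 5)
Your proposed ``type dictionary'' is wrong at exactly one of the four entries, and this is not a small fix: a subset $A \subseteq S$ being minimal nonspherical does \emph{not} imply that its saturated lift $\widehat{A} = \{u',u'' \mid u \in A\}$ is minimal nonspherical in $S_2$. For a right-angled $(W,S)$, a minimal nonspherical $A \subseteq S$ is a nonadjacent pair $\{u,v\}$; its lift $\widehat{A} = \{u',u'',v',v''\}$ contains the strictly smaller nonspherical subset $\{u',v'\}$ (if $n=\infty$) or the triangle $\{u',u'',v'\}$ with labels $(m,n,n)$ (if $n < \infty$). So $\widehat{A}$ has nonspherical proper subsets and is never minimal nonspherical. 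Consequently the ``reduction to saturated subsets'' fails for $\Psi(S_2)$: slab subsets of $S_2$ have the form $\overline{A}\times\widetilde{K}$ with $\overline{A}$ a two- or three-element set properly contained in $\widehat{A}$, so they are never saturated, and the assignment $T \mapsto \widehat{T}$ is not a bijection $\Psi(S)\to\Psi(S_2)$. In fact it is not even a map into $\Psi(S_2)$, and a single element of $\Psi(S)$ can have several distinct ``covering'' slabs in $\Psi(S_2)$ (e.g.\ four, in the example worked out for $\Gamma_3$ in the paper), so there is no bijection to be had.

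Because of this your induction through the $\Lambda_i$ also does not go through as written: you use that the $\equiv_i$-equivalence classes on both sides are matched under $\widehat{\cdot}$, but slab elements of $\Lambda_i(S_2)$ do not arise as lifts. The paper's proof handles exactly this asymmetry: it proves (Lemma~\ref{lem:dupWide}) that wide subsets are precisely the lifts — this part of your strategy is correct — but then introduces \emph{covering slabs} (Lemma~\ref{lem:slabcover}) as a substitute for lifts of slab subsets, and proves a weaker correspondence only for the \emph{non-slab} elements of each $\Lambda_i$ (Proposition~\ref{prop:nsarelifts}). The final step exploits Lemma~\ref{lem:widenotinslab} to guarantee that $S$ and $S_2$, when they appear in some $\Lambda_i$, are non-slab, so the non-slab correspondence suffices to equate the hypergraph indexes. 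To rescue your argument you would need to replace the claimed bijection on $\Psi$ with this non-slab bookkeeping (or some equivalent device), rather than a literal $\widehat{\cdot}$-correspondence on all of $\Lambda_0$.
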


Theorem~\ref{thm:duplexhi} will be proved in the end of this subsection.

\begin{remark} \label{rem:duplex} The intuition for Definition~\ref{defn:duplex} and the proof of Theorem~\ref{thm:duplexhi} is as follows.  The requirement $m<\infty$ is necessary so that $\{ u',u''\}$ is a spherical subset (since it corresponds to a single generator of a right-angled Coxeter group).  The requirement $n\ge5$ is necessary so that 
no nonspherical subsystem of rank $3$ in $S_2$ is affine (see Lemma~\ref{lem:minDuplex}). 
We also note that it is possible to generalise the construction of a duplex by allowing edge labels (in case $m_{uv}=\infty$) be varying labels $\ge5$ (or $\infty$) instead of all the same label $n\ge5$.
\end{remark}

The next example will be followed throughout this section.

\begin{example} Figure~\ref{fig:g3} illustrates the construction of a duplex. On the left is the defining graph $\G_3$ from~\cite[Section~5]{dani-thomas-div}.  In this graph, all edges are labelled with $2$ and all ``missing" edges are labelled with $\infty$.  To keep notation simple, let $S$ be the vertex set of $\G_3$ and let $W = W_{\G_3}$ be the associated right-angled Coxeter group. On the right we give a schematic for the defining graph, denoted $(\G_3)_2$, of the duplex $(W_2(m,n),S_2)$ of $(W,S)$.  Each vertex here represents two generators $u',u'' \in S_2$ such that $m_{u',u''}=m$, each thick black edge represents four edges all labelled by $2$ and each thick light grey edge represents four edges all labelled by $n$.  The Coxeter systems $(W,S)$ and $(W_2(m,n),S_2)$ both have hypergraph index 2, as we will prove.

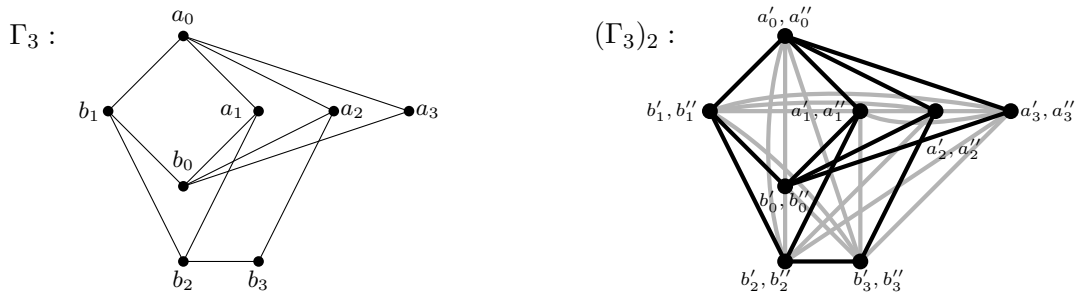
\begin{figure}[htb!] 
\begin{tikzpicture}[scale=1]
\fill (0,1) circle (2pt); \draw (0,1.25) node {\footnotesize $a_0$};
\fill (0,-1) circle (2pt); \draw (0,-0.65) node {\footnotesize $b_0$};
\fill (0,-2) circle (2pt); \draw (0,-2.25) node {\footnotesize $b_2$};
\fill (1,-2) circle (2pt); \draw (1,-2.25) node {\footnotesize $b_3$};
\fill (-1,0) circle (2pt); \draw (-1.25,0) node {\footnotesize $b_1$};
\fill (1,0) circle (2pt); \draw (0.65,0) node {\footnotesize $a_1$};
\fill (2,0) circle (2pt); \draw (2.25,0) node {\footnotesize $a_2$};
\fill (3,0) circle (2pt); \draw (3.25,0) node {\footnotesize $a_3$};
\draw (-1,0)--(0,1);
\draw (-1,0)--(0,-1);
\draw (-1,0)--(0,-2);
\draw (1,0)--(0,1);
\draw (1,0)--(0,-1);
\draw (1,0)--(0,-2);
\draw (2,0)--(0,1);
\draw (2,0)--(0,-1);
\draw (3,0)--(0,1);
\draw (3,0)--(0,-1);
\draw (1,-2)--(0,-2);
\draw (1,-2)--(2,0);
\draw (-2,1) node {$\Gamma_3:$};
\begin{scope}[xshift=8cm,ultra thick]
\begin{scope}[color=mygrey]
\draw (0,1)--(0,-1)--(0,-2);
\draw (-1,0)--(1,0)--(2,0)--(3,0);
\draw (0,1)--(1,-2);
\draw (0,-2)--(2,0);
\draw (0,-2)--(3,0);
\draw (1,-2)--(3,0);
\draw (1,-2)--(1,0);
\draw (0,-1)--(1,-2);
\draw (-1,0) to [out=-30, in=120,looseness=0.75] (1,-2);
\draw (0,1) to [out=-110, in=110,looseness=0.75] (0,-2);
\draw (-1,0) to [out=10, in=170,looseness=0.75] (2,0);
\draw (-1,0) to [out=20, in=170,looseness=0.75] (3,0);
\draw (1,0) to [out=-30, in=-170,looseness=0.75] (3,0);
\end{scope}
\fill (0,1) circle (3pt); \draw (0,1.25) node {\tiny $a'_0,a''_0$};
\fill (0,-1) circle (3pt); \draw (0,-1.2) node {\tiny $b'_0,b''_0$};
\fill (0,-2) circle (3pt); \draw (-0.25,-2.25) node {\tiny $b'_2,b''_2$};
\fill (1,-2) circle (3pt); \draw (1.25,-2.25) node {\tiny $b'_3,b''_3$};
\fill (-1,0) circle (3pt); \draw (-1.5,0) node {\tiny $b'_1,b''_1$};
\fill (1,0) circle (3pt); \draw (0.35,0) node {\tiny $a'_1,a''_1$};
\fill (2,0) circle (3pt); \draw (2.35,-0.5) node {\tiny $a'_2, a''_2$};
\fill (3,0) circle (3pt); \draw (3.75,0) node {\tiny $a'_3,a''_3$};
\draw (-1,0)--(0,1);
\draw (-1,0)--(0,-1);
\draw (-1,0)--(0,-2);
\draw (1,0)--(0,1);
\draw (1,0)--(0,-1);
\draw (1,0)--(0,-2);
\draw (2,0)--(0,1);
\draw (2,0)--(0,-1);
\draw (3,0)--(0,1);
\draw (3,0)--(0,-1);
\draw (1,-2)--(0,-2);
\draw (1,-2)--(2,0);
\draw (-2,1) node {$(\Gamma_3)_2:$};
\end{scope}
\end{tikzpicture} 
\caption{\small{The duplex construction. In the picture on the right 
every bold vertex corresponds to a pair of vertices of $(\Gamma_3)_2$ connected by an edge labelled $m$, every thick black edge corresponds to four edges of $(\Gamma_3)_2$ labelled $2$, and every thick grey edge corresponds to four edges of $(\Gamma_3)_2$ labelled $n$.}}
\label{fig:g3}
\end{figure}
\label{ex:duplex}
\end{example}

Given a defining graph $\G$ for a right-angled Coxeter system $(W,S)$, we write $\G_2 = \G_2(m,n)$ for the defining graph of the duplex $(W_2(m,n),S_2)$. Given $T\subseteq S$, we refer to the set   
$\widetilde{T}:=\left\{u',u''  \mid u \in T \right\} \subseteq S_2$ as the \emph{lift} of $T$. Given $T\subseteq S_2$, we refer to the set 
$T_0:=\{u\in S \mid \{u',u''\} \cap T \neq \varnothing \} \subseteq S$ as the \emph{projection} of $T$.  In order to prove Theorem~\ref{thm:duplexhi}, we first characterise the minimal nonspherical subsets of $S_2$.

\begin{lemma}\label{lem:minDuplex} Let $A$ be a minimal nonspherical subset of $S_2$.  Then either:
\begin{enumerate}
    \item $|A| = 2$ and the subgraph of $\G_2$ induced by $A$ is an edge labelled by $n = \infty$; or
    \item $|A| = 3$ and the subgraph of $\G_2$ induced by $A$ is a triangle with edge labels $(p,n,n)$, where $p \in \{2,m,n\}$ and $n$ is finite.
\end{enumerate}
In particular, $A$ is not irreducible affine of cardinality $\ge3$.
\end{lemma}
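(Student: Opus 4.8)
The plan is to analyze the possible Dynkin diagrams (here, defining-graph-type data with labels in $\{2, m, n\}$) of a minimal nonspherical subset $A\subseteq S_2$, using the classification recalled in Remark~\ref{rem:minimal}: such $A$ must be either irreducible affine (Figure~\ref{fig:affine}) or Lann\'er hyperbolic (Figure~\ref{fig:lanner}). First I would observe that, by construction, the edge labels occurring in $\G_2$ are only $2$, $m$ (with $2\le m<\infty$), and $n$ (with $n\ge 5$ or $n=\infty$), and moreover for any fixed $u\in S$ the pair $\{u',u''\}$ is spherical (it spans a dihedral group of order $2m$), so $A$ cannot contain both $u'$ and $u''$ unless $|A|\ge 3$; and if $|A|=2$ then $A=\{u',v''\}$ (or similar) with $m_{uv}=\infty$ in $(W,S)$, forcing the induced subgraph to be a single edge labelled $n$, which is nonspherical precisely when $n=\infty$, giving case~(1).

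Next I would treat $|A|\ge 3$. The key constraint is that every edge label in the induced subgraph lies in $\{2,m,n\}$, and $n\ge 5$. I would run through the irreducible affine diagrams of rank $\ge 3$ (types $\widetilde A_n$, $\widetilde B_n$, $\widetilde C_n$, $\widetilde D_n$, $\widetilde E_n$, $\widetilde F_4$, $\widetilde G_2$): each of these either contains an edge with label $3$ (and $A$ is minimal nonspherical, so must equal that whole diagram — but $3\notin\{2,m,n\}$ unless $m=3$, and even then the diagram shapes like $\widetilde A_2$ need all edges present) or, more to the point, every such diagram has at least one edge of label $3$ or $4$ or $6$, none of which can equal $n\ge 5$ except possibly $n=6$ in $\widetilde G_2$ — but $\widetilde G_2$ also has an edge of label $3$, impossible unless $m=3$, and then the ``$6$''-edge would need $n=6$; I would rule this out by checking that $\widetilde G_2$ has only one non-$\infty$-type edge of label $6$ and one of label $3$, so it cannot be realized with labels from $\{2,m,n\}$ with $m\ge 2$, $n\ge 5$ in a way consistent with the duplex edge pattern (in particular, an edge of label $m$ only occurs between $u'$ and $u''$ for the \emph{same} $u$, and two such edges are never adjacent). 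Similarly, the rank-$3$ Lann\'er diagrams in Figure~\ref{fig:lanner} are triangles with labels $(p,q,r)$ satisfying $1/p+1/q+1/r<1$; with labels restricted to $\{2,m,n\}$ and $n\ge 5$, I would identify exactly which triangles survive. Here the crucial structural point from the duplex construction is: at most one edge of the triangle can have label $m$ (since $m$-edges form a perfect matching on $\{u',u''\}$ pairs and are pairwise non-adjacent), so a triangle in $\G_2$ has edge-label multiset of the form $(p, q, q)$ with $p\in\{2,m,n\}$ and $q\in\{2,n\}$; nonsphericity ($1/p+1/q+1/q<1$) then forces $q=n$ finite (since $q=2$ gives $1/p+1\ge 1$), yielding case~(2) with $p\in\{2,m,n\}$ and $n$ finite. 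Any Lann\'er diagram of rank $\ge 4$ (the tree-shaped and $4$-cycle diagrams in Figure~\ref{fig:lanner}) contains an edge of label $3$ or $4$ or $5$ in a position adjacent to another edge, which again cannot be matched by the duplex labeling constraints once one notes $m$-edges are isolated and all other edges have label $2$ or $n$; I would dispatch these case by case.

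Finally, to record the ``in particular'' clause: since every diagram in case~(1) or case~(2) has $\le 3$ vertices and is not of the form of any irreducible affine diagram of rank $\ge 3$, $A$ is not irreducible affine of cardinality $\ge 3$ — alternatively, this is immediate because we have just shown no irreducible affine diagram of rank $\ge 3$ can be induced in $\G_2$. The main obstacle I anticipate is organizing the case check cleanly: there are several affine and Lann\'er families to eliminate, and the argument depends on two features of the duplex labeling that must be invoked carefully — that $m$-labeled edges form a non-adjacent matching (so no path or triangle has two $m$-edges), and that all remaining edges carry label $2$ or $n$. Making these two observations precise up front will let the diagram-by-diagram elimination go through quickly, and the triangle analysis will be the only place where a genuine inequality ($1/p + 2/n < 1$) needs to be solved.
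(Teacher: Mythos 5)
Your proposal is correct and takes essentially the same approach as the paper: invoke Remark~\ref{rem:minimal} to restrict $A$ to irreducible affine or Lann\'er type, then observe that in $\G_2$ the $m$-labelled edges connect only lifts $\{u',u''\}$ and hence are pairwise non-adjacent while all remaining edges carry label $2$ or $n$, and finally eliminate all affine diagrams and all Lann\'er diagrams of rank $\ge 4$ via the constraint $n \ge 5$. The paper's proof is a one-paragraph compression of the case analysis you spell out; one small point worth making explicit in either version is that for fixed $u$ and $v \neq u$ the labels $m_{u'v^*}$ and $m_{u''v^*}$ coincide (both determined by $m_{uv}$), which is what actually forces the $(m,q,q)$ pattern on a triangle containing an $m$-edge and rules out, e.g., a $\widetilde G_2$ or a rank-$4$ path such as $(5,3,5)$ even when $m$ and $n$ could superficially match the labels.
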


\begin{proof} By Remark~\ref{rem:minimal}, $(W_A,A)$ is either irreducible affine, or has Dynkin diagram appearing in Figure~\ref{fig:lanner}. We then note that in the defining graph $\G_2$ of $(W_2(m,n),S_2)$, the edges labelled by $m$ are just those which connect lifts of single vertices.  Therefore if two adjacent edges of $\G_2$ both have labels greater than $2$, then either they must have labels $m \geq 3$ and $n$, or they are both labelled by $n$.  Since $m$ is finite and $n \geq 5$, it follows from the classification of irreducible affine Coxeter systems and Figure~\ref{fig:lanner}, respectively, that the only possibilities for~$A$ are as in the statement.
The last sentence follows easily from the description of $A$.
\end{proof}

\begin{example} Consider the duplex system in Example~\ref{ex:duplex} (see also Figure~\ref{fig:g3}), and assume~$n$ is finite. In this case, minimal nonspherical sets in $S_2$ include $\{a'_1, b'_3,b''_3\}$, $\{a'_1, a'_3,b''_3\}$ and $\{a'_1, a'_3,b''_2\}$. The subgraphs of $(\G_3)_2$ induced by these sets are all triangles, with labels $(m,n,n)$, $(n,n,n)$ and $(2,n,n)$, respectively. 
\label{ex:dupnonsph}
\end{example}

We now consider the relationships between lifts and projections of various subsets of $S$ and of $S_2$.

\begin{lemma}\label{lem:dupCommute}
Lifts and projections of commuting subsets also commute.
\end{lemma}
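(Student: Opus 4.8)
The statement asserts two directions: (i) if $A, B \subseteq S$ commute in the right-angled system $(W,S)$, then their lifts $\widetilde A, \widetilde B \subseteq S_2$ commute in the duplex; and (ii) if $T, T' \subseteq S_2$ commute in the duplex, then their projections $T_0, T'_0 \subseteq S$ commute in $(W,S)$. The key observation driving both is the structure of edge labels in $\G_2$: by Definition~\ref{defn:duplex}, the only labels in $\G_2$ are $m$ (on edges $[u',u'']$ joining the two lifts of a single vertex $u \in S$), $2$, and $n$; moreover $2$ occurs precisely on the four edges over a $2$-labelled edge of $\G$, and $n$ occurs precisely on the four edges over an $\infty$-labelled edge of $\G$. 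Recall that two subsets of $S$ (resp.\ $S_2$) commute exactly when every vertex of one is joined by a $2$-labelled edge to every vertex of the other, and that ``commuting'' for subsets of a Coxeter system by convention also requires disjointness.

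\textbf{Step 1 (lifts of commuting sets commute).} Suppose $[A,B]=1$ in $(W,S)$, so $A \cap B = \varnothing$ and $m_{uv}=2$ for all $u \in A$, $v \in B$. First, disjointness of $A$ and $B$ immediately gives disjointness of $\widetilde A$ and $\widetilde B$, since $\{u',u''\}$ and $\{v',v''\}$ are disjoint whenever $u \neq v$. Next, fix $u \in A$, $v \in B$; since $m_{uv} = 2$, clause (2) of Definition~\ref{defn:duplex} says all four edges $[u',v'], [u',v''], [u'',v'], [u'',v'']$ carry label $2$. Hence every vertex of $\widetilde A = \{u', u'' : u \in A\}$ commutes with every vertex of $\widetilde B$, so $[\widetilde A, \widetilde B] = 1$.

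\textbf{Step 2 (projections of commuting sets commute).} Suppose $[T, T'] = 1$ in the duplex, so $T \cap T' = \varnothing$ and every $x \in T$ is joined to every $y \in T'$ by a $2$-labelled edge. I first check $T_0 \cap T'_0 = \varnothing$: if some $u \in T_0 \cap T'_0$, then $T$ meets $\{u',u''\}$ in some vertex $x$ and $T'$ meets $\{u',u''\}$ in some vertex $y$. Since $T \cap T' = \varnothing$ we have $x \neq y$, so $\{x,y\} = \{u', u''\}$; but the edge $[u',u'']$ has label $m \geq 2$, and if $m \geq 3$ this contradicts $x,y$ commuting, while if $m = 2$ then $\{u',u''\}$ is spherical and the pair commutes, yet $x \in T$ and $y \in T'$ would still be forced — here I need to be slightly more careful, so I will argue via disjointness directly: the hypothesis $[T,T']=1$ includes $T \cap T' = \varnothing$, but it does \emph{not} a priori forbid $u' \in T$ and $u'' \in T'$ when $m=2$. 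So instead I handle projections via the label condition, and treat the (possible) overlap separately: if $u \in T_0 \cap T'_0$ then $u \in T_0$ and $u \in T'_0$ give $m_{uu} = 1$ trivially, so $u$ commutes with itself in $S$ vacuously — in fact what I must show is just that $T_0$ and $T'_0$ commute \emph{as subsets}, and the convention-disjointness will follow once I note that if $u' \in T$ and $u'' \in T'$ then $u',u''$ commute forces $m = 2$. The cleaner route: for the commuting/label part, take $u \in T_0$ and $v \in T'_0$ with $u \neq v$; pick $x \in T \cap \{u',u''\}$ and $y \in T' \cap \{v',v''\}$. Then $[x,y]$ is an edge of $\G_2$ with label $2$ (by hypothesis), and by the label analysis of $\G_2$ an edge between a lift-vertex of $u$ and a lift-vertex of $v$ (with $u \neq v$) has label $2$ iff $m_{uv} = 2$ in $\G$, and label $n$ iff $m_{uv} = \infty$. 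Since the label is $2$, we conclude $m_{uv} = 2$, i.e.\ $u$ and $v$ commute in $(W,S)$.

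\textbf{Step 3 (disjointness of projections).} It remains to rule out $u \in T_0 \cap T'_0$. If such $u$ exists, then (using $T \cap T' = \varnothing$) at least one of $\{u', u''\}$ lies in $T$ and a \emph{different} one in $T'$, so $u' \in T$ and $u'' \in T'$ (up to swapping). Then $[u',u'']$ must be a $2$-labelled edge of $\G_2$ by the commuting hypothesis, forcing $m = 2$. So when $m \geq 3$, disjointness of projections is automatic. When $m = 2$, the pair $\{u',u''\}$ is spherical and does commute, so strictly speaking $T_0$ and $T'_0$ need not be disjoint — but in this paper ``$[A,B]=1$'' for subsets is used with the standing disjointness convention only when writing $A \times B$; for the bare relation ``lifts and projections of commuting subsets also commute'' it suffices to establish the pairwise commuting of all cross-vertices, which Step~2 does. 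I will phrase the lemma's conclusion to mean exactly the pairwise-commuting (= $2$-labelled-edge) condition, matching how it is invoked in subsequent proofs, and remark that disjointness transfers whenever $m \geq 3$ or whenever the relevant sets were disjoint to begin with.

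\textbf{Main obstacle.} The only subtlety is the bookkeeping around the disjointness convention for $m = 2$, where a single vertex $u \in S$ has a spherical lift $\{u',u''\}$ whose two elements commute; this is exactly why the hypothesis ``$m < \infty$'' appears in Remark~\ref{rem:duplex}. Everything else is a direct reading of the edge labels in Definition~\ref{defn:duplex}. I expect the write-up to be short once the convention issue is pinned down.
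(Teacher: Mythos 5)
Your proposal is correct and follows essentially the same approach as the paper's one-line proof: lifts commute because $2$-labels propagate by construction, and commuting pairs in $S_2$ project to commuting pairs or (when $m=2$) to a single vertex. The paper also flags the $m=2$ collapse (``the projection of $\{s,t\}$ can only be either two distinct vertices\dots connected by an edge, or\dots a single vertex'') and deals with the resulting possible non-disjointness explicitly later in the proof of Lemma~\ref{lem:dupWide}, so your lengthy Step~3 deliberation lands on exactly the reading the authors use.
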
  
\begin{proof}
If $m_{uv}=2$ for $u,v \in S$, then by construction the lifts of $\{u\}$ and $\{v\}$ also commute. Conversely, if $m_{st}=2$ for $s,t\in S_2$, then the projection of $\{s,t\}$ can only be either two distinct vertices in $\Gamma$ connected by an edge, or (in the case that $m=2$) a single vertex. 
\end{proof}

\begin{lemma} 
Lifts and projections of nonspherical subsets are also nonspherical.
\label{lem:dupNonsph}
\end{lemma}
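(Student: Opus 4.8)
The plan rests on two standard facts: in a right-angled Coxeter system, $T \subseteq S$ is spherical precisely when it spans a clique in the defining graph $\G$ (Remark~\ref{rem:minimalRA}), equivalently $T$ is nonspherical iff it contains two $\G$-nonadjacent vertices; and in any Coxeter system a superset of a nonspherical set is nonspherical. I would prove the two halves of the statement (lifts, then projections) separately.

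For the lift half, given a nonspherical $T \subseteq S$ I would fix $u \ne v$ in $T$ with $m_{uv} = \infty$ in $\G$ and examine the special subgroup $W_{\{u',u'',v'\}} \le W_{\widetilde T}$. By Definition~\ref{defn:duplex}, the triangle these vertices span in $\G_2$ has $m_{u'u''} = m$ and $m_{u'v'} = m_{u''v'} = n$. If $n = \infty$, the edge $\{u',v'\}$ is labelled $\infty$ and already $\langle u',v'\rangle \cong D_\infty \le W_{\widetilde T}$. If $n$ is finite, then $n \ge 5$ and $m \ge 2$ force $\tfrac1m + \tfrac2n \le \tfrac{9}{10} < 1$, so $W_{\{u',u'',v'\}}$ is an infinite rank-$3$ (Lannér hyperbolic) group; this is exactly one of the minimal nonspherical configurations identified in the proof of Lemma~\ref{lem:minDuplex}. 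Either way $W_{\widetilde T}$ is infinite, i.e.\ $\widetilde T$ is nonspherical.

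For the projection half, I would avoid Lemma~\ref{lem:minDuplex} and argue directly. First observe that $T \subseteq \widetilde{T_0}$ for every $T \subseteq S_2$: each $s \in T$ lies in a unique lift $\{u',u''\}$, and then $u \in T_0$ by definition. Now suppose, for contradiction, that $T_0$ is spherical, i.e.\ a clique of $\G$. Then $m_{uv} = 2$ for all distinct $u,v \in T_0$, so by Definition~\ref{defn:duplex}(2) the restriction of the diagram of $\G_2$ to $\widetilde{T_0}$ is a disjoint union of edges labelled $m$; hence $W_{\widetilde{T_0}} \cong \prod_{u \in T_0}\langle u',u''\rangle$ is a finite direct product of dihedral groups $D_{2m}$. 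Since $T \subseteq \widetilde{T_0}$, this makes $W_T$ finite, contradicting the nonsphericity of $T$. Hence $T_0$ is nonspherical.

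I expect the only genuine subtlety to be in the projection half. An approach via Lemma~\ref{lem:minDuplex} would have to contend with the possibility $m = n$, so that a label-$n$ edge of $\G_2$ need not join two distinct lifts, forcing a fiddly case check on how the vertices of a minimal nonspherical triangle of $S_2$ are distributed among lifts. The containment $T \subseteq \widetilde{T_0}$ together with the product structure of a clique's lift sidesteps this, and has the bonus of recording the converse fact that lifts of spherical sets are spherical. This lemma, together with Lemmas~\ref{lem:dupCommute} and~\ref{lem:minDuplex}, will then feed into the proof of Theorem~\ref{thm:duplexhi}.
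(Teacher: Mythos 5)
Your proposal is correct. The lift half is essentially the paper's argument: the paper also fixes a nonadjacent pair $u,v$ in the nonspherical set and observes that the lift contains a triangle with labels $(m,n,n)$, which (since $n\geq 5$) is nonspherical; you merely make the inequality $\tfrac1m + \tfrac2n < 1$ explicit, which the paper leaves implicit in the phrase ``as $n\geq 5$''.

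The projection half, however, is a genuinely different argument, and in my view a cleaner one. The paper applies Lemma~\ref{lem:minDuplex} to a minimal nonspherical subset of $T$ and asserts that the induced subgraph ``contains an edge labelled $n$ that corresponds to a pair of disconnected vertices'' in $\G$. This is correct but hides precisely the subtlety you point out: when $m=n$, an $n$-labelled edge of $\G_2$ may also lie \emph{inside} a lift $\{u',u''\}$, in which case it corresponds to a single vertex of $\G$, not a nonadjacent pair. One must still argue (as is indeed true) that at most one edge of the triangle can lie inside a lift, so at least one $n$-labelled edge joins distinct lifts; the paper does not spell this out. Your contrapositive argument — $T\subseteq\widetilde{T_0}$, and a clique of $\G$ lifts to a finite direct product of dihedral groups $\langle u',u''\rangle$ — avoids Lemma~\ref{lem:minDuplex} entirely and sidesteps the $m=n$ case analysis, at the small cost of not being a direct argument. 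It also establishes, for free, the useful converse that lifts of spherical sets are spherical, a fact the paper re-derives separately inside the proof of Lemma~\ref{lem:slabcover}.

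One minor stylistic point: in the projection half you describe the restriction of ``the diagram of $\G_2$'' to $\widetilde{T_0}$ as a disjoint union of edges labelled $m$. Since $\G_2$ denotes the defining graph (where edges labelled $2$ are present), you presumably mean the Dynkin diagram; also when $m=2$ there are no edges at all, just isolated vertices, though the conclusion $W_{\widetilde{T_0}}\cong\prod_{u\in T_0}\langle u',u''\rangle$ holds regardless. It would be worth phrasing this precisely.
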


\begin{proof} 
Given a nonspherical subset $A$  of $S$, the subgraph of $\G$ induced by $A$ contains a pair of nonadjacent vertices.  By Conditions (1) and (3) of Definition~\ref{defn:duplex}, this means that the defining graph of $(W_{\widetilde{A}},\widetilde{A})$ contains a triangle with edge labels $(m, n, n)$.
Thus as $n \geq 5$, the set $\widetilde{A}$ is also nonspherical.
Now if $T \subseteq S_2$ is nonspherical, it contains a minimal nonspherical subset. Then by Lemma~\ref{lem:minDuplex} the subgraph of $\G_2$ induced by $T$ contains an edge labelled $n$ that corresponds to a pair of  disconnected vertices in the subgraph of $\G$ induced by $T_0$, so~$T_0$ is also nonspherical.
\end{proof}

The next result says that lifts and projections of wide subsets are wide (see Definition~\ref{defn:wide}).
\begin{lemma} Let $(W,S)$ be a right-angled Coxeter system with  duplex $(W_2(m,n),S_2)$. 
\begin{enumerate}
    \item Every set in $\Omega(S_2)$ is the lift of a set in $\Omega(S)$.
    \item Every set in $\Omega(S)$ lifts to a set in $\Omega(S_2)$.
\end{enumerate}
\label{lem:dupWide}
\end{lemma}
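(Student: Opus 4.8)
The plan is to prove the two directions separately, using the characterisation of wide subsets in Definition~\ref{defn:wide} together with Lemmas~\ref{lem:minDuplex}, \ref{lem:dupCommute} and \ref{lem:dupNonsph}. A key preliminary observation, which I would establish first, is that no subset of $S_2$ can be irreducible affine of rank $\ge 3$: this is precisely the last sentence of Lemma~\ref{lem:minDuplex}, since any irreducible affine subset of rank $\ge 3$ contains a minimal nonspherical subset that is itself irreducible affine of rank $\ge 3$ (its own spine minus a vertex, or rather: an irreducible affine system of rank $\ge 3$ is minimal nonspherical). Hence clause (2) of Definition~\ref{defn:wide} is vacuous on the duplex side, and every element of $\Omega(S_2)$ has the form $A \times B$ with $A$ and $B$ both nonspherical. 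I would also record that $S$ itself contains no irreducible affine subset of rank $\ge 3$ because $(W,S)$ is right-angled, so clause (2) is vacuous on both sides and $\Omega$ is in both cases simply the set of maximal subsets of the form (nonspherical)$\times$(nonspherical).

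For part (1): let $T \in \Omega(S_2)$, so $T = A \sqcup B$ with $A, B$ nonspherical and $[A,B]=1$. First I would show $T$ is a lift, i.e.\ $T = \widetilde{T_0}$ where $T_0$ is the projection: equivalently, whenever $u' \in T$ then $u'' \in T$. Suppose not, so some $u' \in T$ but $u'' \notin T$. Using Lemma~\ref{lem:minDuplex}, every minimal nonspherical subset of $S_2$ either is an $\infty$-labelled edge or a triangle with labels $(p,n,n)$; in either case the two $n$-labelled edges join lifts of \emph{distinct} vertices of $\G$, so a minimal nonspherical subset never contains both $u'$ and $u''$, and moreover replacing $u'$ by $u''$ (or vice versa) inside such a minimal nonspherical set yields again a minimal nonspherical set with the same labels, because $u'$ and $u''$ have identical adjacencies to every other vertex of $\G_2$ (both inherit their labels from $u$). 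Consequently $T \cup \{u''\}$ still decomposes as (nonspherical)$\times$(nonspherical): $u''$ commutes with exactly the same vertices of $\G_2$ as $u'$ does (this is Lemma~\ref{lem:dupCommute} applied to the pair $u', u''$ and its $m$-edge, together with the symmetry of Definition~\ref{defn:duplex}), so if $u'$ lay in, say, the factor $A$, then $u''$ commutes with $B$ and with $A \setminus \{u'\} \cup \{u''\}$ is still nonspherical (it contains a minimal nonspherical set, obtained by the swap above), so $(A \cup \{u''\}) \times B$ is of the required form, contradicting maximality of $T$. Hence $T$ is a lift, $T = \widetilde{T_0}$. It remains to check $T_0 \in \Omega(S)$. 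Since $T = A \sqcup B$ with $[A,B]=1$ and both nonspherical, Lemma~\ref{lem:dupCommute} and Lemma~\ref{lem:dupNonsph} (projections of commuting/nonspherical sets are commuting/nonspherical) give $T_0 = A_0 \cup B_0$ with $[A_0, B_0]=1$ and both $A_0, B_0$ nonspherical --- so $T_0$ is of the form appearing in Definition~\ref{defn:wide}(1). Maximality: if $T_0 \subsetneq T_0'$ with $T_0'$ of the same form, then $\widetilde{T_0'}$ is of the same form by Lemma~\ref{lem:dupCommute} and Lemma~\ref{lem:dupNonsph} (lifts of commuting/nonspherical sets), and $\widetilde{T_0} = T \subsetneq \widetilde{T_0'}$, contradicting maximality of $T$.

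For part (2): let $U \in \Omega(S)$, so $U = A_0 \sqcup B_0$ with $[A_0,B_0]=1$ and both nonspherical. By Lemma~\ref{lem:dupCommute} the lifts $\widetilde{A_0}$ and $\widetilde{B_0}$ commute, and by Lemma~\ref{lem:dupNonsph} both are nonspherical; so $\widetilde U = \widetilde{A_0} \sqcup \widetilde{B_0}$ is of the form in Definition~\ref{defn:wide}(1) and hence is contained in some maximal such set, i.e.\ in some $T \in \Omega(S_2)$. By part (1), $T = \widetilde{T_0}$ for some $T_0 \in \Omega(S)$, and $\widetilde U \subseteq \widetilde{T_0}$ forces $U \subseteq T_0$. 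Since $U$ is maximal among sets of its form and $T_0$ is of that form, $U = T_0$, whence $\widetilde U = T \in \Omega(S_2)$, as desired.

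The main obstacle, and the step I would write most carefully, is the ``swap'' argument in part (1): namely, showing that an element of $\Omega(S_2)$ is automatically a full lift. This rests on the interchangeability of $u'$ and $u''$ inside $\G_2$ --- they have identical labelled adjacencies to all other vertices --- and on the structural description of minimal nonspherical subsets from Lemma~\ref{lem:minDuplex}, which guarantees that no such set ever involves both $u'$ and $u''$ and that such sets are preserved under the swap. Once that symmetry is set up cleanly, everything else is a routine application of Lemmas~\ref{lem:dupCommute} and \ref{lem:dupNonsph} together with the bookkeeping of maximality.
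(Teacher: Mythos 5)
Your overall strategy for part (1) genuinely differs from the paper's: you first prove directly, via the swap argument, that any $T\in\Omega(S_2)$ must equal its own lift $\widetilde{T_0}$ (the paper instead deduces this a posteriori, by exhibiting a wide $A'\times B'\subseteq S$ whose lift contains $T$, and then invoking maximality of $T$). The swap idea is nice and works, but two things need attention.

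First, a minor point: your claim that ``a minimal nonspherical subset never contains both $u'$ and $u''$'' is false. A triangle with edge labels $(m,n,n)$ can have the $m$-edge be $[u',u'']$ and the third vertex a lift of some $v$ with $m_{uv}=\infty$, and by Lemma~\ref{lem:minDuplex} such a triangle \emph{is} minimal nonspherical. Fortunately this does not matter for your argument: you never actually use the claim, because $A\cup\{u''\}$ is nonspherical simply because it contains the nonspherical set $A$. The whole discussion of replacing $u'$ by $u''$ inside a minimal nonspherical set is superfluous and should be deleted.

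Second, and more seriously, there is a real gap in the step where you conclude that $T_0$ is of the form in Definition~\ref{defn:wide}(1). You write $T_0 = A_0 \cup B_0$ with $[A_0,B_0]=1$ and both nonspherical, but the form required is $A\times B$ with $A$ and $B$ \emph{disjoint}. Knowing that $T$ is a lift does not force $A_0$ and $B_0$ to be disjoint: when $m=2$ it can happen that $u'\in A$ and $u''\in B$, since $u'$ and $u''$ then commute, and in that case $u\in A_0\cap B_0$. To repair this you must produce a genuine decomposition, e.g.\ $T_0 = A_0 \times (B_0\setminus A_0)$, and prove $B_0\setminus A_0$ is nonspherical. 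That requires the observation (used in the paper's proof) that every $u\in A_0\cap B_0$ commutes with all of $B_0$, so $A_0\cap B_0$ is a clique, hence spherical, and $B_0=(A_0\cap B_0)\times(B_0\setminus A_0)$ forces $B_0\setminus A_0$ to be nonspherical. Equivalently, you could first re-normalize the decomposition of $T$ itself by moving each offending $u''$ out of $B$ and into $A$; either way the $m=2$ case needs an explicit argument. Part (2) of your proof is correct and matches the paper's.
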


\begin{proof} For (1), suppose $T \in \Omega(S_2)$. Then $T=A\times B$ for some $A,B \subseteq S_2$.  We may assume without loss of generality that $A$ is nonspherical. 
Let $A'$ be a minimal nonspherical subset of~$A$.  Then $A'$, and therefore $A$, cannot be irreducible affine of cardinality $\geq 3$, by Lemma~\ref{lem:minDuplex}. 
From the definition of wide subsets, we conclude that $B$ is also nonspherical.

Thus by Lemma~\ref{lem:dupNonsph}, the projections $A_0$ and $B_0$ are also nonspherical, and by Lemma~\ref{lem:dupCommute}, they must also commute. If $m\geq 3$, we also know that $A_0$ and $B_0$ are disjoint, since $A$ and $B$ are disjoint, and $m_{uv}=2$ for all $u\in A$ and $v\in B$. If $m=2$, it is possible for $A_0\cap B_0 \neq \varnothing$, but we observe that if $u \in A_0\cap B_0$, then $m_{uv}=2$ for all $v\in B_0$.  Thus, $A_0 \cap B_0$ is spherical and the relative complement $C:=B_0\backslash A_0$ is nonspherical. Therefore for any finite $m \geq 2$, we have that $A_0$ and $C$ are disjoint nonspherical commuting sets. 

This implies that there exists a wide subset $A'\times B'$ of $S$ that contains $A_0 \times C$, where $A'$ and $B'$ are disjoint nonspherical commuting sets. Furthermore, their lifts $\widetilde{(A')}$ and $\widetilde{(B')}$ are also disjoint nonspherical commuting sets (by construction, Lemma~\ref{lem:dupNonsph} and Lemma~\ref{lem:dupCommute}). Since $\widetilde{(A')}\times \widetilde{(B')}$ contains $A\times B=T$, by the maximality of $T$, we must have  $T = \widetilde{(A')}\times \widetilde{(B')}=\widetilde{A'\times B'}$. Thus $T$ is the lift of a wide subset of $S$.

For (2), suppose $T\in \Omega(S)$.  Then $T = A\times B$ for some $A,B \subseteq S$. Therefore $\widetilde{T}=\widetilde{A}\times \widetilde{B}$, by Condition~(2) of Definition~\ref{defn:duplex}. Further, since $(W,S)$ is a right-angled system, the sets $A$ and $B$ are both nonspherical 
(since there are no right-angled irreducible affine systems of rank $\ge3$).
Thus $\widetilde{A}$ and $\widetilde{B}$ are also nonspherical by Lemma~\ref{lem:dupNonsph}. 

To show maximality of $\widetilde{T}$, suppose that $\widetilde{T}$ is contained in a wide subset $T' \in \Omega(S_2).$ By 
part~(1), we know that $T'$ must be the lift of a wide subset $U \in \Omega(S)$, that is, $T' = \widetilde U$. Since $\widetilde{T} \subset T'$, we have $T \subset U \in \Omega(S)$. By the maximality of $T$, this means $T=U$, so $\widetilde{T} = T'$. Thus $\widetilde{T}$ is a wide subset of $S_2$. 
\end{proof}

\begin{cor}\label{cor:wideNonempty}
Let $(W,S)$ be a right-angled Coxeter system with duplex $(W_2(m,n),S_2)$.  Then $\Omega(S) \neq \varnothing$ if and only if $\Omega(S_2) \neq \varnothing$.\qed
\end{cor}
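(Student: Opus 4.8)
The plan is to obtain this as an immediate consequence of Lemma~\ref{lem:dupWide}. Suppose first that $\Omega(S) \neq \varnothing$, and pick any $T \in \Omega(S)$; by part~(2) of Lemma~\ref{lem:dupWide} the lift $\widetilde{T}$ lies in $\Omega(S_2)$, so $\Omega(S_2) \neq \varnothing$. Conversely, suppose $\Omega(S_2) \neq \varnothing$ and pick any $T \in \Omega(S_2)$; by part~(1) of Lemma~\ref{lem:dupWide} we have $T = \widetilde{U}$ for some $U \in \Omega(S)$, and in particular $\Omega(S) \neq \varnothing$. That is the entire argument, so there is no real obstacle to overcome here: all of the work has already been done in establishing Lemma~\ref{lem:dupWide}, and this corollary is just the bookkeeping statement that records it at the level of (non)emptiness.

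It is perhaps worth noting an alternative viewpoint, which serves as a sanity check but which I would not actually use. By Lemma~\ref{lem:hypOmega}, the condition $\Omega(S) = \varnothing$ is equivalent to $W$ being hyperbolic, and likewise $\Omega(S_2) = \varnothing$ is equivalent to $W_2(m,n)$ being hyperbolic, so Corollary~\ref{cor:wideNonempty} asserts precisely that $W$ is hyperbolic if and only if $W_2(m,n)$ is. One could in principle verify this directly from Moussong's criterion (Theorem~\ref{thm:Moussong}) together with Lemma~\ref{lem:minDuplex}: since $S_2$ contains no irreducible affine subset of rank $\geq 3$, the only obstruction to hyperbolicity of $W_2(m,n)$ is a subset of the form $(W_{T_1},T_1)\times(W_{T_2},T_2)$ with both $W_{T_i}$ infinite, and Lemmas~\ref{lem:dupCommute} and~\ref{lem:dupNonsph} let one match such subsets of $S_2$ with such subsets of $S$. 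However, routing the proof through Lemma~\ref{lem:dupWide} as above is shorter and needs no further case analysis, so that is the approach I would take.
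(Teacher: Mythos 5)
Your argument is exactly what the paper intends: the corollary is stated with no proof (just a \qed), as an immediate consequence of Lemma~\ref{lem:dupWide}, and your first paragraph spells out precisely that deduction. The alternative route through Moussong's criterion in your second paragraph is a reasonable sanity check but, as you note, unnecessary.
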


\begin{example} 
It is not difficult to verify that the wide subsets of $S$ from Example~\ref{ex:duplex} are the sets $\{a_0,b_0\}\times \{b_1,a_1,a_2,a_3\}$ and $\{a_1,b_1\}\times\{a_0,b_0,b_2\}$.  By Lemma~\ref{lem:dupWide}, the wide subsets of $S_2$ are exactly the lifts of these sets. 
\label{ex:dupwide}
\end{example}

We now consider the slab subsets of $S$ and of $S_2$ (see Definition~\ref{defn:slab}).  Here, the situation is more subtle.  Observe that the lift of an element of $\Psi(S)$ is not an element of $\Psi(S_2)$, since a minimal nonspherical set $A \subseteq S$ is a pair of disconnected vertices, but by Lemma~\ref{lem:minDuplex}, its lift  $\widetilde{A} \subseteq S_2$ (which has four elements) is not minimal nonspherical. The next lemma gives us a partial remedy.

 \begin{lemma} Let $T=A\times K \in \Psi(S)$, where $A$ is minimal nonspherical and $K$ is a maximal nonempty spherical subset commuting with $A$. Then there exists a set $\overline{T}=\overline{A}\times \overline{K} \in \Psi(S_2)$ such that the projection of $\overline{T}$ is $T$, $\overline{A}$ is a minimal nonspherical subset of $\widetilde{A}$ and $\overline{K}=\widetilde{K}.$
 \label{lem:slabcover}
 \end{lemma}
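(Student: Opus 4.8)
The plan is to produce $\overline{T}$ explicitly. Since $(W,S)$ is right-angled and $A$ is minimal nonspherical, Remark~\ref{rem:minimalRA} gives $A=\{u,v\}$ with $m_{uv}=\infty$; hence in $\G_2$ every edge of $\widetilde{A}=\{u',u'',v',v''\}$ joining a lift of $u$ to a lift of $v$ carries the label $n$, while $[u',u'']$ and $[v',v'']$ carry the label $m<\infty$. First I would choose a minimal nonspherical subset $\overline{A}$ of $\widetilde{A}$; such a subset exists because $\widetilde{A}$ is nonspherical by Lemma~\ref{lem:dupNonsph}, and by Lemma~\ref{lem:minDuplex} it is either a pair of vertices joined by an edge labelled $n=\infty$ (when $n=\infty$) or a triangle on three of the four vertices of $\widetilde{A}$ with labels $(m,n,n)$ (when $n$ is finite). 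Since $\overline{A}\subseteq\widetilde{A}$, its projection is contained in $A$, and since that projection is nonspherical by Lemma~\ref{lem:dupNonsph} while $|A|=2$, the projection of $\overline{A}$ equals $A$. I then claim that $\overline{T}:=\overline{A}\times\widetilde{K}$ is the required element of $\Psi(S_2)$, with $\overline{K}=\widetilde{K}$.

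Checking that $\overline{T}$ satisfies the three conditions of Definition~\ref{defn:slab} is routine. Condition (1) holds by the choice of $\overline{A}$. For (2): $\widetilde{K}$ is nonempty since $K\neq\varnothing$, and spherical since $K$ is a clique, so that $W_{\widetilde{K}}$ is the direct product over $k\in K$ of the finite dihedral groups $\langle k',k''\rangle$; moreover $\widetilde{A}\supseteq\overline{A}$ commutes with $\widetilde{K}$ by Lemma~\ref{lem:dupCommute} (applied to the commuting sets $A$ and $K$), and $\overline{A}\cap\widetilde{K}\subseteq\widetilde{A}\cap\widetilde{K}=\varnothing$ because $A\cap K=\varnothing$. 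For (3): if some $T'\in\Omega(S_2)$ contained $\overline{T}$, then by Lemma~\ref{lem:dupWide}(1) we would have $T'=\widetilde{U}$ for some $U\in\Omega(S)$, and projecting the inclusion $\overline{T}\subseteq\widetilde{U}$ would yield $T=A\times K\subseteq U$, contradicting condition (3) of Definition~\ref{defn:slab} for $T\in\Psi(S)$.

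The real content, and the step I expect to be the main obstacle, is to show that $\overline{T}$ is maximal among subsets satisfying (1)--(3), so that indeed $\overline{T}\in\Psi(S_2)$. Suppose $\overline{T}\subseteq\overline{A}'\times\overline{K}'$ where the latter also satisfies (1)--(3). Writing $\overline{A}=(\overline{A}\cap\overline{A}')\sqcup(\overline{A}\cap\overline{K}')$ as a union of two commuting sets whose second part is spherical, minimality of $\overline{A}$ forces $\overline{A}\subseteq\overline{A}'$, and then minimality of $\overline{A}'$ forces $\overline{A}=\overline{A}'$; hence $\widetilde{K}\subseteq\overline{K}'$. To rule out $\overline{K}'\supsetneq\widetilde{K}$, take $s\in\overline{K}'\setminus\widetilde{K}$, so $s\in\{w',w''\}$ for some $w\in S\setminus K$, $s\notin\overline{A}$, and $s$ commutes with $\overline{A}$. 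If $w\in A$, then, since $\overline{A}$ has projection $A$, it contains a lift $t$ of the other element of $A$, and $m_{st}=n\geq 5>2$ because the two elements of $A$ are nonadjacent; this contradicts $s$ commuting with $t\in\overline{A}$. So $w\notin A$, and then $w$ commutes with $A$ by Lemma~\ref{lem:dupCommute}. Consequently $K\cup\{w\}$ commutes with $A$ and strictly contains $K$, so by maximality of $K$ it is nonspherical, and since $K$ is a clique there is $k\in K$ with $m_{wk}=\infty$. But then $\{s,k'\}$ (if $n=\infty$) or the triangle $\{s,k',k''\}$ with labels $(n,n,m)$ (if $n$ is finite) is, by Lemma~\ref{lem:minDuplex}, a nonspherical subset of $\overline{K}'$, contradicting that $\overline{K}'$ is spherical. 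Hence $\overline{K}'=\widetilde{K}$, so $\overline{A}'\times\overline{K}'=\overline{T}$ and $\overline{T}\in\Psi(S_2)$. Finally, the projection of $\overline{T}=\overline{A}\cup\widetilde{K}$ is the union of the projections of $\overline{A}$ and $\widetilde{K}$, namely $A\cup K=T$, and $\overline{K}=\widetilde{K}$ by construction, which is what was claimed.
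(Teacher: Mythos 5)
Your proof is correct and follows essentially the same route as the paper's: pick the same $\overline{A}$ (a rank-$2$ subset with label $\infty$ when $n=\infty$, or a triangle with labels $(m,n,n)$ when $n$ is finite), take $\overline{K}=\widetilde{K}$, and verify that $\overline{T}=\overline{A}\times\widetilde{K}$ lands in $\Psi(S_2)$. The only real difference is in how the maximality of $\overline{T}$ is established: the paper asserts that $\widetilde{K}$ is a maximal spherical set commuting with $\overline{A}$, deducing this by projecting any larger spherical set $L\supsetneq\widetilde K$ down to a spherical set $L_0 \supsetneq K$ commuting with $A$ (which would contradict the maximality of $K$); you instead take a hypothetical strictly larger $\overline{A}'\times\overline{K}'$, show $\overline{A}'=\overline{A}$ by minimality, then locate an explicit nonspherical subset of $\overline{K}'$ coming from a non-edge $\{w,k\}$ in $\Gamma$, contradicting sphericality. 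Both arguments are sound; yours has the merit of being slightly more self-contained and of treating the $m=2$ overlap issue carefully, but neither adds any new idea over the other.
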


\begin{proof}  Since $S$ is right-angled, the set $A$ consists of a pair of disconnected vertices.  If $n = \infty$, choose $\overline{A}$ to be a pair of vertices in $\widetilde A$ that are connected in $\Gamma_2$ by an edge labelled $\infty$.  If~$n$ is finite, choose $\overline{A}$ to be any three vertices in $\widetilde A$, so that the subgraph induced by $\overline{A}$ is a triangle with finite edge labels $(m,n,n)$.  We observe that $\overline{A} \subset \widetilde A$ is minimal nonspherical, and that the projection of $\overline{A}$ is~$A$. 

We know that the lift $\widetilde{K}$ is spherical, or else its projection $K$ would not be spherical by Lemma~\ref{lem:dupNonsph}. Furthermore, $\widetilde{K}$ must be a maximal spherical subset of $S_2$ commuting with~$\overline{A}$, because any larger spherical set $L \supsetneq \widetilde{K}$ which commutes with $\overline A$ would have spherical projection $L_0 \supsetneq K$ that commutes with $A$ (by Lemmas~\ref{lem:dupCommute} and~\ref{lem:dupNonsph}), contradicting the maximality of $K$. 

Now let $\overline{T}=\overline{A} \times \widetilde{K}$. Then the projection of $\overline{T}$ is $T$.  The set $\overline{T}$ is not wide, as $\overline A$ is not irreducible affine of rank $\geq 3$ (see Remark~\ref{rem:duplex}).  Also, $\overline{T}$ cannot be properly contained in a wide subset of $S_2$, since by Lemma~\ref{lem:dupWide}, its projection $T = A \times K$ would then also be contained in a wide subset of $S$, contradicting the hypothesis that $T$ belongs to $\Psi(S)$. Therefore $\overline{T}$ is in $\Psi(S_2)$. Put $\overline{K} = \widetilde K$ and we are done.
 \end{proof}

\begin{definition}\label{def:slabcover} Let $T \in \Psi(S).$ Then if $\overline{T} \in \Psi(S_2)$ is as in the statement of Lemma~\ref{lem:slabcover}, we call $\overline{T}$ a \emph{covering slab}  of $T$.
\end{definition}

\noindent
From the proof of Lemma~\ref{lem:slabcover} we obtain a complete description of possible components $\overline A$ for a covering slab $\overline T$: if $n=\infty$, then $\overline A$ is a rank $2$ subsystem with the edge label $n=\infty$, and if $n$ is finite, then $\overline A$ is a rank $3$ subsystem with finite edge labels $(m,n,n)$. Clearly, covering slabs are not unique.

\begin{example} Continuing Example~\ref{ex:duplex}, consider $A=\{a_1, b_3\}$ and $K=\{b_2\}$. Then $A$ is minimal nonspherical, $K$ is the maximal spherical subset commuting with $A$, and $T=A\times K$ is not contained in any wide subset (see Example~\ref{ex:dupwide}). So $T$ is a slab subset of $S$.

Now assume that $n\ne\infty$ and let $\overline A=\{a'_1, b'_3,b''_3\} \subset S_2$. Then $\overline A$ is a minimal nonspherical subset of $S_2$, and the maximal spherical subset $\overline K \subset S_2$ which commutes with $\overline A$ is $\{b'_2,b''_2\}$. From Example~\ref{ex:dupwide}, we can see that the set $\overline{T} = \overline A \times \overline K=\{a'_1, b'_3,b''_3\}\times\{b'_2,b''_2\}$ is not contained in any wide subset, and hence is a covering slab of $T$.  Other covering slabs of $T$ are $\{a''_1, b'_3,b''_3\}\times\{b'_2,b''_2\}$, $\{a'_1,a''_1, b'_3\}\times\{b'_2,b''_2\}$ and $\{a'_1,a''_1, b''_3\}\times\{b'_2,b''_2\}$.
\label{ex:dupslab}
\end{example}

For convenience we will call the elements of $\Lambda_i(S) \setminus \Psi(S)$ the \emph{non-slab elements} of $\Lambda_i(S)$.  
The last result we will need before proving Theorem~\ref{thm:duplexhi} extends the lifting property of wide subsets to all non-slab elements of $\Lambda_i(S)$.

 \begin{prop} Assume that $\Omega(S) \neq \varnothing$, equivalently by Corollary~\ref{cor:wideNonempty} that $\Omega(S_2) \neq \varnothing$.  Then for $i \geq 0$, every non-slab element of $\Lambda_i(S)$ lifts to a non-slab element of $\Lambda_i(S_2)$, and every non-slab element of $\Lambda_i(S_2)$ is the lift of a non-slab element of $\Lambda_i(S)$.
\label{prop:nsarelifts}
\end{prop}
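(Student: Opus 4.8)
The plan is to induct on $i$, proving the two assertions together. For the base case $i=0$, note that by Lemma~\ref{lem:widenotinslab} a slab subset cannot contain, and in particular cannot equal, a wide subset; hence the non-slab elements of $\Lambda_0(S)=\Omega(S)\cup\Psi(S)$ are exactly the elements of $\Omega(S)$, and likewise for $S_2$. Thus the case $i=0$ is precisely Lemma~\ref{lem:dupWide}.

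The heart of the inductive step is the following intermediate claim, which I would establish first: \emph{if $T,T'\in\Lambda_i(S)$ are non-slab, then $T\equiv_i T'$ in $\Lambda_i(S)$ if and only if $\widetilde T\equiv_i\widetilde{T'}$ in $\Lambda_i(S_2)$.} Its proof rests on a rerouting observation, valid in either Coxeter system: if a slab $A\times K$ occurs in the interior of an $\equiv_i$-chain, then, since $K$ is spherical and commutes with the minimal nonspherical set $A$, any nonspherical intersection of $A\times K$ with a neighbour must already contain $A$; hence the two neighbours of $A\times K$ intersect nonspherically (and two adjacent slabs meeting nonspherically coincide, by Lemma~\ref{lem:intersection}), so $A\times K$ may be deleted from the chain. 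After rerouting, the chain runs entirely through non-slab elements, and one transports it across the lift using the inductive hypothesis together with $\widetilde P\cap\widetilde Q=\widetilde{P\cap Q}$ and the fact that lifts and projections of nonspherical sets are nonspherical (Lemma~\ref{lem:dupNonsph}).

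Granting the claim, let $X$ be a non-slab element of $\Lambda_{i+1}(S)$, say $X=\bigcup_{T\in\mathcal C}T$ for an $\equiv_i$-equivalence class $\mathcal C$ in $\Lambda_i(S)$, which contains a non-slab element by Corollary~\ref{cor:non-slab}. By the claim the lifts of the non-slab elements of $\mathcal C$ all lie in a single $\equiv_i$-equivalence class $\mathcal D$ in $\Lambda_i(S_2)$; set $Y=\bigcup\mathcal D\in\Lambda_{i+1}(S_2)$. Since every non-slab element of $\Lambda_i(S)$ contains a wide subset (by the argument in the proof of Corollary~\ref{cor:non-slab}), $Y$ contains the lift of a wide subset, which is a wide subset of $S_2$ by Lemma~\ref{lem:dupWide}, so $Y$ is non-slab by Lemma~\ref{lem:widenotinslab}; it remains to prove $Y=\widetilde X$. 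For $\widetilde X\subseteq Y$: a non-slab $T\in\mathcal C$ has $\widetilde T\in\mathcal D$ directly, while for a slab $T=A\times K$ of $\mathcal C$ one notes (after rerouting) that $A\subseteq U'$ for some non-slab $U'\in\mathcal C$, so $\widetilde A\subseteq\widetilde{U'}\in\mathcal D$, and moreover a covering slab $\overline A\times\widetilde K$ of $T$ (which exists by Lemma~\ref{lem:slabcover}) lies in $\Lambda_0(S_2)$ with $\overline A\subseteq\widetilde A\subseteq\widetilde{U'}$, so by Lemma~\ref{lem:monotone}(1) it feeds into an element of $\Lambda_i(S_2)$ meeting $\widetilde{U'}$ nonspherically, hence lying in $\mathcal D$, and that element contains $\widetilde K$. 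The reverse inclusion $Y\subseteq\widetilde X$ is the subtle point: a non-slab element of $\mathcal D$ is, by the inductive hypothesis, $\widetilde V$ for some non-slab $V\in\Lambda_i(S)$, and the claim forces $V\equiv_i T$, hence $V\in\mathcal C$ and $\widetilde V\subseteq\widetilde X$; a slab element $\overline{A'}\times\overline{K'}$ of $\mathcal D$ is (after rerouting) linked to a non-slab $\widetilde{V_1}\in\mathcal D$ with $\overline{A'}\subseteq\widetilde{V_1}$, whence $\mathrm{proj}(\overline{A'})\subseteq V_1\subseteq X$, and one shows $\mathrm{proj}(\overline{K'})\subseteq X$ by choosing a minimal nonspherical $B\subseteq\mathrm{proj}(\overline{A'})$ and noting that $\mathrm{proj}(\overline{K'})$ (or its part outside $\mathrm{proj}(\overline{A'})$, when $m=2$) commutes with $B$ and so, together with $B$, sits inside a wide subset or a slab of $S$, which feeds up the $\Lambda$-sequence into $\mathcal C$ because it meets $V_1$ in the nonspherical set $B$; since $\widetilde X\subseteq Y$, this gives $\overline{A'}\times\overline{K'}\subseteq\widetilde{\mathrm{proj}(\overline{A'})\cup\mathrm{proj}(\overline{K'})}\subseteq\widetilde X$. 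Finally, that every non-slab element of $\Lambda_{i+1}(S_2)$ is a lift follows formally: such an element is $\bigcup\mathcal D$ for an $\equiv_i$-class $\mathcal D$ containing a non-slab element $\widetilde T$, the claim shows the non-slab elements of $\mathcal D$ are exactly the lifts of those of the class $\mathcal C$ of $T$, and the computation just made identifies $\bigcup\mathcal D$ with $\widetilde{\bigcup\mathcal C}$.

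The main obstacle is the asymmetry noted before Lemma~\ref{lem:slabcover}: slabs of $S$ do not lift to slabs of $S_2$, and $S_2$ carries slabs whose minimal nonspherical parts have three or four elements --- and, when $m=2$ or $n$ is finite, whose spherical parts can project to sets overlapping, or even strictly larger (indeed nonspherical) than, $\mathrm{proj}(\overline{A'})$. Keeping both families of slab elements (those inside $\mathcal C$ downstairs and those inside $\mathcal D$ upstairs) under control via covering slabs and projections, while forcing $\bigcup\mathcal C$ and $\bigcup\mathcal D$ to match exactly, is where the real work lies; by contrast, the forward inclusion $\widetilde X\subseteq Y$ and the correspondence of equivalence classes are comparatively routine.
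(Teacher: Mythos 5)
Your proof is correct, and it follows the same overall strategy as the paper's (induction on $i$, base case by Lemma~\ref{lem:dupWide}, and the identity $\widetilde X=Y$ proved via two inclusions, with covering slabs handling the slab elements and Lemma~\ref{lem:monotone}(1) pushing low-level sets up to level $i$). The genuine point of difference is your \emph{rerouting} observation: a slab in the interior of an $\equiv_i$-chain can simply be deleted, because any nonspherical intersection of $A\times K$ with a neighbour must already contain $A$, so the two neighbours of the slab meet in a nonspherical set. This lets you phrase an explicit intermediate claim --- for non-slab $T,T'$, that $T\equiv_i T'$ if and only if $\widetilde T\equiv_i\widetilde{T'}$ --- and then transport chains across the lift entirely through non-slab elements. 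The paper does not reroute; instead it lifts the whole chain, replacing each slab $T_j$ by a covering slab $\overline T_j$ and tracking where it feeds, then does the analogous thing in reverse for $Y_0\subseteq X$ by choosing a suitable minimal nonspherical piece of $A_0$. Your version pushes all the slab-handling into the two containment arguments and keeps the chain-transport clean; the paper's version avoids a standalone claim at the cost of case analysis inside both chain constructions. You are also slightly more explicit than the paper about the $m=2$ subtlety where $\mathrm{proj}(\overline{A'})$ and $\mathrm{proj}(\overline{K'})$ can overlap, and about the fact that when $n$ is finite a spherical $\overline{K'}$ can have nonspherical projection (so $B\times C$ may land in a wide rather than a slab subset) --- both points the paper glosses but which your argument correctly absorbs.
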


\begin{proof} We proceed by induction on $i$. As the wide subsets of $S_2$ are exactly the lifts of the wide subsets of $S$, by Lemma~\ref{lem:dupWide}, the result holds for $i=0$.

Now assume the result holds for some fixed $i$.  If $X\in \Lambda_{i+1}(S) \setminus \Psi(S)$, then Corollary~\ref{cor:non-slab} implies that any  $\equiv_i$-equivalence class whose union is $X$ has at least one non-slab element, say $T$. Then, by assumption, $\widetilde{T}$ is a non-slab element of $\Lambda_i(S_2)$. Let $Y$ be the union of the elements of the $\equiv_i$-equivalence class of $\widetilde{T}$.  Then  $Y \in \Lambda_{i+1}(S_2) \setminus \Psi(S_2)$  by Corollary~\ref{cor:non-slab}.   
Moreover, Corollary~\ref{cor:non-slab} and the induction hypothesis imply that every non-slab element of $\Lambda_{i+1}(S_2)$ has this form, 
i.e.\ is the union of the elements of the  $\equiv_i$-equivalence class of a non-slab element which is the lift of a non-slab element of $\Lambda_i(S)$.  Thus, both statements of the proposition will be proved if we show that $\widetilde{X}=Y$.  We achieve this by first showing  that $\widetilde X\subseteq Y$  and $Y_0 \subseteq X$.  This will complete the proof, since the latter statement implies that $Y \subseteq \widetilde{Y_0} \subseteq \widetilde X$.

To prove that $\widetilde X \subseteq Y$, fix $x \in X$. 
We 
need to show that the lift $\{x',x''\}$ is contained in $Y$. Let $U \in \Lambda_{i}(S)$ be a subset of $S$ such that $x\in U$ and $T\equiv_i U$. Thus there exists a sequence $T=T_0$, $T_1$, \ldots, $T_\ell=U$ of distinct sets in $\Lambda_{i}(S)$ such that $T_j \cap T_{j+1}$ is nonspherical for $0\leq j < \ell$. We now construct a sequence of sets $\widehat{T}_j \in \Lambda_i(S_2)$ with similar properties.

For $0 \leq j \leq \ell$, if $T_j$ is a non-slab element, then take $\widehat T_j=\widetilde T_j$, and note that $\widehat T_j \in \Lambda_i(S_2)$ by the inductive hypothesis. 
Otherwise, let $ \overline T_j$ be any covering slab of $T_j$, and let $\widehat T_j$ be the element of $\Lambda_i(S_2)$ such that $\overline T_j$ feeds into $\widehat T_j$, which exists by Lemma~\ref{lem:monotone}(1).

We claim that $\widehat T_j\cap\widehat T_{j+1}$ is nonspherical for each $j$.  
This is obvious for non-slab $T_j$ and $T_{j+1}$, since then $\widehat T_j\cap \widehat T_{j+1}=\widetilde T_j\cap\widetilde T_{j+1}=\widetilde {T_j\cap T_{j+1}}$, which is nonspherical by Lemma~\ref{lem:dupNonsph}. If, say, $T_j$ is slab and $T_{j+1}$ is non-slab, then their intersection contains a pair of vertices connected by an edge with label infinity.  From the construction of 
$\overline T_j$, we see that $\overline T_j\cap \widetilde T_{j+1}$ is nonspherical.  Thus 
 $\widehat T_j\cap \widehat T_{j+1}$ is nonspherical, as it contains $\overline T_j\cap \widetilde T_{j+1}$.  Finally, Lemma~\ref{lem:intersection} implies that $T_{j}$ and $T_{j+1}$ cannot both be slab. 
This proves the claim.  
It follows that $\widetilde{T} \equiv_i \widehat{T_{\ell}}.$

If $U$ is non-slab then $\widehat{T_{\ell}} = \widetilde U$ contains the lift $\{x',x''\}$ of $\{x\}$.  If $U$ is slab, then there exist covering slabs $\overline U'$ and $\overline U''$ that contain $x'$ and $x''$ 
and feed into $\widehat T_{\ell}'$ and $\widehat T_{\ell}''$
respectively. 
Then from the previous paragraph we know that $T \equiv_i \widehat T_{\ell}'$  and $T \equiv_i \widehat T_{\ell}''$. It follows that 
both $x'$ and $x''$  belong to $Y$ and hence $\widetilde{X} \subseteq Y.$

It remains to show that $Y_0 \subseteq X$.  Let $y\in Y.$  Then there is a $U \in \Lambda_{i}(S_2)$ such that $y\in U$ and $\widetilde{T} \equiv_i U$. So there exists a sequence $\widetilde{T}=U_1$, $U_2$, \ldots, $U_\ell=U$ of distinct sets in $\Lambda_{i}(S_2)$ such that $U_j \cap U_{j+1}$ is nonspherical for $1\leq j < \ell$. We will  construct a sequence of subsets $V_1$, \dots, $V_\ell$ in $\Lambda_i(S)$ such that $V_1=T$, the set  $V_\ell$ contains the projection of $y$, and $V_j\cap V_{j+1}$ is nonspherical for all $0\le j<\ell$. 

If $U_j$ is non-slab, let $V_j=(U_j)_0$, and note that $V_j \in \Lambda_i(S)$ by the inductive hypothesis. If $U_j$ is slab, then $U_j=A\times K$ where $A$ is a minimal nonspherical subset of $S_2$ and $K$ is a maximal nonempty spherical subset of $S_2$ commuting with $A$. For $j<\ell$, let $A'$ be any minimal nonspherical subset of the projection $A_0$ (which is nonspherical by Lemma~~\ref{lem:dupNonsph}), and for $j=\ell$ we additionally require that $A'$ contains the projection $\{y\}_0$ of $y$, if $y\in A$ (we appeal here to Lemma~\ref{lem:minDuplex}). Note that $K_0$ commutes with $A'$ by Lemma~\ref{lem:dupCommute}.  
Then we  know from Defintions~\ref{defn:wide} and~\ref{defn:slab} that $A \times K_0$ is contained in either a wide or slab subset in $\Lambda_0(S)$, and thus feeds into some element of $\Lambda_i(S)$ by Lemma~\ref{lem:monotone}(1).
Denote this subset $V_j$. 

Note that by Lemma~\ref{lem:dupNonsph}, each projection $(U_j \cap U_{j+1})_0$ is nonspherical. 
If $U_j$ and $U_{j+1}$ are both non-slab, then $V_j$ and $V_{j+1}$ are non-slab as well by the inductive hypothesis, and $V_j \cap V_{j+1}=(U_j)_0 \cap (U_{j+1})_0$ is nonspherical, since this intersection contains $(U_j \cap U_{j+1})_0$. 
If, say, $U_j$ is non-slab but $U_{j+1}$ is slab, then $U_{j+1}=A\times K$ as described above, and we observe that since $A$ is minimal nonspherical, it is contained in the nonspherical intersection $U_j\cap U_{j+1}$. In particular, we see that $A'\subseteq (U_j)_0 \cap V_{j+1}$, so $V_j\cap V_{j+1}$ is nonspherical in this case  as well. Finally, Lemma~\ref{lem:intersection} implies that no two consecutive elements of the sequence $U_1$,\dots, $U_\ell$ are slab. We have shown that every pair of consecutive elements of  the sequence $T=V_1$, $V_2$, \ldots, $V_\ell$ has nonspherical intersection. 
Therefore $T \equiv_i V_\ell$, and by construction, the projection $\{y\}_0$ is contained in $V_\ell$. Thus $\{y\}_0 \subset X$ and we conclude that $Y_0 \subseteq X.$
\end{proof}

We can now prove Theorem~\ref{thm:duplexhi}. 

\begin{proof}[Proof of Theorem~\ref{thm:duplexhi}] By Corollary~\ref{cor:wideNonempty}, we know that $\Omega(S)\neq \varnothing$ if and only if $\Omega(S_2) \neq \varnothing$.  If $\Omega(S)$ and $\Omega(S_2)$ are both empty then $(W,S)$ and its duplex have infinite hypergraph index and we are done.  So we suppose that $\Omega(S)\neq \varnothing$ and $\Omega(S_2) \neq \varnothing$.  
Then $S$ contains a wide subset, and so by Lemma~\ref{lem:widenotinslab}, 
$S \not \in \Psi(S)$. Similarly, $S_2 \not \in \Psi(S_2)$.

Assume that $S_2$ is a set in $\Lambda_i(S_2)$ for some $i\geq 0$.  Then it is a non-slab subset, that is, $S_2 \in \Lambda_i(S_2) \setminus \Psi(S_2)$. By Proposition~\ref{prop:nsarelifts}, $S_2 \in \Lambda_i(S_2) \setminus \Psi(S_2)$ if and only if $S \in \Lambda_i(S) \setminus \Psi(S)$, since~$S_2$ is the lift of $S$. Therefore, $S_2$ is in $\Lambda_h(S_2)\backslash\Lambda_{h-1}(S_2)$ if and only if $S$ is in $\Lambda_h(S)\backslash\Lambda_{h-1}(S)$, and so the two Coxeter systems must have the same hypergraph index.
\end{proof}

\subsection{Relationships between  duplexes}\label{sec:relationships}

In Proposition~\ref{prop:distinct} we  show that as we vary the parameters $m$ and $n$ in the duplex construction, we obtain infinitely many pairwise nonisomorphic Coxeter groups.  We then state some open questions concerning the commensurability and quasi-isometry classification of duplex groups.

\begin{prop}\label{prop:distinct}  Let $(W,S)$ be a nonspherical right-angled Coxeter system.  Let $m$ and~$m'$ be integers~$\geq 2$ and let $n$ and $n'$ be either integers $\geq 5$ or equal to $\infty$.  Then the Coxeter groups $W_2(m,n)$ and $W_2(m',n')$ are abstractly isomorphic if and only if $(m,n) = (m',n')$.  
\end{prop}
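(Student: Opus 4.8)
The plan is to recover the parameters $m$ and $n$ from the isomorphism type of $W_2(m,n)$ by using the rigidity theory for Coxeter groups, since $W_2(m,n)$ is typically not right-angled and hence not automatically rigid. The ``if'' direction is trivial, so assume $W_2(m,n)\cong W_2(m',n')$. The first step is to observe that both sides are equipped with the distinguished Coxeter generating sets $S_2$ and $S_2'$ coming from the duplex construction, and to invoke a suitable rigidity result: since $(W,S)$ is nonspherical, the defining graph $\Gamma$ contains a pair of nonadjacent vertices $u,v$, so $\Gamma_2(m,n)$ contains the induced subgraph on $\{u',u'',v',v''\}$ which is a ``square'' with two opposite edges labelled $m$ and four edges labelled $n$ — in particular every vertex of $\Gamma_2$ lies in a triangle with two edges labelled $n\ge 5$ (or, if $n=\infty$, in an infinite-order pair). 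The key point is that the edge labels of the Dynkin diagram, together with its isomorphism type, are essentially determined by the abstract group together with a choice of Coxeter generating set; and one wants to argue that the two Coxeter generating sets $S_2,S_2'$ are ``the same'' up to the standard moves. I would cite the known results on when Coxeter systems are rigid or strongly rigid — e.g.\ the absence of $\mathbb{Z}/2$-summands arguments, or the diagram-twisting/reflection-rigidity results of Brady--McCammond--M\"uhlherr--Neumann and others — to conclude that any two Coxeter generating sets of the isomorphic group have the same multiset of finite edge labels and the same ``shape'' of Dynkin diagram up to the elementary twist operations, none of which change the multiset $\{m_{st}\}$.

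The second step is then purely combinatorial: extract $m$ and $n$ from the multiset of finite edge labels and the diagram topology of $\Delta(W_2(m,n),S_2)$. The label $m$ appears precisely on the $|S|$ edges joining the two lifts $u',u''$ of a common vertex $u\in S$, and the label $n$ (when finite) appears precisely on the $4\cdot(\#\{\text{non-edges of }\Gamma\})$ edges coming from Condition~(3). Since $(W,S)$ is nonspherical, $\Gamma$ has at least one non-edge, so $n$ genuinely occurs. To separate $m$ from $n$ one notes $m$ is finite and arbitrary $\ge 2$ while $n\ge 5$ or $\infty$; the only ambiguity is when $m=n$ could a priori be confused, but the edges labelled $m$ form a perfect matching on $S_2$ (each vertex of $\Gamma_2$ is in exactly one $m$-edge, namely to its duplex partner), whereas the $n$-labelled edges do not form a matching as soon as $\Gamma$ has a vertex of ``non-degree'' $\ge 2$ in the complement — and even in degenerate cases one can read off $n$ from the number of edges or from the $n=\infty$ case being detected by the presence of an infinite dihedral special subgroup that is not contained in the matching. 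Thus the pair $(m,n)$ is an invariant of the multiset of labelled edges, hence of the isomorphism type together with a choice of Coxeter generating set, hence (by the rigidity input) of the abstract group $W_2(m,n)$ itself.

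The main obstacle is the first step: controlling \emph{all} Coxeter generating sets of $W_2(m,n)$, not just $S_2$. Duplex groups are deliberately non-right-angled, so Radcliffe's rigidity theorem does not apply, and one must check that the hypotheses of an appropriate non-right-angled rigidity theorem are met — for instance that $\Gamma_2$ contains no ``separating'' configurations enabling a diagram twist that alters labels, or more robustly that the relevant invariants (the multiset of finite $m_{st}$ with multiplicity, which is known to be a group invariant for Coxeter groups with no ``even'' complications, or the isomorphism type of $\Delta$ up to twisting) are preserved. I would handle this by appealing to the structure of $\Gamma_2$: every edge of $\Gamma_2$ has label in $\{2,m,n\}$, the $m$-edges form a matching whose removal disconnects each duplicated pair, and I expect one can verify directly that $W_2(m,n)$ is rigid (or at least that the multiset of finite labels is an invariant) using the characterizations available in the literature. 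If full rigidity is unavailable, a fallback is to compare the two systems via their Dynkin diagrams' first Betti numbers and edge-label multisets, which Theorem~\ref{thm:bettiIntro}-type considerations show are well-behaved, though the cleanest route remains citing a rigidity theorem and then doing the easy label bookkeeping above.
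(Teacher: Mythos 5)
Your proposal rests entirely on a rigidity input that you never actually secure, and this is precisely where it breaks down. The rigidity theorem the paper invokes is Theorem~1(b) of Franzsen--Howlett--M\"uhlherr, which applies only to Coxeter systems with \emph{all edge labels finite}. This covers the case where both $n$ and $n'$ are finite, and the paper uses it only there. When $n = \infty$ (or $n' = \infty$), the Dynkin diagram of the duplex contains edges labelled $\infty$, so that theorem does not apply; Radcliffe's rigidity doesn't apply either since the duplex is not right-angled when $m\ge 3$; and you do not name any rigidity result that covers this case. You explicitly say you ``expect one can verify directly'' that $W_2(m,n)$ is rigid, but this is exactly the content you would need to prove, and it is nontrivial (reflection rigidity / strong rigidity statements for Coxeter systems with mixed finite and infinite labels are delicate). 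Your fallback — comparing Dynkin diagrams' first Betti numbers and edge-label multisets — presupposes these are abstract group invariants, which again requires a rigidity-type statement; it is circular.

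The paper avoids this gap by not using rigidity at all in the infinite-label cases. For both $n = n' = \infty$ it compares the \emph{maximal order of a finite subgroup}, which is $(2m)^N$ where $N$ is the clique number of $\Gamma$, so $m$ is recovered directly from the abstract group. For $n$ finite and $n' = \infty$, it compares the \emph{number of conjugacy classes of maximal finite subgroups}: the presence of finite dihedral subgroups of order $2n$ (from the $n$-labelled edges) that are not absorbed into the clique-derived spherical subsets produces strictly more conjugacy classes, distinguishing the groups. Both of these are honest group invariants, using the standard facts that every finite subgroup of a Coxeter group is conjugate into a finite special subgroup and that conjugacy classes of maximal finite subgroups correspond to maximal spherical subsets. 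Only after disposing of the $\infty$ cases does the paper use the Franzsen--Howlett--M\"uhlherr rigidity theorem, where it genuinely applies. Your second step (extracting $m$ and $n$ from the labelled graph via the matching structure of the $m$-edges) is fine and essentially matches the paper's case~(3) bookkeeping, but the proposal as written does not prove the proposition because the rigidity foundation is missing for all the cases involving $\infty$.
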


\begin{proof} Assume $W_2(m,n)$ and $W_2(m',n')$ are isomorphic.  We wish to show that $(m,n) = (m',n')$. Since $(W,S)$ is nonspherical, $(W_2(m,n),S_2)$ is nonspherical for all $m$ and $n$ by Lemma~\ref{lem:dupNonsph}. In particular, there exist edges labelled $n$ in the Dynkin diagram of $(W_2(m,n),S_2)$ (and similarly for $n'$ and $W_2(m',n')$). 
Up to symmetry, there are three cases possible:
\begin{enumerate}
\item both $n$ and $n'$ are $\infty$;
\item $n$ is finite, but $n'$ is $\infty$;
\item both $n$ and $n'$ are finite.
\end{enumerate}

We will make use of the following facts: 
\begin{itemize}
\item[(i)] every finite subgroup of a Coxeter group is conjugate into a finite special subgroup (see e.g.\  \cite[Proposition\,1.3]{Brink-Howlett} or \cite[Theorem~4.5.3]{Bjorner-Brenti}); and, moreover,
\item[(ii)]
 the conjugacy classes of the maximal finite subgroups of a Coxeter group are in one-to-one correspondence with the maximal spherical subsets of its Coxeter generating set (see~\cite[Theorem~1.9]{brady-mccammond-muhlherr-neumann}).
\end{itemize}

In case (1) the finite subgroups of maximal order in $W_2(m,\infty)$ are all isomorphic to the direct product of $N$ dihedral groups of order $2m$, where $N$ is the maximum number of vertices in a clique in the defining graph of $(W,S)$.  (These dihedral subgroups of $W_2(m,\infty)$ are generated by $\cup_{i=1}^N \{ u_i', u_i''\}$, where $\{u_1,\dots,u_N\} \subseteq S$ is the vertex set of a maximal clique in the defining graph for $(W,S)$.)
Thus, the maximum order of a finite subgroup of $W_2(m,\infty)$ is $(2m)^N = 2^N m^N$, 
and this maximum is achieved. Similarly, the maximum order of a finite subgroup of $W_2(m',\infty)$ is $2^N(m')^N$, and this maximum is achieved as well. It follows that $m = m'$.

In case (2) we observe that, as in case (1) above, the maximal spherical subsets of $(W_2(m',\infty),S_2)$ bijectively correspond to cliques in the defining graph of $(W,S)$. We notice that each such clique gives rise to a maximal spherical subset in the system $(W_2(m,n),S_2)$ as well. However, in the system $(W_2(m,n),S_2)$ there are also $2$-element spherical subsets consisting of edges labelled $n$ and they are not contained in any of the maximal spherical subsets coming from cliques. By (ii) above, we conclude that $W_2(m,n)$ contains more conjugacy classes of maximal finite subgroups than $W_2(m',\infty)$, and therefore these Coxeter groups cannot be isomorphic. (Notice that our reasoning allows the situation when $n=m$.)

In case (3) we appeal to Theorem~1(b) of~\cite{franzsen-howlett-muhlherr}, which says that a nonspherical Coxeter system with all edge labels finite is rigid, meaning that the corresponding Dynkin diagram is unique up to the isomorphism of edge-labelled graphs. In our setting, this means there is a label preserving graph isomorphism $f\colon \Delta \to \Delta'$, where $\Delta$ and $\Delta'$ are the Dynkin diagrams corresponding to $(W_2(m,n),S_2)$ and $(W_2(m',n'),S_2)$, respectively.   Note that each vertex of $\Delta$ has at most one incident edge with label $m$ (one if $m\neq 2$ and none if $m=2$), and
an even number of incident edges with label $n$. 
In particular, every vertex has even valence if and only if $m=2$.  
The
 analogous statements hold for $\Delta'$. 
 It follows that $m=2$ if and only if $m'=2$.  Now if $m=m'=2$, then $n=n'$, as $n$ and $n'$ are the only labels. Otherwise, if $m, m' \neq 2$, the equality of edge labels of $\Delta$ and $\Delta'$ gives
$\{m,n\}=\{m',n'\}$.  Now note that 
since $(W,S)$ is nonspherical, its Dynkin diagram has at least one edge with label $\infty$, and it follows that there is a vertex $v$ in $\Delta$ with at least two incident edges labelled $n$.  Since $f(v)$ is incident to just one edge labelled $m'$, at least one of these $n$-labelled edges is mapped to an edge incident to $f(v)$ labelled $n'$.  Thus $n=n'$, and so $m=m'$. 
\end{proof}

The following open questions concern the coarse geometric relationships between duplexes.  We say  that two groups $G$ and $H$ are \emph{commensurable} if they have isomorphic finite index subgroups.

\begin{questions}
Let $W$ be a nonspherical right-angled Coxeter group, let $m$ and $m'$ be integers~$\geq 2$ and let $n$ and $n'$ be either integers $\geq 5$ or equal to $\infty$. 
\begin{enumerate}
\item
\begin{enumerate}
    \item Is $W$ commensurable to the duplex $W_2(m,n)$? 
    \item If $(m,n) \neq (m',n')$, are $W_2(m,n)$ and $W_2(m',n')$ commensurable?
\end{enumerate}
\item 
\begin{enumerate}
    \item Is $W$ quasi-isometric to the duplex $W_2(m,n)$?
    \item If $(m,n) \neq (m',n')$, are $W_2(m,n)$ and $W_2(m',n')$ quasi-isometric?
\end{enumerate}
\end{enumerate}
\end{questions}

We have included 1(a) and 2(a) here since a positive answer to these questions immediately implies a positive answer to 1(b) and 2(b), respectively.  Note also that positive answers to 1(a) and 1(b) imply positive answers to 2(a) and 2(b), respectively.
Concerning 2(a), we note that the ``obvious" map from $W_2(m,n)$ back to $W$, taking $u',u'' \in S_2$ to $u \in S$, is not a quasi-isometry.  For example, if $m_{uv} = \infty$ then $\langle u',u'', v',v''\rangle$ is a convex  subgroup of $W_2(m,n)$, and is not $2$-ended, but its image under this map is the convex  subgroup $\langle u,v\rangle \cong D_\infty$ of $W$, which is $2$-ended. Similar reasoning shows that a map from $W$ to $W_2(m,n)$ induced by sending each $u \in S$ to some element of the finite dihedral group $\langle u',u'' \rangle$ is not a quasi-isometry.

\subsection{Examples with arbitrary hypergraph index}\label{sec:DTexamples}

We now show how to obtain examples with hypergraph index any non-negative integer, and thus prove part of Corollary~\ref{cor:duplexIntro}. 

For $d \geq 1$, let $\G_d$ be the defining graph from Figure~\ref{fig:dtgammas}; these examples first appeared in~\cite[Section~5]{dani-thomas-div}.  
While it is possible to show directly that the associated right-angled Coxeter system has hypergraph index $d-1$, we use the following two results instead.

\begin{prop}[Propositions 5.1 and 5.3 of~\cite{dani-thomas-div}]  \label{prop:dtdivd}
For $d \geq 1$, the right-angled Coxeter group defined by $\Gamma_d$ has divergence polynomial of degree $d$.
\end{prop}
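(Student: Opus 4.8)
The statement is $\div{W_{\G_d}}\simeq r^d$, and the plan is to prove the two bounds $\div{W_{\G_d}}\preceq r^d$ and $\div{W_{\G_d}}\succeq r^d$ separately, each by induction on $d$, exploiting the recursive way in which $\G_d$ is built from $\G_{d-1}$ (the systems involved are all $1$-ended, as one checks from~\cite[Theorem~8.7.2]{davis-book}, so Gersten's divergence behaves as expected). For the upper bound $\div{W_{\G_d}}\preceq r^d$ I would show that $W_{\G_d}$ is strongly thick of order at most $d-1$ and then apply Theorem~\ref{cor:thickDiv}. When $d=1$ the graph $\G_1$ is a join of two induced subgraphs each containing a nonadjacent pair, so $W_{\G_1}$ is a direct product of two infinite groups, hence wide, i.e.\ strongly thick of order $0$ by Proposition~\ref{prop:wide}. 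For the inductive step one reads off from Figure~\ref{fig:dtgammas} a decomposition $\G_d=\G_{d-1}\cup J$ into induced subgraphs in which $J$ is a join of two nonspherical subgraphs and the intersection $\G_{d-1}\cap J$ is a nonspherical induced subgraph. Since special subgroups of right-angled Coxeter groups are convex, the pair $\{W_{\G_{d-1}},W_J\}$ forms a tight algebraic network inside $W_{\G_d}$ in the sense of Definition~\ref{defn:tightAlgebraic}: their union generates $W_{\G_d}$, each is quasiconvex, and $W_{\G_{d-1}}\cap W_J=W_{\G_{d-1}\cap J}$ is infinite and path-connected. By Proposition~\ref{prop:tight} the associated left cosets then form a tight network of subspaces; $W_J$ is wide, while $W_{\G_{d-1}}$ is strongly thick of order $\le d-2$ by the inductive hypothesis, so Definition~\ref{defn:strongThick} gives that $W_{\G_d}$ is strongly thick of order $\le d-1$. (Alternatively, one verifies directly and combinatorially that $h(W_{\G_d},S)=d-1$ and invokes Theorem~\ref{thm:hyp_thick_div_Intro}(3); the case $d=3$ is illustrated by the discussion of $\G_3$ in Section~\ref{sec:duplex}.)

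For the lower bound $\div{W_{\G_d}}\succeq r^d$ I would exhibit a bi-infinite periodic geodesic $\gamma$ in the Cayley graph $\cC(W_{\G_d},S)$ and show $\div{\gamma}(r)\succeq r^d$; since $\div{\gamma}$ bounds $\div{W_{\G_d}}$ from below, this suffices. The geodesic $\gamma$ should be chosen so that, in the Davis complex $\Sigma=\Sigma(W_{\G_d},S)$, it crosses a stack of walls $\{H_k\}_{k\in\Z}$ with $\gamma(0)$ sitting among them, arranged so that for $|k|=O(r)$ the wall $H_k$ separates $\gamma(r)$ from $\gamma(-r)$, and so that the region trapped between $H_k$ and $H_{k+1}$ contains a subcomplex quasi-isometric to $\cC(W_{\G_{d-1}},S)$. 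The crux is then a recursive \emph{detour estimate}: any $r$-avoidant path from $\gamma(r)$ to $\gamma(-r)$ must pass, in order, through the $\Theta(r)$ regions trapped between consecutive walls $H_k$, $H_{k+1}$ with $|k|=O(r)$, because each such wall separates the two endpoints; and traversing one of these regions while staying outside $B(\gamma(0),r)$ costs at least the divergence of $W_{\G_{d-1}}$ at a radius comparable to $r$, hence $\succeq r^{d-1}$ by the inductive hypothesis. Summing over the $\Theta(r)$ regions yields $\div{\gamma}(r)\succeq r\cdot r^{d-1}=r^d$. The base case $d=1$ is trivial, as divergence is always at least linear.

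The main obstacle is the lower bound, and specifically making the detour estimate rigorous. One must control precisely how an $r$-avoidant path in $\Sigma$ interacts with the walls, using that in a right-angled system two walls of the same type never cross and that the types of the walls $H_k$ reflect the recursive structure of $\G_d$; one must identify the region between consecutive walls as carrying a controlled retraction onto which avoidant paths project with bounded distortion; and one must arrange the induction so that the hypothesis for $W_{\G_{d-1}}$ applies inside each such region at a radius comparable to $r$. This bookkeeping is the technical heart of~\cite[Section~5]{dani-thomas-div}. It becomes considerably more delicate in the non-right-angled setting, where distinct walls of the same type can intersect, which is precisely why in Section~\ref{sec:duplex} we reprove an analogue of the lower bound for duplex groups directly rather than deducing it from this result.
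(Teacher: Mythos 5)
First, a point of comparison: the paper does not prove this proposition at all; it is quoted verbatim from Propositions~5.1 and~5.3 of~\cite{dani-thomas-div} and used as a black box (together with Theorem~\ref{thm:levcohidiv}) to deduce $h(W_{\Gamma_d},S)=d-1$. So there is no internal argument to match your sketch against, and what you have written is essentially a reconstruction of the cited proof's outline.

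That said, your upper-bound argument has a concrete gap. You claim a decomposition $\Gamma_d=\Gamma_{d-1}\cup J$ where $J$ is an induced join of two nonspherical subgraphs; no such $J$ exists once $d\geq 2$. Passing from $\Gamma_{d-1}$ to $\Gamma_d$ adds the two vertices $a_d$ (adjacent only to $a_0,b_0$) and $b_d$ (adjacent only to $a_{d-1},b_{d-1}$), which are nonadjacent and have no common neighbour, so they cannot lie in a common join at all, let alone one with both factors nonspherical. Worse, $b_d$ lies in no wide subset of $\Gamma_d$ whatsoever: a wide $A\times B$ with $b_d\in A$ forces $B\subseteq\{a_{d-1},b_{d-1}\}$ and hence $A\subseteq\{b_d\}$, which is spherical. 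This is precisely the point of these examples --- $\Gamma_d$ was engineered so that thickness of order $d-1$ cannot be read off from a single chain of wide (product) pieces, and one must also route through the slab subsets $A\times K$ with $K$ spherical (and pass through the two-step amalgamations these force). Your parenthetical alternative --- verify combinatorially that $h(W_{\Gamma_d},S)=d-1$ and invoke Theorem~\ref{thm:hyp_thick_div_Intro}(3) --- is the sound way to package the upper bound, but note that you must actually do that computation; citing Theorem~\ref{thm:levcohidiv} to get the hypergraph index would be circular, since that is exactly how the paper uses the present proposition. Your lower-bound outline (induct along a stack of separating walls, charging $\succeq r^{d-1}$ per region using the inductive hypothesis) does match the structure of the genuine argument --- compare Lemma~\ref{lem:divd-lower} and the discussion preceding it --- but as you acknowledge, this is where all the work is, and you have not supplied it.
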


\begin{thm}[Theorem~A of~\cite{levcovitz-RACG-2020}] \label{thm:levcohidiv}
For $d \geq 1$, a right-angled Coxeter group has divergence polynomial of degree $d$ if and only if its defining graph has hypergraph index $d-1$.
\end{thm}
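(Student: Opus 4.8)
The plan is to reduce the theorem to a single lower-bound estimate, since the matching upper bound and the ``only if'' direction are essentially already available. We may assume $W$ is $1$-ended, as in the rest of the discussion. If $W$ has divergence polynomial of some integer degree then $W$ is not relatively hyperbolic, since finitely presented relatively hyperbolic groups have exponential divergence (\cite[Theorem~1.3]{Sisto12}); hence $h := h(W,S)$ is finite by Theorem~\ref{thm:hyp_thick_div_Intro}(4). Conversely, by Theorem~\ref{thm:hyp_thick_div_Intro}(3) --- whose right-angled special case is \cite[Theorem~6.3 and Corollary~6.4]{levcovitz-RACG} --- finite hypergraph index $h$ gives $\div{W}(r) \preceq r^{h+1}$. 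So the whole theorem follows once we prove the lower bound: \emph{a $1$-ended right-angled Coxeter system of finite hypergraph index $h$ has $\div{W}(r) \succeq r^{h+1}$}. Granting this, $\div{W}(r) \simeq r^{h+1}$, so $\div{W}$ is polynomial of degree $d$ precisely when $h = d-1$, and both implications of the theorem follow.

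For the lower bound, following \cite{levcovitz-RACG-2020}, I would construct by induction on $i = 0, 1, \dots, h$ a bi-infinite periodic geodesic $\gamma_i$ in the Cayley graph of a special subgroup $W_{L_i}$, with $L_i \in \Lambda_i(S)$ and $\div{\gamma_i}(r) \succeq r^{i+1}$; taking $i = h$, $L_h = S$ then gives a periodic geodesic in $\cC(W,S)$ whose divergence bounds $\div{W}(r)$ from below. The base case $i = 0$ is immediate: $L_0 \in \Omega(S)$ is a join $A \times B$ with $A$ and $B$ nonspherical (there being no right-angled irreducible affine systems of rank $\geq 3$), and for $a \in A$, $b \in B$ the geodesic $\gamma_0 = (ab)^{\infty}$ satisfies $\div{\gamma_0}(r) \succeq r$, as does any complete geodesic. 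For the inductive step, write $L_i$ as the union of an $\equiv_{i-1}$-equivalence class $M_0, M_1, \dots, M_k$ in $\Lambda_{i-1}(S)$ with each $M_j \cap M_{j+1}$ nonspherical. Each $W_{M_j}$ contains a conjugate of the level-$(i-1)$ geodesic, of divergence $\succeq r^i$, and the infinite special subgroups $W_{M_j \cap M_{j+1}}$ serve as commuting ``hinges'' along which consecutive pieces can be glued; $\gamma_i$ is the periodic bi-infinite extension of this concatenation. That the concatenation is genuinely a geodesic is a wall-crossing check --- no wall is dual to two of its edges --- using that two walls of the same type are disjoint in a right-angled system and that each turn occurs inside a hinge subgroup.

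The main obstacle, and the technical heart of \cite{levcovitz-RACG-2020}, is the estimate $\div{\gamma_i}(r) \succeq r^{i+1}$. The idea is to identify on the order of $r$ walls which $\gamma_i$ crosses between $\gamma_i(-r)$ and $\gamma_i(r)$ and which any $r$-avoidant path $\pi$ joining these two points must also cross; for each such crossing one shows, by using wall-separation to confine $\pi$ into the appropriate piece $W_{M_j}$ and then invoking the level-$(i-1)$ divergence bound, that $\pi$ is forced to make a detour of length $\succeq r^i$; and since these detours occur along $\sim r$ distinct, suitably independent walls their costs cannot be amortized, so $\pi$ has total length $\succeq r \cdot r^i = r^{i+1}$. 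Carrying this out rigorously requires careful bookkeeping of wall types, a precise determination of which walls separate the relevant points, and separation properties of the hinge subgroups --- exactly the analysis performed for the quadratic case in \cite{dani-thomas-div, levcovitz-div}, now iterated through the $h$ levels of the hypergraph. Once the lower bound is in place, the deduction of the theorem is purely formal, as in the first paragraph.
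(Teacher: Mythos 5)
This statement is not proved in the paper at all: it is imported verbatim as Theorem~A of~\cite{levcovitz-RACG-2020} and cited as a black box (it is combined with Proposition~\ref{prop:dtdivd} and Theorem~\ref{thm:duplexhi} to deduce Corollary~\ref{cor:hypDuplex}). So there is no ``paper's own proof'' against which to compare your argument; what you have written is a sketch of the argument in the cited source.

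As a sketch it is structurally reasonable, and your opening reduction --- that once the upper bound $\div{W}(r) \preceq r^{h+1}$ from the right-angled case of~\cite[Theorem~6.3, Corollary~6.4]{levcovitz-RACG} and the relative-hyperbolic dichotomy are in hand, the whole theorem follows from the single lower bound $\div{W}(r) \succeq r^{h+1}$ for finite $h$ --- is exactly right. But there is a genuine gap: your third paragraph does not prove that lower bound, it narrates it (``one shows\dots,'' ``is forced to make a detour\dots,'' ``their costs cannot be amortized''), and you explicitly hand the burden back to~\cite{levcovitz-RACG-2020} by calling the estimate ``the technical heart'' of that paper. The difficult points that you wave at --- verifying that the concatenated path in the inductive step really is a geodesic, showing that the $\sim r$ relevant walls are genuinely ``independent'' so that the detour costs add rather than telescope, and propagating the level-$(i-1)$ bound into a separated region cut out by those walls --- are precisely the content of Levcovitz's proof, and none of them is carried out here. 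So your submission is a plan for a proof, not a proof; if the intention (like the paper's) is merely to cite the result, the sketch is unnecessary, and if the intention is to reprove it, the central estimate must actually be established, with the wall-type bookkeeping made precise. One small additional point: your base case remark that ``any complete geodesic'' has linear divergence is true but not quite what the induction needs --- one must set up $\gamma_0$ so that the subsequent inductive steps can locate and exploit separating walls along it, which is part of why Levcovitz's construction of the periodic geodesic is more delicate than ``take $(ab)^\infty$.''
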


\begin{figure}
\begin{center}
\begin{tikzpicture}[scale=0.85,semithick]
\begin{scope}[xshift=-5cm]
\fill (0,1) circle (2.3pt); 
\fill (0,-1) circle (2.3pt);
\fill (-1,0) circle (2.3pt);
\fill (1,0) circle (2.3pt); 
\draw (-1,0)--(0,1);
\draw (-1,0)--(0,-1);
\draw (1,0)--(0,1);
\draw (1,0)--(0,-1);
\draw (0,1.75) node {$\Gamma_1$};
\end{scope}

\begin{scope}[xshift=-1.5cm]
\fill (0,1) circle (2.3pt); 
\fill (0,-1) circle (2.3pt); 
\fill (0,-2) circle (2.3pt); 
\fill (-1,0) circle (2.3pt); 
\fill (1,0) circle (2.3pt); 
\fill (2,0) circle (2.3pt); 
\draw (-1,0)--(0,1);
\draw (-1,0)--(0,-1);
\draw (-1,0)--(0,-2);
\draw (1,0)--(0,1);
\draw (1,0)--(0,-1);
\draw (1,0)--(0,-2);
\draw (2,0)--(0,1);
\draw (2,0)--(0,-1);
\draw (0,1.75) node {$\Gamma_2$};
\end{scope}

\draw (2.5,0) node {\Huge\dots};

\begin{scope}[xshift=5.5cm]
\fill (0,1) circle (2.3pt); \draw (0,0.65) node {\footnotesize $a_0$};
\fill (0,-1) circle (2.3pt); \draw (0,-0.65) node {\footnotesize $b_0$};
\fill (0,-2) circle (2.3pt); \draw (0,-2.3) node {\footnotesize $b_2$};
\fill (1,-2) circle (2.3pt); \draw (1,-2.3) node {\footnotesize $b_3$};
\fill (2,-2) circle (2.3pt); \draw (2,-2.3) node {\footnotesize $b_4$};
\fill (4,-2) circle (2.3pt); \draw (4,-2.3) node {\footnotesize $b_d$};
\fill (-1,0) circle (2.3pt); \draw (-1.25,0) node {\footnotesize $b_1$};
\fill (1,0) circle (2.3pt); \draw (0.65,0) node {\footnotesize $a_1$};
\fill (2,0) circle (2.3pt); \draw (1.6,0) node {\footnotesize $a_2$};
\fill (3,0) circle (2.3pt); \draw (2.5,0) node {\footnotesize $a_3$};
\fill (5,0) circle (2.3pt); \draw (5.4,-0.35) node {\footnotesize $a_{d-1}$};
\fill (6,0) circle (2.3pt); \draw (6.45,0) node {\footnotesize $a_{d}$};
\draw (3.75,0) node {\dots};
\draw (-1,0)--(0,1);
\draw (-1,0)--(0,-1);
\draw (-1,0)--(0,-2);
\draw (1,0)--(0,1);
\draw (1,0)--(0,-1);
\draw (1,0)--(0,-2);
\draw (2,0)--(0,1);
\draw (2,0)--(0,-1);
\draw (3,0)--(0,1);
\draw (3,0)--(0,-1);
\draw (1,-2)--(0,-2);
\draw (1,-2)--(2,0);
\draw (2,-2)--(1,-2);
\draw (2,-2)--(3,0);
\draw (5,0)--(0,1);
\draw (5,0)--(0,-1);
\draw (6,0)--(0,1);
\draw (6,0)--(0,-1);
\draw (4,-2)--(5,0);
\draw (2,-2)--(2.5,-2);
\draw (3.5,-2)--(4,-2);
\draw (3,-2) node {\dots};
\draw (0,1.75) node {$\Gamma_d$};
\end{scope}
\end{tikzpicture}
\end{center}
\caption{\small{A sequence of defining graphs $\Gamma_d$ of right-angled Coxeter systems from~\cite{dani-thomas-div}.} }
\label{fig:dtgammas}
\end{figure}
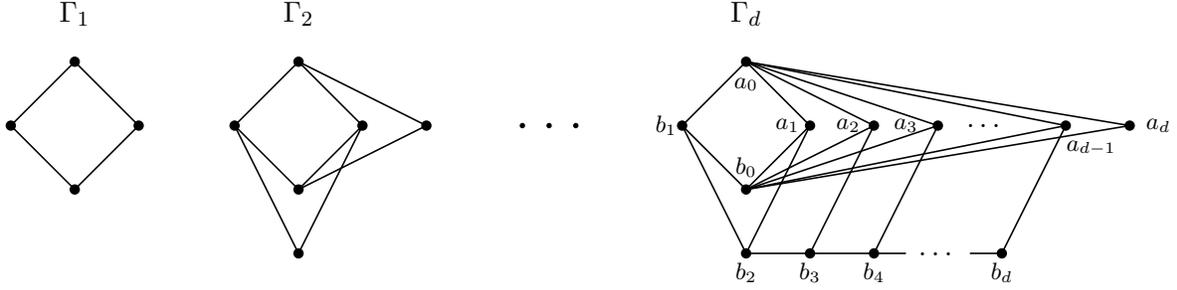

By combining these results with   Theorem~\ref{thm:duplexhi}, we obtain:

\begin{cor}\label{cor:hypDuplex}  Let $\G_d$ be the $d$-th graph in the sequence shown in Figure~\ref{fig:dtgammas}, with associated right-angled Coxeter group $W_{\G_d}$.   For all $m \ge 2$ and $n \geq 5$ (including $n = \infty$), the duplex group $(W_{\G_d})_2(m, n)$ has hypergraph index $d-1$. \qed
\end{cor}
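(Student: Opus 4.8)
The plan is to assemble the corollary from three ingredients already available in the excerpt, with no new argument required. First I would fix $d \geq 1$ and invoke Proposition~\ref{prop:dtdivd}, which states that the right-angled Coxeter group $W_{\Gamma_d}$ associated to the defining graph $\Gamma_d$ of Figure~\ref{fig:dtgammas} has divergence polynomial of degree~$d$, i.e.\ $\operatorname{div}(W_{\Gamma_d}) \simeq r^d$. Note that this automatically forces $W_{\Gamma_d}$ to be $1$-ended (since divergence is a polynomial of degree $\geq 1$), so the hypotheses of the subsequent results are met.

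Next, I would apply Theorem~\ref{thm:levcohidiv} (Levcovitz's Theorem~A of~\cite{levcovitz-RACG-2020}), which asserts that a right-angled Coxeter group has divergence polynomial of degree~$d$ if and only if its defining graph has hypergraph index~$d-1$. Combined with the previous step, this yields that the right-angled Coxeter system $(W_{\Gamma_d}, S)$ has hypergraph index exactly $d-1$, where $S$ denotes the vertex set of $\Gamma_d$.

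Finally, I would apply Theorem~\ref{thm:duplexhi}: for every integer $m \geq 2$ and every $n \geq 5$ (including $n = \infty$), the duplex system $(W_2(m,n), S_2)$ of a right-angled Coxeter system $(W,S)$ has the same hypergraph index as $(W,S)$. Taking $(W,S) = (W_{\Gamma_d}, S)$, this gives that the duplex group $(W_{\Gamma_d})_2(m,n)$ has hypergraph index $d-1$, which is the claimed statement. The only point requiring (minor) care is to confirm that the parameter ranges in the corollary's hypothesis, namely $m \geq 2$ and $n \in \{5,6,\dots\} \cup \{\infty\}$, coincide with the hypotheses of Theorem~\ref{thm:duplexhi}; they do, so the chain of implications closes without gaps. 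There is no genuine obstacle here: the content of the corollary is entirely carried by Proposition~\ref{prop:dtdivd}, Theorem~\ref{thm:levcohidiv}, and Theorem~\ref{thm:duplexhi}, and the proof is a formal concatenation of these three facts.
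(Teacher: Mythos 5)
Your proof is correct and uses exactly the same three ingredients the paper does: Proposition~\ref{prop:dtdivd}, Theorem~\ref{thm:levcohidiv}, and Theorem~\ref{thm:duplexhi}, concatenated in the same order. The paper treats the corollary as immediate from these (hence the trailing \qed), and your writeup simply spells out that chain.
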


Now Corollary~\ref{cor:hypDuplex} and Proposition~\ref{prop:distinct} together establish the existence of infinitely many pairwise nonisomorphic examples of non-right-angled Coxeter groups with hypergraph index~$d -1$.  Thus  the first sentence in Corollary~\ref{cor:duplexIntro} of the introduction holds.

\subsection{Examples with arbitrary divergence}\label{sec:examplesDiv}

We now prove that for all positive integers $d$, certain of the duplex groups from Section~\ref{sec:DTexamples} have divergence polynomial of degree~$d$ and are strongly thick of order~$d-1$.  This completes the proof of Corollary~\ref{cor:duplexIntro} of the introduction.  Specifically, we prove:
  
\begin{prop}\label{prop:divd}
Let $\G_d$ be the $d$-th graph in the sequence shown in Figure~\ref{fig:dtgammas}, with associated right-angled Coxeter group $W_{\G_d}$.   Let $m \ge 2$ be an even number.  Then the duplex group $(W_{\G_d})_2(m, \infty)$ has divergence $\simeq r^d$. 
\end{prop}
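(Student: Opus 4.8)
The proof splits into an upper bound $\div{W}\preceq r^d$ and a matching lower bound $\div{W}\succeq r^d$, where $W=(W_{\G_d})_2(m,\infty)$.

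For the upper bound the plan is simply to assemble results already in hand. By Corollary~\ref{cor:hypDuplex}, the system $((W_{\G_d})_2(m,\infty),S_2)$ has hypergraph index $d-1$. One first checks that $W$ is $1$-ended (via Davis's criterion~\cite[Theorem~8.7.2]{davis-book}: the defining graph of the duplex is connected and has no separating spherical subset, since in fact every vertex lies in a triangle), and then Corollary~\ref{cor:hDiv} gives at once that $\div{W}$ is bounded above by a polynomial of degree $(d-1)+1=d$. So the content of the proposition is the lower bound.

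For the lower bound, since the divergence of a complete geodesic is a lower bound for the divergence of the ambient space, and $W$ is quasi-isometric to its Davis complex $\Sigma=\Sigma(W_2(m,\infty),S_2)$, it suffices to produce a complete geodesic $\gamma$ in $\Sigma$ with $\div{\gamma}(r)\succeq r^d$. Following the outline of~\cite[Proposition~5.3]{dani-thomas-div}, I would take $\gamma$ to be an axis of a suitable infinite-order element analogous to the one used there — a natural candidate is $a_0'b_0'$, which has infinite order because $m_{a_0'b_0'}=\infty$ in the duplex — and analyse $r$-avoidant paths between $\gamma(r)$ and $\gamma(-r)$ via the wall structure of $\Sigma$. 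The essential preliminary point, and the reason for assuming $m$ even, is that every wall of $\Sigma$ then has a well-defined \emph{type} in $S$: a wall crossing a $2m$-gon $wW_{\{u',u''\}}$ runs from an edge to the edge $m$ steps away around the polygon, and since the $2m$ edges are labelled alternately $u'$ and $u''$ and $m$ is even, these two edges carry the same label; projecting $u',u''\mapsto u$ assigns the wall a type $u\in S$. With types available one can adapt the combinatorial framework of~\cite{dani-thomas-div} (walls separating pairs of points, the constraints on the types of walls that a geodesic, versus an avoidant path, must cross) and run an induction on $d$: an $r$-avoidant path for $\gamma$ is forced to cross walls whose types trace out the ``staircase'' $b_1,b_2,\dots$ in $\G_d$, and removing the last rung of the staircase exhibits (a duplex of) $\G_{d-1}$ inside $\G_d$, so that the required avoidant length grows by a factor proportional to $r$ at each stage, yielding $\div{\gamma}(r)\succeq r^d$.

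I expect the main obstacle to be precisely the one flagged in the introduction: in a right-angled Coxeter group two walls of the same type are always disjoint, but for $m\geq 4$ even this fails — two distinct walls of the same type in $\Sigma$ can cross, such crossings occurring inside the bounded Davis cells of the spherical subgroups $W_{\{u',u''\}}\cong D_\infty$-free (finite dihedral of order $2m$), where the $m$-labelled edges create the extra relations. Consequently every lemma of~\cite{dani-thomas-div} that uses same-type walls being parallel must be reproved. The plan to circumvent this is to show that such crossings are confined to the cosets $wW_{\{u',u''\}}$ and are governed by the $m$-labelled edges, while the $\infty$-labelled edges — which are plentiful in the duplex, arising from every non-edge of $\G_d$ — still supply enough rigidity to force the spreading of walls needed for the estimates; combined with the fact that the cosets $wW_{\{u',u''\}}$ have uniformly bounded diameter (so that, coarsely, $\Sigma$ behaves like the Davis complex of $W_{\G_d}$, even though no obvious map between the two is a quasi-isometry, as noted in Section~\ref{sec:relationships}), this should allow the Dani--Thomas bounds to go through with the same exponent $d$.
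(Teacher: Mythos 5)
Your upper bound is exactly the paper's: combine Corollary~\ref{cor:hypDuplex} (hypergraph index $d-1$) with Corollary~\ref{cor:hDiv}. So the issue is the lower bound, and there your sketch diverges from what the paper actually does in several ways that are not cosmetic.

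First, the choice of geodesic. You propose the axis $\gamma$ of $a_0'b_0'$; its two ends have labels built from $\{a_0^*, b_0^*\}$, and both halves travel along walls of the \emph{same} type (something in $\{b_1', b_1''\}$, since $a_0, b_0$ both lie in the star of $b_1$). The paper instead takes two rays $\alpha$, $\beta$ from a common basepoint, with labels $b'_{d-1}a'_{d-1}b'_{d-1}\dots$ and $b_d'a_d'b_d'\dots$, chosen precisely so they travel along walls of \emph{adjacent} types $b'_d$ and $b'_{d+1}$. This adjacency of types is the hypothesis of the key Lemma~\ref{lem:divd-lower}, so that lemma would not apply to your $\gamma$ as stated, and you would need a different engine for the estimate.

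Second, the induction scheme. You want to induct on $d$ by ``removing the last rung of the staircase.'' The paper explicitly flags that inducting directly on the subscript $n$ of the wall types fails (same-type walls can cross, so the descent is not clean), and instead holds $d$ fixed, embeds $\Cayley{d}\hookrightarrow\Cayley{d+3}$, and runs the induction on an auxiliary parameter $k$ with $1\le k\le n\le d$. The extra generators from $\G_{d+1},\dots,\G_{d+3}$ are needed because in the inductive step the subscripts of the relevant wall types drift both downward \emph{and} upward: the walls $H_i$ chosen along $\beta$ have types $b_n^*$ or $b_{n+2}^*$, and the auxiliary walls $L_i$ have types one step off from those, potentially up to $b_{n+3}^*$. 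Your ``remove the last rung'' picture, which peels one generator off the high end, does not leave room for this upward drift, and it is precisely the naive descent the paper says is hard to make work.

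Third, your appeal to ``coarsely, $\Sigma$ behaves like the Davis complex of $W_{\G_d}$'' because the cosets $wW_{\{u',u''\}}$ have uniformly bounded diameter is not something you are entitled to lean on: the paper records in Section~\ref{sec:relationships} that whether $W$ and $W_2(m,n)$ are quasi-isometric is an open question, and the obvious coarse map is shown there \emph{not} to be a quasi-isometry. The paper's lower bound avoids any such comparison; it works intrinsically in $\Sigma_{d+3}$ with a careful wall-crossing count (Lemma~\ref{lem:travel} and Lemma~\ref{lem:geod-label} controlling how often a geodesic can cross walls of a given type, with the constant $M = 4m+1$ appearing from the dihedral relations of the $m$-labelled edges). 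Your diagnosis that same-type crossings happen inside the finite dihedral cells and are governed by the $m$-labels is correct and is indeed what Lemma~\ref{lem:geod-label} formalizes, but you would still need the rest of the framework — the pair of rays, the $\Cayley{d+3}$ embedding, and the $k$-induction — to turn that observation into the estimate $\div{\gamma}(r)\succeq r^d$.
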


\begin{cor}\label{cor:thickness}  For $m \geq 2$ even, $(W_{\G_d})_2(m, \infty)$ is strongly thick of order $d-1$.
\end{cor}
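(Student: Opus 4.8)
The plan is to combine the upper bound on the order of strong thickness coming from the hypergraph index with the matching lower bound on divergence supplied by Proposition~\ref{prop:divd}. Throughout, write $G = (W_{\G_d})_2(m,\infty)$ with $m \geq 2$ even; note that $G$ is $1$-ended, as is implicit already in the statement of Proposition~\ref{prop:divd}.

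First I would dispose of the base case $d = 1$ separately: here Corollary~\ref{cor:hypDuplex} gives that $(G,S_2)$ has hypergraph index $0$, so $G$ has linear divergence by Theorem~\ref{thm:h0Linear}, hence is strongly thick of order $0 = d-1$ by Proposition~\ref{prop:wide}. From now on assume $d \geq 2$.

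By Corollary~\ref{cor:hypDuplex}, the Coxeter system $(G, S_2)$ has finite hypergraph index $d - 1$, so, since $G$ is $1$-ended, Theorem~\ref{thm:finiteThick} shows $G$ is strongly thick of order at most $d - 1$. It remains to rule out that $G$ is strongly thick of order at most $d - 2$. Arguing by contradiction, if it were, then by Theorem~\ref{cor:thickDiv} the divergence of $G$ would be bounded above by a polynomial of degree $(d-2)+1 = d-1$, i.e.\ $\div{G} \preceq r^{d-1}$. But $\div{G} \simeq r^d$ by Proposition~\ref{prop:divd}, and $r^d \not\preceq r^{d-1}$; this contradiction forces $G$ to be strongly thick of order exactly $d-1$, as claimed.

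I do not expect a genuine obstacle here. Both halves of the argument are available off the shelf: the upper bound on the order of thickness is Theorem~\ref{thm:finiteThick} (equivalently Theorem~\ref{thm:hyp_thick_div_Intro}(3)) applied to the hypergraph-index computation of Corollary~\ref{cor:hypDuplex}, while the matching lower bound on divergence is precisely Proposition~\ref{prop:divd}, whose proof is where all the technical effort lives (well-defined types of walls for $m$ even, handling intersecting walls of the same type, and so on). The only point requiring care is the bookkeeping between the phrases ``strongly thick of order $n$'' and ``strongly thick of order at most $n$'', which is why the degenerate case $d = 1$ is treated first.
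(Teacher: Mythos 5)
Your proof is correct and is essentially the same as the paper's: both combine Corollary~\ref{cor:hypDuplex} and Theorem~\ref{thm:finiteThick} for the upper bound on the order of thickness, then use Theorem~\ref{cor:thickDiv} together with Proposition~\ref{prop:divd} to rule out any lower order. Your separate treatment of $d=1$ is harmless but unnecessary, since ``strongly thick of order at most $0$'' already means ``strongly thick of order $0$'' (the order is a non-negative integer), so the paper's single uniform argument covers that case too.
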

\begin{proof}[Proof of Corollary~\ref{cor:thickness}]  By Corollary~\ref{cor:hypDuplex}, the group  $(W_{\G_d})_2(m, \infty)$ has hypergraph index~$d-1$.  Hence by Theorem~\ref{thm:finiteThick}, $(W_{\G_d})_2(m, \infty)$ is strongly thick of order at most $d-1$.  If it is strongly thick of order $n < d-1$, then by Theorem~\ref{cor:thickDiv} its divergence is bounded above by $r^{n+1}$ with $n +1 < d$.  This  contradicts Proposition~\ref{prop:divd}.
\end{proof}

We note that 
the duplex Coxeter groups in Proposition~\ref{prop:divd} are right-angled 
only when $m=2$.  Thus by Proposition~\ref{prop:distinct} and Corollary~\ref{cor:thickness}, for every $d \geq 1$, as $m$ varies we obtain infinitely many non-right-angled Coxeter groups which have degree $d$ polynomial divergence, and for which Conjecture~\ref{conj:lower} holds.  As discussed in Section~\ref{sec:relationships}, we do not know whether these duplex groups are quasi-isometric to 
 the right-angled Coxeter group defined by $\G_d$. 

In order to prove Proposition~\ref{prop:divd}, we first introduce some background, fix some notation, and state a technical lemma (Lemma~\ref{lem:divd-lower} below). We then give a short proof of Proposition~\ref{prop:divd}; the upper bound in the proposition is obtained from  the hypergraph index, and the lower bound follows easily from Lemma~\ref{lem:divd-lower}.  
We defer the proof of Lemma~\ref{lem:divd-lower}  till the end of the section.  Its proof follows the same general outline as the analogous result in the right-angled case~\cite[Proposition~5.3]{dani-thomas-div}, but the details are considerably more involved. 

For the remainder of this section, we fix an even integer $m \ge 2$ and
denote the duplex Coxeter group $(W_{\G_d})_2(m, \infty)$ simply by $W_d$.
 We denote the  defining graph  $(\G_d)_2$ of $W_d$ by $\Phi_d$ and  its Cayley graph by 
 $\Cayley{d}$.  We write $\Sigma_d$ for the Davis complex of $W_d$ (with its duplex generating set), and recall that $\Sigma_d$ has $1$-skeleton $\Cayley{d}$.  (See the end of Section~\ref{sec:CoxeterDavis} for background on the structure of the Davis complex.)

 Since $m$ is even, every $m_{st}$ in the presentation of $W_d$ is either even or infinity.  It follows that  given any wall $H$ of $\Sigma_d$, 
 each edge of  $\Cayley{d}$ dual to $H$ is labelled by the same generator; this generator is called 
the \emph{type} of $H$.
Note that unlike in the right-angled case, walls of the same type can intersect each other.  
If two walls intersect, then either they have the same type, or there is an edge between their types in $\Phi_d$ (which means that the corresponding label $m_{st}$ is not $\infty$).

Given a wall $H$ of $\Sigma_d$, we define $N_H$ to be the intersection of all (closed) half-spaces of~$\Sigma_d$ which contain~$H$ in their interior; if there are no such half-spaces (equivalently, if every wall of $\Sigma_d$ intersects $H$), we define $N_H = \Sigma_d$. Let $N_H^1$ denote the subgraph of~$\cC_d$ induced by the vertices of~$\cC_d$ which lie in $N_H$.  
 We say that a path $\gamma$ in $\cC_d$ \emph{travels along $H$} if $\gamma$ lies in~$N_H^1$ but does not cross~$H$.  Observe that these definitions make sense for arbitrary Coxeter systems, not just the duplex $W_d$.
 
 \begin{lemma}\label{lem:travel}
 Let $H$ be a wall of $\Sigma_d$ and let $N_H^1$ be as defined above.  Then:
 \begin{enumerate}
 \item If $v$ is a vertex of $\cC_d$ which is the endpoint of an edge dual to $H$, then $v \in N_H^1$.     In particular, $N_H^1$ is nonempty.
 \item Let $v$ and $w$ be any two vertices of $N_H^1$, and let $\gamma$ be any geodesic in $\cC_d$ from $v$ to $w$.  Then $\gamma$ is contained in $N_H^1$, and if in addition $v$ and $w$ are in the same component of the complement of $H$, then $\gamma$ travels along $H$.
 \item If a path $\gamma$ in $\cC_d$ travels along $H$, then the label of any of its edges lies in the star of the type of $H$ in $\Phi_d$. 
 \end{enumerate}
  \end{lemma}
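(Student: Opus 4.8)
The plan is to prove the three parts of Lemma~\ref{lem:travel} more or less in order, using the fact that $N_H$ is an intersection of half-spaces to leverage convexity of walls and half-spaces in the $\CAT(0)$/combinatorial structure of the Davis complex.

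\textbf{Part (1).} Let $v$ be a vertex of $\cC_d$ that is an endpoint of an edge $e$ dual to $H$. I would argue by contradiction: if $v \notin N_H^1$, then some closed half-space $\cK$ with $H$ in its interior fails to contain $v$. But $e$ is dual to $H$, so its midpoint lies on $H$, hence in the interior of $\cK$; since $\cK$ is closed and convex (a half-space bounded by the wall $H' \ne H$ defining $\cK$), both endpoints of $e$ lie in $\cK$ — a wall $H'$ separating $v$ from $H$ would have to cross $e$, forcing $H' = H$, contradiction. Non-emptiness of $N_H^1$ then follows since $H$ is dual to at least one edge.

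\textbf{Part (2).} Here the key input is that half-spaces of $\Sigma_d$ are convex, and that a combinatorial geodesic $\gamma$ in $\cC_d$ crosses a given wall at most once (standard for Coxeter/Davis complexes, see \cite[Section~3]{davis-book}). Since $v, w \in N_H^1$, both lie in every half-space $\cK$ containing $H$ in its interior; by convexity of $\cK$ at the level of the $1$-skeleton (a geodesic between two vertices of a half-space stays in that half-space, since leaving and re-entering would force two crossings of the bounding wall), all of $\gamma$ lies in $N_H^1$. If moreover $v$ and $w$ lie in the same component of $\Sigma_d \setminus H$, then no wall separating the two sides of $H$ — in particular $H$ itself — is crossed by $\gamma$ (otherwise $\gamma$ would cross $H$, moving $v$ and $w$ to opposite sides). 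Hence $\gamma$ lies in $N_H^1$ and does not cross $H$, i.e.\ it travels along $H$.

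\textbf{Part (3).} This is the part I expect to be the main obstacle, since it is where the specific combinatorics of walls and their types enters. Suppose $\gamma$ travels along $H$ and let $e$ be an edge of $\gamma$, with type (generator) $t$, dual to some wall $H_e$ of type $t$. I would show $H_e$ intersects $H$: since $\gamma$ lies in $N_H^1$ and does not cross $H$, the wall $H_e$ cannot be one of the walls whose half-space (on the side away from $H$) is used in defining $N_H$ — more carefully, if $H_e$ did not meet $H$, then $H$ would lie entirely in one of the two half-spaces bounded by $H_e$, and that half-space, being one containing $H$ in its interior, would be one of the sets intersected to form $N_H$; but $e$ is dual to $H_e$, so its two endpoints lie on opposite sides of $H_e$, hence $\gamma$ would leave $N_H$, a contradiction. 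So $H_e \cap H \ne \varnothing$. Now invoke the earlier observation (stated just before Lemma~\ref{lem:travel}): two intersecting walls either have the same type, or there is an edge between their types in $\Phi_d$. Either way, the type $t$ of $H_e$ lies in the closed star of the type of $H$ in $\Phi_d$, which is the claim. The subtle point to get right is the precise relationship between ``$\gamma$ stays in $N_H$'' and ``every wall dual to an edge of $\gamma$ meets $H$'' — one must be careful that crossing a wall $H_e$ disjoint from $H$ genuinely forces exit from $N_H$, which is where the definition of $N_H$ as the intersection of \emph{all} half-spaces containing $H$ in their interior (and the case $N_H = \Sigma_d$ when no such half-space exists) must be handled cleanly.
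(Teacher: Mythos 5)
Your proposal is correct and follows essentially the same approach as the paper's proof. The only noteworthy variation is in part (2), where you derive convexity of $N_H^1$ directly from the fact that a geodesic crosses each wall at most once, whereas the paper simply cites \cite[Lemma~4.5.7]{davis-book} for this; parts (1) and (3) match the paper's arguments, with (1) phrased by contradiction rather than directly.
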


\begin{proof}
If $N_H = \Sigma_d$ then $N_H^1 = \cC_d$ and (1) is obvious.  Otherwise, let $H'$ be any wall of $\Sigma_d$ which is disjoint from $H$.  Write $H'_v$ for the half-space of $\Sigma_d$ which is bounded by~$H'$ and contains $v$.  Then since no wall separates~$v$ from $H$, and $H$ and $H'$ are disjoint, the wall~$H$ is contained in the interior of $H'_v$.  That is, for every wall $H'$ which is disjoint from $H$, the half-space bounded by $H'$ which contains~$v$ equals the half-space bounded by $H'$ which contains $H$ in its interior.  Hence $v \in N_H$, and so $v \in N_H^1$. 

Now by \cite[Lemma~4.5.7]{davis-book}, the subgraph $N_H^1$ of $\cC_d$ is convex in the word metric, so the first claim in (2) holds.  If $v$ and $w$ are in the same component of the complement of $H$, the path $\gamma$ contains~$2k$ edges which are dual to $H$, for some integer $k \geq 0$.  But since $\gamma$ is a geodesic, it crosses each wall at most once, so $k = 0$.  Therefore $\gamma$ travels along~$H$, as required.

For (3), as $\gamma$ travels along $H$, each of its edges is in $N_H^1$.  By the definition of $N_H^1$, any wall~$K$ dual to such an edge necessarily intersects  $H$.
Then, as noted above, the types of $K$ and $H$ are either equal or adjacent in $\Phi_d$.  
\end{proof}

\noindent We note that the proofs of (1) and (2) in Lemma~\ref{lem:travel} are valid for all Coxeter systems, while~(3) holds whenever all finite $m_{st}$ in the presentation are even, so that walls have well-defined types.

Returning to our setting of duplex groups, the following lemma gives a lower bound on the divergence of pairs of geodesics that travel along certain types of walls in $\Sigma_d$. Recall that $W_{\Gamma_d}$ is generated by the elements $a_i,b_i$ for $i=0,\dots,d$, and that its duplex $W_d$ has corresponding generators $a_i',a_i'',b_i',b_i''$.  The $(p,r)$-avoidant distance $d^{\mathrm{av}}_{p,r}$ in the following statement is defined in Section~\ref{sec:div}.

\begin{lemma}\label{lem:divd-lower}
For each $1 \le  k \le d$, there exists a constant $C_k$  with the following property:\\
Given any 
$n$ with
$k \le n \le d$, and any geodesic rays $\alpha$ and $\beta$  in $\Cayley{{d+3}}$ satisfying (1)--(3) below:
\begin{enumerate}
\item $\alpha$ and $\beta$ have the same initial vertex, say $p$;
\item $\alpha$ travels along a wall of type $b_{n}'$ or $b_{n}''$; and
\item $\beta$ travels along a wall of type  $b_{n+1}'$ or $b_{n+1}''$;
\end{enumerate}
the following inequality holds for all
$r\ge 3^{k+1}M$ (where $M = 4m+1$), 
and for all $r_1, r_2 \ge r$:
\[
d^{\mathrm{av}}_{p,r}(\alpha(r_1), \beta(r_2)) \ge C_k r^k.
\]
\end{lemma}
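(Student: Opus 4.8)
The plan is to prove Lemma~\ref{lem:divd-lower} by induction on $k$, following the outline of the analogous argument in~\cite[Proposition~5.3]{dani-thomas-div} but keeping careful track of the extra subtleties introduced by the duplex construction --- in particular the fact that walls of the same type can now intersect. The key structural input comes from Lemma~\ref{lem:travel}(3): a path travelling along a wall of type $b_n'$ or $b_n''$ can only use labels in the star of that type in $\Phi_{d+3}$. By the duplex construction applied to $\G_{d+3}$, that star is built from the neighbourhood of $b_n$ in $\G_{d+3}$, so (for $n$ in the relevant range) it consists of the lifts of $\{a_0,b_0,b_{n-1},b_{n+1}\}$ together with $\{b_n',b_n''\}$. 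The analogous statement holds for $b_{n+1}'$ or $b_{n+1}''$, and the two stars overlap only in the lifts of $a_0,b_0$. This "small overlap of stars" is exactly the combinatorial mechanism that forced large divergence in the right-angled case, and it persists here.

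For the base case $k=1$, I would argue that any $(p,r)$-avoidant path $\sigma$ from $\alpha(r_1)$ to $\beta(r_2)$ must have length at least a linear function of $r$: roughly, one considers the walls of type $a_0', a_0'', b_0', b_0''$ that separate $p$ from the points $\alpha(r_1)$ and $\beta(r_2)$ (there are at least $\asymp r$ of them, up to the multiplicative constant $M=4m+1$ that accounts for $2m$-gon relators and the doubling of generators), and observes that $\sigma$ must cross each such wall. The constant $M$ and the lower bound $r \ge 3^{k+1}M$ are chosen so that "$(p,r)$-avoidant'' genuinely forces $\sigma$ to leave the convex hulls of the wall-neighbourhoods $N_H^1$ along $\alpha$ and $\beta$, so that it registers all these crossings. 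This yields $d^{\mathrm{av}}_{p,r}(\alpha(r_1),\beta(r_2)) \ge C_1 r$.

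For the inductive step, assume the bound for $k-1$ and consider $k \le n \le d$ with rays $\alpha,\beta$ as in the hypotheses, and a $(p,r)$-avoidant path $\sigma$ between their time-$r_1$ and time-$r_2$ points with $r \ge 3^{k+1}M$. The idea is to decompose $\sigma$ into subpaths according to which wall-neighbourhoods $N_H^1$ they lie in: as $\sigma$ crosses from the region near $\alpha$ (travelling along a $b_n$-type wall) towards the region near $\beta$ (travelling along a $b_{n+1}$-type wall), it must pass through a sequence of "transition'' subpaths, each of which travels along a wall of type $b_j'$ or $b_j''$ for some $j$, because the only labels available (by Lemma~\ref{lem:travel}(3) and the structure of $\Phi_{d+3}$) connect consecutive $b$-type walls through the $a_0,b_0$ "hubs''. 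Each such transition subpath, together with an appropriately chosen geodesic to a common basepoint, is itself a configuration to which the inductive hypothesis applies at level $k-1$ and scale $\asymp r/3$ (which is still $\ge 3^k M$), yielding a contribution $\ge C_{k-1}(r/3)^{k-1}$ to its length; and there are $\asymp r$ such transitions to be made, because one must "climb'' through $\asymp r$ levels' worth of separating walls. Multiplying gives length $\ge C_k r^k$ with $C_k := C_{k-1}/3^{k-1} \cdot (\text{const})$, completing the induction.

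The main obstacle --- and where the argument genuinely departs from~\cite{dani-thomas-div} --- is controlling the subpaths of $\sigma$ that travel along walls of type $b_n'$ or $b_n''$ when two such walls of the \emph{same} type intersect. In the right-angled case distinct walls of the same type are disjoint, so "$\sigma$ travels along a wall of type $t$'' pins down the wall uniquely and the decomposition of $\sigma$ into transition pieces is clean. Here I would need a lemma asserting that even though same-type walls can cross, any path travelling along such a wall still cannot make "progress'' in the $b_0,a_0$-separating direction without crossing a wall of type $a_0'$, $a_0''$, $b_0'$, or $b_0''$ --- i.e.\ same-type wall-crossings are "horizontal'' and do not help $\sigma$ get from $\alpha$ to $\beta$. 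This is plausible because an edge labelled $b_n'$ or $b_n''$ does not change which side of an $a_0$- or $b_0$-type wall one is on (as $m_{b_n' a_0'}$ etc.\ are finite, these walls commute in the appropriate sense). Making this precise, and checking that the resulting bookkeeping of wall-crossings still produces $\asymp r$ honest level-transitions, is the technical heart of the proof and is presumably why the authors defer it to the end of the section; the factor $M = 4m+1$ is exactly the slack needed to absorb the finite $2m$-gon relators and the two-fold redundancy $\{u',u''\}$ when counting these crossings.
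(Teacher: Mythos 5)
There is a fundamental misreading of the structure of $\Gamma_d$ that invalidates most of your argument. You claim that the star of $b_n$ in $\Gamma_{d+3}$ is $\{a_0,b_0,b_{n-1},b_{n+1}\}$ and therefore the stars of $b_n$ and $b_{n+1}$ overlap only in ``the $a_0,b_0$ hubs.'' This is wrong for $n\ge 2$. From Figure~\ref{fig:dtgammas}, the vertices $a_0$ and $b_0$ are adjacent precisely to $b_1$ and to all the $a_i$'s --- they are \emph{not} adjacent to $b_n$ for $n\ge 2$. The actual star of $b_n$ (for $n\ge 2$) is $\{a_{n-1},b_{n-1},b_n,b_{n+1}\}$, so the stars of $b_n$ and $b_{n+1}$ overlap in $\{b_n,b_{n+1}\}$; this is exactly what Figure~\ref{fig:bnstar} in the paper depicts and what the real argument uses. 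Your picture, in which progress from $\alpha$ to $\beta$ must route ``through the $a_0,b_0$ hubs,'' does not correspond to the graph. In particular your base case, which proposes to count walls of type $a_0^*$ or $b_0^*$ separating $p$ from $\alpha(r_1)$ and $\beta(r_2)$, fails: since the support of $\alpha$ lies in $\{a_{n-1}^*,b_{n-1}^*,b_n^*,b_{n+1}^*\}$ (Lemma~\ref{lem:travel}(3)), the geodesic $\alpha$ never uses a letter $a_0^*$ or $b_0^*$, so no wall of those types separates $p$ from $\alpha(r_1)$ at all. The paper's base case instead counts walls $K_j$ dual to $\beta$ and shows (Claim~1 in the paper's proof) that all but at most $3m$ of them are crossed by $\beta$ but not by $\alpha$, which forces any avoidant path to cross $\ge 2r/3$ separating walls.

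Beyond this, your inductive step is too schematic to fill the gap you yourself identify. You correctly observe that the genuine difficulty is that same-type walls can now intersect, and you posit a lemma that ``same-type wall-crossings are horizontal.'' The paper does not prove such a lemma; instead it navigates the same-type problem by (i) carefully choosing the separating subsequence $\mathcal H$ of walls of types $b_n^*$ and $b_{n+2}^*$ along $\beta$ so that consecutive same-type $H_{i-1},H_i$ are always separated by a wall of type $a_n^*$ (Claim~2(iii)), and (ii) for each $H_i$ locating a transverse wall $L_i$ of type $b_{x}^*$ with $x$ one off the subscript of $H_i$ (Claim~3) whose separation property is verified by an explicit case analysis using that intervening $a_n^*$-wall. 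The inductive hypothesis is then applied to a concrete pair of geodesics $\mu_i'$ (travelling along $H_i$) and $\nu_i$ (travelling along $L_i$) based at $p_i=\nu_i(0)$, at scale $\ge r/3$. None of this is supplied by your sketch; the phrase ``climb through $\asymp r$ levels' worth of separating walls'' is not matched by a construction, and since the ``levels'' in your picture are indexed by $a_0,b_0$-type separators that do not exist here, it is not clear the transitions you want to count exist at all. You would need to redo the combinatorics starting from the correct stars, and then prove something equivalent to Claims~2 and~3 to make the induction go through.
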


In practice, for a fixed $n$, we will use the best lower bound provided by this lemma, which occurs when $k=n$. 
However, the above formulation is needed for a proof by induction (on~$k$). As will be evident from the proof strategy, it is not easy to induct on $n$ directly; this has to do with the fact that walls of type $b_{n}'$ or $b_{n}''$ can cross walls of type $b_{n+1}'$ or $b_{n+1}''$.  

\bigskip

Before proving Lemma~\ref{lem:divd-lower}, we explain how this immediately gives Proposition~\ref{prop:divd}.
\begin{proof}[Proof of Proposition~\ref{prop:divd}]
By Proposition~\ref{prop:dtdivd}, Theorem~\ref{thm:levcohidiv} and Theorem~\ref{thm:duplexhi}, the hypergraph index of $W_d$ is $d-1$.  Then it follows from Corollary~\ref{cor:hDiv}
 that the divergence of $W_d$ is $\preceq r^d$.  
To obtain the  lower bound on divergence, we
define 
$\alpha$ (respectively,~$\beta$) to be the geodesic ray  based at $e$  with label $b'_{d-1}a'_{d-1}b'_{d-1}a'_{d-1}\dots$
 (respectively,~$b_d'a_d'b_d'a_d' \dots$), which travels along a wall of type $b'_{d}$ (respectively,~$b'_{d+1}$)
in $\Cayley{{d+3}}$.  Thus, 
we can apply Lemma~\ref{lem:divd-lower} with $k=n=d$ to conclude that  
$d^{\mathrm{av}}_{e,r}(\alpha(r), \beta(r)) \ge C_d r^d,$ 
for all $r> 3^{d+1}(4m+1)$,
 where the avoidant distance is being computed in $\Cayley{{d+3}}$.    Any avoidant path 
 between $\alpha(r)$ and $\beta(r)$
in $\Cayley{{d}}$ remains avoidant under the isometric inclusion $\Cayley{{d}} \hookrightarrow \Cayley{{d+3}}$, and therefore also 
has length bounded below by $C_d r^{d}.$ This proves the desired lower bound on the  divergence of $W_d$. 
\end{proof}

We now introduce some terminology and a lemma to be used in the proof of Lemma~\ref{lem:divd-lower}.
For any vertex $x$ of $\G_d$, we use $x^*$ to denote one or both of the lifts of $x$, i.e.,~ $x'$ and/or  $x''$ (depending on the context).
We sometimes refer to $i$ as the \emph{subscript} of $b_i^*$.

The \emph{support of a word} is the set of letters that appear in it.  
We define the \emph{support of a path} (finite or infinite) in $\Cayley{d}$ to be the support of its label. 
Recall that the label of any subsegment of a geodesic is a reduced word.
The following lemma 
will be applied to labels of geodesic segments a number of times  in the proof of Lemma~\ref{lem:divd-lower}. 

\begin{lemma}\label{lem:geod-label}
Let $w$ be a reduced word in $W_k$, with $k$ arbitrary. 
\begin{enumerate}
\item
If the support of $w$ is contained in the star of a vertex $x'$ (or $x''$) in $\Phi_k$, then $w$ contains at most $m$ instances of $x^*$.
\item 
If the support of $w$ is contained in the lift of an adjacent pair of vertices $\{x, y\}$ of $\G_k$, then 
$|w| \le 2m$.
 \end{enumerate}

\end{lemma}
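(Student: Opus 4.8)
The plan is to deduce both statements from the explicit local structure of the duplex defining graph $\Phi_k$ (Definition~\ref{defn:duplex}), together with two elementary facts about dihedral Coxeter groups and reduced words in direct products.

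First I would record the structure of $\Phi_k$ around a vertex. By Definition~\ref{defn:duplex}, the only edges of $\Phi_k$ incident to $x'$ are the edge $[x',x'']$ with label $m$ and, for each vertex $y$ adjacent to $x$ in $\G_k$, the two edges $[x',y']$ and $[x',y'']$ with label $2$. Hence the star of $x'$ in $\Phi_k$ is $\{x',x''\}\cup\{y',y''\mid y\text{ adjacent to }x\text{ in }\G_k\}$. Since each such $y'$ and $y''$ commutes with both $x'$ and $x''$, the corresponding special subgroup splits as a direct product $W_{\mathrm{star}(x')}=\langle x',x''\rangle\times W_T$, where $T=\mathrm{star}(x')\setminus\{x',x''\}$ and $\langle x',x''\rangle$ is a dihedral Coxeter group of order $2m$. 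Similarly, for an adjacent pair $\{x,y\}$ of $\G_k$, condition~(2) of Definition~\ref{defn:duplex} makes all four cross-edges among $\{x',x'',y',y''\}$ carry label $2$, so $W_{\widetilde{\{x,y\}}}=\langle x',x''\rangle\times\langle y',y''\rangle$ is a direct product of two dihedral Coxeter groups of order $2m$.

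Next I would invoke: (a) a dihedral Coxeter group of order $2m$, with its standard generating set, has longest element of word length $m$; and (b) if $W=W_1\times W_2$ with generating set $S_1\sqcup S_2$ and $w$ is a reduced word over $S_1\sqcup S_2$, then the subword of $w$ consisting of its $S_i$-letters is reduced in $W_i$ (immediate from $\ell_W(g_1g_2)=\ell_{W_1}(g_1)+\ell_{W_2}(g_2)$). Part~(1) then follows: after relabelling we may assume the support of $w$ lies in $\mathrm{star}(x')$, so $w$ is reduced in $W_{\mathrm{star}(x')}=\langle x',x''\rangle\times W_T$; by (b) its $x^*$-subword is reduced in $\langle x',x''\rangle$, and by (a) it therefore consists of at most $m$ letters, i.e.\ $w$ contains at most $m$ instances of $x^*$. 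Part~(2) follows at once: $w$ represents an element of the finite group $W_{\widetilde{\{x,y\}}}=\langle x',x''\rangle\times\langle y',y''\rangle$, whose longest element has word length $m+m=2m$; since $w$ is reduced and special subgroups are isometrically embedded in $W_k$, $|w|$ equals that word length, hence $|w|\le 2m$.

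I do not expect a genuine obstacle: the whole argument is the observation that stars of vertices, and lifts of edges, in a duplex generate direct products of copies of the order-$2m$ dihedral group. The one point meriting a sentence of justification is that $\langle x',x''\rangle$ really is a direct factor of $W_{\mathrm{star}(x')}$, which holds because any vertex of $\Phi_k$ adjacent to $x'$ other than $x''$ has the form $y^*$ with $y$ adjacent to $x$ in $\G_k$, and then all four edges between $\{x',x''\}$ and $\{y',y''\}$ are labelled $2$.
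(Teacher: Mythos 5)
Your proof is correct and uses essentially the same core observation as the paper: $\{x',x''\}$ generates a dihedral group of order $2m$ that commutes with everything else in $\mathrm{star}(x')$ (and with $\{y',y''\}$ for part (2)). The only cosmetic differences are that the paper phrases the step as shuffling $x^*$-letters by commutation to extract a reduced $\{x',x''\}$-prefix, where you invoke the reduced-word-in-direct-products lemma, and for (2) the paper applies (1) to $x^*$ and $y^*$ separately while you bound $|w|$ directly by the length of the longest element of the finite factor $\langle x',x''\rangle\times\langle y',y''\rangle$ — these are the same idea dressed differently.
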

\begin{proof}
For (1), note that the stars of $x'$ and $x''$ in $\Phi_k$ are equal, and  all edges in this star, except possibly the edge between $x'$ and $x''$ (which is labelled $m$), are labelled $2$. Thus, $x^*$  commutes with every 
letter of $w$ that is not in $\{x', x''\}$, and so 
$w$ can be rearranged, by commuting letters, to a  word of the same length which has  the form $w_xw'$, where $w_x$ has support contained in $\{x', x''\}$, and the support of $w'$ does not contain $x^*$.  Since $w$ is reduced, so is $w_x$.    Now since $(x'x'')^m =1$, a word of length greater than $m$ in these letters is not reduced. Thus,  $|w_x| \le m$, and (1) follows.

For (2), we note that the lift of $\{x, y\}$ is contained in the star of $x^*$ as well as the star of  $y^*$ in $\Phi_k$, and we apply  (1) to both $x^*$ and $y^*$. 
\end{proof}

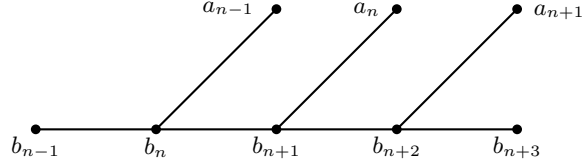
\begin{figure}
\begin{center}
\begin{tikzpicture}[scale=0.8,thick]
\fill (0,-2) circle (2.3pt); \draw (0,-2.3) node {\footnotesize $b_{n-1}$};
\fill (2,-2) circle (2.3pt); \draw (2,-2.3) node {\footnotesize $b_n$};
\fill (4,-2) circle (2.3pt); \draw (4,-2.3) node {\footnotesize $b_{n+1}$};
\fill (6,-2) circle (2.3pt); \draw (6,-2.3) node {\footnotesize $b_{n+2}$};
\fill (8,-2) circle (2.3pt); \draw (8,-2.3) node {\footnotesize $b_{n+3}$};

\fill (4,0) circle (2.3pt); \draw (3.2,0) node {\footnotesize $a_{n-1}$};
\fill (6,0) circle (2.3pt); \draw (5.5,0) node {\footnotesize $a_n$};
\fill (8,0) circle (2.3pt); \draw (8.7,-0.05) node {\footnotesize $a_{n+1}$};

\draw (2,-2)--(0,-2);
\draw (2,-2)--(4,0);
\draw (4,-2)--(2,-2);
\draw (4,-2)--(6,0);
\draw (6,-2)--(4,-2);
\draw (6,-2)--(8,0);
\draw (8,-2)--(6,-2);
\end{tikzpicture}
\end{center}
\caption{\small{A subgraph of $\Gamma_d$ induced by the stars of the vertices $b_n$, $b_{n+1}$ and $b_{n+2}$, used in the proof of Lemma~\ref{lem:divd-lower}.}}
\label{fig:bnstar}
\end{figure}

We are now ready to prove Lemma~\ref{lem:divd-lower}.  
In the proof below, given a geodesic $\gamma$, we use the notation 
$\gamma_{[s, t]}$ to denote the segment of $\gamma$ between $\gamma(s)$ and $\gamma(t)$.  
\begin{proof}[Proof of Lemma~\ref{lem:divd-lower}]
We begin with some preliminaries.
By the symmetry of $\Phi_{d+3}$ interchanging $b_n'$ with $b_{n}''$ and $b_{n+1}'$ with $b_{n+1}''$, respectively, we may assume,
without loss of generality, that $\alpha$ travels along a wall of type $b_n'$ and $\beta$ travels along a wall of  type~$b_{n+1}'$.  
By Lemma~\ref{lem:travel}(3), the support of $\alpha$ 
is contained in  the 
star of $b_{n}'$ in $\Phi_{d+3}$, which is equal to 
$\{a_{n-1}^*, b_{n-1}^*, b_n^*, b_{n+1}^*\}$; see Figure~\ref{fig:bnstar}.
 Similarly, the support of $\beta$ is contained in  $\{a_n^*, b_n^*, b_{n+1}^*, b_{n+2}^*\}$, which is the star of $b_{n+1}'$.

 Next, we establish some restrictions on the number and types of walls crossed by both $\alpha$ and $\beta$:

\smallskip

\noindent \emph{Claim~1.} 
Let $\mathcal K = K_1, K_2, \dots $ denote the sequence of  walls crossed by $\beta_{[0,\infty)}$,  where $K_i$ is dual to the edge between $\beta(i-1)$ and $\beta(i)$.   Then: 
\begin{enumerate}
\item[(i)]  There are at most $m$ walls in $\mathcal K$ of type $b_{n+1}^*$. 
\item[(ii)] If $\alpha$ crosses some $K_j\in  \mathcal K$ with $j > 2m$, then $K_j$ has type $b_{n+1}^*$. 
\end{enumerate}

\smallskip
\noindent
\emph{Proof of Claim~1.}
The set of types of walls in  $\mathcal K$ is the support of $\beta$, which is contained in 
 the star of $b'_{n+1}$.
Thus, Lemma~\ref{lem:geod-label}(1) implies that the label of $\beta$ contains at most $m$ instances of $b_{n+1}^*$, proving (i). 

Comparing the supports of $\alpha$ and $\beta$, we see that 
any wall crossed by both $\alpha$ and $\beta$ must be of type $b_n^*$ or $ b_{n+1}^*$.
We will now show that if $K_j \in \mathcal K$ is a wall of type $b_n^*$ crossed by $\alpha$, then 
$j \le 2m$.  Let $K_i$ be any wall in $\mathcal K$ with $i<j$. 
Then $K_i$ intersects either $K_j$ or $\alpha$ (for if it doesn't intersect $K_j$, then it separates $\alpha(0) = \beta(0)$   from $K_j$, and therefore from some vertex $\alpha(k)$ in $N^1_{K_j}$, forcing $\alpha$ to cross it). 
If $K_i$ intersects $K_j$, which is a wall of type $b_n^*$, then $K_i$ has type either $b_n^*$ or $b_{n+1}^*$ since walls of type $a_n^*$ and $b_{n+2}^*$ can't intersect a wall of  type $b_n^*$. If~$\alpha$ crosses $K_i$, then both $\alpha$ and $\beta$ cross $K_i$  and hence $K_i$ has type $b_n^*$ or $b_{n+1}^*$ again. 
It follows that the label of 
$\beta_{[0,j]}$
 is a reduced word with support in $\{b^*_n, b^*_{n+1}\}$. So, by Lemma~\ref{lem:geod-label}(2) it has length at most $2m$.  In particular, we have that $j \le 2m$, proving (ii). 
 This proves Claim~1. 

\smallskip
We now prove Lemma~\ref{lem:divd-lower} by induction on $k$.  For the base step, let $k=1$. 
Fix  
$r\ge 3^2 M > 9m$ and $r_1, r_2\ge r$. 
Fix $n$ satisfying $1 \le n \le d$ and let $\alpha$, $\beta$ and $\mathcal K$ be as above.
Claim~1 readily implies that at least $ r-3m>  r-r/3 = 2r/3$ of the walls crossed by $\beta_{[0,r]}$
do not intersect $\alpha$. 
Any path from $\alpha(r_1)$ to $\beta(r_2)$ must necessarily cross all of these walls, as each of them separates $\alpha(r_1)$ and $\beta(r_2)$. In particular, any $r$-avoidant path from 
$\alpha(r_1)$ to $\beta(r_2)$
has length at least $2r/3$.
 This proves the base step, with $C_1 = 2/3$. 

  Now assume the statement is true for $k-1$. Fix $r\ge3^{k+1} M$ and $n$ satisfying
$ k \le n \le d$. Let $\alpha$, $\beta$,  and $\mathcal K$  be as above.   
 Let $\eta$ be an arbitrary $r$-avoidant path from
 $\alpha(r_1)$ to $\beta(r_2)$, where $r_1, r_2\ge r$. 
To obtain a lower bound on the length of $\eta$, we first pass to a subsequence $\mathcal H$ of $\mathcal K$ given by the following claim:
\smallskip

\noindent \emph{Claim~2.} There exists a 
    sequence of walls $\mathcal H = H_0, \dots, H_l$  
  with the following properties:
  
  \begin{enumerate}
  \item[(i)] $\mathcal H$ is a subsequence of $K_{2m+1}, K_{2m+2}, \dots, K_{\lceil{r/2}\rceil}$; 
  \item[(ii)]  For all $i$, the type of $H_i$ is $b_n^*$ or $ b_{n+2}^*$;  
  \item[(iii)]  For all $i\ge 1$, either the types of $H_{i-1}$ and $H_i$ have different subscripts 
  or there exists $K \in \mathcal K$ of type $a_n^*$ 
  which separates $H_{i-1}$ and $H_i$;
    \item[(iv)] $\alpha$ does not cross $H_i$ for any $i$; and

  \item[(v)]$ l \ge r/(4M)$.
  \end{enumerate}

\noindent\emph{Proof of Claim~2.}
We first observe that any subsegment of $\beta$ of length $2m+1$
has at least one  instance of $b_n^*$ or $b_{n+2}^*$ (for if not, then its label would be a reduced word  of length $ 2m+1$ with support in $\{a_n^*, b_{n+1}^*\}$, violating Lemma~\ref{lem:geod-label}(2)).    By similarly applying 
Lemma~\ref{lem:geod-label}(2) to the pairs $\{b_n^*, b_{n+1}^*\}$ and $\{b_{n+1}^*, b_{n+2}^*\}$, respectively, we  conclude that any subsegment of $\beta$ of length $2m+1$ contains at least one instance of $a_n^*$ or $b_{n+2}^*$, and at least one instance of $a_n^*$ or $b_{n}^*$.

The sequence $\mathcal H$ is chosen as follows.
From the previous paragraph, we know that 
 $\beta_{[2m, 4m+1]}$ is dual to some $K \in \mathcal K$ of type $b_n^*$ or $b_{n+2}^*$, and we set $H_0 = K$.  Now for $j>1$, suppose $H_{j-1}$ has been chosen to be some $K_i\in \mathcal K$, say  of type $b_n^*$. 
 (The $b_{n+2}^*$ case is analogous.)   Then $\beta_{[i-1, i+2m]}$ crosses  some $K_{i'} \in \mathcal K$ of type $b_{n+2}^*$ or $a_n^*$ (from the previous paragraph).  In the former case, we set $H_j=K_{i'}$ and note that the types of $H_{j-1}$ and $H_j$ have different subscripts.  Otherwise, if  $K_{i'}$ has type $a_n^*$, we know that  $\beta_{[i'-1, i'+2m]}$ crosses some $K_{i''}$ of type $b_n^*$ or $ b_{n+2}^*$, and we set $H_j = K_{i''}$. In this case $K_{i'}$ separates $H_{j-1}$ and $H_j$, since walls of type $a_n^*$ cannot intersect walls of type $b_n^*$ or $b_{n+2}^*$.   We continue until we have exhausted all the walls in $\mathcal K$ up until $K_{\lceil{r/2}\rceil}$.

Note that properties (i)--(iii) are obvious from this selection process, and (iv) follows easily from 
(i), (ii) and Claim~1(ii).  Moreover, the selection process guarantees that every subsegment of $\beta_{[0, \lceil{r/2}\rceil ]}$ of length $M = 4m+1$ crosses at least one wall in $\mathcal H$.  Thus
$|\mathcal H| \ge \lceil{r/2}\rceil / {M}$, and since $r \ge 3^{k+1}M\ge 4M,$
\[
l = |\mathcal H|  -1 \ge \frac{\lceil{r/2}\rceil } {M} -1 \ge \frac{r/2 }{M} -1 =\frac{r-2M}{2M}\ge \frac{r}{4M}.
\]
This proves (v) and completes the proof of Claim~2.

\smallskip

As in the base step,
Claim~2(iv) implies that $\eta$ crosses each wall in $\mathcal H$. 
Our next goal is to obtain a lower bound on the length of the part of $\eta$ between $H_{i-1}$ and $H_i$, for each $i$; see Figure~\ref{fig:lower-duplex}.  We do this by applying the inductive hypothesis to a pair of geodesics $\mu_i'$ and $\nu_i$, which we now define.

\begin{figure}
\begin{overpic}[scale=1.3,unit=1mm]%
{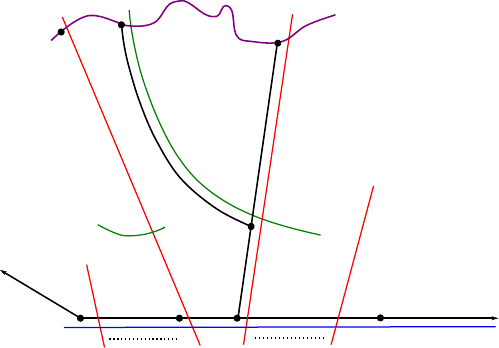}
\put(19,-2.5){\tiny\textcolor{red}{$H_0$}}
\put(38,-2){\tiny\textcolor{red}{$H_{i-1}$}}
\put(48,-2){\tiny\textcolor{red}{$H_i$}}
\put(65,-2){\tiny\textcolor{red}{$H_l$}}
\put(32,34){\small $\nu_i$}
\put(16,7){\small $e$}
\put(45,15){\small $\mu_i$}
\put(49,40){\small $\mu_i'$}
\put(95, 7){\small $\beta$}
\put(2,11){\small $\alpha$}
\put(74,8){\tiny $\beta\left(\lceil \frac{r}{2} \rceil\right)$}
\put(30,7.5){\small $g_{i-1}$}
\put(45,7.5){\small $g_{i}$}
\put(6,65){\small $h_{i-1}$}
\put(22,67){\small $q_i$}
\put(54,63){\small $h_i$}
\put(47,22){\small $p_i$}
\put(40,69){\small \textcolor{Purple}{$\eta$}}
\put(17,26){\tiny \textcolor{OliveGreen}{$L_{i-1}$}}
\put(65,21.5){\tiny \textcolor{OliveGreen}{$L_{i}$}}
\end{overpic}
\caption{\small{Constructions used to obtain a lower bound on the length of the part of $\eta$ between $H_{i-1}$ and $H_i$.}}
\label{fig:lower-duplex}
\end{figure}
 
 For $0 \le i \le l $, let $g_i$ be the first vertex of $\beta$ which is in $N_{H_i}^1$. In particular, $g_i$ is in the same component of the complement of $H_i$ as  $p=\beta(0)$. 
 Let $h_i$ be the first vertex of $\eta$ which is in $N_{H_i}^1$. 
 In particular, $h_i$ is in the same component of the complement of $H_i$ as $g_i$. Then by  
 Lemma~\ref{lem:travel}(2) 
 there is a geodesic $\mu_i$ which travels along $H_i$ and connects $g_i$ to $h_i$, with $\mu_i(0) = g_i$.

\medskip
\noindent \emph{Claim~3.} 
For each $i \ge 1$, there is a wall $L_i$ 
such that the following hold. See Figure~\ref{fig:lower-duplex}.
\begin{enumerate}
\item[(i)]
$L_i$ is dual to some edge in ${\mu_i}_{[0, M]}$.
\item[(ii)]  $L_i$ is of type $b_x^*$, with $x = (\text{the subscript of the type of } H_i) \pm 1$.
\item[(iii)]
$L_i$  separates $h_i$ and $h_{i-1}$.
\end{enumerate}

\noindent
\emph{Proof of Claim~3:} 
Let $i \ge 1$. 
Suppose $H_i$ has type $b_{n}^*$.  (By Claim~2(ii), the only other possibility is 
$b_{n+2}^*$, and the proof of that case is analogous.)  The possible types of walls which intersect $H_i$ are $b_{n-1}^*, b_{n}^*, b_{n+1}^*$ and $a_{n-1}^*$.  Applying 
Lemma~\ref{lem:geod-label}(2) 
as in the proof of Claim~2, we know that ${\mu_i}_{[0, 2m+1] }$ crosses a wall $L$ of type $b_{n-1}^*$ or $a_{n-1}^*$. In the former case, we set $L_i = L$.  Otherwise, if $L$ is of type $a_{n-1}^*$, we know that the segment of $\mu_i$ of length $2m+1$ starting at and including its edge dual to $L$ crosses a wall $L'$ of type $b_{n+1}^*$ or 
$b_{n-1}^*$ (using Lemma~\ref{lem:geod-label}(2) again), and we set $L_i=L'$. Now (i) and (ii) are clear from the construction, as $M = 4m+1$.

To prove (iii), 
we start by showing that
$\mu_{i-1}$ and $\beta$ do not cross $L$, regardless of
whether $L$ is of type $b_{n-1}^*$ or $a_{n-1}^*$.
Firstly, $\beta$ cannot cross $L$, since  $b_{n-1}^*$ and  $a_{n-1}^*$ are not in its support.

To show that $\mu_{i-1}$ does not cross $L$ we consider two cases, depending on whether $H_{i-1}$ has type $b_{n+2}^*$ or $b_n^*$. 
In the former case, we see by comparing types that $H_{i-1}$ and $H_i$ do not intersect, and that $H_{i-1}$ and $L$ do not intersect either. By Lemma~\ref{lem:travel}(1), the point $g_{i-1}$ precedes (along the geodesic $\beta$) the point of intersection of $\beta$ with the wall $H_{i-1}$, and hence $H_{i-1}$ separates $g_{i-1}$ from $H_i$. Since $H_i$ and $L$ intersect, we conclude that $H_{i-1}$ separates $g_{i-1}$ from $L$, and hence, since $\mu_{i-1}$ does not cross $H_{i-1}$ by definition, $H_{i-1}$ separates $\mu_{i-1}$ from $L$.

Now we consider the case when both $H_{i-1}$ and $H_i$ are of type $b_n^*$. 
Then by Claim~2(iii) there exists a wall $K\in\cK$ of type $a_n^*$ which separates $H_{i-1}$ and $H_{i}$. Let $\Sigma_d\setminus K = A\sqcup B$ be the disjoint union of open 
half-spaces such that $H_{i-1}\subset A$, $H_i\subset B$. Since $H_{i-1}$ and $K$ do not intersect (having nonadjacent types in $\Phi_{d+3}$), $H_{i-1}$ cuts $A$ into a disjoint 
union of two subsets $C$ and $D$ so that $\Sigma_d \setminus (H_{i-1}\cup K) = C\sqcup D\sqcup B$. If $C$ is the component which contains $p=\beta(0)$, then we claim that $\mu_{i-1}$ lies entirely in $C$ whereas $L$ lies entirely in $B$. Indeed, let $t_{g_{i-1}}$, $t_{H_{i-1}}$, $t_{K}$ and $t_{H_i}$ be the unique real numbers such that 
$\beta(t_\diamond)=\beta\cap \diamond$, for $\diamond$ being one of $g_{i-1}$, $H_{i-1}$, $K$, $H_i$. (These numbers are unique 
since a geodesic can cross a wall no more than 
once.) By the construction of the $H_i$'s, we have 
$t_{H_{i-1}}<t_K<t_{H_i}$, and we have $t_{g_{i-1}}<t_{H_{i-1}}$ by Lemma~\ref{lem:travel}(1).
Hence $C$ is the component which contains $\beta_{[0,t_{H_{i-1}})}$, and $B$ is the component that contains $\beta_{(t_K,\infty)}$. Since $g_{i-1}\in\beta_{[0,t_{H_{i-1}})}$ and $\mu_{i-1}$ does not cross $H_{i-1}$ (as it travels along $H_{i-1}$), we see that $\mu_{i-1}$ lies entirely in $C$. On the other hand, $H_i$ and $L$ do intersect, whereas $K$ and $L$ do not (again by comparing types). Hence $L$ lies in the component which contains $\beta(t_{H_i})$, i.e.\ in $B$. Since $C$ and $B$ are disjoint, we conclude that $\mu_{i-1}$ does not cross $L$.

Now observe that $L$ separates $h_i$ and $g_i$, since the geodesic $\mu_i$ between $h_i$ and $g_i$ crosses $L$ exactly once.  
Let $\xi$ be the path from $h_{i-1}$ to $g_i$ consisting of $\mu_{i-1}$ and the segment of $\beta$ from $g_{i-1}$ to $g_i$. 
Since $\mu_{i-1}$ and $\beta$ don't cross $L$, neither does $\xi$.
 Thus $L$ separates $h_i$ from $\xi$, and therefore from $h_{i-1}$.  This proves (iii) in the case that $L=L_i$. 

Otherwise $L$ is of type $a_{n-1}^*$ and $L_i$ is of type $b_{n+1}^*$ or 
$b_{n-1}^*$. By comparing types we observe that $L$ and $L_i$ cannot intersect.  By construction, $L_i$ is in the same component of the complement of $L$ as $h_i$, while, as shown in the previous paragraph, $\xi$ is in the other component. It follows that $\xi$ cannot cross $L_{i}$, so its endpoints $h_{i-1}$ and $g_i$ are in a single component of the complement of $L_{i}$.  On the other hand, since $\mu_i$ 
 crosses $L_i$ exactly once, $L_i$ separates $h_i$ and $g_i$.  Consequently $L_i$ separates $h_i$ and $h_{i-1}$. This completes the proof of Claim~3.

\medskip

By Claim~(3)(iii), $L_i$ separates the points $h_i$ and $h_{i-1}$ on $\eta$, so $\eta$ must cross $L_i$ for all $i \ge 1$.  
Let $p_i$ be the first vertex of $\mu_i$ in $N_{L_i}^1$
and let $q_i$ be the first vertex of $\eta$ which is in $N_{L_i}^1$.  Then $p_i$ and $q_i$ are in 
 the same component of the complement of $L_i$.   By Lemma~\ref{lem:travel}(2), there is a geodesic $\nu_i$ which travels along $L_i$ from $p_i$ to $q_i$.  
Let $\mu_i'$ 
denote the geodesic ray $\mu_i$ reparametrised so that  $\mu_i'(0) = p_i$ (see Figure~\ref{fig:lower-duplex}).

We now show that the inductive hypothesis can be applied to the pair $\mu_i', \nu_i$.  
By construction, $\mu_i'(0) = \nu_i(0)$, and $\mu'_i$ (respectively, $\nu_i$) travels along $H_i$ (respectively, $L_i$).  Combining  Claim~2(ii) and 
Claim~3(iii), we conclude that either $H_i$ has type $b_n^*$ and $L_i$ has type $b_{n \pm 1}^*$, or~$H_i$ has type $b_{n+2}^*$ and $L_i$ has type $b_{n+1}^*$ or $b_{n+3}^*$.   Thus, conditions (1)--(3) in the statement of Lemma~\ref{lem:divd-lower} are satisfied, with the role of $\alpha$  being played by whichever of $\mu_i'$ and $\nu_i$ travels along the wall whose type has a lower subscript.  Since the lowest of these subscripts is $n-1$, and since we had chosen $n \ge k$, we know that the subscripts of the types of $\mu_i'$ and $\nu_i'$ are both $\ge k-1$, and so  the inductive hypothesis can be applied to $\mu_i'$ and $\nu_i$.

Finally, we must verify that $\mu_i'$ and $\nu_i$ are long enough (specifically, of length at least $3^{k}M$) for the conclusion of the inductive hypothesis to hold.  
The length of $\mu_i'$ is $d(p_i, h_i)$.
Recall that $h_i$ is a point outside the ball of radius $r$ centred at $\beta(0)$.  
By Claim~2(i) and the fact that $g_i$ is on the same side of $H_i$ as $\beta(0)$ for each $i$, we have $d(\beta(0), g_i)\le \lceil r/2\rceil -1 <r/2 $, and by Claim~3(i) 
we have $d(g_i, p_i) \le M$. 
Combining these facts, we get: 
\[
r \le d(\beta(0), h_i) \le d(\beta(0), g_i) + d(g_i, p_i) +  d(p_i, h_i) \le r/2+M + d(p_i, h_i).
\]
Furthermore, since $r \ge 3^{k+1} M$, it is certainly true that
$M < r/6$.  From this, 
we get, $d(p_i, h_i) \ge r-r/2-M \ge r/2-r/6 > r/3$.  Similarly, $d(p_i, q_i) > r/3$. 
Thus, $\mu_i'$ and $\nu_i$ both have length at least $r/3 \ge 3^{k}M$ as needed, so 
the conclusion of the inductive hypothesis holds.  Specifically, we get, for all $1 \le i \le l$: \[
d^{\mathrm{av}}_{p_i, r}(q_i, h_i) \ge C_{k-1} \left(\frac r3\right)^{k-1}.
\]
Finally, since $l \ge r/(4M)$ by Claim~2(v), we have that the length of $\eta$ is at least 
\[
\sum_{i=1}^l d^{\mathrm{av}}_{p_i, r}(q_i, h_i) \ge \frac{r}{4M} C_{k-1} \left(\frac r3\right)^{k-1}
= \frac{C_{k-1}}{4\cdot 3^{k-1}M}\, r^k.
\]
This proves the inductive step, with $C_k = C_{k-1}/(4\cdot 3^{k-1}M)$.
\end{proof}

\section{Hypergraph index and the first Betti number}\label{sec:Betti}

We prove Theorem~\ref{thm:bettiIntro} and Corollary~\ref{cor:spectrumIntro} in Section~\ref{sec:bound}.  Theorem~\ref{thm:bettiIntro} provides an upper bound on hypergraph index in terms of the first Betti number of the Dynkin diagram.  We then discuss some examples and prove Corollary~\ref{cor:DynkinTreeDivIntro} in Section~\ref{sec:examplesBetti}, and state some related open questions in Section~\ref{sec:questionsBetti}.

\subsection{Proof of Theorem~\ref{thm:bettiIntro}}\label{sec:bound}

Let $(W,S)$ be a Coxeter system with Dynkin diagram $\Delta= \Delta(W,S)$ and finite hypergraph index $h = h(W,S)$.  Our goal is to show that $h \leq \betti \Delta +~1$. We first introduce some terminology and prove a few lemmas; Corollary~\ref{cor:spectrumIntro} will be a consequence of one of these results.  We then consider the case where $\Delta$ is a tree in Proposition~\ref{prop:DynkinTree}, and finally prove Theorem~\ref{thm:bettiIntro} by induction.  

In what follows, we write $\Lambda_i=\Lambda_i(S)$, and say that $T \subseteq S$ is \emph{at level $i$} if $T \in \Lambda_i$.  Note that $T \subseteq S$ may be at more than one level.
Now recall the directed forest with vertex set~$\cup_i \Lambda_i$ described above the statement of Lemma~\ref{lem:monotone}.  Let $\cC$ denote an equivalence class in $\Lambda_{h-1}$ such that $S = \cup_{T \in \cC} T$.  Then given $T \in  \Lambda_i$, with $i <h$, it is not necessarily true that $T$ equals or feeds into an element of $\cC$, even though $T \subseteq S$.  On the other hand, there is a unique element of $\Lambda_{h+1}$, namely $S$, and 
every subset of $S$ at level less than $h+1$ feeds into it. 
Thus when $h$ is finite, this directed forest is actually a tree. 

Next, consider the set \[ \{S\} \cup \cC \cup \{T \mid T \mbox{ feeds into an element of } \cC \}.\]
We call the subtree  induced by this set of vertices 
the \emph{principal subtree for $S$ associated to $\cC$}.  Note that $\cC$, and hence principal subtrees, need not be unique.  
We will often fix a principal subtree, and denote its set of level $i$ vertices by $\widetilde \Lambda_i(S)$.
Note that $T \in \widetilde \Lambda_i(S)$ can be thought of as a vertex of this tree or a subset of $S$ and, by a slight abuse of notation, we will go back and forth between these interpretations without comment.

In order to use an inductive argument to prove Theorem~\ref{thm:bettiIntro}, we will need to find a subsystem of
$(W,S)$ with hypergraph index exactly $h-1$.  This is provided by the following lemma.

\begin{lemma}\label{lem:hi=h-1}
Let $(W, S)$ be a Coxeter system with finite hypergraph index $h \geq 1$.  Then there exists 
$T \subset S$ such that $h(W_T, T) = h-1$. 
\end{lemma}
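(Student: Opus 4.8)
The plan is to exploit the principal subtree structure just introduced. Fix an equivalence class $\cC$ in $\Lambda_{h-1}(S)$ whose union is $S$, fix a principal subtree for $S$ associated to $\cC$, and let $\widetilde\Lambda_i(S)$ denote its level-$i$ vertices. Since $h\geq 1$, the level-$(h-1)$ vertices $\cC=\{U_1,\dots,U_k\}$ are all proper subsets of $S$ (none can equal $S$, since $S$ first appears at level $h$; and if some $U_j$ were all of $S$ it would already be in $\Lambda_{h-1}$, contradicting $h=h(W,S)$). The idea is to take $T$ to be the union of all the sets appearing in the principal subtree at level $\leq h-1$ — equivalently $T=\bigcup_{j}U_j=S$, which is not proper, so this naive choice fails. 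Instead I would drop one of the $U_j$'s (or rather, one ``branch'' of the tree) so as to get a proper subset of $S$ that still has hypergraph index exactly $h-1$.

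More precisely, here is the approach I expect to work. Because $S\notin\Lambda_{h-1}(S)$ but $S\in\Lambda_h(S)$, the class $\cC$ has at least two elements and $\bigcup_{j=1}^k U_j=S$ with each $U_j\subsetneq S$. Choose $s\in S$ lying in some but not all of the $U_j$ — such $s$ exists because otherwise every $U_j$ contains $s$, and running this over all $s$ would force... (careful: it could happen that the $U_j$ partition obstructions differently). The cleaner move: pick any $U_j$, say $U_1$, and any $s\in U_1\setminus\bigcup_{j\geq 2}U_j$ if such exists; set $T=S\setminus\{s\}$. Then I claim $h(W_T,T)=h-1$. For the upper bound $h(W_T,T)\le h-1$: each $U_j$ with $j\geq 2$ is contained in $T$, and one checks using Lemma~\ref{lem:monotone}(2) (monotonicity under $T\subset T'$, read with $T'=S$ and the inclusion reversed — actually I would apply the $\Lambda_i(T)\to\Lambda_i(S)$ direction contrapositively) that the subtree feeding into these $U_j$ survives inside $\Lambda_*(T)$, so that the equivalence class they form (they are still pairwise $\equiv_{h-2}$-linked through the same intersections, all of which avoid $s$ or at least remain nonspherical) has union $\bigcup_{j\geq 2}U_j$; since $\bigcup_{j\geq 2}U_j\supseteq S\setminus\{s\}=T$, we get $T\in\Lambda_{h-1}(T)$, hence $h(W_T,T)\le h-1$. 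For the lower bound $h(W_T,T)\ge h-1$: if $h(W_T,T)\le h-2$, then $T\in\Lambda_{h-2}(T)$, and by Lemma~\ref{lem:monotone}(2a) applied to $T\subset S$ there is $T'\in\Lambda_{h-2}(S)$ with $T\subseteq T'$. Since $U_1$ feeds into something at level $h-1$ and $T'\cap U_1$ contains $T\cap U_1=U_1\setminus\{s\}$, which is nonspherical provided $|U_1|\ge 3$ (or more carefully, provided $U_1\setminus\{s\}$ is nonspherical — which I can arrange by choosing $s$ suitably), we would get $T'\equiv_{h-2}(\text{a set feeding from }U_1)$, forcing their union, which contains $T\cup U_1=S$, to lie in $\Lambda_{h-1}(S)$ — contradicting $h=h(W,S)$.

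The main obstacle, and the place I expect the real work, is the choice of the vertex $s$ (or the branch to delete): I need $s\in S$ such that (i) removing $s$ still leaves $\bigcup\{U_j : s\notin U_j\}$ covering $S\setminus\{s\}$, and (ii) the set $U_1\setminus\{s\}$ (for some $U_1$ containing $s$) is still nonspherical, so that the lower-bound argument goes through. Condition (i) is automatic if $s$ is in more than one $U_j$, or in one $U_j$ together with the constraint that the other $U_j$'s cover the rest — I would argue this holds for a generic choice because if not, every $s$ is ``private'' to a unique $U_j$ and uncovered otherwise, which combined with $\bigcup U_j=S$ and the pairwise nonspherical intersections would force the $U_j$ to have been merged at an earlier level. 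Condition (ii) may require choosing $s$ not in a minimal nonspherical ``core'' of $U_1$; here I would use that each $U_j$ in $\widetilde\Lambda_{h-1}$ contains a wide subset (by the inductive fact proved inside Corollary~\ref{cor:non-slab}, since $\Omega(S)\neq\varnothing$ when $h$ is finite), and a wide subset has the form $A\times B$ with $A,B$ nonspherical or $A$ irreducible affine of rank $\ge 3$; removing a single $s$ from $U_1$ cannot destroy nonsphericality of a rank-$\ge 2$-at-each-factor wide subset unless $s$ is essential, and there is room to avoid that. I would organize the proof by first establishing these combinatorial facts about $\cC$ as a preliminary claim, then defining $s$ explicitly, then verifying the two inequalities via Lemma~\ref{lem:monotone} as sketched above.

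\begin{proof}[Proof sketch]
Since $h=h(W,S)\geq 1$, we have $S\in\Lambda_h(S)\setminus\Lambda_{h-1}(S)$ and $\Omega(S)\neq\varnothing$. Fix an $\equiv_{h-1}$-equivalence class $\cC=\{U_1,\dots,U_k\}$ in $\Lambda_{h-1}(S)$ with $\bigcup_j U_j=S$; then $k\geq 2$ and each $U_j\subsetneq S$. By the remark proved inside the proof of Corollary~\ref{cor:non-slab}, every non-slab element of $\Lambda_{h-1}(S)$ contains a wide subset, and by Lemma~\ref{lem:widenotinslab} a wide subset is nonspherical in a robust way; in particular there is $s\in S$ and an index, say $j=1$, such that $s\in U_1$, the set $U_1\setminus\{s\}$ is still nonspherical, and $S\setminus\{s\}\subseteq\bigcup_{j}(U_j\setminus\{s\})$ (this last holds because $\bigcup_j U_j=S$). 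Set $T=S\setminus\{s\}$, a proper subset of $S$.

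We show $h(W_T,T)=h-1$. First, $\Omega(T)\neq\varnothing$: indeed $U_1\setminus\{s\}$ contains a nonspherical set, hence $T$ contains a wide subset. For the upper bound, for each $j$ the set $U_j\cap T=U_j\setminus\{s\}$ lies in $T$, and since $\cC$ is an $\equiv_{h-1}$-class the sets $U_j\setminus\{s\}$ are linked by a chain of nonspherical intersections inside $T$; applying Lemma~\ref{lem:monotone}(2) to the inclusion $T\subseteq S$ (tracking the subtrees feeding into the $U_j$) shows that there is an $\equiv_{h-2}$-class in $\Lambda_{h-2}(T)$ whose union contains $\bigcup_j(U_j\setminus\{s\})=T$, hence $T\in\Lambda_{h-1}(T)$ and $h(W_T,T)\leq h-1$. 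For the lower bound, suppose $h(W_T,T)\leq h-2$, so $T\in\Lambda_{h-2}(T)$. By Lemma~\ref{lem:monotone}(2a) applied to $T\subseteq S$, there is $T'\in\Lambda_{h-2}(S)$ with $T\subseteq T'$. On the other hand, by Lemma~\ref{lem:monotone}(1) the wide or slab subset of $S$ containing a minimal nonspherical subset of $U_1$ feeds into some $X\in\Lambda_{h-2}(S)$, and in fact (chasing the principal subtree) we may take $X$ feeding into $U_1$; then $T'\cap X\supseteq T\cap U_1=U_1\setminus\{s\}$ is nonspherical, so $T'\equiv_{h-2}X$ in $\Lambda_{h-2}(S)$, and the union of their $\equiv_{h-2}$-class contains $T\cup U_1=S$, forcing $S\in\Lambda_{h-1}(S)$, a contradiction. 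Hence $h(W_T,T)=h-1$.
\end{proof}
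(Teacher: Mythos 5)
Your proposal takes a genuinely different route from the paper, and in its present form it has a gap that I believe is fatal to the overall strategy, not just a missing detail.

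\textbf{Comparison with the paper's proof.} The paper takes $T$ to be \emph{one of the sets $U_j$} in the $\equiv_{h-1}$-class $\mathcal C$ whose union is $S$ --- specifically, a $U_j$ that is not contained in any set at level $< h-1$ (the existence of such a $U_j$ being a separate claim). The upper bound $h(W_T,T)\le h-1$ is then almost automatic: the whole principal subtree feeding into $T$ lives inside $T$, and a straightforward induction (using only that elements of $\widetilde\Lambda_0(S)$ feeding into $T$ are wide/slab subsets of $T$, and that $\equiv_i$-chains survive) gives $T\in\Lambda_{h-1}(T)$. The lower bound is a one-line application of Lemma~\ref{lem:monotone}(2a). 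You instead take $T=S\setminus\{s\}$ for a carefully chosen $s$. That $T$ is strictly larger than every $U_j$, and that size is precisely what makes the upper bound hard.

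\textbf{The gap in your upper bound.} You claim there is an $\equiv_{h-2}$-class in $\Lambda_{h-2}(T)$ whose union is $T$. Nothing you write justifies dropping from $\Lambda_{h-1}$ to $\Lambda_{h-2}$. The sets $U_j\setminus\{s\}$ are naturally ``at level $h-1$'' in $T$, not level $h-2$: pushing each $U_j\cap T$ (or the subtree under it) into $\Lambda_*(T)$ by the argument you sketch produces elements of $\Lambda_{h-1}(T)$, not $\Lambda_{h-2}(T)$, and the $\equiv_{h-1}$-chain among the $U_j$'s then gives $T\in\Lambda_h(T)$, i.e.\ $h(W_T,T)\le h$ --- which is what one gets for free, not what is needed. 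In fact there is a cheap necessary condition that shows the obstruction is real: if $T=S\setminus\{s\}$ and $T\in\Lambda_{h-1}(T)$, then Lemma~\ref{lem:monotone}(2a) (applied to the inclusion $T\subset S$) produces $T'\in\Lambda_{h-1}(S)$ with $T\subseteq T'$; since $T$ misses only one vertex, $T'\in\{T,S\}$, and $T'=S$ is ruled out by $h(W,S)=h$. Hence $S\setminus\{s\}$ would itself have to be an element of $\Lambda_{h-1}(S)$. This is a strong and uncontrolled condition on $s$, and your construction of $s$ does not arrange it. In examples where the level-$(h-1)$ sets are all strictly smaller than any $S\setminus\{s\}$, no choice of $s$ can work, and the vertex-deletion strategy breaks entirely.

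\textbf{Secondary issues.} The existence of a ``private'' vertex $s\in U_1\setminus\bigcup_{j\ge 2}U_j$ is asserted but not proved, and it can fail (think of three proper sets cyclically covering $S$ with every vertex in exactly two of them). You also would need to arrange $U_1\setminus\{s\}$ nonspherical, which is plausible but needs a real argument. Finally, in the lower-bound step the displayed inclusion $T'\cap X\supseteq T\cap U_1$ is reversed: since $X$ feeds into $U_1$, we have $X\subseteq U_1$, so the correct inclusion is $T'\cap X\subseteq T\cap U_1$. The lower bound can still be salvaged (take $X$ to be a nonspherical set in the subtree under $U_1$ containing $s$, so that $T'\cap X=X\setminus\{s\}$ and one then arranges nonsphericality), but the upper bound cannot as written.

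In short: switch to the paper's device of choosing $T$ to be one of the $U_j$ not contained in any lower-level set; that removes all three difficulties at once.
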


\begin{proof}
Fix a principal subtree for $S$, associated to an equivalence class $\cC$, and denote its level~$i$ vertices by $\widetilde \Lambda_i(S)$.  Let 
$T_1, \dots, T_k \subset S$ be the elements of  $\widetilde \Lambda_{h-1}(S)$; these are precisely the elements of the equivalence class $\cC$ of $\Lambda_{h-1}(S)$, and these all  feed into the unique element~$S$ of $\widetilde \Lambda_h(S)$.

We first show that at least one of $T_1, \dots, T_k$  is not contained in any set at a level lower than $h-1$.
More precisely, we
claim that there exists $j$ such that if $T_j \subseteq U \in \Lambda_i(S)$ for some $i$, then $i\ge h-1$.  
(Observe that in this statement, $U$ need not be in our fixed principal subtree.)
To prove the claim, suppose instead that this fails for each of $T_1, \dots, T_k$.
 It follows that there is an index $l < h-1$, and (not necessarily distinct) elements
$U_1, \dots, U_k \in \Lambda_l(S)$ such that $T_i \subseteq U_i$.  Since $T_1, \dots, T_k$ 
form a complete $\equiv_{h-1}$-equivalence class in $\Lambda_{h-1}$, 
it follows that $U_1, \dots, U_k$ are all $\equiv_l$-equivalent.  Hence there is an element of $\Lambda_{l+1}(S)$ that contains $\cup_j U_j$, which, by construction, is all of $S$.  Since $l+1<h$, this contradicts the fact that the hypergraph index of $(W, S)$ is $h$.  
This proves the claim.  

Now fix $T=T_j$ which satisfies the claim.  We wish to show that $h(W_T, T) \le h-1$.
It is immediate from Definitions~\ref{defn:wide} and~\ref{defn:slab} that any element of $\widetilde \Lambda_0(S)$ which feeds into $T\in \widetilde \Lambda_{h-1}(S)$ is contained in an element of $\Lambda_0(T)$.  For $0<i\le h-1$, each element $U\in \widetilde\Lambda_i(S)$ which feeds into $T$ is a union of elements of some equivalence class in $\widetilde\Lambda_i(S)$, consisting of elements which also feed into $T$. Hence it follows by induction that every such element is contained in some element of $\Lambda_i (T)$.  In particular, $T$ belongs to $\Lambda_{h-1}(T)$.  This implies that 
$h(W_T, T) \le h-1$.

Now suppose $h(W_T, T) < h-1$.
 Then there exists some  $n< h-1$ such that $\Lambda_n(T)$ contains~$T$.  By Lemma~\ref{lem:monotone}(2a), it follows that there is an element of $\Lambda_n(S)$ containing $T$.  This contradicts the fact that $T=T_j \in \Lambda_{h-1}$ satisfies the claim above.  Thus $h(W_{T}, T)=h-1$.
\end{proof}

Corollary~\ref{cor:spectrumIntro} of the introduction follows from Lemma~\ref{lem:hi=h-1}.

The next lemma shows that in most cases, attaching finite trees to a Dynkin diagram along finitely many vertices does not increase the hypergraph index.

\begin{lemma}\label{lem:add_trees}
Let $\Delta$ be a Dynkin diagram with associated Coxeter system $(W, S)$ such that $h(W, S)< \infty$.
Suppose there are  subsets $T, T_1, \dots, T_k \subset S$, which induce diagrams $\D_T, \D_{T_1}, \dots, 
\D_{T_k}$, 
such that the following hold:
\begin{enumerate}
\item The sets $T, T_1, \dots, T_k$ are pairwise disjoint and their union is $S$. 
\item 
Each $\D_{T_j}$ is a tree.
\item The diagram $\D$ is obtained by taking the disjoint union of $\D_T, \D_{T_1}, \dots, 
\D_{T_k}$ and adding, for each $j$, a single edge connecting a vertex of $\D_{T_j}$ to a vertex $v_j \in \D_T$.
(We do not require that $v_j \neq v_{j'}$ for $j \neq j'$.)
\end{enumerate}
If $h(W_T, T) \neq 0, \infty$, then $h(W, S) \le h(W_T, T)$. 
\end{lemma}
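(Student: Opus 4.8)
Write $h'=h(W_T,T)$, so $1\le h'<\infty$. Since $h'<\infty$ we have $\Omega(T)\neq\varnothing$, whence $W_T$ is infinite, $T$ is nonspherical, and — as any wide subset of $T$ sits inside a wide subset of $S$ — also $\Omega(S)\neq\varnothing$. Because $h(W,S)<\infty$ by hypothesis, it is enough to prove $S\in\Lambda_{h'}(S)$. The plan is to extract this from the feeding-forest structure over $S$ together with a combinatorial description of its wide and slab subsets. Conditions (1)--(3) will be used in the form: the only edge of $\Delta$ having exactly one endpoint in $T_j$ is the edge at $v_j$; hence vertices of distinct $T_j,T_{j'}$ commute, and every vertex of $T_j$ other than $w_j$ commutes with all of $T$. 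A first consequence, which I would record at once: \emph{no wide or slab subset of $S$ is contained in a single $T_j$} — such a $P$ is disjoint from $v_j$, so any vertex of the nonempty set $T\setminus\{v_j\}$ commutes with $P$, and adjoining it to the appropriate factor would contradict the maximality in Definition~\ref{defn:wide} or~\ref{defn:slab}.

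\textbf{Reduction via the feeding forest.}
Applying Lemma~\ref{lem:monotone}(2a) to $T\subseteq S$ with $T\in\Lambda_{h'}(T)$ gives $Y\in\Lambda_{h'}(S)$ with $T\subseteq Y$; fixing a principal subtree for $T$, Lemma~\ref{lem:monotone}(2b) moreover yields, for each of its level-$0$ elements $R$, a set $R'\in\Lambda_0(S)$ with $R\subseteq R'$ and $R'$ feeding into $Y$. Let $\mathcal F$ be the collection of all wide and slab subsets of $S$ feeding into $Y$. Since feeding-into implies containment and since every vertex of a set in $\Lambda_{h'}(S)$ lies in a level-$0$ set feeding into it, $\bigcup\mathcal F=Y\supseteq T$; and $\mathcal F$ is closed under $\equiv_0$, because $Q\equiv_0 Q'$ forces $Q,Q'$ to feed into the same element of $\Lambda_1(S)$, which feeds into $Y$. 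Hence it suffices to establish
\[
(\star)\qquad\text{every wide or slab subset of }S\text{ is joined to an element of }\mathcal F\text{ by an }\equiv_0\text{-chain of wide and slab subsets of }S;
\]
indeed, granting $(\star)$, any vertex $z\in S$ — which lies in some wide or slab subset $P$ of $S$ as $h(W,S)<\infty$ — forces $P\in\mathcal F$, hence $z\in P\subseteq Y$, so $Y=S$ and $S\in\Lambda_{h'}(S)$.

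\textbf{The crux.}
Proving $(\star)$ is the bulk of the work and the step I expect to be the real obstacle. I would begin by classifying the minimal nonspherical subsets of $S$: by Remark~\ref{rem:minimal} each is irreducible affine or Lann\'er hyperbolic, so has one of the small, explicitly listed diagrams of Figures~\ref{fig:affine} and~\ref{fig:lanner} (all of rank $\le 5$); combined with the single-edge attachment, every minimal nonspherical $A\subseteq S$ either lies in $T$, or lies in a single $T_j$, or is a path straddling the edge $[w_j,v_j]$ (so $w_j,v_j\in A$, and $A\cap T_j$, $A\cap T$ are sub-paths through the respective endpoints). Given a wide or slab subset $P=A\times B$ of $S$ meeting $S\setminus T$, one picks a minimal nonspherical $A_0\subseteq A$ and argues by cases on its location. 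When $A_0\subseteq T$ or $A_0$ straddles, $P\cap T$ contains a nonspherical subset of $T$; one then takes a wide or slab subset of $T$ containing a minimal nonspherical subset of $P\cap T$, enlarges it to a wide or slab subset of $S$ by Lemma~\ref{lem:monotone}(2a), and checks that this enlargement lies in $\mathcal F$ and has nonspherical intersection with $P$, giving an $\equiv_0$-link; here one uses that the enlargements $R'$ cover $T$ and that $h(W_T,T)\neq 0$ (so $T\notin\Omega(S)$, by Lemma~\ref{lem:reducible}) to control which level-$0$ sets of $S$ feed into $Y$. When $A_0$ lies in some $T_j$, the first observation (that $P\not\subseteq T_j$) together with the tree structure of $\Delta_{T_j}$ produces within $P$ a further minimal nonspherical subset which meets $T$ or straddles, reducing to the previous case. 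One may reduce to $k=1$ at the outset, and the affine/Lann\'er classification keeps each case finite; but the detailed verification — in particular that an enlarged $T$-piece having nonspherical intersection with $P$ really feeds into $Y$ rather than merely intersecting $T$ — is exactly where the hypotheses $h(W_T,T)\neq 0,\infty$ and the precise single-edge attachment are needed, and is the delicate part.
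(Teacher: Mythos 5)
Your reduction is logically sound as far as it goes: if $(\star)$ holds then $\mathcal F = \Lambda_0(S)$ (since $\mathcal F$ is closed under $\equiv_0$), hence $Y = \bigcup\mathcal F = \bigcup\Lambda_0(S) = S$, giving $S\in\Lambda_{h'}(S)$. However, the proof is genuinely incomplete: you identify $(\star)$ as ``the bulk of the work and the real obstacle,'' give only a sketch, and explicitly defer the decisive verification (``that an enlarged $T$-piece\dots\ really feeds into $Y$ rather than merely intersecting $T$'') as ``the delicate part.'' That deferred step is exactly where the argument has to be completed, and it is not.

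There is also a more structural problem with the route you chose. Your $(\star)$ asserts that \emph{every} wide or slab subset of $S$ feeds into the single element $Y\in\Lambda_{h'}(S)$, equivalently that $\mathcal F = \Lambda_0(S)$. This is substantially stronger than what the lemma requires and, as far as I can tell, is not a consequence of the hypotheses: once $S\in\Lambda_{h'}(S)$ there can still be other elements $Z\neq S$ of $\Lambda_{h'}(S)$ (arising from distinct $\equiv_{h'-1}$-classes), and the level-$0$ sets feeding into such a $Z$ do not feed into $Y$ at level $h'$, so $\mathcal F\subsetneq\Lambda_0(S)$ and $(\star)$ fails even though the conclusion $h(W,S)\le h'$ holds. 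By contrast, the paper needs only a much weaker covering statement: it suffices to find, for each $j$, \emph{one} element $U_j\in\Lambda_0(S)$ containing $T_j$ that feeds into $Y$. The mechanism is the one your setup almost reaches but never exploits. Since $h(W_T,T)\neq 0$, Lemma~\ref{lem:intersection} forces some $A\times B\in\widetilde\Lambda_0(T)$ to have \emph{both} $A$ and $B$ nonspherical (otherwise the principal subtree stabilizes at level~$0$). For each $j$, the single vertex $v_j$ can lie in at most one of the disjoint sets $A,B$; say $v_j\notin A$. Then every vertex of $A$ commutes with $T_j$ (by condition (3)), so $A\times(B\cup T_j)$ has both factors nonspherical and is contained in some $U_j\in\Omega(S)\subseteq\Lambda_0(S)$. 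Since $U_j\supseteq A\times B\in\widetilde\Lambda_0(T)$ and $A\times B$ feeds into $T\in\Lambda_{h'}(T)$, Lemma~\ref{lem:monotone}(2b) guarantees that $U_j$ feeds into $Y$. Hence $Y\supseteq T\cup\bigcup_j U_j\supseteq T\cup\bigcup_j T_j=S$. This is the key construction your sketch is missing: it converts the hypothesis $h(W_T,T)\neq 0$ into a concrete wide subset $A\times B$ whose factors can be alternately extended over each $T_j$, bypassing the chaining argument and the classification of minimal nonspherical subsets entirely.
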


\begin{proof}
Let $h_T=h(W_T, T) $. 
Fix a principal  subtree associated to an equivalence class in $\Lambda_{h_T - 1}(T)$ whose union is $T$, and for $0\le i \le h_T$, let $\widetilde \Lambda_i(T)$  denote the 
set of level $i$ vertices of this subtree. 
Now if every element of $\widetilde \Lambda_{0}(T)$ is of the form 
$A \times K$, where $A$ is minimal nonspherical and $K$ is spherical, then  Lemma~\ref{lem:intersection} implies that $\widetilde \Lambda_0(T) =
\widetilde \Lambda_1(T)=\cdots =\widetilde \Lambda_{h_T}(T)$.
This means that $\widetilde\Lambda_0(T)$ contains $T$, hence $h_T =0$, which contradicts the hypothesis of the lemma.  Thus $\widetilde \Lambda_0(T)$
has at least one set of the form $A \times B$, where $A$ and $B$ are both nonspherical.  

Now consider $T_j$, for any $1 \le j \le k$.  By hypothesis, $v_j$ is the only vertex in $\D$ which is adjacent to $\D_{T_j}$.  
At least one of the nonspherical sets $A$ and $B$ from the previous paragraph,  say $A$, does not contain $v_j$. 
Thus there is a set $A \times (B \cup T_j) \subset S$, with both factors nonspherical.  So there exists $U_j \in  \Lambda_0 (S)$ with 
$A \times B \subset A \times (B \cup T_j) \subseteq U_j$.

Recall that there is a unique element in $\widetilde\Lambda_{h_T}(T)$, namely $T$.  
Let $T' \in \Lambda_{h_T}(S)$ denote a corresponding element provided by Lemma~\ref{lem:monotone}(2).   Thus $T \subseteq T'$ and given any element $X \in \widetilde \Lambda_i(T)$, and any $X'\in \Lambda_i(S)$ containing $X$, it follows that $X'$ feeds into $T'$.  
In particular, $U_j$ feeds into $T'$ for each $j$, since $U_j$ is an element of $\Lambda_0(S)$ containing $A \times B \in \widetilde\Lambda_0(T)$.
Consequently $T'$ 
contains $T\cup \bigl(\bigcup_{1 \le j \le k} U_j\bigr) =S$.  It follows that $h(W, S) \le h_T$. 
\end{proof}

We now consider the situation where $\Delta$ is a tree.  The next result  will be used to establish the base case in our inductive proof of Theorem~\ref{thm:bettiIntro} below.

\begin{prop}\label{prop:DynkinTree}  Suppose the Dynkin diagram $\Delta$ of $(W,S)$ is a tree.  Then $(W,S)$ has hypergraph index $0$, $1$ or $\infty$ only.  
\end{prop}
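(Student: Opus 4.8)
The plan is to reduce everything to \emph{sub}trees and run an induction on $|S|$, using Lemmas~\ref{lem:hi=h-1}, \ref{lem:add_trees} and~\ref{lem:reducible} as the main engine. First I would dispose of the easy case: if $h=h(W,S)=\infty$ there is nothing to prove, so from now on assume $h$ is finite; then $\Omega(S)\neq\varnothing$ by Definition~\ref{defn:hypergraphIndex}, and since $\Delta$ is connected $(W,S)$ is irreducible. (One can note, using Corollary~\ref{cor:linearIntro}, that for a tree $\Delta$ the value $h=0$ occurs exactly when $\Delta$ is irreducible affine of rank $\geq 3$; this is consistent with the conclusion but not needed.) I claim $h\leq 1$, proved by strong induction on $|S|$.

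Suppose for contradiction $h\geq 2$. By Lemma~\ref{lem:hi=h-1} there is a proper subset $T\subsetneq S$ with $h(W_T,T)=h-1\geq 1$. The induced diagram $\Delta_T$ is a forest, and writing $W_T$ as the product of the special subgroups of the components of $\Delta_T$: at most one factor is infinite by Lemma~\ref{lem:reducible}(1) (else $h(W_T,T)=0$) and at least one is, by Lemma~\ref{lem:finite} (else $h(W_T,T)=\infty$); so Lemma~\ref{lem:reducible}(2) gives a single connected component $C\subseteq T$ with $\Delta_C$ a tree, $|C|<|S|$, and $h(W_C,C)=h-1$. The key structural point is then that, because $\Delta$ is a tree, $\Delta$ is obtained from the subtree $\Delta_C$ by attaching trees along single edges, exactly as in hypotheses~(1)--(3) of Lemma~\ref{lem:add_trees}: each component of $\Delta\setminus C$ is a tree, and it cannot be joined to $C$ by two distinct edges, since such a pair of edges, together with a path inside $C$ and a path inside that component, would close up a cycle. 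So taking $T,T_1,\dots,T_k$ in Lemma~\ref{lem:add_trees} to be $C$ and the vertex sets of the components of $\Delta\setminus C$, I can apply that lemma.

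To finish, I would split into two cases. If $h-1=1$ then $h(W_C,C)=1\notin\{0,\infty\}$, so Lemma~\ref{lem:add_trees} yields $h(W,S)\leq 1$, contradicting $h\geq 2$. If instead $h-1\geq 2$ then $(W_C,C)$ has strictly smaller rank, a tree Dynkin diagram, and finite hypergraph index $\geq 2$, contradicting the induction hypothesis. In both cases we get a contradiction, so $h\leq 1$.

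The parts I expect to require the most care are the verification that Lemma~\ref{lem:add_trees} genuinely applies here --- that every connected induced subtree of a tree realises the ``attach trees along single edges'' picture, which rests on the no-cycles argument above --- and the elementary but slightly fiddly step of extracting the connected component $C$ of the right hypergraph index from the set $T$ produced by Lemma~\ref{lem:hi=h-1}, via Lemma~\ref{lem:reducible}. The rest is formal.
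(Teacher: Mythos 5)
Your argument is correct and non-circular, but it takes a genuinely different route from the paper's. The paper proves Proposition~\ref{prop:DynkinTree} by a direct analysis of $\Lambda_0(S)$: assuming $S \notin \Lambda_0$, it shows that every wide subset of the form $A \times B$ with both $A,B$ nonspherical must equal $S\setminus\{s\}$ for some $s$, uses the tree structure of $\Delta$ to show that any two such sets have nonspherical intersection and cover $S$ (giving $h\le 1$), and handles the remaining configurations via Lemma~\ref{lem:intersection}. You instead specialise the inductive machinery built for Theorem~\ref{thm:bettiIntro} --- Lemmas~\ref{lem:hi=h-1} and~\ref{lem:add_trees}, together with Lemma~\ref{lem:reducible} to extract an irreducible subsystem --- to the $\betti{\Delta}=0$ case; your observation that a connected induced subtree of a tree automatically fits the attaching-trees-along-single-edges picture in Lemma~\ref{lem:add_trees} is the heart of it. Both proofs are valid, and neither Lemma~\ref{lem:hi=h-1} nor Lemma~\ref{lem:add_trees} depends on the proposition, so there is no circularity. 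The paper's approach is self-contained and illuminates the mechanics of the $\Lambda_i$-construction for trees; yours shows the proposition could have been folded into the induction of Theorem~\ref{thm:bettiIntro} instead of serving as a separate base case. One minor simplification: the strong induction on $|S|$ and the final case split are unnecessary --- whenever $2\le h<\infty$ your argument produces $C\subsetneq S$ with $\Delta_C$ a tree and $h(W_C,C)=h-1\notin\{0,\infty\}$, whereupon Lemma~\ref{lem:add_trees} directly yields $h(W,S)\le h-1 < h$, a contradiction.
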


\begin{proof}
Let $h$ denote the hypergraph index of $(W,S)$.  We may assume $\Lambda_0$ is nonempty and does not contain $S$, for otherwise $h=\infty$ or $h=0$ by Definition~\ref{defn:hypergraphIndex}.   Recall that every element of $\Lambda_0$ is of one of the following types:

\begin{enumerate}
\item $A \times B \in \Omega(S)$, where $A$ and $B$ are both nonspherical.
\item $A \times K \in \Omega(S) \cup \Psi(S)$, with $A$ minimal nonspherical and $K$ spherical.  
\end{enumerate}
If every element of $\Lambda_0$ is of type (2), then Lemma~\ref{lem:intersection} implies that each 
$\equiv_0$-equivalence class contains a single element. Hence by induction $\Lambda_h = \Lambda_0$ for all $h \geq 0$, and $h=\infty$, since $S \notin \Lambda_0$ by assumption. 
Thus we may assume that there is at least one element in $\Omega(S)$ of type (1). We now describe the sets of this type. 

Given $s\in S$, define $T_s
= S \setminus \{ s \}$
and let $\Delta_s$ be the subgraph of $\Delta$ induced by $T_s$. Note that the special subgroups generated by the vertex sets of the components of $\Delta_s$ pairwise commute.  Thus if $\Delta_s$ has at least two nonspherical components, then $T_s$ is a set  of type~(1), as the maximality condition in the definition of $\Omega(S)$  is clearly satisfied.   Conversely, if $A \times B\in \Omega(S)$ is a set of type~(1), then there exists $s \in S \setminus A \times B$ (as $S \notin \Lambda_0$).  
Then by maximality of elements of $\Omega(S)$, we have $A \times B = T_s$
 and it follows that $\Delta_s$ has at least two nonspherical components. 

Suppose there are at least two sets of type (1) in $\Lambda_0$.  Then 
 there are distinct $s, t\in S$ such that both $\Delta_s$ and $\Delta_t$ have at least two nonspherical components.  Write $\Delta_s(t)$ for the component of $\Delta_s$ which contains $t$.  Then as $\Delta$ is a tree, all components but one of $\Delta_t$ are contained in $\Delta_s(t)$, hence $T_s \cap T_t$ is nonspherical.  Since $T_s \cup T_t = S$, it follows that  $h=1$.
 
 Finally, assume that there is a unique set $T_s$ of type (1) in $\Lambda_0$. 
Then every $T \neq T_s$ in $\Lambda_0$ is of type (2), and 
contains $s$ (for otherwise $T\subset T_s$, contradicting either maximality in Definition~\ref{defn:wide} or condition (3) in Definition~\ref{defn:slab}).   If there exists such a $T$ with $T_s\cap T$ nonspherical, then $S = T_s \cup T$, and so $h= 1$.  Otherwise, if $T_s\cap T$ is spherical for every $T \neq T_s$, then 
$T_s$ is the unique element of its $\equiv_0$-equivalence class, as is each $T\neq T_s$, by Lemma~\ref{lem:intersection}.  Hence by induction $\Lambda_h = \Lambda_0$ for all $h \geq 0$, and $h=\infty$ in this last case. 
 \end{proof}

We are now ready to prove Theorem~\ref{thm:bettiIntro}.

\begin{proof}[Proof of Theorem~\ref{thm:bettiIntro}]
The proof is by induction on the first Betti number. 
The base case, where $\betti{\D}=0$ and so $\Delta$  is a forest, follows easily by Proposition~\ref{prop:DynkinTree} and Lemmas~\ref{lem:finite} and \ref{lem:reducible1} and Proposition~\ref{prop:reducible2}.  For the inductive step, we assume the statement is true for all Dynkin diagrams with first Betti number less than $b$, and let $\D$ be a diagram with $\betti{\Delta} = b\ge 1$, such that $\D$ defines a Coxeter system $(W, S)$.  Let $h = h(W, S)$.  Observe that if $h \le 2$, then the statement $h\le b+1$ is automatically true.  So we assume from now on that $h >2$. 
By hypothesis, $h<\infty$. 

\emph{Case 1. $\D$ is connected.}
By Lemma~\ref{lem:hi=h-1}, there exists a set $T \subset S$ such that $h(W_T, T) = h-1$.  Let $\Delta_T \subset \Delta$ be the Dynkin diagram of $(W_T, T)$.  
If $\betti{\D_T} < \betti \D$, then we can apply the induction hypothesis to conclude that 
$h-1 = h(W_T, T) \le \betti{\D_T} +1 \le \betti{\D}$, so that $h \le b+1$ as required.  
Thus, we may assume that $\betti{\D_T} = \betti{\D}=b$.  

First suppose $\D_T$ is not connected, so that $(W_T, T)$ is reducible.  Since we are assuming $h > 2$, which implies $h(W_T, T)>1$, Lemma~\ref{lem:finite} implies that we can write 
$(W_T, T ) = (W_{T_1}, T_1 ) \times (W_{T_2}, T_2)$, where  $(W_{T_1}, T_1 )$ is nonspherical and irreducible, and $(W_{T_2}, T_2)$ is spherical.  Moreover, $h(W_T, T) = h(W_{T_1}, T_1) =h-1$. 
Let $\Delta_{T_1}$ be the subdiagram of $\D$ induced by $T_1$.  If $\betti{\D_{T_1}} <b$, then we may apply the induction hypothesis to conclude that $h(W_{T_1}, T_1) \le \betti{\D_{T_1}}+1$, which 
implies $h \le b+1$.  Thus we may assume that $\betti{\D_{T_1}} =b$. 

Now define $U\subset S$ as follows.  Put 
$U=T$ if $\D_T$ is connected, and put $U = T_1$  (from the previous paragraph) if $\D_T$ is not connected.  Let $\D_U$ be the diagram induced by $U$.  Then 
by our assumptions, $\betti{\D_U} =\betti{\D}$, $h(W_U, U) = h-1$ and $\D_U$ is connected.  
Consider the graph obtained from $\D$ by deleting the interiors of all edges of $\D$ which have exactly one vertex in $\D_U$, and let $\D_U, \D_1, \D_2, \dots, \D_k$ be its components. 
Now $\betti{\D_U} + \Sigma_i \betti{\D_i} \le \betti{\D}$, since we have deleted edges.  
Since $\betti{\D_U} =\betti{\D}$, it follows that $\betti {\D_j}=0$ for all $1 \leq j \leq k$, and therefore, $\D_j$ is a tree.   Furthermore, there is exactly one edge in $\D$ which is incident to both $\D_U$ and $\D_j$.  If there were two such edges, then there would be a cycle in $\D$ which is not entirely contained  in $\D_U$.  It follows easily from the definition of the first Betti number that in this case  $\betti{\D_U} < \betti{\D}$, which contradicts our assumption.  We have established that $\D$ has the structure described in the statement of Lemma~\ref{lem:add_trees}, given by $\D_U, \D_1, \D_2, \dots, \D_k$.  By the conclusion of that lemma, $h=h(W, S) \le h(W_U, U) = h-1$, a contradiction.

\emph{Case 2. $\D$ is not connected.}  In this case $(W,S)$ is reducible, and since it is nonspherical, it has at least one nonspherical  factor.  As we have assumed $h>2$, Lemma~\ref{lem:finite} implies that
$(W, S) = (W_1, S_1)\times (W_2, S_2)$, with the first factor 
nonspherical and irreducible and the second factor spherical, and furthermore, 
$h=h(W_1, S_1)$.
Now the Dynkin diagram $\D_1$ of $(W_1, S_1)$ is connected, and $\betti{\D_1} \le  \betti{\Delta}=b$.  
 Since we have already established the conclusion of the lemma for connected Dynkin diagrams   with first Betti number at most $b$, we conclude  that 
  $h(W_1, S_1) \le \betti{\D_1} +1$.
 Hence $h \le b+1$ as required. 
 
 This completes the proof of the inductive step. 
\end{proof}

\subsection{Examples}\label{sec:examplesBetti}

Let $(W,S)$ be a Coxeter system with Dynkin diagram $\Delta$ and hypergraph index~$h$.
In this section we consider some examples where $\Delta$ has small first Betti number, with a view to determining which values of $h$ are actually realised.  We also prove Corollary~\ref{cor:DynkinTreeDivIntro}.

\subsubsection{Dynkin diagram a tree}\label{sec:DynkinTree}

In this case, $\betti \Delta = 0$, and by Proposition~\ref{prop:DynkinTree}, we have $h \in \{0,1,\infty\}$.  We provide examples to show that these values of $h$ are all  realised, and hence prove Corollary~\ref{cor:DynkinTreeDivIntro}.

We recall an infinite family of examples from the introduction to~\cite{caprace}. Let $\Delta_n$ be the labelled graph with vertex set $S_n = \{s_1,\dots,s_n\}$ such that $m_{ij} = 4$ if $|i-j| = 1$ and $m_{ij} = 2$ if $|i-j| \geq 2$.  Thus $\Delta_n$ is the path on $n$ vertices (hence a tree), with all edge labels~$4$.  See Figure~\ref{fig:8path} for the case $n = 8$.

\bigskip

\begin{figure}[!ht]
\centering
\begin{tikzpicture}[double distance=3pt, thick,scale=0.8]
\begin{scope}[xshift=-1.5cm]
\foreach \x in {1,2,...,8}
  \fill (-6.75+1.5*\x,0) circle[radius=2.5pt] node[shift={(0,-0.4)}] {{$s_\x$}};
 \draw (-5.25,0)--(5.25,0) ; 
 \foreach \x in {1,2,...,7}
 \draw (-6+1.5*\x,0.3) node {$4$};

\end{scope}

\end{tikzpicture}
    \caption{\small{Dynkin diagram $\Delta_8$.}}
    \label{fig:8path}
\end{figure}
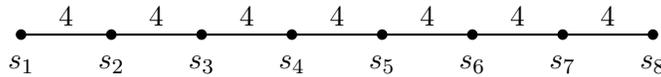
Write $W_n$ for the Coxeter group with Dynkin diagram $\Delta_n$.  First observe that the Coxeter system $(W_3,S_3)$ is irreducible affine of type $\widetilde C_2$,  so $(W_3,S_3)$ has hypergraph index $0$.  Then, as noticed in~\cite{caprace}, by Theorem~A of~\cite{caprace} (corrected to Theorem~A$'$ of~\cite{caprace-erratum}), for $4 \leq n \leq 7$ the group $W_n$ is relatively hyperbolic, but for $n \geq 8$ the group $W_n$ is not relatively hyperbolic with respect to any family of special subgroups.  Thus for $4 \leq n \leq 7$ the Coxeter system $(W_n,S_n)$ has hypergraph index $\infty$, by Theorem~\ref{thm:hyp_thick_div_Intro}(4) (or by direct computation).  Finally, for $n \geq 8$, it is not difficult to check that $(W_n,S_n)$ has hypergraph index $1$.  Thus this family of examples exhibits hypergraph indexes $0$, $1$ and $\infty$.  We note that by Theorem~8.7.2 of~\cite{davis-book}, each of the groups $W_n$ is $1$-ended.

Corollary~\ref{cor:DynkinTreeDivIntro} of the introduction can now be obtained by combining  Proposition~\ref{prop:DynkinTree} with the above family of examples and Parts~(1), (2) and (4) of Theorem~\ref{thm:hyp_thick_div_Intro}, for hypergraph indexes $0$, $1$ and $\infty$ respectively.

\subsubsection{Dynkin diagram a cycle}\label{sec:DynkinCycle}

In this case, $\betti \Delta = 1$, and so by Theorem~\ref{thm:bettiIntro}, we have $h \in \{0,1,2,\infty\}$.  We now show that each of these possibilities is realised.  

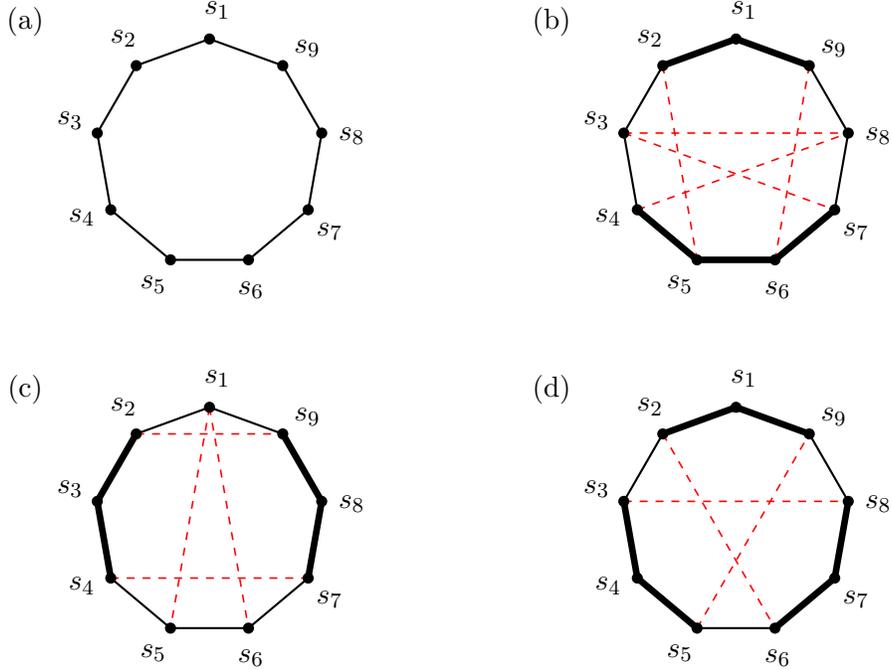
\begin{figure}
\centering
\begin{tikzpicture}[double distance=3pt, thick, scale=0.7]
\begin{scope}[xshift=10cm]
\node[draw=none,minimum size=3cm,regular polygon,regular polygon sides=9] (a) {};

\draw[line width=2.5pt] (a.corner 1)--(a.corner 2); 
\draw (a.corner 2)--(a.corner 3);
\draw (a.corner 3)--(a.corner 4);
\draw[line width=2.5pt] (a.corner 4)--(a.corner 5);
\draw[line width=2.5pt] (a.corner 5)--(a.corner 6);
\draw[line width=2.5pt] (a.corner 6)--(a.corner 7);
\draw (a.corner 7)--(a.corner 8);
\draw (a.corner 8)--(a.corner 9);
\draw[line width=2.5pt] (a.corner 9)--(a.corner 1);

\begin{scope}[red,semithick,dashed]
\draw (a.corner 2)--(a.corner 5);
\draw (a.corner 9)--(a.corner 6);
\draw (a.corner 3)--(a.corner 7);
\draw (a.corner 4)--(a.corner 8);
\draw (a.corner 3)--(a.corner 8);
\end{scope}

\foreach \x in {1,2,...,9}
  \fill (a.corner \x) circle[radius=3pt] node[shift={(\x*360/9+35:0.4)}] {$s_{\x}$};
\draw (-3.5,2.5) node {(b)};  
\end{scope}

\begin{scope}[yshift=-7cm]
\node[draw=none,minimum size=3cm,regular polygon,regular polygon sides=9] (a) {};

\draw (a.corner 1)--(a.corner 2);
\draw[line width=2.5pt](a.corner 2)--(a.corner 3);
\draw[line width=2.5pt] (a.corner 3)--(a.corner 4);
\draw (a.corner 4)--(a.corner 5);
\draw (a.corner 5)--(a.corner 6);
\draw (a.corner 6)--(a.corner 7);
\draw[line width=2.5pt] (a.corner 7)--(a.corner 8);
\draw[line width=2.5pt](a.corner 8)--(a.corner 9);
\draw (a.corner 9)--(a.corner 1);

\begin{scope}[red,semithick,dashed]
\draw (a.corner 1)--(a.corner 5);
\draw (a.corner 1)--(a.corner 6);
\draw (a.corner 2)--(a.corner 9);
\draw (a.corner 4)--(a.corner 7);
\end{scope}

\foreach \x in {1,2,...,9}
  \fill (a.corner \x) circle[radius=3pt] node[shift={(\x*360/9+35:0.4)}] {{$s_\x$}};
\draw (-3.5,2.5) node {(c)};  
\end{scope}

\begin{scope}
\node[draw=none,minimum size=3cm,regular polygon,regular polygon sides=9] (a) {};

\draw (a.corner 1)--(a.corner 2);
\draw (a.corner 2)--(a.corner 3);
\draw (a.corner 3)--(a.corner 4);
\draw (a.corner 4)--(a.corner 5);
\draw (a.corner 5)--(a.corner 6);
\draw (a.corner 6)--(a.corner 7);
\draw (a.corner 7)--(a.corner 8);
\draw (a.corner 8)--(a.corner 9);
\draw (a.corner 9)--(a.corner 1);

\foreach \x in {1,2,...,9}
  \fill (a.corner \x) circle[radius=3pt] node[shift={(\x*360/9+35:0.4)}] {{$s_\x$}};
\draw (-3.5,2.5) node {(a)};  
\end{scope}

\begin{scope}[yshift=-7cm, xshift=10cm]
\node[draw=none,minimum size=3cm,regular polygon,regular polygon sides=9] (a) {};

\draw[line width=2.5pt](a.corner 1)--(a.corner 2);
\draw (a.corner 2)--(a.corner 3);
\draw[line width=2.5pt] (a.corner 3)--(a.corner 4);
\draw[line width=2.5pt](a.corner 4)--(a.corner 5);
\draw (a.corner 5)--(a.corner 6);
\draw[line width=2.5pt] (a.corner 6)--(a.corner 7);
\draw[line width=2.5pt] (a.corner 7)--(a.corner 8);
\draw (a.corner 8)--(a.corner 9);
\draw[line width=2.5pt] (a.corner 9)--(a.corner 1);

\begin{scope}[red,semithick,dashed]
\draw (a.corner 2)--(a.corner 6);
\draw (a.corner 3)--(a.corner 8);
\draw (a.corner 5)--(a.corner 9);
\end{scope}

\foreach \x in {1,2,...,9}
  \fill (a.corner \x) circle[radius=3pt] node[shift={(\x*360/9+35:0.4)}] {{$s_\x$}};
\draw (-3.5,2.5) node {(d)};  
\end{scope}

\end{tikzpicture}
    \caption{\small{Dynkin diagrams of Coxeter systems having hypergraph index (a) $0$, (b) $1$, (c) $2$ and (d) $\infty$. To ease notation, edges labelled $4$ are represented here as thick edges (whereas edges labelled $3$ are represented as thin edges, as usual).
    Red dashed lines encode the complements of subsets in $\Lambda_0$.}}
    \label{fig:cycle}
\end{figure}

Consider the Dynkin diagrams depicted in Figure~\ref{fig:cycle}.  Here, subsets forming elements of~$\Lambda_0$ are represented by dashed red lines, with such a line between vertices $s_i$ and $s_j$ encoding the subset $T_{i,j}:=S\setminus\{s_i,s_j\}$. 

The system in (a) is the irreducible affine system of type $\widetilde A_8$, so it has hypergraph index~$0$. 

In (b), subsets $T_{3,7}$ and $T_{4,8}$ are both of the form $A\times B$ for nonspherical $A$ and $B$, and $T_{3,7}\cap T_{4,8}$ is nonspherical of the form $A'\times K$, with $A'=\{s_2,s_1,s_9\}$ generating an affine system of type $\widetilde C_2$ and $K=\{s_5,s_6\}$ a finite one of type $B_2$. Since $T_{3,7}\cup T_{4,8}=S$, the hypergraph index in this case is $1$. 

In (c), $\Lambda_0$ consists of the four subsets $T_{1,5}$, $T_{1,6}$, $T_{2,9}$ and $T_{4,7}$, but only two of them have a nonspherical intersection, with $T_{1,5}\cap T_{1,6}$ of type $\widetilde C_2\times \widetilde C_2$, and $T_{1,5}\cup T_{1,6}=T_1:=S\setminus\{s_1\}\in \Lambda_1$. Then $T_1\cap T_{2,9}$ is nonspherical (irreducible affine of type $\widetilde C_5$), and hence $S=T_1\cup T_{2,9}$ belongs to $\Lambda_2$, so that the hypergraph index equals $2$ here. 

In (d), $\Lambda_0$ consists of three subsets,  $T_{2,6}$, $T_{3,8}$ and $T_{5,9}$, all generating systems of type $\widetilde C_3\times\widetilde C_2$, but all their pairwise intersections are spherical (of type $B_2\times B_2\times A_1$), hence 
$\Lambda_h=\Lambda_0\not\ni S$,
for all $h$. Thus the hypergraph index is infinite in (d).

\subsubsection{Further examples with hypergraph index $\betti{\Delta}+1$} 
\label{sec:DynkinGenTheta}

Aided by computer, we have also found examples of Dynkin diagrams with no edge labels $\infty$ and first Betti number $i$ such that the corresponding Coxeter system has hypergraph index $i+1$, for $i=2,3,4,5$. The Dynkin diagram in Figure~\ref{fig:GenThetaGraphs} is such an example for $i = 5$, and has the additional property that its subdiagram induced by the vertices $\{s_1,s_2,\dots,s_{8+i}\}$ has first Betti number $i$ and hypergraph index $i+1$ for all $i=1,\dots,5$.   
It is not easy to verify the hypergraph index in this example by hand, so the interested reader may check this using our  \textsf{GAP} code~\cite{hindex_gap}, which also provides the Coxeter matrices for the key examples of this paper.

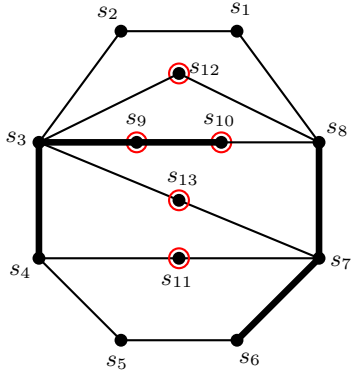
\begin{figure}[!ht]
\centering
\begin{tikzpicture}[double distance=3pt, thick, scale=0.75]
\node[draw=none,minimum size=4cm,regular polygon,regular polygon sides=8] (a) {};
\path let \p1 = (a.corner 1) in coordinate (e1) at (\x1,3);
\path let \p1 = (a.corner 2) in coordinate (e2) at (\x1,3);
\draw (e1)--(e2);
\draw (e2)--(a.corner 3);
\draw[line width=2.5pt] (a.corner 3)--(a.corner 4);
\draw (a.corner 4)--(a.corner 5);
\draw (a.corner 5)--(a.corner 6);
\draw[line width=2.5pt] (a.corner 6)--(a.corner 7);
\draw[line width=2.5pt] (a.corner 7)--(a.corner 8);
\draw (a.corner 8)--(e1);
\foreach \x in {3,4,...,8}
  \fill (a.corner \x) circle[radius=3.25pt] node[shift={(\x*360/8+35:0.3)}] {{\footnotesize$s_\x$}};
\fill (e1) circle[radius=3.25pt] node[shift={(1*360/8+35:0.3)}] {{\footnotesize$s_1$}};
\fill (e2) circle[radius=3.25pt] node[shift={(2*360/8+35:0.3)}] {{\footnotesize$s_2$}};

\draw (a.corner 4)--(a.corner 7);
\draw (a.corner 3)--(a.corner 7);
\draw [red] (0,0) circle[radius=5pt];
\fill (0,0) circle[radius=3.25pt] node[shift={(1*360/8+35:0.3)}] {{\footnotesize$s_{13}$}};
\path let \p1 = (a.corner 3) in coordinate (c) at (-0.75,\y1);
\draw [red] (c) circle[radius=5pt];
\fill (c) circle[radius=3.25pt] node[shift={(1*360/8+45:0.3)}] {{\footnotesize$s_9$}};
\path let \p1 = (a.corner 3) in coordinate (b) at (0.75,\y1);
\draw [red] (b) circle[radius=5pt];
\fill (b) circle[radius=3.25pt] node[shift={(2*360/8+5:0.3)}] {{\footnotesize$s_{10}$}};
\draw [line width=2.5pt] (a.corner 3)--(b);
\draw (b)--(a.corner 8);
\path let \p1 = (a.corner 3) in coordinate (d) at (0,-\y1);
\draw [red] (d) circle[radius=5pt];
\fill (d) circle[radius=3.25pt] node[shift={(5*360/8+40:0.3)}] {{\footnotesize$s_{11}$}};
\draw [red] (0,2.25) circle[radius=5pt];
\fill (0,2.25) circle[radius=3.25pt] node[shift={(7*360/8+55:0.35)}] {{\footnotesize$s_{12}$}};
\draw (a.corner 3)--(0,2.25)--(a.corner 8);
\end{tikzpicture}
\caption{\small{A Dynkin diagram with first Betti number $5$ whose associated Coxeter system has hypergraph index 6. To ease notation, 
edge labels~4 are represented as thick edges (whereas thin edges represent edge labels~3, as usual). For each $i=1,\dots,5$, a subdiagram on the subset of vertices $\{s_1,\dots,s_{8+i}\}$ has first Betti number $i$ and hypergraph index $i+1$. }   \label{fig:GenThetaGraphs}}
\end{figure}

\subsection{Questions}\label{sec:questionsBetti}

The following open questions are suggested by Theorem~\ref{thm:bettiIntro}  and the examples in Section~\ref{sec:examplesBetti}.

\begin{questions}\label{q:betti}   
Let $\Delta$ be a connected Dynkin diagram. 
\begin{enumerate}
    \item Which hypergraph indexes are realised as we vary over all connected Dynkin diagrams having the same first Betti number as $\Delta$?  In particular, is hypergraph index $b_1(\Delta) + 1$ realised?
    \item Which hypergraph indexes are realised as we vary over all connected Dynkin diagrams having the same underlying unlabelled graph as $\Delta$?  In particular, for which underlying graphs is hypergraph index $b_1(\Delta) + 1$ realised?
\end{enumerate}
\end{questions}
 
We restrict to connected Dynkin diagrams here because, by Lemmas~\ref{lem:finite} and~\ref{lem:reducible1} and Proposition~\ref{prop:reducible2}, the main interest for hypergraph index is in irreducible Coxeter systems.   We also observe that by Proposition~\ref{prop:labels7up}, once the underlying graph of $\Delta$ is fixed, we only need to consider edge labels in the finite set $\{2,3,4,5,6,7,\infty\}$ for (2).  Note as well that by Theorem~\ref{thm:hyp_thick_div_Intro}(1) and Corollary~\ref{cor:linearIntro}, a Coxeter system which is irreducible and has hypergraph index $0$ must be irreducible affine, and hence its Dynkin diagram is restricted to being a tree or a cycle.

\frenchspacing
\bibliographystyle{alpha}
\bibliography{refs}

\end{document}